\title{Torus knot filtered embedded contact homology of the tight contact 3-sphere}
\author{Jo Nelson and Morgan Weiler}
\date{}
\DeclareFontFamily{OT1}{pzc}{}
\DeclareFontShape{OT1}{pzc}{m}{it}{<-> s * [1.10] pzcmi7t}{}
\DeclareMathAlphabet{\mathpzc}{OT1}{pzc}{m}{it}
\definecolor{indigo}{RGB}{75,0,150}
\definecolor{brightpurple}{RGB}{102,0,153}
\definecolor{fuchsia}{RGB}{180,51,180}
\definecolor{jolightpurple}{RGB}{188,171,240}
\definecolor{blb}{RGB}{153, 102, 51}
\definecolor{bdb}{RGB}{77, 38, 0}
\definecolor{bg}{RGB}{85, 85, 94}
\newcommand{\tb}{\textcolor{Blue}}
\newcommand{\mc}[1]{{\mathcal #1}}
\numberwithin{equation}{section}
\numberwithin{figure}{section}
\newtheorem{theorem}{Theorem}[section]
\newtheorem{proposition}[theorem]{Proposition}
\newtheorem{corollary}[theorem]{Corollary}
\newtheorem{lemma}[theorem]{Lemma}
\newtheorem{lemma-definition}[theorem]{Lemma-Definition}
\theoremstyle{definition}
\newtheorem{definition}[theorem]{Definition}
\newtheorem{remark}[theorem]{Remark}
\newtheorem{example}[theorem]{Example}
\newtheorem{notation}[theorem]{Notation}
\renewcommand{\frak}{\mathfrak}
\newcommand{\C}{{\mathbb C}}
\newcommand{\CP}{{\mathbb C}{\mathbb P}}
\newcommand{\Q}{{\mathbb Q}}
\newcommand{\R}{{\mathbb R}}
\newcommand{\N}{{\mathbb N}}
\newcommand{\Z}{{\mathbb Z}}
\newcommand{\cpq}{\mathbb{CP}^1_{2,q}}
\newcommand{\ve}{\varepsilon}
\newcommand{\fp}{\mathfrak{p}}
\newcommand{\D}{\mathbf{D}}
\newcommand{\cO}{{\Sigma}}
\newcommand{\op}{\operatorname}
\newcommand{\M}{\mc{M}}
\newcommand{\CZ}{\op{CZ}}
\newcommand{\bpm}{\begin{pmatrix}}
\newcommand{\epm}{\end{pmatrix}}
\newcommand{\fb}{\mathcal{F}_{b} }
\newcommand{\af}{\substack{\mathcal{A} < L \\ \mathcal{F}_{b}  \leq K}}
\newcommand{\afo}{\substack{\mathcal{A} < L_0 \\ \mathcal{F}_{b}  \leq K}}
\newcommand{\afe}{\substack{\mathcal{A} < L(\varepsilon) \\ \mathcal{F}_{b}  \leq K}}
\newcommand{\afepp}{\substack{\mathcal{A} < L(\varepsilon'') \\ \mathcal{F}_{b}  \leq K}}
\newcommand{\afep}{\substack{\mathcal{A} < L(\varepsilon') \\ \mathcal{F}_{b}  \leq K}}
\newcommand{\afp}{\substack{\mathcal{A} < L' \\ \mathcal{F}_{b}  \leq K}}
\newcommand{\afplus}{\substack{\mathcal{A} < L^+ \\ \mathcal{F}_{b}  \leq K}}
\newcommand{\afminus}{\substack{\mathcal{A} < L^- \\ \mathcal{F}_{b}  \leq K}}
\newcommand{\afpm}{\substack{\mathcal{A} < L^\pm \\ \mathcal{F}_{b}  \leq K}}
\newcommand{\id}{\op{id}}
\newcommand{\A}{\mathcal{A}}
\newcommand{\cur}{\mathcal{C}}
\newcommand{\rot}{\op{rot}}
\begin{document}

\maketitle

\begin{abstract}
Knot filtered embedded contact homology was first introduced by Hutchings in 2015; it has been computed for the standard transverse unknot in irrational ellipsoids by Hutchings and for the Hopf link in lens spaces $L(n,n-1)$ via a quotient by Weiler.   While toric constructions can be used to understand the ECH chain complexes of many contact forms adapted to open books with binding the unknot and Hopf link, they do not readily adapt to general torus knots and links.  In this paper, we generalize the definition and invariance of knot filtered embedded contact homology to allow for degenerate knots with rational rotation numbers.  We then develop new methods for understanding the embedded contact homology chain complex of positive torus knotted fibrations of the standard tight contact 3-sphere in terms of their presentation as open books and as Seifert  {fiber} spaces.  We provide Morse-Bott methods, using a doubly filtered complex and the energy filtered perturbed Seiberg-Witten Floer theory developed by Hutchings and Taubes, and use them to compute the $T(2,q)$ knot filtered embedded contact homology, for $q$ odd and positive.  
\end{abstract}

\tableofcontents

\section{Introduction}

 Knot filtered embedded contact homology is a topological spectral invariant which was first introduced by Hutchings, who computed it for the standard transverse unknot in the irrational  ellipsoid to study the mean action of area preserving disk maps which are rotation near the boundary \cite{HuMAC}.   Knot filtered embedded contact homology was subsequently computed for the Hopf link in the lens spaces $(L(n,n-1), \xi_{std})$ (obtained as a quotient of the irrational ellipsoid) by Weiler and used to study area preserving diffeomorphisms of the closed annulus subject to a boundary condition \cite{weiler, weiler2}.  To understand knot filtered embedded contact homology with respect to the right handed $T(p,q)$ torus knots in $(S^3,\xi_{std})$, we introduce new non-toric techniques, which also elucidate the embedded contact homology (ECH) chain complexes of more general open books and arbitrary Seifert {fiber} spaces.

  We also generalize the definition and invariance of knot filtered embedded contact homology, to encompass knots with rational rotation numbers, and provide Morse-Bott methods to compute it. This involves direct limits of doubly filtered direct systems, which is similar in spirit but more involved than our prior work for prequantization bundles \cite{preech} and utilizes the work of Hutchings and Taubes \cite{cc2}.
  
   Previously, the only well-studied ECH chain complexes were those of toric contact forms, as initiated in \cite{T3}, and prequantization bundles over closed symplectic surfaces \cite{preech}.  Our methods allow us to understand the embedded contact homology of $T(p,q)$ fibrations of the standard tight 3-sphere in terms of their associated open book decompositions and presentation as Seifert fiber spaces.  Knot filtered embedded contact homology is defined with respect to a trivialization induced by the Seifert surface of the knot, which is best understood in terms of the open book decomposition.  The main complication in the setting at hand is the computation of the ECH index, which is more subtle than in the toric or prequantization setting.  This is due in part to the trivializations available for fibers projecting to points with isotropy, as one cannot use the `constant' trivialization, which is available for all fibers of prequantization bundles.  
   
 We begin with the $p=2$ case in this paper and complete the study of all $p$ in the sequel in \cite{calabi}. For general $p$, an alternate family of nondegenerate perturbations is needed, which gives rise to non-vanishing differentials, and requires a more involved adaptation of our Morse-Bott methods previously established for prequantization bundles in \cite{preech}. The results of this paper will be used in the sequel \cite{calabi} to deduce quantitative existence results for Reeb orbits associated to any contact form on $(S^3,\xi_{std})$ admitting a maximal self linking torus knot $T(p,q)$ as an elliptic Reeb orbit with (approximate) rotation number $pq$ and whose volume is at most $\frac{1}{pq}$.

\subsection{Overview of embedded contact homology}\label{ss:overviewECH}
Let $Y$ be a closed three-manifold with a contact form $\lambda$. Let $\xi=\ker(\lambda)$ denote the associated contact structure, and let $R$ denote the associated Reeb vector field, which is uniquely determined by 
\[
\lambda(R)=1, \ \ \ d\lambda(R, \cdot)=0.
\]
A {\em Reeb orbit\/} is a map $\gamma:\R/T\Z\to Y$ for some $T>0$ such that $\gamma'(t)=R(\gamma(t))$, modulo reparametrization.  
 
A Reeb orbit is said to be \textit{embedded} whenever this map is  injective. For a Reeb orbit as above, the linearized Reeb flow for time $T$ defines a symplectic linear map
\begin{equation}
\label{eqn:lrt}
P_\gamma{(T)} :(\xi_{\gamma(0)},d\lambda) \longrightarrow (\xi_{\gamma(0)},d\lambda).
\end{equation}
The Reeb orbit $\gamma$ is {\em nondegenerate\/} if $P_\gamma$ does not have $1$ as an eigenvalue.  The contact form $\lambda$ is nondegenerate if all its Reeb orbits are nondegenerate; nondegenerate contact forms form a comeager subset of all contact forms. 

  A nondegenerate Reeb orbit $\gamma$ is {\em elliptic\/} if $P_\gamma$ has eigenvalues on the unit circle and {\em  hyperbolic\/} if $P_\gamma$ has  real eigenvalues. If $\tau$ is a homotopy class of trivializations of $\xi|_\gamma$, then the {\em Conley-Zehnder index\/} $\CZ_\tau(\gamma)\in\Z$ is defined in terms of the induced path of symplectic linear matrices.  The parity of the Conley-Zehnder index does not depend on the choice of trivialization $\tau$.  If $\gamma$ is an embedded Reeb orbit, the Conley-Zehnder index is even when $\gamma$ is positive hyperbolic and odd otherwise.

We say that an almost complex structure $J$ on $\R_s \times Y$ is {\em $\lambda$-compatible\/} if
\begin{itemize}
\itemsep-.25em
\item $J(\xi)=\xi$ and $J(\partial_s)=R$; 
\item $J$ rotates the contact planes positively, meaning $d\lambda(v,Jv)>0$ for nonzero $v\in\xi$;  
\item $J$ is invariant under translation of the $\R$ factor.
\end{itemize} 
We consider $J$-holomorphic curves $u: (\dot{\Sigma},j) \to (\R \times Y, J)$, where $(\dot{\Sigma},j)$ is a punctured possibly disconnected Riemann surface, modding out by the usual equivalence relation, namely composition with biholomorphic maps between domains.  If $\gamma$ is a (possibly multiply covered) Reeb orbit, a \emph{positive end} of $u$ at $\gamma$ is a puncture near which $u$ is asymptotic to $\R \times \gamma$ as $s \to \infty$, meaning it admits coordinates of a positive half-cylinder $(\sigma,\tau) \in [0,\infty) \times (\R / T\Z)$ such that 
\[
j(\partial_\sigma) = \tau, \ \ \ \lim_{\sigma \to + \infty} \pi_\R(u(\sigma,\tau)) = + \infty, \ \ \ \lim_{\sigma \to + \infty} \pi_Y(u(\sigma, \cdot)) = \gamma.
\]
A \emph{negative end} is defined analogously with $\sigma \in (-\infty, 0]$ and $s \to -\infty$.  We assume that all punctures are positive or negative ends.  

Embedded contact homology (ECH) is a `symplectic shadow'  of Seiberg-Witten Floer homology due to Hutchings, roughly defined as follows; see also the survey \cite{lecture}. Given a closed 3-manifold $Y$ equipped with a nondegenerate contact form $\lambda$ and a generic $\lambda$-compatible $J$, the \emph{embedded contact homology chain complex} with respect to a fixed class $\Gamma \in H_1(Y,\Z)$ is the $\Z/2$-module\footnote{It is possible to define ECH with integer coefficients as explained in \cite[\S 9]{obg2}, but that is not necessary for the purposes of this paper or its sequel.} $ECC_*(Y,\lambda,\Gamma, J)$.  The chain complex is freely generated by \emph{admissible Reeb currents}, which are finite sets of Reeb currents\footnote{In some literature, Reeb currents are called orbit sets.} $\alpha = \{ (\alpha_i, m_i) \}$, such that 
\begin{itemize}
\itemsep-.25em
\item the $\alpha_i$ are distinct embedded Reeb orbits;
\item the $m_i$ are positive integers and $m_i=1$ whenever $\alpha_i$ is hyperbolic;
\item the total homology class of $\alpha$ is $\sum_i m_i[\alpha_i] = \Gamma$. 
\end{itemize} 
Sometimes we use the multiplicative notation, $\alpha = \prod_i \alpha_i^{m_i}$.

The chain complex and its homology are relatively $\Z/d$ graded by the ECH index $I$ (defined momentarily), where $d$ denotes the divisibility of $c_1(\xi) + 2 \op{PD}(\Gamma) \in H^2(Y;\Z)$ mod torsion.  This means that if $\alpha$ and $\beta$ are two admissible Reeb currents, we can define their index difference $I(\alpha,\beta)$ by choosing an arbitrary $Z\in H_2(Y,\alpha,\beta)$ (the notation indicates 2-chains with boundary on $\alpha-\beta$, modulo boundaries of 3-chains) and setting 
\[
I(\alpha,\beta) = [I(\alpha,\beta,Z)] \in \Z/d,
\]
which is well-defined by the index ambiguity formula \cite[\S 3.3]{Hindex}; see also \S \ref{ss:ECHI}. When the chain complex is nonzero, we can further define an absolute $\Z/d$ grading by picking some generator $\beta$ and declaring its grading to be zero, so that the grading of any other generator $\alpha$ is
\[
|\alpha| = I(\alpha,\beta).
\]
(By the additivity property of the ECH index, the differential decreases this absolute grading by 1.) In particular, if $\Gamma =0$, then the empty set of Reeb orbits is a generator of the chain complex, which depends only on $Y$ and $\xi$.  As a result, $ECH_*(Y,\xi,0)$ has a canonical absolute $\Z/d$ grading, in which the empty set is assigned to have grading zero.  Finally, as a result of the ECH index parity property, for every $\Gamma \in H_1(Y,\Z)$, there is a canonical absolute $\Z/2$ grading given by the parity of the number of positive hyperbolic Reeb orbits \cite[\S 3.3]{Hindex}; also reviewed in \S \ref{ss:ECHI}.

\begin{definition}
If $Z \in H_2(Y,\alpha,\beta)$ and $\tau$ is a trivialization of $\xi$ over the Reeb orbits $\{\alpha_i\}$ and $\{\beta_j\}$, which is symplectic with respect to $d\lambda$, we define the \emph{ECH index} to be
\begin{equation}\label{ECHindex}
I(\alpha,\beta,Z) = c_\tau (Z) + Q_\tau(Z) + CZ_\tau^I(\alpha,\beta).
\end{equation}
The terms $c_\tau$ (relative first Chern number), $Q_\tau$ (relative intersection pairing), and $CZ^I_\tau$ (total Conley-Zehnder index) will be defined in \S\ref{s:generalities} and \S\ref{s:components}.
\end{definition}

Let $\M_k(\alpha,\beta,J)$ denote the set of $J$-holomorphic currents from $\alpha$ to $\beta$ with ECH index $k$.  The \emph{ECH differential} is given by 
\[
\partial \alpha = \sum_\beta \#_2 \left( \M_1(\alpha,\beta,J)/\R \right) \beta{,}
\]
{the} mod 2 count of ECH index 1 currents in $\R \times Y$, modulo $\R$ translation and equivalence of $J$-holomorphic currents. A \emph{$J$-holomorphic current} is a finite set of pairs $\cur= \{ (C_k, d_k)\}$, where the $C_k$ are distinct, connected, somewhere injective $J$-holomorphic curves in $(\R \times Y, d(e^s \lambda))$ and $d_k \in \Z_{\geq 0}$ subject to the asymptotic condition that $\cur$ ``converge as a current" to $\alpha$ as $s \to + \infty$ and to  $\beta$  as $s \to -\infty$. This asymptotic convergence as a current condition means that the positive ends of $C_k$ are at covers of the Reeb orbits $\alpha_i$ wherein the sum over $k$ of $d_k$ multiplied by the total covering amount of all ends of $C_k$ at iterates of $\alpha_i$ is $m_i$, and analogously for the negative ends.   The notion of a \emph{current}  is that of a linear functional on the space of differential forms and is due to Federer, cf. \cite[\S 4]{fed} and \cite[\S 1.4]{mor}.  Currents provide a natural topology on the space of real surfaces, admitting extremely useful compactness properties, cf. \cite[\S 4.2.17]{fed}, \cite[Thm.~5.5]{mor}, which were further developed and exploited by Taubes in the $J$-holomorphic setting \cite{swgr}.   

The definition of the ECH index and the associated index inequality is the key nontrivial ingredient used to define embedded contact homology \cite{Hindex}.  In particular, the assumption that $J$ is generic guarantees that the currents of ECH index 1 consist of {a single} embedded Fredholm and ECH index 1 $J$-holomorphic curve, and possibly an ECH index 0 current, which must be a union of trivial cylinders $\R \times \gamma$, with multiplicities, where $\gamma$ is a Reeb orbit. 

\begin{remark}[The role of degree]
A notion which will be crucial to this paper, as it was in \cite{preech}, is the \emph{degree} of an ECH generator, pair of generators, or curve counted by the differential (see Definition \ref{def:degreep}). Degree is a concept only appearing when the Reeb orbits of $\lambda$ agree with orbits of an $S^1$ action on $Y$, and is not intrinsic to ECH generally. (The $S^1$ action we use in this paper is described in \S\ref{s:topology}.) Essentially, degree counts the relative algebraic multiplicity of the multisets of fibers underlying the Reeb currents. In \cite{preech} and the sequel we relate the degree of a pair of generators to the degree of the holomorphic covering map to the base (here $\CP^1_{2,q}$) arising from any curves between them counted by the differential. As there is no differential in this paper, we do not need that relationship, but degree does govern both our action and knot filtrations, as is fully explained in \S\ref{s:spectral}.
\end{remark}

 We denote the homology of the ECH chain complex by $ECH_*(Y,\lambda,\Gamma,J)$.  That the differential squares to zero is rather involved and was established by Hutchings and Taubes \cite{obg1,obg2}; it requires obstruction bundle gluing as a result of the presence of an intermediate level consisting of multiply covered trivial cylinders between two ECH index one curves.  The embedded contact homology does not depend on the choice of $J$ or on the contact form $\lambda$ for $\xi$, and so defines a well-defined $\Z/2$-module $ECH_*(Y,\xi,\Gamma)$.   
  
 The proof of invariance of embedded contact homology goes through Taubes' isomorphism with Seiberg-Witten Floer cohomology \cite{taubesechswf}-\cite{taubesechswf5}.  
 
 \begin{theorem}[Taubes]\label{thm:taubes}
 If $Y$ is connected, then there is a canonical isomorphism of relatively graded $\Z[U]$-modules 
\[
ECH_*(Y,\lambda,\Gamma,J) \simeq \widehat{HM}^{-*}(Y,\mathfrak{s}_\xi + \op{PD}(\Gamma)),
\]
which sends the ECH contact invariant $c(\xi):=[\emptyset] \in ECH(Y,\xi,0)$ to the contact invariant in Seiberg-Witten Floer cohomology.
 \end{theorem}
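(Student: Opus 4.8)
The plan is to follow Taubes' program relating the large-parameter perturbed Seiberg-Witten equations on $Y$ to Reeb dynamics. First I would fix the $\Spinc$ structure $\mathfrak{s}_\xi + \op{PD}(\Gamma)$, whose distinguished origin is the contact structure $\xi$, and perturb the three-dimensional Seiberg-Witten equations by inserting the contact form $\lambda$ together with a large real parameter $r$: schematically one takes the curvature equation to read $*F_A = r(\langle \text{cl}(\cdot)\psi,\psi\rangle - i\lambda) + \ldots$ paired with the Dirac equation $D_A\psi = 0$, where an additional abstract perturbation is chosen so that the equations and their linearizations are nondegenerate. The key analytic input, which is the technical heart of \cite{taubesechswf}-\cite{taubesechswf5}, is the behavior as $r \to \infty$: a priori estimates force the spinor $\psi = (\alpha,\beta)$ to have $|\alpha| \to 1$ away from a codimension-two locus, and the zero set of $\alpha$ concentrates along an admissible Reeb current whose total homology class is $\Gamma$.

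Second I would promote this concentration into a bijection of generators. For $r$ sufficiently large, the irreducible solutions of the perturbed equations are in canonical correspondence with the admissible Reeb currents generating $ECC_*(Y,\lambda,\Gamma,J)$, while the reducible solutions feed the $U$-module structure; the multiplicities $m_i$ on elliptic orbits, and the constraint $m_i = 1$ on hyperbolic orbits, are recovered from the vortex structure transverse to each Reeb orbit. This identifies the underlying $\Z/2$-modules.

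Third I would match the differentials. The Seiberg-Witten Floer differential counts solutions of the four-dimensional equations on $\R_s \times Y$ interpolating between the three-dimensional generators; as $r \to \infty$ these converge, as currents, to the broken $J$-holomorphic currents counted by the ECH differential, and conversely an ECH index one curve can be glued to produce such a solution. Here the obstruction bundle gluing of Hutchings-Taubes \cite{obg1,obg2}, which secures $\partial^2 = 0$ on the ECH side, is exactly what accounts for the boundary contributions of multiply covered trivial cylinders in the Seiberg-Witten count. I expect this compactness-and-gluing correspondence---establishing that limits of Seiberg-Witten instantons are holomorphic currents, that holomorphic data integrates to genuine solutions, and that the two counts agree mod $2$---to be the main obstacle, as it is genuinely the substance of the later papers in the series.

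Finally I would verify that the isomorphism respects the extra structure. The relative $\Z/d$ grading by the ECH index $I$ is matched with the relative grading of $\widehat{HM}$ by spectral flow of the perturbed operators, via the index formula \eqref{ECHindex}; the ECH $U$-map, which counts index two curves through a chosen base point, corresponds to the degree-two map on $\widehat{HM}$; and since the empty Reeb current corresponds to the distinguished generator adjacent to the reducible, the class $[\emptyset] = c(\xi)$ is carried to the Seiberg-Witten contact invariant. Assembling these steps for a fixed nondegenerate $(\lambda,J)$, and invoking the topological invariance of Seiberg-Witten Floer cohomology to transport the result across choices, yields the canonical isomorphism of relatively graded $\Z[U]$-modules.
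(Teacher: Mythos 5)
This statement is not proved in the paper at all: it is quoted as Taubes' theorem, with the proof residing in the five-paper series \cite{taubesechswf}--\cite{taubesechswf5}, and the paper only uses it as a black box (to conclude topological invariance of ECH and, later, to identify filtered ECH with filtered Seiberg--Witten Floer cohomology via \cite{cc2}). Your proposal faithfully reproduces the architecture of Taubes' argument: the large-$r$ contact-form perturbation of the three-dimensional Seiberg--Witten equations, the concentration of the zero set of the spinor component along an admissible Reeb current in class $\Gamma$, the resulting identification of generators, the matching of differentials by degenerating four-dimensional solutions on $\R\times Y$ into broken $J$-holomorphic currents, and the identification of gradings, $U$-maps, and contact invariants. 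In that sense you have described the same approach the paper relies on. But as a \emph{proof} it has no independent content: every nontrivial step --- the a priori estimates as $r\to\infty$, the bijection between irreducible solutions and admissible Reeb currents, the compactness-and-gluing correspondence for the differential --- is exactly the substance of the cited papers, as you yourself acknowledge. What you have written is a correct table of contents for Taubes' proof, not an argument that could be checked or completed without reproducing those papers.

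Two specific assertions are also inaccurate. First, the reducible solutions do not ``feed the $U$-module structure'': for $r$ large the reducibles acquire arbitrarily large energy (cf.\ the discussion of $\widehat{CM}^*_L$ in \S 7.3 of this paper) and play no role in the identification of generators; the $U$-map on the ECH side counts $I=2$ curves through a generic base point \cite{shs}, and Taubes shows in \cite{taubesechswf5} that under the isomorphism it agrees with the analogous degree $-2$ map on $\widehat{HM}$ defined in \cite{KMbook}. Second, the obstruction bundle gluing of \cite{obg1,obg2} is the direct, ECH-internal proof that $\partial^2=0$; it is not the mechanism by which the Seiberg--Witten and ECH counts are matched in Taubes' isomorphism, although closely related difficulties with multiply covered trivial cylinders do arise in his gluing analysis. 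If your goal is to justify the theorem as used in this paper, the honest route is the one the paper takes: cite it.
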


Here $\widehat{HM}^*$ denotes the `from' version of Seiberg-Witten Floer cohomology, which is fully explained in the book by Kronheimer and Mrowka \cite{KMbook}, and $\mathfrak{s}_\xi$ denotes the canonical spin-c structure determined by the oriented 2-plane field $\xi$.  The contact invariant in Seiberg-Witten Floer cohomology was defined in \cite{km-contact}; see also \cite[\S 6.3]{kmos} and \cite{mariano}. Taubes' isomorphism demonstrates that ECH is a topological invariant of $Y$, except that one needs to shift $\Gamma$ when the contact structure is changed.\footnote{Given a fixed spin-c structure on a closed oriented 3-manifold $Y$, there is a construction establishing a one-to-one correspondence between the isomorphism classes of spin-c structures and the isomorphism classes of complex line bundles $L \to Y$.  We can replace isomorphism classes of complex line bundles with the elements of $H_1(Y;\Z)$ because line bundles $L$ are classified by $c_1(L) \in H^2(Y;\Z)$.  See also \cite[\S 1]{KMbook} and \cite[\S 6]{osbook}.}   The original motivation for the construction of embedded contact homology was to find a symplectic model for Seiberg-Witten Floer cohomology, so as to generalize Taubes' theorem establishing the equivalence between the Seiberg Witten invariant and Gromov invariant for closed symplectic 4-manifolds \cite{swgr}.
 
There  are two filtrations on embedded contact homology of interest, summarized below, with further  details given in \S \ref{s:spectral}.  The first is by symplectic action, which provides obstructions to symplectic embeddings of 4-manifolds \cite{qech} and enables computations of ECH by successive approximations for prequantization bundles \cite{preech}. The \emph{symplectic action} or {length} of an orbit set $\alpha=\{(\alpha_i,m_i)\}$ is
\begin{equation}\label{eq:action}
\mathcal{A}(\alpha):=\sum_im_i\int_{\alpha_i}\lambda.
\end{equation}
If $J$ is $\lambda$-compatible and there is a $J$-holomorphic current from $\alpha$ to $\beta$, then $\mathcal{A}(\alpha)\geq\mathcal{A}(\beta)$ by Stokes' theorem, since $d\lambda$ is an area form on such $J$-holomorphic curves. Since $\partial$ counts $J$-holomorphic currents, it decreases symplectic action,\footnote{In fact, $\A(\alpha) = \A(\beta)$ only if $\alpha = \beta$.} i.e.,
\[
\langle\partial\alpha,\beta\rangle\neq0\Rightarrow\mathcal{A}(\alpha)\geq\mathcal{A}(\beta).
\]

Let $ECC_*^L(Y,\lambda,\Gamma;J)$ denote the subgroup of $ECC_*(Y,\lambda,\Gamma;J)$ generated by admissible Reeb currents of symplectic action less than $L$. Since $\partial$ decreases action, it is a subcomplex. By \cite[Theorem 1.3]{cc2}  the homology of $ECC_*^L(Y,\lambda,\Gamma;J)$ is independent of $J$, therefore we denote its homology by $ECH_*^L(Y,\lambda,\Gamma)$, which we call \emph{action filtered embedded contact homology}.  Action filtered ECH of $(Y,\lambda)$ gives rise to the ECH spectrum $\{c_k(Y,\lambda)\}$, provided that the homology class of the cycle given by the empty set is nonzero, as explained in \S \ref{ss:ECHspectrum}.

The \emph{knot filtration} on embedded contact homology was first defined by Hutchings in \cite{HuMAC} for nondegenerate contact manifolds with $H_1(Y,\Z) =0$ and computed for the standard transverse unknot in the boundary of the four dimensional irrational ellipsoids.  

Knot filtered ECH is defined in terms of a linking number with a fixed embedded elliptic Reeb orbit realizing a transverse knot, denoted by $b$, with rotation number $\rot(b)$, defined in terms of a trivialization\footnote{This is a somewhat `atypical' choice of trivialization; usually one uses a trivialization which extends over a disk (or surface) spanned by the orbit, cf. Remarks \ref{rem:linkingtriv} and \ref{rem:c1sl}.} such that the  push off of $b$ with respect to the trivialization has linking number zero with $b$.  When $H_1(Y,\Z)=0$, this rotation number is well-defined, meaning that the push off linking zero trivialization is in a distinguished homotopy class.  In \cite[Thms.~5.2 \& 5.3]{weiler} it is explained how to obtain a well-defined rotation number for $Y$ when $H_1(Y,\Z)$ is torsion. Moreover, for an elliptic Reeb orbit $b$, the push off linking zero trivialization can always be chosen so that $P_b(t)$ is rotation by angle $2\pi \theta_t$ for each $t \in [0,T]$, and we set $\rot(b):=\theta_T$.   

Let $b^m\alpha$ be a Reeb current where $m \in \Z_{\geq 0}$  and $\alpha$ is a Reeb current not containing $b$. Assume $H_1(Y;\Z)=0$ so that the linking number of knots is well-defined. The \emph{knot filtration} on embedded contact homology with respect to $b$ of the Reeb current  $b^m\alpha$  is given by
\[
\mathcal{F}_b(b^m\alpha) := m \rot(b) + \ell(\alpha,b),
\]
where $\ell(\alpha, b)$ is given by
\[
{\ell(\alpha,b)=\sum_im_i\ell(\alpha_i,b).}
\]
 If $b$ is nondegenerate then  $\rot(b) \in \R \setminus \Q$.  When $b$ is nondegenerate, $\mathcal{F}_b$ is not integer valued, but it is true that if $\rot(b)>0$ (resp. $\rot(b)<0$) and if every Reeb orbit other than $b$ has nonnegative (resp. nonpositive) linking number with $b$, then $\mathcal{F}_b$ takes values in a discrete set of nonnegative (resp. nonpositive) real numbers.   The ECH differential $\partial$ does not increase the knot filtration $\mathcal{F}_b$, as proven in \cite[Lem.~5.1]{HuMAC}.

If $K$ is a real number, let
\[
ECH_*^{\mathcal{F}_b \leq K}(Y,\lambda,J, b, \rot(b)) 
\]
denote the homology of the subcomplex generated by admissible Reeb currents $b^m\alpha$ where $\mathcal{F}_b(b^m\alpha) \leq K$.   Unlike action filtered ECH, which depends heavily on the choice of contact form, knot filtered ECH is a topological spectral invariant denoted by $ECH_*^{\fb \leq K}(Y,\xi,b,\rot(b))$ which depends only on on $(Y,\xi)$, the Reeb orbit $b$ with fixed rotation number $\rot(b)$, and filtration level $K$ by \cite[Thm.~5.3]{HuMAC}.  In \S \ref{s:cobordismfun} we generalize the definition and invariance of knot filtered ECH to allow for rational rotation angles.

\begin{remark}[Comparison with knot embedded contact homology]\label{rem:ECK}
\ \\
 Knot filtered embedded contact homology is distinct from knot embedded contact homology $ECK$.   
 Using sutures, Colin, Ghiggini, Honda, and Hutchings \cite{cghh} defined a hat version $\widehat{ECK}(b,Y,\lambda)$ of knot embedded contact homology for a (neighborhood of a) null-homologous transverse knot $b$ in a closed contact manifold $(Y,\lambda)$.   Sutured ECH has been shown to be a topological invariant (up to isotopy of $\lambda$ and choice of embedding data of $J$) by Colin, Ghiggini, and Honda \cite{cgh}, as well as by Kutluhan, Sivek, and Taubes \cite{kst} (who additionally establish naturality of sutured ECH).    Colin, Ghiggini, and Honda conjecture that 
\[
\widehat{ECK}(b,Y,\lambda) \cong \widehat{HFK}(-b,-Y),
\]
in connection with the isomorphism between embedded contact homology and Heegaard Floer homology by Colin, Ghiggini, and Honda \cite{cgh}-\cite{cghiii} and the extension of Heegaard Floer homology to balanced sutured manifolds by  Juh\'asz \cite{juhasz}, which incorporates knot Floer homology of Ozsv\'ath-Szab\'o \cite{os-hfk} and Rasmussen \cite{ras-thesis} as a special case.   There is also an analogue of this story in monopole and instanton Floer homologies as developed by Kronheimer and Mrowka \cite{km-suture}.  Work of Kutluhan, Lee, and Taubes establishes the isomorphism between Seiberg-Witten Floer homology and Heegaard Floer homology \cite{klti}-\cite{kltv}.

The `hat' knot contact homology is defined as the first page of a spectral sequence arising from a filtration induced by a null-homologous transverse knot; this mirrors the filtration on Heegaard Floer homology induced by a null-homologous topological knot.  The hat version is equipped with an equivalent of the Alexander grading in the Heegaard Floer setting, and furthermore categorifies the Alexander polynomial.  After suitably adapting the contact form to an open book decomposition of the manifold, and using the sutured formulation of embedded contact homology \cite{cghh}, it is shown in \cite{cgh} that the knot embedded contact homology  can be computed by considering only orbits and differentials in the complement of the binding of the open book.

Spano explains how to define a ``full version" of knot embedded contact  homology $ECK$, which also extends to links, and shows that ECK is a categorification of the multivariable Alexander polynomial   \cite{spano}.  Brown generalizes these constructions to hold in rational open book decompositions, which permits a definition of $ECK$ for rationally null-homologous knots, and additionally establishes a large negative $n$-surgery formula for $ECK$ \cite{brown}.  
 The computation of $\widehat{ECK}$ for positive $T(2,q)$ torus knots in $(S^3,\xi_{std})$ is given in \cite[\S 11]{brown}.

J. Rasmussen has recently communicated to us that for positive $T(p,q)$  torus knots in $(S^3,\xi_{std})$, our computation of knot filtered embedded contact homology with rotation number $pq$ coincides with an associated filtration on the corresponding positive knot Heegaard Floer homology $HFK^+$.  However, it is unclear if this correspondence holds for more general knots or what the analogue of different rotation numbers correspond to in the Heegaard Floer setting, and merits further study.
\end{remark}

\subsection{Main results, organization, and future directions}\label{ss:main}
In \S \ref{s:cobordismfun}, we prove the following theorem, which allows us to generalize the definition and invariance properties of knot filtered ECH to allow for degenerate contact forms so that the rotation number can be rational and provides Morse-Bott computational methods for appropriate Seifert fiber spaces in the spirit of \cite{preech}.

\begin{definition}\label{def:knotadmissible}
A pair of families $\{(\lambda_\ve, J_\ve)\}$ is said to be a \emph{knot admissible pair} for $(Y,\lambda, b, \rot(b))$, where $\lambda$ is a degenerate contact form admitting the transverse knot $b$ as an embedded Reeb orbit whenever
\begin{itemize}
\itemsep-.25em

\item  $f_{s}: [0,c_0]_s \times Y \to \R_{>0}$  are smooth functions such that $\frac{\partial f}{\partial s}>0$ and  $\lim_{s \to 0}f_{s}=1$ in the $C^0$-topology;
\item  There is a full measure set $\mathcal{S} \subset(0,c_0]$ such that for each $\ve \in \mathcal{S}$, $f_{\ve}\lambda$  is $L(\ve)$-nondegenerate and $L(\ve)$ monotonically increases towards $+\infty$ as $\ve$ decreases towards 0;
\item $\lambda_{\varepsilon}:=f_{\varepsilon}\lambda$ each admit the transverse knot $b$ as an embedded elliptic Reeb orbit (when $\ve \neq 0$) and $\{ \rot_{\varepsilon}(b)\}$ is monotonically decreasing to $\rot(b)$ as $\varepsilon \in [0, c_0]$ decreases to 0;
\item $J_{\varepsilon}$ is an $ECH^{L(\varepsilon)}$ generic $\lambda_{\varepsilon}$-compatible almost complex structure (when $\varepsilon \neq 0$).
\end{itemize}
Sometimes we also suppress the almost complex structure and refer to the sequence of contact forms $\{\lambda_\ve\}$ as a \emph{knot admissible family}, provided it satisfies the above conditions.  Precise definitions of each condition can be found in the statement of Lemma \ref{lem:nicecobmap1}.  By the discussion in Lemma \ref{lem:nicecobmap1} and Remark \ref{rem:nicecobmap1}, it follows that for any $\varepsilon' \in (0,\varepsilon)$, the admissible Reeb currents of action less than $L(\varepsilon)$ associated to $\lambda_{\varepsilon}$ and $\lambda_{\varepsilon'}$ are in bijective correspondence.  
\end{definition}

\begin{theorem}\label{thm:introok}
Let $(Y, \xi)$ be a closed contact 3-manifold with $H_1(Y)=0$, $b\subset Y$ be a transverse knot and $K\in \R$.  If $\lambda$ is degenerate, 
we define
\[
ECH_*^{\mathcal{F}_b \leq K}(Y,\lambda, b, \rot(b)) :=
\lim_{\varepsilon \to 0}ECH_*^{\afe}(Y, \lambda_{\varepsilon}, b, \rot_{\varepsilon}(b),J_{\varepsilon}),  \]
where $\{(\lambda_\ve, J_\ve)\}$ is a  {knot admissible pair} for $(Y,\lambda, b, \rot(b))$, and the right hand side is the action filtered subcomplex, which has been further restricted to the knot filtered subcomplex.    Then $ECH_*^{\mathcal{F}_b \leq K}(Y,\lambda, b, \rot(b))$ is well-defined and depends only on $Y, \xi, b, \rot(b),$ and $K$.
\end{theorem}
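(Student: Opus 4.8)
The plan is to construct, for each fixed knot admissible pair $\{(\lambda_\ve, J_\ve)\}$, a directed system whose terms are the doubly filtered homologies $ECH_*^{\afe}(Y,\lambda_\ve, b, \rot_\ve(b), J_\ve)$ indexed by $\ve \in \mathcal{S}$ under the reverse ordering (smaller $\ve$ being later), to show that this system stabilizes in each knot filtration degree so that its direct limit exists, and finally to prove that any two knot admissible pairs produce cofinally isomorphic systems with a common limit depending only on the stated topological data. First I would produce the structure maps. For $\ve' < \ve$ in $\mathcal{S}$, the interpolating family $f_s\lambda$ restricted to $[\ve',\ve]$ realizes a weakly exact symplectic cobordism (since $\partial f/\partial s>0$), which by \cite{cc2} induces a functorial map on action filtered homology
\[
\Phi_{\ve,\ve'}\colon ECH_*^{L(\ve)}(Y,\lambda_\ve,0) \longrightarrow ECH_*^{L(\ve')}(Y,\lambda_{\ve'},0),
\]
satisfying $\Phi_{\ve,\ve}=\op{id}$ and $\Phi_{\ve',\ve''}\circ\Phi_{\ve,\ve'}=\Phi_{\ve,\ve''}$, and independent of the choice of $J_\ve$, $J_{\ve'}$ by the $J$-independence of action filtered ECH. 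The bijective correspondence of admissible Reeb currents of action less than $L(\ve)$ for $\lambda_\ve$ and $\lambda_{\ve'}$, established in Lemma \ref{lem:nicecobmap1} and Remark \ref{rem:nicecobmap1}, lets me identify these maps on the chain level with the expected inclusion-induced maps.

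Next I would verify knot filtration compatibility. The value $\mathcal{F}_b(b^m\alpha)=m\rot_\ve(b)+\ell(\alpha,b)$ splits into the topological and hence $\ve$-invariant linking term $\ell(\alpha,b)$ and the term $m\rot_\ve(b)$. Because $\{\rot_\ve(b)\}$ decreases monotonically to $\rot(b)$, under the bijective correspondence of generators the filtration value of any fixed generator is nonincreasing as $\ve$ decreases; combined with the nonnegativity of the linking numbers of the remaining orbits with $b$, this shows each $\Phi_{\ve,\ve'}$ carries the subcomplex $\{\mathcal{F}_b\leq K\}$ into $\{\mathcal{F}_b\leq K\}$, exactly as in the nondegenerate monotonicity of \cite[Lem.~5.1]{HuMAC}. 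Since $L(\ve)\to+\infty$ while $K$ is fixed, and every generator with $\mathcal{F}_b\leq K$ has uniformly bounded $b$-multiplicity and linking data, the set of generators contributing at filtration level $K$ is eventually constant, so the directed system $\{ECH_*^{\afe}\}$ stabilizes and its direct limit is well-defined.

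Finally, to prove independence of the pair, given a second knot admissible pair $\{(\lambda'_\delta, J'_\delta)\}$ for the same data, I would interleave the two families by choosing sequences $\ve_n\downarrow 0$ and $\delta_n\downarrow 0$ whose action windows and rotation numbers interlace, and apply the cobordism maps of \cite{cc2} in both directions. Functoriality of these maps, together with the computation that a composition around a ``there and back'' product cobordism is the identity, shows the two directed systems are cofinal in a common system and hence have isomorphic limits. Applying the topological invariance of knot filtered ECH in the nondegenerate regime \cite[Thm.~5.3]{HuMAC} along each family, and noting that distinct degenerate forms realizing $(Y,\xi,b,\rot(b))$ are themselves comparable by such cobordisms, shows the canonical limit depends only on $Y,\xi,b,\rot(b)$, and $K$. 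The main obstacle will be the simultaneous control of the two filtrations in the presence of the moving rotation number: the Hutchings--Taubes cobordism maps are built for the action filtration alone, and one must verify that they respect the knot filtration even as $\rot_\ve(b)$ varies, so that the doubly filtered system is genuinely directed and stabilizes. This requires tracking the monotonicity of $\rot_\ve(b)$ together with the holomorphic-curve linking estimate underlying \cite[Lem.~5.1]{HuMAC} uniformly across the cobordism.
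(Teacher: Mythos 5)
Your overall skeleton (a directed system of doubly filtered groups built from the Hutchings--Taubes cobordism maps, followed by an interleaving argument for invariance) is the same as the paper's, but there is a genuine gap at the heart of the argument: you never actually prove that the cobordism maps respect the knot filtration. Your filtration-compatibility step only treats the maps within a single knot admissible family in the regime where Lemma \ref{lem:nicecobmap1} applies, so that the chain map is the canonical bijection on generators and monotonicity of $\rot_\ve(b)$ does the work. But the invariance step forces you to compare two \emph{different} knot admissible pairs (and, implicitly, different degenerate forms with the same kernel): the cobordism maps between $\lambda^+_{\ve^+}$ and $\lambda^-_{\ve^-}$ are not bijections on generators, and neither are the chain homotopies produced by the Composition and Homotopy Invariance axioms. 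You name this as ``the main obstacle'' but defer it, whereas it is precisely the content of the paper's Proposition \ref{prop:cobknot}: one chooses the cobordism compatible $J$ on the completion so that $\R\times b$ is $J$-holomorphic (possible because $b$ is a Reeb orbit at both ends and $[0,1]\times b$ is symplectic), invokes the Holomorphic Curves axiom of Theorem \ref{thm:cobmaps} to get a broken $J$-holomorphic current behind every nonzero matrix coefficient, and then combines intersection positivity of each component with $\R\times b$ and the asymptotic winding bounds of \cite{hwz2, dc} with the hypothesis $\rot_+(b)\geq\rot_-(b)$ to conclude $\fb(\alpha_+)\geq\fb(\beta_-)$ level by level. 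The same argument must be applied to the chain homotopies (which count $J_t$-holomorphic currents) so that the chain homotopy equivalences of Proposition \ref{prop:che} descend to the knot filtered subcomplexes; without this, neither well-definedness nor invariance follows.

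Two secondary points. First, your stabilization claim --- that all generators with $\fb\leq K$ eventually have action below $L(\ve)$ --- is unjustified in the generality of the theorem: nothing in the hypotheses forces the knot filtration to bound the action (in the $T(2,q)$ example one has $\fb \approx 2q\,\A$, but that is special), and orbits of large action with small or negative linking with $b$ are not excluded. Fortunately stabilization is also unnecessary, since direct limits of directed systems of $\Z/2$-modules exist regardless; what is actually needed, and what you omit, is the identification of the diagonal limit $\lim_{\ve\to0}ECH_*^{\afe}$ with the iterated limit $\lim_{\ve\to0}\lim_{L\to\infty}ECH_*^{\af}$, which the paper establishes in Lemma \ref{lem:trick} by navigating the commutative diagram of inclusion and cobordism maps into the ``good region'' $\ve\leq\ve(L)$. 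Second, when you interleave two families you will face mismatched action thresholds $L^+(\ve^+)$ versus $L^-(\delta)$, so the inclusion-induced maps $\iota^{L,L'}$ must be woven into the system as well; this is exactly why the paper works with the doubly filtered diagram rather than only the diagonal terms.
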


The proof of this result is carried out in \S \ref{s:cobordismfun}.  It comes by way of a doubly filtered direct limit, where the chain maps induced by cobordisms on action filtered embedded contact homology chain complexes are obtained from energy filtered perturbed Seiberg-Witten Floer theory via the results of Hutchings and Taubes \cite{cc2}.  This is a more involved generalization of the direct systems and direct limits we carried out in \cite[\S 7]{preech}.  
  
 In particular, this procedure also allows us to compute knot filtered embedded contact homology via successive approximations involving knot admissible families of contact forms admitting a fixed transverse knot as a nondegenerate embedded elliptic Reeb orbit with monotonically decreasing irrational rotation numbers converging to a rational rotation number.   Our methods allow one to directly construct a knot admissible family of contact forms for any fiber of a Seifert fiber space with negative Euler class, equipped with a tight $S^1$-invariant contact structure.

We now explain our result and methods for computing positive $T(2,q)$ filtered embedded contact homology of $(S^3,\xi_{std})$.  Consider the unit 3-sphere $S^{3}$ in $\C^{2}$ and let $J_{\C^2}$ be the standard complex structure on $\C^2$.  Then the standard tight contact structure is given by
\[
\left(\xi_{std}\right)\vert_p = T_pS^3 \cap J_{\C^2}(T_pS^3)
\]
and may expressed as the kernel of the 1-form
\[
\lambda_0 = \frac{i}{2}  \left( z_1 d\bar{z}_1 - \bar{z}_1 dz_1 + z_2 d\bar{z}_2 - \bar{z}_2 dz_2 \right).
\]
We can realize the right handed torus knot $T(p,q)$ in $S^3$ as
\[
T(p,q)=\left\{(z_1,z_2)\in S^3 \subset \C^2 \ |\ z_1^p+z_2^q=0\right\};
\]
the projection map $\pi:S^3 \ \setminus \ T(p,q) \to S^1$ is the Milnor fibration, cf.~Remark \ref{rmrk:Milnor}.   Etnyre shows in \cite{torus}, that positive (e.g. right-handed) transversal torus knots are transversely isotopic if and only if they have the same topological knot type and the same self-linking number. Thus it makes sense to refer to the standard transverse positive (right-handed) $T(p,q)$ torus knot, which we denote by ${b}$ and is of maximal self-linking number $pq-p-q$.

Given $p,q\in\R$, let $N(p,q)$ denote the sequence $(pm+qn)_{m,n \in \Z_{\geq0}}$ of nonnegative integer linear combinations of $p$ and $q$, written in increasing order with multiplicity. We use $N_k(p,q)$ to denote the $k^\text{th}$ element of this sequence, including multiples and starting with $N_0(p,q)=0.$  We can now state our main results.

\begin{theorem}\label{thm:kech-intro}
Let $\xi_{std}$ be the standard tight contact structure on $S^3$.  
Let ${b_0}$ be the standard {right-handed} transverse $T(2,q)$ torus knot for $q$ odd and positive.  Then for $k \in \N$,
\[
ECH_{2k}^{\fb \leq K}(S^3,\xi_{std},{b_0},2q)=\begin{cases}\Z/2&K\geq{ N_k(2,q)  },
\\0&\text{otherwise,}\end{cases}
\]
and in all other gradings $*$,
\[
ECH_*^{\fb \leq K}(S^3,\xi_{std},{b_0},2q)=0.
\]
If $\delta$ is a sufficiently small positive irrational number, then up to grading $k \in \N$ and knot filtration threshold $K$ inversely proportional to $\delta$,
\[
ECH_{2k}^{\fb \leq K}(S^3,\xi_{std},{b_0},2q+\delta)=\begin{cases}\Z/2&K\geq{ N_k(2,q) + \delta {(\$N_k(2,q) -1)}},
\\0&\text{otherwise,}\end{cases}
\]
where { $\$N_k(2,q)$ is the number of repeats in $\{ N_j(2,q)\}_{j\leq k}$ with value $N_k(2,q)$,} and in all other gradings $*$, up to the threshold inversely proportional to $\delta$,
\[
ECH_*^{\fb \leq K}(S^3,\xi_{std},{b_0},2q+\delta)=0.
\]
\end{theorem}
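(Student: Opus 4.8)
The backbone of the argument is that the \emph{total} embedded contact homology of $(S^3,\xi_{std})$ is already determined: by Taubes' isomorphism (Theorem \ref{thm:taubes}) with the Seiberg--Witten Floer cohomology of the sphere it is the ``$U$-tower,'' a single copy of $\Z/2$ in each even nonnegative grading and $0$ otherwise. The plan is therefore to build an explicit filtered chain model, compute the ECH index (the $\Z$-grading) and the knot filtration $\fb$ of every generator below a large action threshold, and then read off the doubly filtered homology by comparison with this tower. I would first establish the nondegenerate statement with $\rot(b_0)=2q+\delta$, and then recover the rational case $\rot(b_0)=2q$ as a direct limit $\delta\to 0$ along a knot admissible family, which is exactly the setup legitimized by Theorem \ref{thm:introok}.

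First I would fix a contact form $\lambda_{2q+\delta}$ coming from the Seifert fibration $S^3\to\cpq$ adapted to the open book with binding $b_0$, perturbed in a Morse--Bott fashion so that $b_0$ is an embedded elliptic Reeb orbit of rotation number $2q+\delta$ and the only other Reeb orbits of action below $L(\delta)$ are iterates of the distinguished fibers over the orbifold points of order $2$ and $q$ (together with orbits descending from a Morse function on the base $\cpq$). The admissible Reeb currents in the action window are then finite products of iterates of $b_0$ and these distinguished fibers. The decisive structural point in the $p=2$ case is that every such generator involves \emph{no positive hyperbolic orbits}, so by the ECH index parity property each generator sits in \emph{even} ECH index. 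Since the differential lowers the $\Z$-grading by one, it maps every generator into an empty odd-graded group and hence vanishes identically, so $ECC_*=ECH_*$ as filtered complexes. Comparing with the tower forces the complex to contain exactly one generator $\zeta_k$ in each grading $2k$, $k\in\N$, and the entire theorem reduces to verifying $I(\zeta_k,\emptyset)=2k$ together with the evaluation of $\fb(\zeta_k)$.

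The heart of the argument, and the step I expect to be the main obstacle, is this simultaneous computation of the index $I(\zeta_k,\emptyset)=c_\tau(Z)+Q_\tau(Z)+\CZ^I_\tau(\zeta_k)$ in the Seifert-surface (push-off-linking-zero) trivialization $\tau$ defining $\fb$, and of $\fb(b_0^m\alpha)=m\,\rot(b_0)+\ell(\alpha,b_0)$. The subtlety flagged in the introduction is that the fibers over the orbifold points carry isotropy and admit no constant trivialization, so the relative Chern term $c_\tau$, the relative self-intersection $Q_\tau$, and the Conley--Zehnder contributions must each be tracked against $\tau$ and correlated with the linking data. Concretely I would (a) read off the linking numbers $\ell(\cdot,b_0)$ of each distinguished fiber with $b_0$ from the Seifert / open-book combinatorics, these being $2$ and $q$ and producing the combinations $2m+qn$; (b) compute the rotation numbers and Conley--Zehnder indices of the perturbed orbits and their iterates; and (c) assemble $I$ and $\fb$ and prove, by an inductive counting argument, that the unique even-index generator in grading $2k$ has knot filtration governed by the sorted sequence $N(2,q)$, namely $\fb(\zeta_k)=N_k(2,q)+\delta(\$N_k(2,q)-1)$. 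Here the $\delta$-correction records the rotation contribution of the $b_0$-iterates weighted by the multiplicity bookkeeping $\$N_k(2,q)$; the validity of this nondegenerate identity only up to grading and threshold $K$ of size $\sim L(\delta)\sim 1/\delta$ reflects that a fixed small $\delta$ captures generators only within a bounded action window.

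Finally, as $\delta\to 0$ the filtration levels $N_k(2,q)+\delta(\$N_k(2,q)-1)\to N_k(2,q)$, and Theorem \ref{thm:introok} guarantees that the direct limit of the doubly filtered homologies is well-defined and independent of the approximating knot admissible family. This yields the degenerate computation $\fb(\zeta_k)=N_k(2,q)$, hence $ECH_{2k}^{\fb\leq K}(S^3,\xi_{std},b_0,2q)=\Z/2$ precisely for $K\geq N_k(2,q)$ and $0$ otherwise, with all odd gradings vanishing, completing the proof.
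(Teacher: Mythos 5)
Your proposal is correct and follows essentially the same route as the paper: perturb along the Seifert fibration so that the low-action generators are products $b^Bh^He^E$ with $h$ negative hyperbolic, kill the differential by the index parity/evenness argument, reduce the whole theorem to correlating the ECH index with the degree and with the knot filtration $\fb(b^Bh^He^E)=2qB+qH+2E+B\delta$ (so that the grading-$2k$ generator has filtration $N_k(2,q)+\delta(\$N_k(2,q)-1)$), and then pass to $\delta\to 0$ via the direct-limit invariance of Theorem \ref{thm:introok}. The only organizational difference is that you deduce ``one generator per even grading'' by comparing with the $U$-tower through Taubes' isomorphism (Theorem \ref{thm:taubes}), whereas the paper establishes it purely combinatorially (Proposition \ref{prop:bijectionZ2}, via Lemmas \ref{lem:dqxy}--\ref{lem:dd+1}); since your step (c) still requires exactly that index-versus-degree monotonicity, the substantive work is identical.
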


\begin{remark}[Threshold between grading and filtration with $\delta$]
The relationship between the threshold of the grading $2k$ and filtration level $K$ with respect to the size of $\delta$ is as follows.  We require $\delta$ to be small enough so that $N_k(2,q)+\delta(\$N_k(2,q)-1)  \leq N_{k+1}(2,q)$ for all $k$.  
\end{remark}

\begin{remark}[Generalization to $T(p,q)$ for $p\neq 2$]\label{rem:intropq} \ \\
We have constructed alternate knot admissible families of contact forms associated to a different family of orbifold Morse functions and established the associated ECH chain complex.  We establish that the same result holds in Theorem \ref{thm:kech-intro} with 2 replaced by $p$ and $\op{gcd}(p,q)=1$ using a different perturbation in \cite{calabi}.    
\end{remark}

\begin{remark}[Comparison to a toric perturbation]\label{rmk:toric}
Using a clever perturbation, cf.~Figure \ref{fig:morse23}, the case $p=2$ can be handled entirely combinatorially; this is fairly involved, but rewarding as we can prove using (comparatively) elementary methods that the differential vanishes.  
Still, one might ask why we use such methods, seeing as the contact form in question is contactomorphic to a very simple one, namely a rescaling of an ellipsoid (as can be proven via Kegel-Lange \cite{kl} and Cristofaro-Gardiner -- Mazzucchelli \cite{cgm}).  There are indeed convex and concave toric perturbations of these degenerate contact forms, and their generators, ECH indices, and ECH spectra are well-understood via lattice paths in the plane. These toric perturbations have a different chain complex than the one studied in this paper when $p=2$, and are more similar to the chain complex studied in the sequel, as detailed in \cite{calabi}. However, the combinatorial toric ECH differential is not fully understood, as we explain in a detailed comparison to a hypothetical convex toric differential, cf.~\cite[\S 5.6]{calabi}. The complication arises when including the ``virtual edges" corresponding to the covers of certain elliptic orbits. Thus we required new non-toric methods.

While it is a priori feasible to understand the toric differential involving ``virtual edges" in terms of Taubes' (punctured) pseudoholomorphic beasts in $\R \times (S^1 \times S^2)$ \cite{letter, beasts, beasts2, beasts3}, our approach has a number of advantages.  It allows one to understand the embedded contact homology chain complexes of more general Seifert fiber spaces and open books. This is not possible from a toric perspective, as toric contact forms only exist on closed 3-manifolds which are diffeomorphic to 3-spheres, $S^2 \times S^1$, or lens spaces. It is also more geometrically intuitive to compute the knot filtration from the open book.
\end{remark}

Since $H_1(S^3,\Z)=0$, we also need a detailed understanding of the ECH index of arbitrary generator sets because all orbit sets are homologous, as opposed to the case in \cite{preech} where the ordinary homology helped us organize the admissible Reeb currents.   Also, establishing formulae for the sundry Conley-Zehnder sums of Reeb orbit fibers which project to orbifold points is a bit intense.   As a result, our derivation of a formula for the ECH index and establishing its bijective correspondence to the nonnegative even integers are nontrivial combinatorial endeavors.  They are necessary to establish the behavior of the differential and our computation of knot filtered embedded contact homology. In particular, in order to compute the knot filtration one must know the ECH index and action of every homologically essential generator in addition to its knot filtration value.

We now review the outline of our arguments, indicating where they appear in the paper.  (The interested reader may wish to additionally consult the more detailed summary of the chain complexes and the statements of our index theorems in \S \ref{ss:2}.)

\subsubsection*{Reeb dynamics}
In \S \ref{s:topology} we detail the Reeb dynamics in terms of the associated open book decomposition, Seifert fibration, and prequantization orbibundle of $S^3$ realizing the transverse right handed torus knots $T(p,q)$ as Reeb orbits. The naturally associated contact forms $\lambda_{p,q}$ are established to all be strictly contactomorphic with each other by way of Kegel-Lange \cite{kl} and Cristofaro-Gardiner -- Mazzucchelli \cite{cgm}. In particular, the Reeb vector field associated to $\lambda_{p,q}$ generates the Seifert fibration; the Reeb orbits correspond to the fibers of the prequantization orbibundle.  The pages of the open book (which supports $\lambda_{p,q}$) are used to induce a trivialization along the $T(p,q)$ binding, with respect to which we compute knot filtered embedded contact homology.  The pages also play a supporting role in finding additional surfaces and trivializations used in our computation of the ECH index.

Now specializing to the case $p=2$, we perturb the degenerate contact form $\lambda_{p,q}$, using the lift of a perfect orbifold Morse-Smale function $H_{2,q}$ on the base orbifold $\CP^1_{2,q}$ by exploiting geometric symmetries present in the $T(2,q)$ fibration of $S^3$;  cf.~Figure \ref{fig:morse23}.  This is in the spirit of \cite{leo, jo2, preech}, and we define
\[
\lambda_{2,q,\varepsilon}:=  (1+\varepsilon \frak{p}^*H_{2,q})\lambda_{2,q}.
\]
Up to large action $L$, the only Reeb orbits of perturbed Reeb vector field $R_{2,q,\varepsilon(L)}$ are the fiber iterates of:

\begin{itemize}
\itemsep-.25em
\item The binding ${b}$, an embedded elliptic orbit, which is a regular $T(2,q)$ knotted fiber orbit that projects to the nonsingular maximum of $H_{2,q}$;
\item The embedded negative hyperbolic orbit $h$, a singular fiber of the Seifert fibration, which projects to the singular index 1 critical orbifold point of $H_{2,q}$ with isotropy $\Z/2$;
\item The embedded elliptic orbit $e$, a singular fiber of the Seifert fibration, which projects to the singular minimum  of $H_{2,q}$ with isotropy $\Z/q$.  
 \end{itemize}

In particular, the generators of $ECC_*^{L}(S^3,\lambda_{2,q,\varepsilon(L)}, J)$ are of the form $b^Bh^He^E$, where $B,E \in \Z_{\geq 0}$ and $H=0,1$. As a result, we obtain the direct system $\{ ECH^L_*(Y,\lambda_{2,q,\varepsilon(L)})\}$ such that the direct limit  is the homology of the chain complex generated by the associated admissible Reeb currents.

\subsubsection*{ECH index}

 Our computation of the ECH index, completed in \S \ref{s:generalities}-\ref{s:ECHI}, makes use of three different trivializations. Relating everything together so that we can understand the ECH index of arbitrary orbit sets takes some care. We now sketch some of what goes into this.  To understand some patterns in the generator sets, see Table \ref{table:gen2}.

In \S \ref{s:generalities}, we first assemble the necessary generalities about the ECH index scattered throughout \cite{Hindex,Hrevisit,lecture} and set up our trivializations: the constant trivialization for regular fibers, the orbibundle trivialization for all fibers, and the page trivialization induced by the open book decomposition {for the binding}.  We also establish a  change in trivialization formula {for covers of simple orbits}, Proposition \ref{prop:nonsimpleCOT}, which may be of independent interest, and is used to relate our trivializations in the setting at hand.

The prequantization orbibundle description allows us to understand the monodromy angles determining the Conley-Zehnder index and first Chern number $c_\tau$, which is carried out in \S \ref{s:components}.  The calculation of the total Conley-Zehnder indices $CZ^I_\tau$ is combinatorially involved and completed in \S \ref{s:ECHI}.  Understanding the relative intersection pairing $Q_\tau$ requires one to find suitable surfaces representing classes in $H_2(Y,\alpha,\beta)$.  The orbibundle is not well suited for this task, so we use the open book and Seifert fiber space presentations, and compute the relative intersection pairing in \S \ref{s:components}.  

\subsubsection*{Spectral invariants of ECH}

 The combinatorics detailed in \S \ref{s:ECHI} allow us to establish the relationship between the ECH index of a generator and its \textit{degree} (defined below).  This in turn governs the associated spectral invariants of embedded contact homology of interest, which we establish in \S \ref{s:spectral}.

  \begin{definition}\label{def:degreep}
Given a pair of (homologous) Reeb currents $\alpha$ and $\beta$ expressed in terms of embedded orbits realizing fibers of a prequantization orbibundle or Seifert fiber space, we define their \textit{relative degree} to be the relative algebraic multiplicity of the associated fiber sets. That is, given (homologous) Reeb currents $\alpha=b^Bh^He^E$ and $\beta=b^{B'}h^{H'}e^{E'}$, the relative degree of the pair  is:
\[
d(\alpha, \beta) =  \frac{B+\frac{1}{2}H+\frac{1}{q}E-B'-\frac{1}{2}H' -\frac{1}{q}E'}{| e| } =  2qB + qH +2E-2qB' - qH' -2E'.
\]
\end{definition}

\begin{remark}\label{def:degree}
\begin{enumerate}[{ (i)}]
\itemsep-.25em
\item We usually consider the degree of a single Reeb current, which is defined to be the relative degree of the pair when $\beta=\emptyset$; we will denote this degree by $d(b^Bh^He^E)$ or simply $d$ when the generator is unspecified or clear from context.

\item Intuitively, the degree of $b$ is $2q$ because a regular fiber bounds a page of the open book decomposition, which is a $2q$-fold cover of the base $\CP^1_{2,q}$. The $q$-fold cover $e^q$ of $e$ also bounds a page (in homology; to see $e^q$ as the boundary of a surface homologous to a page, the surface $S_e$ made up of a union of fibers must be glued to the page along $b$), so the homology intersection number of a surface with boundary $e$ with a regular fiber (i.e., the degree of $e$) must be two ($2q$ divided by $q$). Similarly, the degree of $h$ must be $q$ ($2q$ divided by 2).
\end{enumerate}
\end{remark}

\begin{remark}[Generalization to Seifert fiber spaces]
When the ECH differential does not vanish for index reasons, the degree $d$ of a pair of admissible Reeb currents $(\alpha,\beta)$ corresponds to the \textit{degree} of {the cover of the orbifold base induced by} any curves counted in $\langle\partial\alpha,\beta\rangle$ for Seifert fiber spaces of negative Euler class, similar to \cite[\S 4]{preech}. In analogy with \cite{preech, moy}, the embedded contact homology of a Seifert fiber space equipped with an $S^1$-invariant contact structure is expected to recover the exterior algebra of the orbifold Morse homology of the base.  However, depending on the choice of orbifold Morse function, there will not always be a bijective correspondence between generators at the chain level, as evidenced in the $T(2,q)$-fibration of $S^3$.  
\end{remark}

The degree allows us to compute the ECH spectrum: When the grading $k$ is sufficiently small relative to $L(\varepsilon)$, the degree of the Reeb current representing the generator of the group $ECH_{2k}^{L(\varepsilon)}(S^3,\lambda_{2,q,\varepsilon})$ is $N_k(2,q)$, which allows us to establish that
 \[
c_k(S^3,\lambda_{2,q}) = N_k({1/2}, {1/q}).
\]

The knot filtered ECH of $(S^3,\xi_{std})$ with respect to the standard transverse right handed $T(2,q)$ knot with rotation number $2q + \delta$, where $\delta$ is either 0 or a sufficiently small positive irrational number, is governed by the degree as well.  In \S\ref{ss:kECH}, we show that for any Reeb current $\alpha$ not including the standard right handed transverse $T(p,q)$ torus knot  ${b}$,
\[
\mathcal{F}_{{b}}({b}^B\alpha) =  B \rot(b) + \ell(\alpha,{b}) = d({b}^B \alpha) + B\delta_{L(\varepsilon)}.
\]
Theorem \ref{thm:kech-intro} then follows from the description of the filtered chain complex in \S \ref{s:ECHI} via successive approximations and the Morse-Bott direct limit arguments using the sequence of contact forms $\{ \lambda_{2,p,\ve} \}$, which is a knot admissible family by the computations preceding and summarized in Lemma \ref{lem:orbtrivCZ2}.  Finally, we establish Theorem \ref{thm:introok} in \S \ref{s:cobordismfun}.

\begin{remark}
Since $d({b}^B \alpha) =  2q \ \A_{\lambda_{2,q}}({b}^B \alpha)$, knot filtered ECH is able to realize the relationship between action and linking in this class of examples, cf. \cite[Prop.~1.3]{bhs}.  We will elucidate this relationship further in the sequel \cite{calabi}. Because our perturbation $\lambda_{p,q,\varepsilon}$ is not toric, our work also could be used to bound the systolic interval for a larger class of perturbations of ellipsoids {by carefully controlling the estimates on action appearing in Lemma \ref{lem:efromL}(ii)}. \end{remark}

\begin{remark}
With additional development, knot filtered embedded contact homology can be used as a means to obtain new obstructions of relative symplectic cobordisms between transverse knots in contact 3-manifolds.    To establish results for strong symplectic cobordisms the results of \cite{echsft} will be beneficial.   A better understanding of how knot filtered ECH changes with respect to ``large changes" in the rotation number will be helpful.  Presently, knot filtered ECH using widely varying irrational rotation numbers has only been computed in $S^3$ and lens spaces $L(n,n-1)$ with irrational rotation numbers in \cite{HuMAC, weiler, weiler2}.
 \end{remark}

  \noindent \textbf{Acknowledgements.} 
We wish to thank Gordana Mati\'c and Jeremy Van Horn-Morris for elucidating conversations during the Braids in Low-Dimensional Topology conference at the Institute for Computational and Experimental Research in Mathematics (ICERM) in Providence, RI; ICERM programming is supported by the National Science Foundation under DMS-1929284.  We also thank Michael Hutchings, Umberto Hryniewicz, Tom Mrowka, Peter Ozsv\'ath, Abror Pirnapasov, Jake Rassmussen and Sara Venkatesh for helpful conversations. Finally, we thank the anonymous referee for their thoughtful suggestions and comments on an earlier version of this paper. Jo Nelson is partially supported by NSF grants DMS-2104411 and CAREER DMS-2142694.  During her stay at the Institute for Advanced Study, she was supported as a {Stacy and James Sarvis Founders’ Circle Member}. Morgan Weiler is partially supported by an NSF MSPRF grant DMS-2103245.

\section{From open books to orbibundles}\label{s:topology}

In this section we review how to obtain the open book decomposition of $(S^3,\xi_{std})$ along a right handed $T(p,q)$ torus knot and identify the Seifert invariants.  We then review why the $T(p,q)$ fibrations of $(S^3,\xi_{std})$ are strictly contactomorphic to the prequantization orbibundles over complex one dimensional weighted projective space $\CP_{p,q}^1$ with Euler class $-\frac{1}{pq}$.  Using the latter description, we perturb the contact form induced by the orbibundle connection 1-form using (the lift of) an appropriate Morse-Smale function on the base orbifold. We then describe the associated perturbed Reeb dynamics.

\subsection{Open books along right handed torus knots}\label{ss:OBDs}
The open book decompositions of $S^3$ along the right handed $T(p,q)$ knots are obtained as ``stabilizations" of explicit open book decompositions of $S^3$ with annular pages and Hopf link bindings.  This process can be iterated as explained in \cite[\S 9]{osbook},\cite[\S 2]{ao} to obtain the genus $\frac{(p-1)(q-1)}{2}$ open book decomposition of $(S^3,\xi_{std})$ along the-right handed $T(p,q)$ knot. Torus links correspond to rational open books of lens spaces, see \cite{cabling}. We thus only consider torus knots, so thus assume $p$ and $q$ are relatively prime.

An \emph{open book decomposition} $(B,\pi)$ of a closed oriented 3-manifold $Y$ is an oriented link $B\subset Y$, called the \emph{binding}, together with a fibering $\pi:Y\setminus B\to S^1$ such that $\pi^{-1}(\theta)$, $\theta\in S^1$, is a Seifert surface for $B$. The closures $\overline{\pi^{-1}(\theta)}$ are called \emph{pages}. The \emph{monodromy} $\phi_\pi$ of $(B,\pi)$ is the isotopy class {(relative to the boundary)} of the return map of the flow of any vector field which is positively transverse to the pages and meridional near $B$.

An open book decomposition is entirely determined by the diffeomorphism type of its pages and isotopy class of its monodromy: an \emph{abstract open book} is a pair $(\Sigma,\phi)$ where $\Sigma$ is an oriented compact surface with nonempty boundary and $\phi$ is a diffeomorphism of $\Sigma$ which is the identity near $\partial\Sigma$. An abstract open book determines an open book decomposition $(B_\phi,\pi_\phi)$ of the manifold $Y_\phi:=\Sigma\times[0,1]/\sim_\phi$, where $(z,1)\sim_\phi(\phi(z),0)$ for all $z\in\Sigma$ and $(z,t)\sim_\phi(z,t')$ for all $z\in\partial\Sigma$. The binding $B_\phi$ is $\partial\Sigma\times[0,1]/\sim_\phi$ and the projection map $\pi_\phi$ is simply projection onto the $[0,1]$-coordinate. Abstract open books are equivalent if there is a diffeomorphism of their pages under which their monodromies are conjugate.

\begin{definition}
The \emph{stabilization} of an abstract open book $(\Sigma,\phi)$ is the abstract open book whose page $\Sigma'$ is obtained from $\Sigma$ by attaching a 1-handle and whose monodromy $\phi'$ is the composition $\phi\circ \tau_c$, where $c$ is a closed curve in $\Sigma'$ intersecting the co-core of the new 1-handle exactly once and $\tau_c$ is a Dehn twist along $c$; if $\tau_c$ is a right-handed Dehn twist then we say $(\Sigma',\phi')$ is a \emph{positive stabilization} of $(\Sigma,\phi)$ and if $\tau_c$ is a left-handed Dehn twist we say it is a \emph{negative stabilization}. The underlying 3-manifolds determined by $(\Sigma,\phi)$ and $(\Sigma',\phi')$ are diffeomorphic, no matter the choice of the curve $c$.
\end{definition}

One can view stabilizations (resp.~destablization) as plumbing (resp.~deplumbing) Hopf bands.  Since plumbing a Hopf band at the level of 3-manifolds is equal to taking a connected sum with $S^3$, by definition we do not change the topology of the underlying 3-manifold.   As detailed in \cite[\S 9]{osbook}, one can plumb two positive Hopf links to get the right-handed trefoil $T(2,3)$.  The resulting monodromy will be the product of two right-handed Dehn twists.  Iterating this plumbing operation allows one to express the monodromy of a right handed $T(2,q)$ torus knot as a product of $(q-1)$ right-handed Dehn twists.  

By attaching additional positive Hopf bands, we can construct the fibered surface of a right handed $(p,q)$-torus knot for arbitrary $p$ and $q$.  By \cite[Thm.~1, Fig.~4-5]{ao,aoe}, the monodromy of a right-handed $T(p,q)$ torus knot is a product of $(p-1)(q-1)$ nonseparating positive Dehn twists.   We denote the corresponding abstract open book by $(T(p,q),\pi)$.  The page of this open book is a surface of genus $\frac{(p-1)(q-1)}{2}$ and the monodromy is $pq$-periodic.

An open book decomposition of a 3-manifold $Y$ and a cooriented contact structure $\xi$ on $Y$ are called \emph{compatible} if $\xi$ can be represented by a contact form $\lambda$ such that the binding is a transverse link, $d\lambda$ is a volume form on every page, and the orientation of the transverse binding induced by $\lambda$ agrees with the boundary orientation of the pages.  We will call a contact form $\lambda$ \emph{adapted} to an open book if the above conditions hold.  That every open book decomposition of a closed and oriented 3-manifold admits a compatible contact structure is due to Thurston-Winkelnkemper \cite{tw}.  Giroux substantially refined this result \cite{giroux}, and showed that any two contact structures compatible with a given open book decomposition are isotopic; see also \cite[\S 9, \S 11]{osbook}. See also the recent proof of Breen-Honda-Huang \cite{bhh}, which extends Giroux' result to all dimensions.

As explained in \cite[Rem.~9.2.12]{osbook}, in the case of a positive stabilization of a compatible open book on $(Y,\xi)$, the resulting open book is obtained by plumbing a positive Hopf band to a page of the original open book.  The contact structure compatible with the resulting open book is a contact connected sum $\xi \# \xi_{std}$, which is isotopic to $\xi$.  The plumbing procedure  is a special case of the Murasugi sum.  Ambient stabilization is described in terms of an ambient Murasugi sum in \cite{etnlec}.  Moreover, the contact Murasugi sum induces the connect sum of contact manifolds \cite[Prop.~2.6]{cm}. Giroux's theorem \cite{bhh, giroux} states that given a closed 3-manifold $Y$, there is a one to one correspondence between oriented contact structures on $Y$ up to isotopy and open book decompositions of $Y$ up to stabilization.

We have the following relationship between abstract open books with periodic monodromy and Seifert fibrations.  Explicit contact forms $\lambda_{p,q}$ and $\lambda_{p,q,\varepsilon}$ adapted to the open book $(T(p,q),\pi)$ are described later.

\begin{theorem}[{\cite[Thm.~4.1]{CH}}]\label{thm:CH} Suppose $(\Sigma,\phi)$ is an abstract open book with periodic monodromy $\phi$. Let $c_i$ be the fractional Dehn twist coefficient of the $i^\text{th}$ boundary component and assume all $c_i>0$. Then there is a Seifert fibration on $Y_\phi$ and a contact form $\lambda_\phi$ on $Y_\phi$ adapted to the open book decomposition $(B_\phi,\pi_\phi)$ whose Reeb vector field generates the Seifert fibration.
\end{theorem}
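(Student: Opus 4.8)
The plan is to extract a Seifert fibration from a finite-order representative of the periodic monodromy, present $Y_\phi$ as a prequantization-type circle orbibundle over a closed $2$-orbifold, transport the associated connection contact form (whose Reeb field is the generator of the $S^1$-action), and finally verify adaptedness to $(B_\phi,\pi_\phi)$, where the hypothesis $c_i>0$ enters decisively.

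First I would replace $\phi$ by a genuinely finite-order diffeomorphism $\psi$ with $\psi^n=\id$ for some $n$; near the $i^{\text{th}}$ boundary component $\psi$ acts as a rotation through $2\pi c_i$, while the open-book representative $\phi$ is the identity there, so $\phi$ and $\psi$ differ near $\partial\Sigma$ by a partial boundary twist recorded precisely by the fractional Dehn twist coefficients $c_i$. On the mapping torus the suspension flow generated by $\partial_t$ has all orbits periodic of period dividing $n$, so it defines a locally free $S^1$-action and hence a Seifert fibration with projection $\mathfrak{p}\colon Y_\phi\to\mathcal{O}$ onto a closed $2$-orbifold $\mathcal{O}$ obtained by capping off $\Sigma/\langle\psi\rangle$. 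The interior exceptional fibers sit over the cone points coming from the nontrivial $\langle\psi\rangle$-stabilizers, and the binding components appear as the exceptional fibers over the cone points that cap off the boundary circles, with Seifert invariants read off from the $c_i$.

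Next I would build the contact form as a connection $1$-form for this $S^1$-action, in the spirit of the prequantization orbibundle construction used later over $\CP^1_{p,q}$. Equipping $\mathcal{O}$ with an orbifold area form $\omega$, I choose a connection $1$-form $\lambda_\phi$ normalized so that $\lambda_\phi(R)=1$ on the generating field $R$ and with curvature $d\lambda_\phi=\mathfrak{p}^*\omega$. Since $R$ is vertical, $d\lambda_\phi(R,\cdot)=\mathfrak{p}^*\omega(R,\cdot)=0$, so $R$ is exactly the Reeb field of $\lambda_\phi$ and generates the Seifert fibration by construction; in a local trivialization $\lambda_\phi\wedge d\lambda_\phi=d\theta\wedge\mathfrak{p}^*\omega$, a positive volume form precisely when $\omega$ is a positive orbifold area form. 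The role of $c_i>0$ here is to force the Euler number of the fibration to have the (negative) sign for which a globally defined such $\lambda_\phi$ with positive $\omega$ exists.

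Finally I would verify adaptedness. The binding $B_\phi$ is a union of exceptional fibers, hence tangent to $R$ and therefore positively transverse to $\xi=\ker\lambda_\phi$, and $c_i>0$ is exactly what forces the induced transverse orientation of the binding to match the boundary orientation of the pages. Each page maps to $\mathcal{O}$ by the orientation-preserving $n$-fold branched covering $\Sigma\to\Sigma/\langle\psi\rangle$, so $d\lambda_\phi=\mathfrak{p}^*\omega$ restricts to a positive area form on the page, the orbifold charts absorbing the apparent vanishing at interior cone points. The main obstacle is the local model at the binding: away from $\partial\Sigma$ this is the standard suspension-of-a-finite-order-map plus prequantization package, but at the binding $\phi$ is the identity while $\psi$ rotates, so one must reconcile the page framing with the Seifert framing while simultaneously keeping $d\lambda_\phi$ a positive area form up to the binding, the binding positively transverse with the correct orientation, and $R$ tangent to the fibration there. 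This reconciliation is precisely where $c_i>0$ is used and is the crux of the argument.
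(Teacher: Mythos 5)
You should first note a structural point: the paper never proves this statement. Theorem~\ref{thm:CH} is quoted from Colin--Honda \cite{CH} (Thm.~4.1) and used as a black box, with the surrounding machinery (existence and uniqueness of $S^1$-invariant transverse contact structures) attributed to Lisca--Mati\'c \cite{lm} and the orbifold Boothby--Wang picture to Kegel--Lange \cite{kl}. So there is no internal proof to compare against; your proposal has to be judged against the cited source's argument, and in fact your outline reconstructs essentially that strategy: replace $\phi$ by a genuine finite-order representative $\psi$, take the suspension $S^1$-action on the mapping torus to obtain the Seifert fibration over the capped-off quotient orbifold, build $\lambda_\phi$ as an $S^1$-invariant connection $1$-form whose curvature is the pullback of an orbifold area form (this is exactly the converse Boothby--Wang construction of \cite{kl}, and it requires negative Euler number --- consistent with the paper's own example, where $c=1/pq$ and $e(Y)=-1/pq$ by Proposition~\ref{prop:SeifertInvts}, one has $e(Y_\phi)=-\sum_i c_i$, so $c_i>0$ is precisely what makes the construction possible), and finally verify adaptedness.

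As a proof, however, your write-up has a genuine gap, and you flag it yourself: everything at the binding is deferred. Extending the $S^1$-action over the solid tori glued in along $\partial\Sigma\times[0,1]/\!\sim_\phi$, verifying that the connection form remains contact there, that $d\lambda_\phi$ restricts positively to the pages all the way up to the binding, and that the binding is positively transverse to $\xi$ with the orientation induced as boundary of the pages --- these are exactly the steps where $c_i>0$ does its work (the suspension orbits trace curves of slope $c_i$ on the boundary tori, so positivity is what makes the Seifert fibers positively transverse to the pages near the binding and keeps the orbit slope away from the meridian, so the fibration extends with the binding as a fiber), and declaring this reconciliation to be ``the crux'' is not the same as carrying it out; Colin--Honda do it with an explicit local model of the contact form near the binding. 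Separately, your assertion that the binding components appear as \emph{exceptional} fibers over cone points is false in general: in the very setting of this paper, the $T(p,q)$ binding is a \emph{regular} fiber of the Seifert fibration (Remark~\ref{cor:CHdetails}(iii)). Whether a binding component is exceptional or regular is governed by the $c_i$, so the Seifert-invariant bookkeeping in your second paragraph needs to be corrected accordingly.
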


{The \emph{fractional Dehn twist coefficient} measures the difference between a representative of the monodromy $\phi$ and its Nielsen-Thurston representative (which is not necessarily the identity along the boundary). See \cite[\S1.1]{CH} for a definition.} In particular, when all the $c_i$ are positive, then $\xi$ is an $S^1$-invariant contact structure which is transverse to the $S^1$-fibers of the Seifert fibration.  Moreover, by \cite[Lem.~4.3]{CH}, we have that for any Seifert fibered space $Y$ with a fixed fibering, any two $S^1$-invariant transverse contact structures are isotopic.  These results go through \cite{lm}.

\begin{remark}\label{cor:CHdetails} In the setting of Theorem \ref{thm:CH}, for the open book $(T(p,q),\pi)$, we have:
\begin{enumerate}[ (i)]
\itemsep-.25em
\item The (right handed) $pq$-periodic representative $\psi$ of the monodromy $\phi$  is the return map of the Reeb vector field $R_\phi$ of $\lambda_\phi$.
\item $R_\phi$ generates the $S^1$-actions on $S^3$ with fundamental domain given by an orbifold 2-sphere with two exceptional points, one with isotropy group $\Z/p\Z$ and the other with isotropy group $\Z/q\Z$.
\item The binding  $T(p,q)$ is a regular fiber of the Seifert fibration.
\end{enumerate}
With respect to a preferred trivialization induced by the page, the inverse of the fractional Dehn twist coefficient realizes the rotation number of the binding Reeb orbit, see \S \ref{ss:pagetau}. 
\end{remark}

To understand the associated Seifert invariants, it is helpful to recall how the open book decomposition $(T(p,q),\pi)$ can be written in coordinates.

\begin{remark}\label{rmrk:Milnor}
 We can realize the right handed torus knot $T(p,q)$ as
\[
T(p,q)=\left\{(z_1,z_2)\in S^3 \subset \C^2 \ |\ z_1^p+z_2^q=0\right\},
\]
and the projection map $\pi$ is the Milnor fibration\footnote{D. Dreibelbis created
visualizations of Milnor's fibration theorem for torus knots and links at: \\ \url{https://www.unf.edu/~ddreibel/research/milnor/milnor.html} \\ \indent \  \ H. Blanchette created an interactive model for the trefoil at: \\  \url{http://people.reed.edu/~ormsbyk/projectproject/posts/milnor-fibrations.html} \\
\indent \  \ B. Baker created an animation for $T(2,3)$: \\ \url{https://sketchesoftopology.wordpress.com/2012/08/24/bowman/}
}
\[
\pi:S^3\setminus T(p,q) \to S^1, \ (z_1,z_2)\mapsto\frac{z_1^p+z_2^q}{|z_1^p+z_2^q|}.
\]

There is a well-known $S^1$ action on $S^3$ inducing the associated Seifert fibration,  
which is the flow of the Reeb vector field discussed in \S \ref{ss:morse}. The $S^1$-action
\[
e^{{2\pi}it}\cdot(z_1,z_2)=\left(e^{{2\pi}pit}z_1,e^{{2\pi}qit}z_2\right)
\]
is positively transverse to the pages of $(T(p,q),\pi)$.

\end{remark}

\subsection{Seifert fiber spaces and $S^1$-invariant contact structures}\label{ss:Seifert}

We now provide some background on Seifert fiber spaces and $S^1$-invariant contact structures, primarily to fix our notational conventions, which agree with \cite{lm}.

\begin{definition}\cite[\S 14]{lrbook}\label{def:seifert} A \emph{Seifert fiber space} is a 3-manifold $Y$, which can be decomposed into a union of disjoint circles, the \emph{fibers}, such that each circle has a neighborhood which is fiber-preserving homeomorphic to an \emph{$(\mu,\nu)$-fibered solid torus} $T_{\mu,\nu}$ where $\gcd(\mu,\nu) =1$ and $\mu \geq 1$:\[
\mathbb{D}^2_z\times[0,2\pi]/(z,0)\sim\left(e^{2\pi i \nu/\mu}z,2\pi\right).
 \]
In other words, a $(\mu,\nu)$-fibered solid torus  $T_{\mu,\nu}$ is the mapping torus of a $\frac{\nu}{\mu}$ rotation.  The fiber  $\{0\}\times S^1$ is a core circle of the solid $(\mu,\nu)$-torus, while the other fibers represent $\mu$-times the core circle.\footnote{J. Bettencourt provides an approximate means of visualizing a fibration at: \\  \url{http://www.jessebett.com/TorusKnotFibration/index.html}}  Collapsing each of the fibers of the solid $(\mu,\nu)$-torus defines a quotient map to a disk $D^2$.  If $\nu=0$, then $T_{\mu,\nu}$ is an $S^1$-bundle over $\mathbb{D}^2$, but if $\nu \neq 0$ then $T_{\mu,\nu}$ is an $S^1$-bundle only over $\mathbb{D}^2 \setminus \{ 0 \}$ and the core circle is called an \emph{exceptional fiber}.

There is a homeomorphism from $T_{\mu, \nu+\mu}$ to $T_{\mu, \nu}$ that takes fibers to fibers, so we can always pick $\nu$ such that $0\leq \nu < \mu$.  If $\nu \neq 0$, define the \emph{unnormalized Seifert invariant} $(a,b)$ by
\[
\begin{array}{lcl}
a &=&\mu \\
b \nu &\equiv& 1 \mod \mu. \\
\end{array}
\]
(It is possible and sometimes of interest to normalize the Seifert invariant so that $0 \leq b < a$, but this is not strictly necessary.) We say that the core circle of $T_{\mu,\nu}$ is an exceptional fiber of type $(a,b)$.  The quotient space of $Y$ determined by collapsing each fiber to a point is a 2-orbifold $\cO$, and the quotient map $\fp: Y \to \Sigma$ is called a \emph{Seifert fibering} of $Y$ over $\Sigma$.
\end{definition}

\begin{remark}
Not all Seifert fiber spaces are orientable, nor is the base of an orientable Seifert fiber space necessarily an orientable orbifold. However, in this paper both the base and total space are orientable.
\end{remark}

\begin{definition}\label{def:seifert}
Let $\fp: Y \to \cO$ be an oriented three dimensional Seifert fibration with oriented base  of genus $g$ and normalized Seifert invariants $Y(g; b; (a_1,b_1),...,(a_r,b_r))$, in the sense of \cite{orlik, lrbook} and in agreement with \cite{lm}. The \emph{Euler class} of $Y$ is defined by the rational number $e(Y):= -b - \sum_{i=1}^r \frac{b_i}{a_i}.$  (One obtains the same number when using unnormalized Seifert invariants.)
\end{definition}

\begin{remark}
Since $\alpha$ and $\beta$ are reserved for Reeb currents, we have had to make a slight abuse of notation.  Each $(a_i, b_i)$ corresponds to the Seifert invariant $(a,b)$  in Definition \ref{def:seifert}. The standalone $b$ indicates that there is a fiber of type $(1,b)$ present.  Moreover, the normalization convention dictates that $0 \leq b_i<a_i$ with $\gcd(a_i,b_i)=1$.  \end{remark}

\begin{remark}
Lisca and Mati\'c gave a complete answer to the question of which Seifert fibered 3-manifolds admit a contact structure transverse to their fibers in \cite[Thm.~1.3, Cor.~2.2, Prop.~3.1]{lm}, which moreover is shown to be universally tight.  In particular, an oriented Seifert 3-manifold carries a positive, $S^1$-invariant transverse contact structure if and only if $e(Y) < 0$.  We use a negative Euler class, as in \cite{lm, lrbook}, to agree with Giroux's conventions for classifying tight contact structures transversal to the fibration of a circle bundle over a surface in terms of the Euler class of the fibration in \cite{giroux1}.
\end{remark}

 The classical Seifert fibering of $S^3$ along the torus knot $T(p,q)$, cf. \cite{ch-sfs, moser}, may be described as follows. It provides a partition of $S^3$ into orbits over the orbifold 2-sphere with the $z_1$ and $z_2$-axes given by singular fibers and each principal fiber given by a torus knot of type $T(p,q)$.  
  \begin{proposition}\label{prop:SeifertInvts}
The $S^1$-action $e^{2\pi it}\cdot(z_1,z_2)=\left(e^{2\pi pit}z_1,e^{2\pi qit}z_2\right),$
generates the Seifert fibration of $S^3$ given by ${Y\left(0; -1; (p, p-m), (q, n)\right),}$ 
where $m, n \in \Z$ such that $qm-pn=1$, and has Euler class ${e(Y)= -\frac{1}{pq}.}$
\end{proposition}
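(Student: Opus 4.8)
The plan is to read the Seifert data directly off the $S^1$-action and then feed it into the Euler class formula of Definition~\ref{def:seifert}. First I would analyze the isotropy of $e^{2\pi it}\cdot(z_1,z_2)=(e^{2\pi pit}z_1,e^{2\pi qit}z_2)$. Since $\gcd(p,q)=1$, a point with both coordinates nonzero has trivial stabilizer, so its orbit is a regular fiber realizing a $T(p,q)$ knot; the only orbits with nontrivial stabilizer are the circle $\{z_2=0\}$ along the $z_1$-axis, with stabilizer $\Z/p$, and the circle $\{z_1=0\}$ along the $z_2$-axis, with stabilizer $\Z/q$. This shows the base orbifold is $S^2$ with exactly two cone points, of orders $p$ and $q$, so that $g=0$ and $r=2$; it then remains to compute the two exceptional Seifert invariants $(a_i,b_i)$ and the background integer $b$.

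Next I would compute the local fibered solid torus model $T_{\mu_i,\nu_i}$ near each exceptional fiber by identifying the first return map of the flow to a meridional transverse disk, in the sense of Definition~\ref{def:seifert}. Taking the $z_1$-circle, its primitive period is $1/p$, and flowing for time $1/p$ fixes the core while rotating the transverse coordinate $z_2$ by angle $2\pi q/p$; hence $\mu_1=p$ and $\nu_1\equiv \pm q \pmod p$, the sign being dictated by the chosen orientation of the transverse disk. Symmetrically, the $z_2$-circle gives $\mu_2=q$ and $\nu_2\equiv \pm p \pmod q$. Converting to Seifert invariants via $a_i=\mu_i$ and $b_i\nu_i\equiv 1 \pmod{\mu_i}$, and solving the resulting congruences using the B\'ezout relation $qm-pn=1$ (so that $qm\equiv 1 \pmod p$ and $pn\equiv -1 \pmod q$), I would obtain $(a_1,b_1)=(p,p-m)$ and $(a_2,b_2)=(q,n)$ after normalizing $0\le b_i<a_i$.

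To pin down the remaining integer $b$, I would use the genus-one Heegaard splitting $S^3=V_1\cup_T V_2$, where $V_1=\{|z_2|\le 1/\sqrt2\}$ and $V_2=\{|z_1|\le 1/\sqrt2\}$ are the $S^1$-invariant solid torus neighborhoods of the two exceptional fibers, glued along $T=\{|z_1|=|z_2|=1/\sqrt2\}$. The background integer is precisely the framing correction needed for the section over the twice-punctured base to assemble the two fibered solid tori into a manifold with $H_1=0$; requiring the total space to be $S^3$ forces $b=-1$. Finally, substituting into $e(Y)=-b-\sum_i b_i/a_i$ from Definition~\ref{def:seifert} and simplifying with $qm-pn=1$ produces the stated value $e(Y)=-\tfrac{1}{pq}$.

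The hard part will not be any single computation but rather keeping all the orientation and normalization conventions mutually consistent: the direction of the $S^1$-flow, the orientation chosen on each transverse disk (which fixes the sign of $\nu_i$), the passage from $\nu_i$ to $b_i$, the sign of the background integer, and the overall sign in the Euler class formula. These choices are coupled, and it is exactly their coordination---made so as to agree with the negative-Euler-class conventions of \cite{lm,lrbook} recorded after Definition~\ref{def:seifert}---that yields the claimed signs in $(p,p-m)$, $(q,n)$, and $e(Y)=-\tfrac{1}{pq}$ rather than their orientation-reversed counterparts. I would therefore fix orientations at the outset (orienting $S^3$, the fibers by the flow, and the base compatibly) and carry them through every step, treating the sign bookkeeping as the genuine content of the proposition.
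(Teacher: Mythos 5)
You should know first that the paper itself contains no proof of Proposition~\ref{prop:SeifertInvts}: it is invoked as a classical fact with pointers to \cite{moser} and \cite{ch-sfs}, so the only question is whether your argument is correct. Its skeleton (isotropy count, local return maps, conversion to Seifert invariants, pinning the background integer via $H_1(S^3)=0$, then the Euler class formula) is the standard route, but you defer exactly the issue you yourself identify as the whole content: the signs. Leaving $\nu_1\equiv\pm q\pmod p$ and $\nu_2\equiv\pm p\pmod q$ unresolved is not a harmless bookkeeping debt, because the two resolutions give different answers: the choice $(+,+)$ feeds through $b_i\nu_i\equiv1\pmod{a_i}$ to the data $(p,m),(q,q-n)$, the choice $(-,-)$ to $(p,p-m),(q,n)$, and \emph{both} data sets, taken with $b=-1$, pass your ``the total space must be $S^3$'' test, since $|{-pq}+mq+(q-n)p|=|qm-pn|=1$ and $|{-pq}+(p-m)q+np|=|qm-pn|=1$. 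Indeed they are the Seifert invariants of the fibration and of its orientation-reverse. So requiring $H_1=0$ fixes $b=-1$ but is structurally incapable of selecting the sign, and your plan has no remaining mechanism that could do so; only an honest orientation computation on the transverse disk can.

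Worse, the conclusion you promise is arithmetically unreachable, and executing your own final step would have revealed it. With the data you claim to derive, namely $b=-1$, $(a_1,b_1)=(p,p-m)$, $(a_2,b_2)=(q,n)$, and $qm-pn=1$, the paper's formula gives
\[
e(Y)=-b-\frac{b_1}{a_1}-\frac{b_2}{a_2}=1-\frac{p-m}{p}-\frac{n}{q}=\frac{m}{p}-\frac{n}{q}=\frac{qm-pn}{pq}=+\frac{1}{pq},
\]
not $-\frac{1}{pq}$. That is, the stated invariants, the stated B\'ezout relation, and the stated Euler class are mutually inconsistent with Definition~\ref{def:seifert}, so no amount of orientation care can produce all three simultaneously; your assertion that they all drop out of one computation is precisely the step that fails. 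The trustworthy half is the Euler class: $e(Y)<0$ is forced by Lisca--Mati\'c \cite{lm}, since $\ker\lambda_{p,q}$ is an $S^1$-invariant contact structure transverse to the fibers, and $|e(Y)|=1/pq$ follows from $|H_1(S^3)|=pq\,|e(Y)|$. Consistency then forces the B\'ezout relation to read $pn-qm=1$ for the data $(p,p-m),(q,n)$ (equivalently, one keeps $qm-pn=1$ and changes the data to $(p,m),(q,q-n)$). A correct execution of your plan would therefore end by flagging this sign slip in the statement rather than claiming to confirm the statement as written.
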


\subsection{Prequantization orbibundles}\label{ss:orbifolds}

We quickly define the notion of an orbifold and collect some calculations for weighted complex one dimensional projective space that will be used in our computation of the ECH index.  Further details can be found in \cite[\S 4]{bgbook}, which also describes complex and K\"ahler structures on orbifolds.

\begin{definition}
Let $O$ be a paracompact Hausdorff space.  An \emph{orbifold chart} or \emph{local uniformizing system} on $O$ is a triple $(\tilde{U}, \Gamma, \varphi)$ where 
\begin{itemize}
\itemsep-.35em
\item $\tilde{U}$ is a connected open subset of $\R^n$ containing the origin, 
\item $\Gamma$ is a finite group acting effectively on $\tilde{U}$,
\item $\varphi: \tilde{U} \to U$ is a continuous map onto an open set $U \subset O$ such that $\varphi \circ \zeta = \varphi$ for all $\zeta \in \Gamma$ and the induced natural map of $\tilde{U}/\Gamma$ onto $U$ is a homeomorphism.
\end{itemize}
An \emph{orbifold atlas} on $O$ is a family $\mathcal{U} = \{\tilde{U}_i, \Gamma_i, \varphi_i \}$ of orbifold charts subject to a compatibility condition on overlapping charts, cf. \cite[Def.~4.1.1]{bgbook}.   A \emph{smooth orbifold} is a paracompact Hausdorff space $O$ equipped with an equivalence class of orbifold atlases, which we denote by $\cO = (O,\mathcal{U})$. 

  \end{definition}

\begin{example}\label{symporb}
Let $\CP^1_{p,q}$ be the weighted complex one dimensional projective space defined by the quotient of the unit sphere $S^3 \subset C^2$ by the almost free action of $S^1 \subset \C$ of the form\footnote{Weighted projective space $\CP^1_{p,q}$ is actually an algebraic variety which admits two different orbifold structures; the other orbifold structure is realized by $\CP^1/ (\Z_{p} \times \Z_{q})$, cf.~\cite[\S 3.a.2]{mann}.  Typically one decorates the algebraic variety $\CP_{p,q}^1$ to denote that it is equipped with a specific orbifold structure, but we forgo this precision as we only consider one orbifold structure in this paper.}
\[
e^{{2\pi}it} \cdot (z_1,z_2) = \left(e^{p{2\pi}it}z_1,e^{q{2\pi}it}z_2 \right).
\]
The following 1-form is invariant under the above $S^1$-action
\begin{equation}
\lambda_{p,q} = \frac{\frac{1}{2\pi}\lambda_0}{p|z_1|^2 + q|z_2|^2}, \mbox{ where } \lambda_0 = \frac{i}{2} \sum_{j=1}^2 \left( z_j d\bar{z}_j - \bar{z}_jd z_j \right).
\end{equation}
Thus $\omega_{p,q} := d\lambda_{p,q}$ descends to an orbifold symplectic form on $\CP^1_{p,q}$.  By \cite[Lem.~4.2]{hong-cz}, its cohomology class satisfies ${[\omega_{p,q}] = \frac{1}{pq}} \mbox{ in }H^2(\CP^1_{p,q}, \Q)\cong \Q.$ (The orbifold chart that gives rise to this desired orbifold structure on $\CP^1_{p,q}$ is spelled out in \cite[Prop.~3.1]{mann}.)
\end{example}

\begin{definition}\label{def:orbX}
Following Boyer-Galicki  \cite[\S 4.3-4.4]{bgbook} we define the \emph{orbifold Euler characteristic} as
\[
\chi^{orb}(\cO) = \sum_S (-1)^{\dim(S)}\frac{\chi(S)}{|\Gamma(S)|}.
\]
where the sum is taken over all strata $S$ of the stratification of $\Sigma,$ $\chi(S)$ is the ordinary Euler characteristic, and $\Gamma(S)$ is the isotropy.
\end{definition}

\begin{example}\label{ex:orbX}
Let $(\Sigma; z_1,...,z_k)$ be a Riemann surface of genus $g$ with $k$ marked points.  We can give $(\Sigma; z_1,...,z_k)$ the structure of an orbifold by defining local uniformizing systems $(\tilde{U}_j, \Gamma_{a_j}, \varphi_j)$ centered at the point $z_j$, where $\Gamma_{a_j}$ is the cyclic group of order $a_j$ and $\varphi_j:\tilde{U}_j \to U_j = \tilde{U}_j/\Gamma_{a_j}$ is the branched covering map $\varphi_j(z)=z^{a_j}$. The orbifold Chern number\footnote{The rational orbifold first Chern class can be computed by way of the first rational Chern class of the canonical divisor.} agrees with the orbifold Euler characteristic:
\[
c_1^{orb}(\Sigma; z_1,...,z_k) = \chi^{orb}(\Sigma; z_1,...,z_k) = 2- 2g -k + \sum_{j=1}^k \frac{1}{a_j}
\]
It follows that for weighted projective space, 
\begin{equation}
{c_1^{orb}(\CP_{p,q}^1)=\frac{p+q}{pq}.}
\end{equation}

\end{example}

A \emph{principal $S^1$-orbibundle} or \emph{prequantization orbibundle} whose total space $Y$ is a manifold is the same as an almost free $S^1$-action on $Y$.  In this setting, the contact form is induced by the \emph{connection 1-form} $A$, which satisfies $iA(R) =1$ and $d(iA)(R, \cdot)=0$, where $R$ is the derivative of the $S^1$-action.  The \emph{curvature form} is the \emph{$S^1$-basic} 2-form $dA=i\mathfrak{p}^*\omega$, where $S^1$-basic means that the 2-form is $S^1$-invariant and vanishes in the the $S^1$-direction of $R$.  The basic cohomology class of $\omega$ can be canonically identified with an element in $H^2_{\mbox{\tiny \em orb}}(\cO;\R) $ via the equivariant de Rham theorem, cf. \cite[\S 3.2]{kl}.  The form $\omega$ being \emph{symplectic} means that it is a closed basic 2-form on $\cO$ satisfying $\omega^n \neq 0$.

We now review some results of Kegel and Lange, in particular their complete classification of closed Besse contact 3-manifolds up to strict contactomorphism in \cite{kl}.  In a strict contactomorphism, the Reeb dynamics are related by rescaling, which allows us to use either the prequantization orbibundle or open book description to compute various components of the ECH index and knot filtration.  

By \cite[Thm 1.4]{kl} we have that a Seifert fibration of a closed orientable 3-manifold $Y$ over an oriented orbifold $\cO$ can be realized by a Reeb flow if and only if the Euler class\footnote{Note that our Euler class is referred to as the real Euler class by Kegel-Lange.  They also use the opposite sign convention from us.}, as defined in Definition \ref{def:seifert}, of the fibration is nontrivial.  Combining \cite[Thm.~1.4]{kl} with \cite{cgm}, gives the following complete classification of Besse\footnote{A contact manifold $(Y, \lambda)$ is said to be \emph{Besse} whenever all its Reeb orbits are periodic, possibly with different periods. } contact 3-manifolds up to strict contactomorphism.   

\begin{theorem}{\em \cite[Cor.~1.6]{kl}}
The classification of closed Besse contact 3-manifolds up to strict contactomorphism coincides with the classification of Seifert fibrations $Y \to \cO$ of orientable, closed 3-manifolds with nonvanishing Euler class.
\end{theorem}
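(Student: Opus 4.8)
The plan is to exhibit a bijection between closed Besse contact 3-manifolds up to strict contactomorphism and Seifert fibrations of orientable closed 3-manifolds with nonvanishing Euler class, and then to verify that this bijection is compatible with the two equivalence relations. The forward map sends a Besse contact manifold to the Seifert fibration generated by its Reeb flow; the backward map realizes a Seifert fibration with nonvanishing Euler class by a Reeb flow, using the already-cited \cite[Thm.~1.4]{kl}. The content of the corollary is that these maps are mutually inverse and descend to the stated equivalence relations, the nontrivial input being the strict-contactomorphism rigidity supplied by \cite{cgm}.

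First I would treat the forward direction. Let $(Y,\lambda)$ be a closed Besse contact 3-manifold, so every Reeb orbit is periodic. By the structure theory of periodic flows (Wadsley's theorem), the unparametrized orbits of the Reeb flow coincide with those of a locally free $S^1$-action on the compact manifold $Y$, whose infinitesimal generator $V$ satisfies $R=gV$ for a positive function $g$; compactness forces the action to be almost free. Collapsing the fibers yields a Seifert fibration $\fp: Y \to \cO$ over a closed oriented 2-orbifold. To see that the Euler class is nonvanishing, note that $d\lambda$ satisfies $\iota_V d\lambda=(1/g)\iota_R d\lambda=0$ and is closed, hence is $S^1$-basic and descends to an orbifold symplectic form $\omega$ on $\cO$ with $\int_\cO \omega > 0$. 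Under the identification of the basic cohomology class of $\omega$ with a class in $H^2_{\mathrm{orb}}(\cO;\Q)\cong\Q$, the Euler class of $\fp$ from Definition \ref{def:seifert} corresponds to $-[\omega]$, which is nonzero since the total area is strictly positive; equivalently one invokes the Lisca--Mati\'c criterion \cite{lm} that a transverse $S^1$-invariant contact structure forces $e(Y)<0$.

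For the backward direction I would apply \cite[Thm.~1.4]{kl} directly: any Seifert fibration of a closed orientable 3-manifold over an oriented orbifold with nonvanishing Euler class is realized by some Reeb flow. That Reeb flow is automatically Besse, since its orbits are precisely the closed $S^1$-fibers. This shows the backward map is well-defined, that its image consists of Besse contact manifolds, and that the associated Seifert fibration recovered by the forward construction is the one we started with, so the two maps are mutually inverse at the level of underlying topological data.

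The main obstacle is upgrading this correspondence of underlying data to a bijection of \emph{equivalence classes}, matching strict contactomorphism with equivalence of Seifert fibrations. One direction is formal: a strict contactomorphism $\phi$ satisfies $\phi^*\lambda_2=\lambda_1$, hence $\phi_*R_1=R_2$, so $\phi$ carries Reeb flow to Reeb flow and therefore induces a fiber-preserving diffeomorphism realizing an isomorphism of the associated Seifert fibrations. The converse -- that a fixed Seifert fibration with nonvanishing Euler class determines its Besse contact form up to strict contactomorphism -- is the crux, and is exactly where \cite{cgm} enters. Concretely, two Besse contact forms inducing the same Seifert fibration have Reeb fields that are fiberwise-positive reparametrizations of the common generator $V$, and one must rule out the apparent ambiguity coming from (i) rescaling the contact form and (ii) the choice of $S^1$-invariant contact structure within its isotopy class. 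The uniqueness of the transverse $S^1$-invariant contact structure in \cite[Lem.~4.3]{lm} handles (ii), while the rigidity result of Cristofaro-Gardiner--Mazzucchelli \cite{cgm} handles (i), producing the required strict contactomorphism and completing the bijection.
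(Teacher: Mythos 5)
This statement is not proved in the paper at all: it is quoted as \cite[Cor.~1.6]{kl}, with the surrounding text saying only that it follows by combining \cite[Thm.~1.4]{kl} (realizability of a Seifert fibration by a Reeb flow if and only if the Euler class is nonvanishing) with the rigidity results of \cite{cgm}. Your outline assembles exactly these ingredients in exactly these roles---Wadsley/Boothby--Wang for the forward map, \cite[Thm.~1.4]{kl} for the backward map, and \cite{cgm} together with \cite[Lem.~4.3]{lm} for the crux that an isomorphism of Seifert fibrations can be upgraded to a strict contactomorphism---so it is a faithful reconstruction of the proof strategy the paper attributes to its sources, not a divergent route, and I see no gap beyond the black-boxing of the same cited theorems that the paper itself black-boxes.
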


The Besse condition gives rise to the following orbifold Boothby Wang result, which is explicitly stated in the language of almost K\"ahler orbifolds and Sasakian geometry as \cite[Theorems 4.3.15, 6.3.8, 7.1.3, 7.1.6]{bgbook}.   However, it is more amenable to use the characterization   \cite[Theorems 1.1, 1.2]{kl} of Besse contact manifolds in terms of principal $S^1$-orbibundles over integral symplectic orbifolds satisfying a certain cohomological condition.  (In the below statement, we have restricted to dimension 3, renormalized the bundle so that the common period of the Reeb orbits is 1 instead of $2\pi$, and negated their Euler class convention. )

\begin{theorem}{\em \cite[Theorem 1.1]{kl} }
Let $(Y^{3},\lambda)$ be a Besse contact manifold.  Then after rescaling by a suitable constant, the Reeb flow of $\lambda$ has common period {$1$} and $\lambda$ is given by the connection 1-form of a corresponding principal $S^1$-orbibundle $\mathfrak{p}: M \to \cO$ over a symplectic orbifold $(\cO,\omega)$, with $\omega$ given by the the curvature form associated to $\lambda$ and ${-\frac{1}{2\pi}}[\omega]$ representing the  Euler class $e \in H^2_{\mbox{\tiny \em orb}}(\cO;\R)$ of the orbibundle.\end{theorem}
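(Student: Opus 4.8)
The plan is to establish the orbifold Boothby--Wang correspondence by regarding the Reeb flow of $\lambda$ as an almost free circle action and taking its quotient. Concretely I would argue in three stages: first promote the Besse hypothesis to genuine periodicity of the Reeb flow and rescale so that the common period is $1$; then identify $Y/S^1$ with a symplectic orbifold $\cO$ and $\lambda$ with an orbifold connection $1$-form; and finally extract the Euler class from the curvature by orbifold Chern--Weil theory.

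For the first stage, note that $\iota_R\lambda = 1$ and $\iota_R d\lambda = 0$ give $\mathcal{L}_R\lambda = \iota_R d\lambda + d(\iota_R\lambda) = 0$, so the Reeb flow preserves $\lambda$ and the contact volume $\lambda \wedge d\lambda$; being volume preserving on the closed manifold $Y$, it is complete. By the Besse condition every orbit is closed, and by Wadsley's theorem a complete vector field with all orbits closed generates an $S^1$-action once its primitive period function is bounded. For a Reeb field this period function is lower semicontinuous, and the commensurability of the primitive periods of nearby orbits forces the least common period to be locally constant, hence bounded on the compact $Y$. Rescaling $\lambda$ by this common period $T$ (that is, replacing $\lambda$ by $T^{-1}\lambda$) produces a contact form whose Reeb flow is a genuine $S^1 = \R/\Z$ action of period $1$. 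This action is almost free: the stabilizer of a point lying on an orbit of primitive period $T/k$ is the cyclic group $\Z/k$. An almost free $S^1$-action on a closed oriented $3$-manifold is precisely a Seifert fibration, so the quotient $\mathfrak{p}\colon Y \to \cO := Y/S^1$ is a principal $S^1$-orbibundle over the $2$-orbifold $\cO$, the exceptional fibers being the orbits with nontrivial isotropy and matching the fibered solid tori $T_{\mu,\nu}$ of Definition \ref{def:seifert}.

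For the second stage, the rescaled $\lambda$ is $S^1$-invariant (since $\mathcal{L}_R\lambda = 0$) and satisfies $\lambda(R) = 1$; hence, writing $A = -i\lambda$ as in \S\ref{ss:orbifolds}, $A$ is an orbifold connection on $\mathfrak{p}\colon Y \to \cO$ with horizontal distribution $\xi = \ker\lambda$. Its curvature $d\lambda$ is $S^1$-basic, being invariant with $\iota_R d\lambda = 0$, so it descends to a $2$-form on $\cO$; this is the curvature form $\omega$ of \S\ref{ss:orbifolds}, normalized by $dA = i\mathfrak{p}^*\omega$. The form $\omega$ is closed (automatic on a $2$-orbifold, or by injectivity of $\mathfrak{p}^*$ on orbifold forms) and nondegenerate, because the contact condition $\lambda \wedge d\lambda \neq 0$ makes $d\lambda$ nondegenerate on $\xi$, which $\mathfrak{p}$ identifies fiberwise with $T\cO$. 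Thus $(\cO,\omega)$ is a symplectic orbifold and $\omega$ is precisely the curvature form associated to $\lambda$.

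Finally, orbifold Chern--Weil theory identifies the real Euler class of a principal $S^1$-orbibundle with $\tfrac{i}{2\pi}[dA] \in H^2_{\mbox{\tiny \em orb}}(\cO;\R)$; substituting $dA = i\mathfrak{p}^*\omega$ yields $e = -\tfrac{1}{2\pi}[\omega]$, exactly as claimed (with the paper's sign convention baked into $A = -i\lambda$). I expect the main obstacle to be the first stage: passing from the pointwise Besse condition to a uniform common period is the genuinely nontrivial input, as one must control how the primitive period jumps across exceptional orbits and confirm that the least common period is locally constant on the compact $Y$ (the Wadsley-type argument). Once periodicity is secured, verifying that $\lambda$ restricts to an honest orbifold connection in the local models $T_{\mu,\nu}$ near exceptional fibers is the only remaining care; the symplectic and Euler-class conclusions are then formal.
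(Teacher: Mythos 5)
You should first be aware that the paper does not prove this statement at all: it is quoted as a black box from Kegel--Lange \cite[Thm.~1.1]{kl}, so the only meaningful comparison is with the proof in that reference. Your three-stage architecture -- a common period via Wadsley's theorem, passage to the quotient orbifold with $\lambda$ as a connection $1$-form, and the Euler class via orbifold Chern--Weil theory -- is precisely the route of \cite{kl}, and your second and third stages are sound, including the sign bookkeeping that produces $e=-\frac{1}{2\pi}[\omega]$.

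The genuine gap is in your first stage, in two respects. First, your formulation of Wadsley's theorem is incorrect: a vector field on a closed manifold all of whose orbits are closed and whose primitive period function is bounded need \emph{not} generate an $S^1$-action. (Rescale the Hopf vector field on $S^3$ by a generic positive function: every orbit is closed with uniformly bounded period, but the flow satisfies $\varphi_T=\mathrm{id}$ for no $T>0$, so it generates no circle action.) The actual hypothesis of Wadsley's theorem is that the flow be geodesible. Second, your argument for boundedness -- that ``the commensurability of the primitive periods of nearby orbits forces the least common period to be locally constant'' -- is circular: commensurability of nearby periods is a consequence of the flow being periodic, not an a priori fact, and lower semicontinuity of the minimal period plus compactness yields only a positive minimum, never an upper bound. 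Indeed Sullivan (in dimension $5$) and Epstein--Vogt (in dimension $4$) constructed flows on closed manifolds with all orbits closed but unbounded period function, so no soft argument of this kind can succeed. The repair is exactly where the contact geometry enters, and it is one line: $\lambda$ itself witnesses the Sullivan--Wadsley geodesibility criterion, since $\lambda(R)=1>0$ and $\iota_R\,d\lambda=0$, so Wadsley's theorem applies verbatim to a Besse Reeb flow and yields $T>0$ with $\varphi_T=\mathrm{id}$; alternatively, in the three-dimensional case at hand one may instead quote Epstein's theorem, which needs no geodesibility hypothesis. With that substitution your first stage closes, and the remaining two stages go through as you wrote them.
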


The converse construction of the above theorem is not needed for the purposes of this paper, but we direct the interested reader to \cite[Theorem 1.2]{kl}.  (In the setting where the base is smooth, prequantization bundles characterize \emph{Zoll} contact manifolds, which in addition to being Besse, also satisfy the requirement that all the periodic Reeb orbits have the same minimal period.)

\subsection{Morse-Smale functions for $T(2,q)$ fibrations}\label{ss:morse}

From Example \ref{symporb}, the Reeb vector field associated to $\lambda_{p,q}$ realizing the associated Seifert fiber space of Proposition \ref{prop:SeifertInvts} as a principal $S^1$-orbibundle over $\CP^1_{p,q}$ is given by
\[
R_{p,q} := {2\pi} 
i \left(p \left (z_1 \frac{\partial}{\partial z_1} - \bar{z}_1 \frac{\partial}{\partial \bar{z}_1}\right) + q \left (z_2 \frac{\partial}{\partial z_2} - \bar{z}_2 \frac{\partial}{\partial \bar{z}_2}\right)\right).
\]

A closed orbit $\gamma$ of a prequantization orbibundle is said to be a \emph{principal orbit} if it has the longest period among all the periodic orbits, provided one exists.   The orbits of the Reeb vector field $R_{p,q}$ that project to nonsingular points of the base $\CP^1_{p,q}$ are principal with action 1.  In particular, the $T(p,q)$ binding of the associated open book will be a principal orbit.

There will be two non-principal orbits of interest, which respectively project to each of the singular points of $\CP^1_{p,q}$. We will refer to these orbits as exceptional orbits; their actions are given by $1/|\Gamma_{x}|$ where $|\Gamma_{x}|$ is the {order} of the cyclic isotropy group at $x$. (Note that a nonsingular point has $|\Gamma_{x}|=1$.) The orbit $ ( e^{2\pi i t}, 0)$
 projects to the orbifold point  whose isotropy group is $ \Z/p\Z$.  Similarly, $(0, e^{2 \pi i t} )$ projects to the orbifold point whose isotropy group is $ \Z/q\Z$.

Next, we specialize to $p=2$, and construct the Morse-Smale functions ${H}_{2,q}$ on the orbifolds $\CP^1_{2,q}$, used to perturb the contact form
\begin{equation}\label{eqn:lambdapertR}
\lambda_{2,q,\varepsilon}:=  (1+\varepsilon \frak{p}^*H_{2,q})\lambda_{2,q},
\end{equation}
cf. Figure \ref{fig:morse23}. These Morse-Smale functions admit 3 critical points and is constructed from the punctured $2q$-gon (with opposite sides identified) representation of the page, which is a surface of genus $g=(q-1)/2$, familiar from the study of mapping class groups.  We obtain the right handed $2q$-periodic element of $\op{Mod}(\Sigma_g)$ by rotating the $2q$-gon by 1 click.

\begin{figure}[h]
 \begin{center}
 \begin{overpic}[width=.75\textwidth, unit=1.75mm]{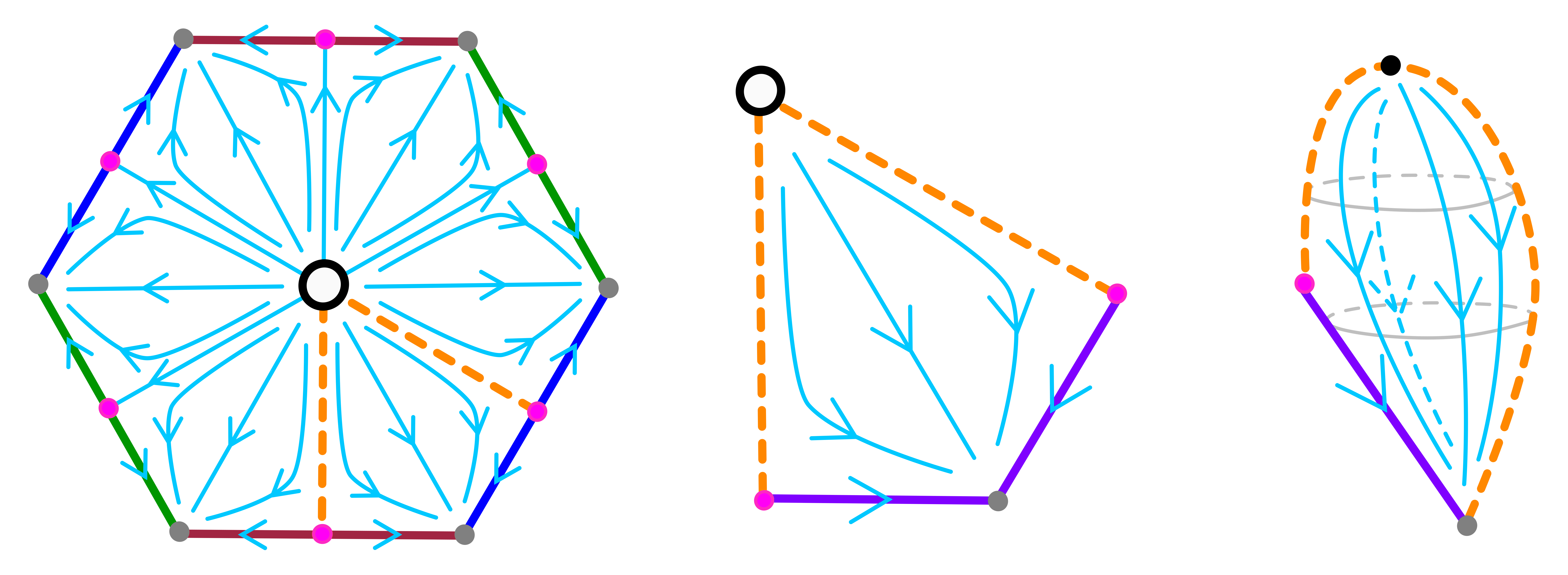}
\put(65.5,0){$e$}
\put(56,12.5){$h$}
\put(62.5,24){$b$}
\end{overpic}
\end{center}
\caption{(The construction and gradient flow of $H_{2,3}$ on $\CP^1_{2,3}$.) \\ \textbf{Left and Center:} On the left is the fundamental domain for the punctured torus, where edges are identified according to their color (with no twist).   The wedge is the fundamental domain for the action of the Reeb return map, which acts on  by a clockwise $\frac{2\pi}{6}$ rotation.   \\ \textbf{Right:} We depict $\CP^1_{2,3}$ obtained by gluing the center picture and closing up the puncture by collapsing it to a maximum (depicted by a black dot).  Gluing in the associated solid torus produces the binding  fiber $b$ of the open book. \\ \textbf{Linking of fibers:} When glued as indicated on the left, the gray dot appears $2=\ell(b,e)$ times while the pink dot appears $3=\ell(b,h)$ times.}
\label{fig:morse23}
\end{figure}

\begin{proposition}[Morse functions $H_{2,q}$]\label{prop:morse2}
Let $q$ be a fixed odd number.  There exists a Morse function $H_{2,q}$  on $\CP^1_{2,q}$, which is $C^2$ close to one, with exactly three critical points, {such that the binding projects to the nonsingular index 2 critical point}, the $\Z/2\Z$-isotropy point is the index 1 critical point, and the $\Z/q\Z$ isotropy point is the index 0 critical point. There are stereographic coordinates defined in a small neighborhood of $x\in X:=\op{Crit}(H_{2,q})$ in which $H_{2,q}$ takes the form  
\begin{enumerate}[\em (i)]
 \itemsep-.35em
    \item $r_0(u,v):=(u^2+v^2)/2-1$,\,\,\, if $x\in X_0$
    \item $r_1(u,v):=(v^2-u^2)/2$,\,\,\,\,\,\,\,\,\,\,\,\,\, if $x\in X_1$
        \item $r_2(u,v):=1-(u^2+v^2)/2$,\,\,\,\,\,if $x\in X_2$
\end{enumerate}
 {Moreover, $H_{2,q}$ is invariant under the the $2q$-periodic Reeb return map $\psi_{2,q}$ associated to $R_{2,q}$.}
\end{proposition}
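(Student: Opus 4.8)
The plan is to construct $H_{2,q}$ directly on the base orbifold $\CP^1_{2,q}$, realize the three prescribed normal forms in adapted orbifold charts, and then deduce the $\psi_{2,q}$-invariance from the fact that $\psi_{2,q}$ preserves the fibres of the Seifert projection $\fp$. I would identify $\CP^1_{2,q}$ topologically with $S^2$, carrying the $\Z/q$-cone point at $e=[0:1]$, the $\Z/2$-cone point at $h=[1:0]$, and a chosen regular point over which the binding projects for the maximum. In the local uniformizing system $(\tilde U,\Gamma,\varphi)$ at each of these points I would take the proposed normal form as the $\Gamma$-invariant model: $r_0=(u^2+v^2)/2-1$ is rotationally symmetric, hence $\Z/q$-invariant, giving a nondegenerate orbifold minimum at $e$; $r_1=(v^2-u^2)/2$ is invariant under $(u,v)\mapsto(-u,-v)$, giving a nondegenerate orbifold saddle at $h$; and $r_2=1-(u^2+v^2)/2$ is a smooth maximum in the trivial chart over the binding point.

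The first thing to check, and in my view the crux of the whole statement, is that three critical points of pairwise distinct index can coexist, since on a smooth $S^2$ this is forbidden by $\chi(S^2)=2$. The resolution is intrinsically orbifold-theoretic and hinges on the $\Z/2$-point: writing the underlying coordinate as $w=z^2$ at $h$, one has $r_1=-\tfrac{1}{2}\op{Re}(w)$, which is \emph{smooth and regular} downstairs. Thus $h$ is a genuine orbifold critical point (a saddle in its chart) but is an ordinary regular point of the underlying function on $S^2$; similarly, at $e$ the model $r_0$ descends through $w=z^q$ to a conical, non-smooth minimum. Consequently the underlying function on $S^2$ has only a single smooth maximum (at the binding point) together with a conical minimum (at $e$), so it is of the shape of a height function with one conical pole and no Euler-characteristic obstruction arises.

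With the local pictures fixed, I would interpolate them into a globally defined orbifold function having no further critical points. Since downstairs the only critical behaviour is the maximum at the binding point and the conical minimum at $e$, with the saddle at $h$ lying on a regular level $\{\op{Re}(w)=0\}$, the interpolation is exactly that of a height-type function on $S^2$ with a single conical minimum, whose existence is standard; one then perturbs within the charts only in the directions allowed by $\Gamma$ to secure the Morse--Smale transversality of the (one-dimensional) stable and unstable manifolds of the saddle. As an independent check on the critical count I would pass to the page via the degree $2q$ branched cover $\fp\colon\Sigma_g\to\CP^1_{2,q}$: by Riemann--Hurwitz the fibres over $e$, $h$, and the binding point consist of $2$ points of ramification $q$, $q$ points of ramification $2$, and one point of ramification $2q$, giving $\chi(\Sigma_g)=3-q$ and $g=(q-1)/2$; the three orbifold critical points lift to orbits of sizes $2$, $q$, $1$, so that $c_0-c_1+c_2=2-q+1=3-q=\chi(\Sigma_g)$, confirming the count upstairs.

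Finally, for the invariance statement I would use that $\psi_{2,q}$ is the first return map of the fibre-generating Reeb flow $R_{2,q}$, so it cyclically permutes the $2q$ intersection points of a regular fibre with the page and satisfies $\fp\circ\psi_{2,q}=\fp$ on the page; hence the pullback $\fp^*H_{2,q}$ is automatically $\psi_{2,q}$-invariant, which is the asserted invariance. The $C^2$-closeness to the constant $1$ is arranged last, by replacing $H_{2,q}$ with $1+\delta'(H_{2,q}-c)$ for small $\delta'>0$ and absorbing the factor $\sqrt{\delta'}$ into the chart coordinates $(u,v)$, which recovers the normal forms $r_0,r_1,r_2$ up to additive constants while making $\|H_{2,q}-1\|_{C^2}$ as small as desired. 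The main difficulty throughout is the one already flagged: organizing the construction so that the orbifold saddle at the $\Z/2$-point is invisible to the underlying surface, which is what reconciles three critical points with the topology of $S^2$.
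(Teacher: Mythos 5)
Your construction is correct, but it runs in the opposite direction from the paper's. The paper (importing the construction of \cite[Lem.~2.16]{leo}) works \emph{upstairs} on the page: it realizes the punctured genus-$(q-1)/2$ page as a $2q$-gon with opposite sides identified and the binding as the puncture at its center, places a minimum at every vertex and a saddle at every edge midpoint, notes that this configuration is invariant under rotation by one click (the $2q$-periodic monodromy $\psi_{2,q}$), and then passes to the quotient, where the vertices become the single $\Z/q$-isotropy point, the midpoints become the single $\Z/2$-isotropy point, and the puncture is filled in with an index~2 critical point; invariance under $\psi_{2,q}$ is thus built into the equivariant construction rather than deduced afterwards. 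You work \emph{downstairs}: you impose the normal forms in the uniformizing charts, interpolate on the underlying $S^2$, and obtain $\psi_{2,q}$-invariance for free from $\fp\circ\psi_{2,q}=\fp$. Your route makes explicit two points the paper leaves implicit: why three critical points of pairwise distinct index are compatible with $\chi(S^2)=2$ (the $\Z/2$-saddle descends to $-\tfrac{1}{2}\op{Re}(w)$, a \emph{regular} point of the underlying function, while the $\Z/q$-minimum descends to a conical, non-smooth point), and the Riemann--Hurwitz bookkeeping $2-q+1=3-q=\chi(\hat\Sigma_g)$, which is exactly the paper's vertex/midpoint/center count seen from below. What the paper's route buys is that the polygon picture (Figure~\ref{fig:morse23}) makes the equivariance and the identification of critical points with isotropy points immediate, with no interpolation step to justify. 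Two minor remarks: the Morse--Smale perturbation you invoke is unnecessary, since with a single index-1 critical point there can be no saddle-to-saddle flow line for any metric (cf.~Lemma~\ref{lemma:smale}); and your observation that the constants in $r_0,r_2$ survive only up to rescaling once $C^2$-closeness to $1$ is imposed is the right reconciliation of the two clauses of the statement, matching the paper's practice of treating $C^2$-closeness as a further normalization (cf.~Lemma~\ref{lem:efromL}).
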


\begin{proof}
This follows from the construction given in \cite[Lem. 2.16]{leo} via the following procedure.  Figure \ref{fig:morse23} provides a ``cartoon" realizing the construction of the desired Morse function.   First, we define a Morse function $\mathring{H}_{2,q}$ on the punctured genus $g= \frac{q-1}{2}$ surface $\mathring{\Sigma}_g$, which is realized as a $2q$-gon with opposite sides identified, with a puncture $\{ x_b \}$ at the center.\footnote{This polygon realizes the page of the open book decomposition of $\mathring{\Sigma}_g$ along $T(2,q)$. Rotation by 1 right handed click and subsequent identification of the $2q$-gon realizes the right handed $2q$-periodic element of the mapping class group of $\mathring{\Sigma}_g$.} We put a minimum at each vertex of the $2q$-gon and a saddle at each midpoint of the edge of the $2q$-gon;  after identification we obtain a single  minimum at a $\Z/q\Z$-isotropy point and a single saddle at a $\Z/2\Z$-isotropy point on $\CP^1_{2,q} \setminus \{ x_b \}$.  We can then extend $\mathring{H}_{2,q}$ to a Morse function on $\CP^1_{2,q}$ by ``completing" the flow over the puncture $\{ x_b \}$ with an index 2 critical point at $\{ x_b \}$. 
\end{proof}

\begin{remark}
When $p\neq 2$, a different class of orbifold Morse functions are needed as the previous construction cannot be used for elementary reasons: when $p=2$, the genus of $T(2,q)$ is $(q-1)/2$, and a genus $g$ surface can be represented by gluing opposite sides of a $(4g+2)$-gon. That $2q$ divides $4g+2=\frac{4(q-1)}{2}+2$ is reflected in the fact that the $2q$-periodic monodromy of the open book can be represented by rotating the polygon. 
\end{remark}

The following lemma guarantees that the Morse functions in Proposition \ref{prop:morse2}   are Smale with respect to $\omega_{2,q}(\cdot, j\cdot)$ restricted to $\CP^1_{2,q}$.

\begin{lemma}\label{lemma:smale}
If $H$ is a Morse function on a 2-dimensional {orbifold} $\Sigma$ with isolated quotient singularities such that $H(p_1)=H(p_2)$ for all $p_1, p_2\in\mbox{\em Crit}(H)$ with Morse index 1, then $H$ is Smale, given any metric on $S$.
\end{lemma}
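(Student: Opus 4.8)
The plan is to reduce the Smale (transversality) condition to the nonexistence of gradient trajectories running between two index-$1$ critical points, and then to rule out such trajectories using the hypothesis together with the strict monotonicity of $H$ along the flow. Throughout I work with the negative gradient flow of $H$ with respect to the given metric $g$; along any nonconstant trajectory $\gamma$ one has $\frac{d}{dt}H(\gamma(t)) = -|\nabla_g H|_g^2 < 0$, so $H$ is strictly decreasing, and this holds for any choice of $g$.

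First I would record two preliminary facts about the orbifold setting. On a $2$-orbifold with isolated cyclic quotient singularities, every singular point is automatically a critical point of $H$: lifting to a uniformizing chart $(\tilde U, \Gamma, \varphi)$ in which $\Gamma$ acts by rotations fixing the origin, the lift $\tilde H$ is $\Gamma$-invariant, so $\nabla_g \tilde H(0)$ is a $\Gamma$-fixed vector and must vanish. Consequently no nonconstant trajectory passes through a singular point, and the stable and unstable manifolds $W^s(q)$, $W^u(p)$ are defined exactly as in the smooth case --- near a singular critical point as the images under $\varphi$ of the $\Gamma$-invariant local stable and unstable manifolds of $\tilde H$.

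Next comes the dimension count. For a critical point $p$ of a Morse function on a $2$-orbifold, $\dim W^u(p) = \op{ind}(p)$ and $\dim W^s(p) = 2 - \op{ind}(p)$. If $\op{ind}(p) = 2$ then $W^u(p)$ is open, and if $\op{ind}(q) = 0$ then $W^s(q)$ is open; in either case $W^u(p)$ and $W^s(q)$ meet transversally automatically. The remaining cases have $\op{ind}(p) \in \{0,1\}$ and $\op{ind}(q) \in \{1,2\}$; but an index-$0$ point is a sink with no emanating trajectory and an index-$2$ point is a source with no incoming trajectory, so the only situation admitting a connecting trajectory and possibly failing transversality is $\op{ind}(p) = \op{ind}(q) = 1$. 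In that case $\dim W^u(p) = \dim W^s(q) = 1$, and along a connecting trajectory $\gamma \subset W^u(p) \cap W^s(q)$ both tangent lines coincide with the flow direction, so $T W^u(p) + T W^s(q)$ is one-dimensional and transversality fails. Hence $H$ is Smale if and only if there is no gradient trajectory connecting two index-$1$ critical points.

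Finally the hypothesis closes the argument. If $\gamma$ were a trajectory with $\alpha$-limit an index-$1$ point $p$ and $\omega$-limit an index-$1$ point $q$, then for any interior time $t_0$ strict monotonicity gives $H(p) > H(\gamma(t_0)) > H(q)$, contradicting the assumption that all index-$1$ critical values coincide. Therefore no such saddle connection exists and $H$ is Smale for the given metric. I expect the only delicate point to be the orbifold bookkeeping around the cyclic quotient singularities; but since the singular points are forced to be critical and every step is either local-in-charts or a global monotonicity statement insensitive to the orbifold structure, this is routine once the local stable and unstable manifolds are set up.
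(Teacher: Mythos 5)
Your proof is correct and follows essentially the same route as the paper's: reduce the Smale condition to the nonexistence of gradient flow lines between index-$1$ critical points, then rule these out by strict monotonicity of $H$ along flow lines together with the equal-critical-value hypothesis. The only difference is one of detail---the paper asserts the equivalence ``fails Smale $\iff$ saddle connection exists'' in one line, whereas you prove it via the dimension count and also verify the orbifold bookkeeping (singular points are forced to be critical, so flow lines avoid them), both of which are sound.
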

\begin{proof}
Given metric $g$ on $\Sigma$, $H$ fails to be Smale with respect to $g$ if and only if there are two distinct critical points of $H$ of Morse index 1 that are connected by a gradient flow line of $H$. Because all such critical points have the same $H$ value, no such flow line exists.
\end{proof}

\subsection{Perturbed Reeb dynamics for $T(2,q)$ fibrations}\label{ss:Reeb}

Similarly to \cite{jo2, leo, preech}, we utilize the Morse functions $H_{2,q}$ to perturb the degenerate contact form $\lambda_{2,q}$, as in \eqref{eqn:lambdapertR}.  Note that the nonsingular point, to which the binding projects, realizing the right handed transverse torus knot $T(2,q)$, is always the unique maximum of the Morse functions.   A standard computation, cf. \cite[Prop.~4.10]{jo2}, adapted to the $S^1$-orbibundle framework, cf. \cite[Rem.~2.1]{LT}, yields the following results.

\begin{lemma}
The Reeb vector field of $\lambda_{2,q,\varepsilon}$ is given by
\begin{equation}
\label{perturbedreeb1}
R_{2,q,\varepsilon}=\frac{R_{2,q}}{1+\varepsilon \fp^*H_{2,q}} + \frac{\varepsilon \widetilde{X}_{H_{2,q}}}{{(1+\varepsilon \fp^*{H_{2,q}})}^{2}}, 
\end{equation}
where $\widetilde{X}_{H_{2,q}}$ denotes the horizontal lift of the Hamiltonian vector field\footnote{We use the convention $\omega(X_{{H}}, \cdot) = d{H}.$} $X_{H_{2,q}}$ on $\CP^1_{2,q}$.  
\end{lemma}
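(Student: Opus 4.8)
The plan is to compute the Reeb vector field $R_{2,q,\varepsilon}$ of the perturbed contact form $\lambda_{2,q,\varepsilon} = (1+\varepsilon\fp^*H_{2,q})\lambda_{2,q}$ directly from the defining equations $\lambda_{2,q,\varepsilon}(R_{2,q,\varepsilon})=1$ and $d\lambda_{2,q,\varepsilon}(R_{2,q,\varepsilon},\cdot)=0$, using that we already understand the unperturbed data: $R_{2,q}$ is the Reeb field of $\lambda_{2,q}$, the connection 1-form $A$ satisfies $d(iA)=i\fp^*\omega_{2,q}$ (so $d\lambda_{2,q}=\fp^*\omega_{2,q}$), and $R_{2,q}$ spans the kernel of $d\lambda_{2,q}$ while $\ker\lambda_{2,q}=\xi$ is the horizontal distribution of the orbibundle. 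First I would set $g:=1+\varepsilon\fp^*H_{2,q}$ for brevity and write the candidate field as a horizontal part plus a vertical (fiber-direction) part, $R_{2,q,\varepsilon} = a\,R_{2,q} + \varepsilon\, b\,\widetilde{X}_{H_{2,q}}$, since the only canonically available directions are the Reeb/fiber direction $R_{2,q}$ and the horizontal lift of a vector field on the base; by symmetry the natural base vector field to lift is $X_{H_{2,q}}$, the Hamiltonian vector field of the perturbing function.

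Next I would impose the two Reeb conditions. The normalization $\lambda_{2,q,\varepsilon}(R_{2,q,\varepsilon})=1$ reads $g\,\lambda_{2,q}(R_{2,q,\varepsilon})=1$; since $\widetilde{X}_{H_{2,q}}$ is horizontal it lies in $\ker\lambda_{2,q}$, so $\lambda_{2,q}(R_{2,q,\varepsilon})=a$, forcing $a=1/g$. For the kernel condition I would compute $d\lambda_{2,q,\varepsilon} = dg\wedge\lambda_{2,q} + g\,d\lambda_{2,q}$ and contract with the candidate field. The term $dg\wedge\lambda_{2,q}$ contracted against $R_{2,q,\varepsilon}$ produces $\lambda_{2,q}(R_{2,q,\varepsilon})\,dg - dg(R_{2,q,\varepsilon})\,\lambda_{2,q}$; because $g=1+\varepsilon\fp^*H_{2,q}$ is pulled back from the base it is invariant along fibers, so $dg$ annihilates the vertical direction $R_{2,q}$ and $dg(R_{2,q,\varepsilon})=\varepsilon b\,dg(\widetilde{X}_{H_{2,q}})$. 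Meanwhile $g\,d\lambda_{2,q}$ contracted with $R_{2,q}$ vanishes (as $R_{2,q}\in\ker d\lambda_{2,q}$), and $g\,d\lambda_{2,q}(\varepsilon b\,\widetilde{X}_{H_{2,q}},\cdot)=\varepsilon b\, g\,\fp^*\omega_{2,q}(\widetilde{X}_{H_{2,q}},\cdot)=\varepsilon b\,g\,\fp^*(dH_{2,q})$, using the defining relation $\omega_{2,q}(X_{H_{2,q}},\cdot)=dH_{2,q}$ for the Hamiltonian vector field together with the fact that $\fp^*\omega_{2,q}$ restricted to horizontal vectors agrees with $\omega_{2,q}$ on the base.

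Collecting terms, the interior product $\iota_{R_{2,q,\varepsilon}}d\lambda_{2,q,\varepsilon}$ must vanish as a 1-form. Splitting against the horizontal and vertical subbundles, the vertical ($\lambda_{2,q}$) component gives $-\varepsilon b\,dg(\widetilde{X}_{H_{2,q}})=0$, which holds since $dg(\widetilde{X}_{H_{2,q}})=\varepsilon\,dH_{2,q}(X_{H_{2,q}})=0$ (a Hamiltonian vector field annihilates its own Hamiltonian). The horizontal component gives $a\,\fp^*dg|_{\mathrm{hor}} + \varepsilon b\,g\,\fp^*dH_{2,q}=0$ as a 1-form on $\xi$; substituting $a=1/g$ and $\fp^*dg=\varepsilon\fp^*dH_{2,q}$ yields $\tfrac{\varepsilon}{g}\fp^*dH_{2,q}+\varepsilon b\,g\,\fp^*dH_{2,q}=0$ after matching coefficients, which solves to $b=1/g^2$. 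This reproduces exactly
\[
R_{2,q,\varepsilon}=\frac{R_{2,q}}{1+\varepsilon\fp^*H_{2,q}} + \frac{\varepsilon\,\widetilde{X}_{H_{2,q}}}{(1+\varepsilon\fp^*H_{2,q})^2}.
\]
The main obstacle I anticipate is bookkeeping the horizontal/vertical splitting carefully, especially verifying that $\fp^*\omega_{2,q}(\widetilde{X}_{H_{2,q}},\cdot)$ descends correctly to $\fp^*dH_{2,q}$ on horizontal vectors and that $dg$ has no vertical component; once that is pinned down the coefficient-matching is routine algebra. Since this is the standard Boothby–Wang/prequantization perturbation computation adapted to the orbibundle setting, I would follow \cite[Prop.~4.10]{jo2} together with the orbibundle framework of \cite[Rem.~2.1]{LT}, checking only that the orbifold singularities (which are isolated and cyclic) do not obstruct the computation, as all quantities are pulled back from the base and the horizontal lift is well-defined away from and through the local uniformizing charts.
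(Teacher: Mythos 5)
Your overall route is the same as the paper's: the paper offers no independent argument for this lemma, saying only that it follows from ``a standard computation, cf.\ \cite[Prop.~4.10]{jo2}, adapted to the $S^1$-orbibundle framework, cf.\ \cite[Rem.~2.1]{LT},'' and your derivation is precisely that computation. The structural points are all sound: the ansatz $R_{2,q,\varepsilon}=aR_{2,q}+\varepsilon b\widetilde{X}_{H_{2,q}}$ (sufficient by uniqueness of the Reeb field), $a=1/g$ from the normalization $\lambda_{2,q,\varepsilon}(R_{2,q,\varepsilon})=1$, the vanishing $dg(R_{2,q,\varepsilon})=0$ because $dH_{2,q}(X_{H_{2,q}})=\omega_{2,q}(X_{H_{2,q}},X_{H_{2,q}})=0$, and the identity $\iota_{\widetilde{X}_{H_{2,q}}}\fp^*\omega_{2,q}=\fp^*dH_{2,q}$, which holds against \emph{all} vectors (not just horizontal ones) since $\fp_*R_{2,q}=0$.

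There is, however, a sign inconsistency you need to repair. For 1-forms one has $\iota_X(dg\wedge\lambda)=dg(X)\,\lambda-\lambda(X)\,dg$, whereas you wrote the negative, $\lambda(X)\,dg-dg(X)\,\lambda$. Carried through, your horizontal-component equation reads $\tfrac{\varepsilon}{g}\fp^*dH_{2,q}+\varepsilon b\,g\,\fp^*dH_{2,q}=0$, and this equation solves to $b=-1/g^2$, not $b=+1/g^2$ as you assert; as written, your last displayed equation contradicts your claimed solution. With the correct expansion the horizontal component is $-\tfrac{\varepsilon}{g}\fp^*dH_{2,q}+\varepsilon b\,g\,\fp^*dH_{2,q}=0$, which does give $b=1/g^2$ and hence the stated formula $R_{2,q,\varepsilon}=g^{-1}R_{2,q}+\varepsilon g^{-2}\widetilde{X}_{H_{2,q}}$. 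So the method and the conclusion are both right; the error sits in the expansion of the interior product, not in the final solve, and is fixed by a one-line correction.
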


\begin{lemma}\label{lem:efromL}
For each odd $q$, let $H_{2,q}$ be the Morse function $H_{2,q}$ as constructed in Proposition \ref{prop:morse2}, which we further assume to be ${C^2}$ close to 1.  
\begin{enumerate}[\em (i)]
\itemsep-.25em
\item For each $L>0$, there exists $\varepsilon(L)>0$ such that for all $\varepsilon<\varepsilon(L)$, all Reeb orbits of $R_{2,q,\varepsilon}$ with $\mathcal{A}(\gamma) < L$ are nondegenerate and project to critical points of $H_{2,q}$, where $\mathcal{A}$ is computed using $\lambda_{2,q,\varepsilon}$.
\item The action of a Reeb orbit $\gamma_x^{k|\Gamma_x|}$ of $R_{2,q,\varepsilon}$ over a critical point $x$ of ${H_{2,q}}$ is proportional to the length of the fiber, namely
\[
\mathcal{A}\left(\gamma_x^{k|\Gamma_x|}\right) = \int_{\gamma_x^{k|\Gamma_x|}} \lambda_{2,q,\varepsilon} = k (1+\varepsilon \fp^*H_{2,q}(x)).
\]
\end{enumerate}
\end{lemma}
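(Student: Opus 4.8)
The plan is to establish both parts by reducing to an explicit computation in the prequantization orbibundle model, using the formula \eqref{perturbedreeb1} for the perturbed Reeb vector field $R_{2,q,\varepsilon}$ together with the fact that $H_{2,q}$ is $C^2$-close to $1$. For part (ii), which I expect to be the cleaner of the two, I would first observe that over a critical point $x$ of $H_{2,q}$ the horizontal lift $\widetilde{X}_{H_{2,q}}$ vanishes, since $X_{H_{2,q}}=0$ at $x$. Hence along the fiber $\gamma_x$ the second term in \eqref{perturbedreeb1} drops out and $R_{2,q,\varepsilon}$ is simply a positive constant multiple of $R_{2,q}$, so the perturbed Reeb orbit over $x$ is the same geometric fiber, traversed at a rescaled speed. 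The orbit $\gamma_x^{k|\Gamma_x|}$ is the $k$-fold cover of this fiber (the exponent $k|\Gamma_x|$ accounts for the isotropy, so that $\gamma_x^{|\Gamma_x|}$ closes up once around the underlying $S^1$-fiber). The action integral then factors as $\int \lambda_{2,q,\varepsilon} = \int (1+\varepsilon\,\fp^*H_{2,q})\lambda_{2,q}$; since $\fp^*H_{2,q}$ is constant equal to $H_{2,q}(x)$ along the fiber, it pulls out of the integral, and $\int_{\gamma_x^{k|\Gamma_x|}}\lambda_{2,q}=k$ because each full fiber has $\lambda_{2,q}$-action $1/|\Gamma_x|$ by the discussion in \S\ref{ss:morse} (principal orbits have action $1$, exceptional orbits action $1/|\Gamma_x|$). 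This yields $\mathcal{A}(\gamma_x^{k|\Gamma_x|})=k(1+\varepsilon\,\fp^*H_{2,q}(x))$ directly.

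For part (i) I would follow the standard Morse--Bott perturbation scheme, as in \cite[Prop.~4.10]{jo2} adapted to the orbibundle setting via \cite[Rem.~2.1]{LT}. The key structural point is that the unperturbed $R_{2,q}$ generates the $S^1$-action, so every orbit of action at most $L$ is a fiber iterate, organized into Morse--Bott families parametrized by the base $\CP^1_{2,q}$. Turning on the perturbation $\varepsilon\,\fp^*H_{2,q}$, the second term of \eqref{perturbedreeb1} is the horizontal lift of $\varepsilon X_{H_{2,q}}$ up to the positive scaling factor $(1+\varepsilon\,\fp^*H_{2,q})^{-2}$; its projection to the base is a positive reparametrization of the gradient-type flow of $H_{2,q}$. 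Consequently a short closed orbit of $R_{2,q,\varepsilon}$ must project to a point of the base fixed by this flow, i.e.\ to a critical point of $H_{2,q}$. I would make this quantitative: for a fixed action bound $L$, only finitely many fiber-iterate families are relevant, and on each such family the perturbed return map is, to leading order in $\varepsilon$, the time-$T$ flow of the Hessian of $H_{2,q}$ at the critical point. Nondegeneracy then follows because $H_{2,q}$ is Morse (so the Hessian is nondegenerate) and $C^2$-close to $1$, which keeps the rotation/shear angles small enough that $P_\gamma$ does not acquire $1$ as an eigenvalue below action $L$; the choice of $\varepsilon(L)$ is dictated by requiring this for all the finitely many families up to action $L$.

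The main obstacle, and the step requiring the most care, is the quantitative control in part (i): one must show not only that critical-point fibers survive as nondegenerate orbits but also that \emph{no other} closed orbits of action less than $L$ appear as $\varepsilon$ shrinks, despite the fact that the perturbation creates a genuine (if slow) drift in the base along non-fixed points. The resolution is the standard elimination argument: away from $\op{Crit}(H_{2,q})$ the base flow moves points a definite distance per unit time, so any orbit projecting to a non-constant path cannot close up within the action window for small $\varepsilon$, whereas a priori long orbits that wind many times around a fiber before closing are excluded by the action bound. Assembling this requires invoking the compactness of $\CP^1_{2,q}$ to get a uniform lower bound on $|X_{H_{2,q}}|$ outside a neighborhood of the critical points, and handling the exceptional fibers—where the isotropy $\Z/2\Z$ and $\Z/q\Z$ enters—by working in the orbifold charts of \S\ref{ss:orbifolds}, so that the local model near each singular critical point is the equivariant Hessian computation of Proposition \ref{prop:morse2}. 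Since the cited references \cite{jo2, leo, preech, LT} carry out precisely this analysis in closely analogous settings, I would present the argument as an adaptation, emphasizing the orbifold/isotropy bookkeeping as the only genuinely new ingredient.
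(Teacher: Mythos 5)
Your proposal is correct and matches the paper's approach: the paper gives no in-text proof of this lemma, instead citing the standard Morse--Bott perturbation computation of \cite[Prop.~4.10]{jo2} adapted to the orbibundle setting via \cite[Rem.~2.1]{LT} and \cite[Lem.~3.1]{preech}, which is precisely the argument you reconstruct (vanishing of $\widetilde{X}_{H_{2,q}}$ over critical points and constancy of $\fp^*H_{2,q}$ along fibers for (ii); slow drift of the projected flow forcing action $\gtrsim 1/\varepsilon$ for non-critical projections, plus nondegeneracy from the Morse Hessian, for (i)). Your bookkeeping of the exceptional-fiber actions $1/|\Gamma_x|$ agrees with \S\ref{ss:morse}, so the expansion is a faithful filling-in of the details the paper delegates to those references.
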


As in \cite[Lem. 3.1]{preech},  one can choose $\varepsilon(L)$ so that the embedded orbits realizing the generators of $ECC^L_*(S^3,\lambda_{2,q,\varepsilon(L)};J)$ consist only of fibers above critical points of $H_{2,q}$ and that $\varepsilon(L)\sim\frac{1}{L}$. 
To capture all these filtered complexes, we obtain the analogue of the result proven in  \cite[\S 3.4]{preech}.

\begin{proposition}\label{prop:directlimitcomputesfiberhomology} As discussed above, there is a direct system formed by $\{ ECH^L_*(Y,\lambda_{2,q,\varepsilon(L)}\}$. The direct limit $\lim_{L\to\infty}ECH^L_*(S^3,\lambda_{2,q,\varepsilon(L)})$ is the homology of the chain complex generated by Reeb currents $\{(\alpha_i,m_i)\}$ where the $\alpha_i$ are fibers above critical points of ${H_{2,q}}$.
\end{proposition}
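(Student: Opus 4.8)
The plan is to mimic the Morse-Bott direct limit argument of \cite[\S 3.4]{preech}, now adapted to the Seifert-fibered setting via the energy-filtered cobordism maps of Hutchings--Taubes \cite{cc2}. First I would fix, for each action threshold $L>0$, a perturbation parameter $\varepsilon(L)$ as in Lemma \ref{lem:efromL}(i), so that every Reeb orbit of $R_{2,q,\varepsilon}$ with $\varepsilon<\varepsilon(L)$ and action less than $L$ is nondegenerate and projects to a critical point of $H_{2,q}$. Together with Lemma \ref{lem:efromL}(ii), which computes the action of each fiber $\gamma_x^{k|\Gamma_x|}$ as $k(1+\varepsilon\fp^*H_{2,q}(x))$, this shows that the generators of $ECC^L_*(S^3,\lambda_{2,q,\varepsilon(L)};J)$ are precisely the admissible Reeb currents $\{(\alpha_i,m_i)\}$ whose components $\alpha_i$ are fibers over critical points of $H_{2,q}$ and whose total action is less than $L$. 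Since the action of any such fiber converges as $\varepsilon\to 0$ to its degenerate length $k$, the set of such generators below a fixed $L$ is finite and stabilizes as $\varepsilon$ decreases within the allowed range.

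Next I would construct the direct system. By \cite[Thm.~1.3]{cc2} the homology $ECH^L_*(S^3,\lambda_{2,q,\varepsilon(L)})$ of the action-filtered subcomplex is independent of the generic $\lambda_{2,q,\varepsilon(L)}$-compatible $J$; I would further check, by a rescaling argument, that it is independent of the particular choice of $\varepsilon<\varepsilon(L)$, so that the notation is unambiguous. For $L\leq L'$, with correspondingly $\varepsilon(L)\geq\varepsilon(L')$, the inclusions of action-filtered subcomplexes, together with the energy-filtered maps on perturbed Seiberg--Witten Floer theory of \cite{cc2} interpolating between $\lambda_{2,q,\varepsilon(L)}$ and $\lambda_{2,q,\varepsilon(L')}$, induce maps $ECH^L_*\to ECH^{L'}_*$. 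I would verify that these maps respect the grading, are compatible under composition so that we genuinely obtain a directed system indexed by $L\to\infty$, and restrict to the identity on the stabilized generators below $L$.

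Finally I would identify the direct limit. Every admissible Reeb current built from fibers over critical points of $H_{2,q}$ has finite symplectic action, and since the ECH differential does not increase action, any fixed grading of the limiting complex involves only finitely many generators, all of bounded action. Hence for $L$ sufficiently large relative to a fixed grading the filtered complex $ECC^L_*$ already contains those generators and computes the corresponding differential entries, so $\lim_{L\to\infty}ECH^L_*(S^3,\lambda_{2,q,\varepsilon(L)})$ stabilizes to the homology of the full chain complex generated by the Reeb currents whose components are fibers above critical points of $H_{2,q}$.

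The main obstacle I anticipate is establishing the well-definedness and compatibility of the direct system in the second step: one must show that the cobordism maps of \cite{cc2} respect the action filtration, commute with the inclusion-induced maps, and are independent of the auxiliary choices of $\varepsilon$ and $J$. This is precisely where the doubly filtered structure and the energy-filtered perturbed Seiberg--Witten Floer theory of Hutchings--Taubes are needed, and it is more delicate here than in \cite{preech} because the degenerate limit $\lambda_{2,q}$ is Morse-Bott along entire families of fibers and the exceptional fibers project to orbifold points carrying nontrivial isotropy.
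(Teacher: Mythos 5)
Your proposal is correct and follows essentially the same route as the paper: the paper states this proposition as the direct analogue of \cite[\S 3.4]{preech}, with the direct system built exactly as you describe from Lemma \ref{lem:efromL}, the inclusion-induced maps, and the energy-filtered cobordism maps of Hutchings--Taubes \cite{cc2} (set up in detail in \S\ref{s:cobordismfun}, where Lemma \ref{lem:nicecobmap1} and Proposition \ref{prop:admissible} show these maps are induced by the canonical bijection on generators, so the limit is the homology of the full complex generated by fiber Reeb currents). One small correction: independence of the choice of $\varepsilon<\varepsilon(L)$ does not follow from a rescaling argument---$\lambda_{2,q,\varepsilon}$ and $\lambda_{2,q,\varepsilon'}$ differ by a non-constant conformal factor, not a constant one---but precisely from those \cite{cc2} cobordism maps being the canonical bijection on generators, which is the tool you invoke in the very next sentence anyway.
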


Proposition \ref{prop:directlimitcomputesfiberhomology} provides a means of computing ECH by taking a direct limit, which involves passing to filtered Seiberg-Witten Floer cohomology explained in \cite[\S 7]{preech}.  The Conley-Zehnder computations in \S \ref{ss:CZ} yield the following classification of fiber Reeb orbits:

\begin{lemma}\label{lem:orbitseh} Up to large action $L(\varepsilon)$, as determined by Proposition \ref{lem:efromL}, the generators of $ECC_*^{L(\varepsilon)}(S^3,\lambda_{2,q,\varepsilon}, J)$ are of the form $b^Bh^He^E$, where $B,E \in \Z_{\geq 0}$ and $H=0,1$. Moreover,
\begin{itemize}
\itemsep-.25em
\item $b$ is elliptic projecting to the (nonsingular) index 2 critical point of $H_{2,q}$,
\item $h$ is negative hyperbolic projecting to the singular index 1 critical point of $H_{2,q}$ with isotropy $\Z/2\Z$,
\item $e$ is elliptic projecting to the singular index 0 critical point of $H_{2,q}$ with isotropy $\Z/q\Z$.
\end{itemize}
\end{lemma}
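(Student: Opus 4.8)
\textbf{Proof proposal for Lemma \ref{lem:orbitseh}.}

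The plan is to establish this classification in two stages: first identifying which Reeb orbits survive up to action $L(\varepsilon)$, and then computing the Conley-Zehnder indices to determine each orbit's elliptic/hyperbolic type and the admissibility constraints $H \in \{0,1\}$. The first stage is essentially handed to us by the preceding results. By Lemma \ref{lem:efromL}(i), for each $L > 0$ there exists $\varepsilon(L) > 0$ such that every Reeb orbit $\gamma$ of $R_{2,q,\varepsilon}$ with $\mathcal{A}(\gamma) < L$ is nondegenerate and projects to a critical point of $H_{2,q}$. By Proposition \ref{prop:morse2}, $H_{2,q}$ has exactly three critical points: the nonsingular index 2 maximum (to which the binding $b$ projects), the $\Z/2\Z$-isotropy index 1 saddle, and the $\Z/q\Z$-isotropy index 0 minimum. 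Thus every generator of $ECC_*^{L(\varepsilon)}$ is a Reeb current supported on fibers over these three critical points, and I would name the embedded orbits over them $b$, $h$, and $e$ respectively. The fiber over the nonsingular point is the principal $T(2,q)$ binding orbit, while the fibers over the $\Z/2\Z$- and $\Z/q\Z$-isotropy points are the exceptional (singular) fibers of the Seifert fibration described in \S\ref{ss:morse}.

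The substance of the lemma lies in the second stage: computing the Conley-Zehnder index of each of these orbits and their iterates to read off their ellipticity or hyperbolicity, which the statement defers to the computations in \S\ref{ss:CZ}. The key principle is the standard relationship, for a Morse-Bott perturbation of a prequantization-type contact form of the kind in \eqref{eqn:lambdapertR}, between the parity and the type of a perturbed fiber orbit and the Morse index of the underlying critical point on the base, together with the rotation number contributed by the monodromy of the orbibundle. Following the template of \cite[Prop.~4.10]{jo2} and its orbibundle adaptation \cite[Rem.~2.1]{LT}, the linearized return map of $R_{2,q,\varepsilon}$ over a critical point $x$ splits into a rotation contribution coming from the underlying degenerate Reeb flow (governed by the isotropy order $|\Gamma_x|$ and the Euler class $-\tfrac{1}{2q}$) and a hyperbolic or small-rotation contribution coming from the Hessian of $H_{2,q}$ at $x$, scaled by $\varepsilon$. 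For the nonsingular maximum the Hessian is negative definite and the orbit $b$ is elliptic; for the index 0 minimum the Hessian is positive definite and $e$ is elliptic. For the index 1 saddle, the indefinite Hessian produces real eigenvalues, so $h$ is hyperbolic; one must further check the sign of these eigenvalues to verify $h$ is \emph{negative} hyperbolic, which is the assertion requiring care.

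Given the types, the admissibility constraints follow from the definition of admissible Reeb currents in \S\ref{ss:overviewECH}: since $h$ is hyperbolic, the multiplicity constraint $m_i = 1$ for hyperbolic orbits forces $H \in \{0,1\}$, while the elliptic orbits $b$ and $e$ may appear with any nonnegative integer multiplicity, giving $B, E \in \Z_{\geq 0}$. I expect the main obstacle to be the verification that $h$ is negative rather than positive hyperbolic, since this depends on the precise sign conventions for the perturbation and the saddle's stable/unstable directions relative to the trivialization, and since the parity of the Conley-Zehnder index (even for positive hyperbolic, odd otherwise, per \S\ref{ss:overviewECH}) interacts with the eventual ECH index computation. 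This sign must be tracked carefully through the orbibundle model, and is precisely where the detailed Conley-Zehnder computations of \S\ref{ss:CZ} are needed; the remaining assertions are comparatively direct consequences of Lemma \ref{lem:efromL}, Proposition \ref{prop:morse2}, and the admissibility definition.
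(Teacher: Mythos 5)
Your proposal follows essentially the same route as the paper: Lemma \ref{lem:efromL}(i) together with Proposition \ref{prop:morse2} reduces the generators to currents in $b$, $h$, $e$ (with $H\leq 1$ forced by the admissibility condition on hyperbolic orbits), and the determination of the orbit types is exactly the content of the Conley--Zehnder computations in \S\ref{ss:CZ} (Lemmas \ref{lempre} and \ref{lem:orbtrivCZ2}), which the paper likewise invokes rather than reproving at this point. One small correction to your discussion of the delicate point: positive versus negative hyperbolicity is intrinsic --- it is the sign of the real eigenvalues of the linearized return map, equivalently the parity of the integer monodromy angle, which is unchanged under change of trivialization --- and in the paper it is pinned down not by ``sign conventions relative to the trivialization'' but by the $\Z/2$ isotropy of the fiber: Lemma \ref{lem:orbtrivCZ2} gives $CZ_{orb}(h^{2k})=2(2+q)k$, which is even and hence rules out $h$ being elliptic, so $h$ is hyperbolic with monodromy angle $2+q$, and this is odd precisely because $q$ is odd, making $h$ negative hyperbolic.
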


\begin{remark}\label{rem:obd}
The contact forms $\lambda_{2,q, \varepsilon}$ and $\lambda_{2,q}$ are  adapted to the open book decomposition $(T(2,q),\pi)$.  This is because $\widetilde X_{H_{2,q}}$ is tangent to the pages of $\pi$, so by (\ref{perturbedreeb1}), $R_{p,q,\varepsilon}$ is positively transverse to the pages while remaining tangent to the binding because $\widetilde X_{H_{2,q}}$ vanishes along the binding.
\end{remark}

From the above discussion, we obtain the following linking numbers relevant to our later computation of the knot filtration on ECH. 

\begin{corollary}\label{cor:linkingnumber} 
For $T(2,q)$, we have:
\begin{enumerate}[\em (i)]
\itemsep-.35em
\item $\ell(e,h)=1$,
\item $\ell(b,e)=2$,
\item $\ell(b,h)=q$.
\end{enumerate}
\end{corollary}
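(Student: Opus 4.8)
The plan is to realize all three orbits as explicit curves in $S^3\subset\C^2$ using the $S^1$-action of Remark \ref{rmrk:Milnor}, and then compute each linking number as a signed count of intersections with an explicit Seifert surface, checking that the Reeb (equivalently, $S^1$-) orientations make every intersection positive. By Lemma \ref{lem:orbitseh} and Remark \ref{cor:CHdetails}, $e$ and $h$ are the exceptional fibers over the two orbifold points while $b$ is a regular fiber. From the action $e^{2\pi it}\cdot(z_1,z_2)=(e^{4\pi it}z_1,e^{2\pi qit}z_2)$ one reads off that the stabilizer of a point with $z_2=0$ is $\Z/2\Z$ and of a point with $z_1=0$ is $\Z/q\Z$, so that
\[
h=\{(z_1,z_2)\in S^3: z_2=0\},\qquad e=\{(z_1,z_2)\in S^3: z_1=0\},
\]
each oriented by increasing $t$, and a regular fiber may be taken to be $b=\{(e^{4\pi it}/\sqrt2,\ e^{2\pi qit}/\sqrt2):t\in[0,1]\}$, a $(2,q)$-curve on the Heegaard torus $\{|z_1|=|z_2|\}$.

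For (i), I would note that $e$ and $h$ are exactly the two coordinate circles, i.e.\ the cores of the genus-one Heegaard splitting of $S^3$; with the orientations above they form the positive Hopf link. Concretely, using the Seifert disk $D_h=\{(z_1,z_2)\in S^3: z_2\in\R_{\geq0}\}$, the orbit $e$ meets $D_h$ in the single point $(0,1)$, transversally and positively, so $\ell(e,h)=1$. For (ii) and (iii) I would intersect $b$ with the analogous disks $D_e=\{z_1\in\R_{\geq0}\}\cap S^3$ and $D_h$: the point $b(t)$ lies on $D_e$ exactly when $e^{4\pi it}\in\R_{>0}$, i.e.\ $t\in\{0,1/2\}$, giving $2$ transverse intersections, and on $D_h$ exactly when $e^{2\pi qit}\in\R_{>0}$, i.e.\ $t\in\{0,1/q,\dots,(q-1)/q\}$, giving $q$ transverse intersections. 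Equivalently, writing $[b]=2\,\mu_1+q\,\lambda_1$ in $H_1$ of the Heegaard torus, where $\mu_1$ is the meridian of the solid torus cored by $e$ and $\lambda_1$ a longitude, one has $\ell(b,e)=2$ and, by the symmetric computation in the complementary solid torus, $\ell(b,h)=q$.

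The same counts are visible geometrically in Figure \ref{fig:morse23}: since the pages of $(T(2,q),\pi)$ are Seifert surfaces for the binding $b$ and the Reeb field is positively transverse to them (Remark \ref{rem:obd}), $\ell(b,e)$ and $\ell(b,h)$ are precisely the numbers of times the fibers $e$ and $h$ puncture a single page, namely $2$ and $q$. I would offer this as the intuitive reading and keep the explicit disk count as the rigorous argument.

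The computation itself is routine; the one point I would treat carefully is signs. I must verify that the $S^1$/Reeb orientations of $e$, $h$, and $b$ force every intersection point above to count with $+1$, so that all three linking numbers are positive as stated; this is guaranteed by the positive transversality of the Reeb flow to the pages. Relatedly, one must be sure that $b$ is the regular fiber, i.e.\ the $(2,q)$-curve, and \emph{not} the Milnor binding $\{z_1^2+z_2^q=0\}$, which is a $(q,2)$-curve and would yield the same linking numbers with the roles of $2$ and $q$ interchanged.
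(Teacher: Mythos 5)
Your proposal is correct, and for parts (ii)--(iii) it takes a genuinely different route from the paper. The paper computes $\ell(b,e)$ and $\ell(b,h)$ by intersecting $e$ and $h$ with a \emph{page} (a Seifert surface for $b$): the intersection points are periodic points of the $2q$-periodic monodromy of periods $2$ and $q$ respectively, so the counts are $2$ and $q$. You instead intersect the parametrized regular fiber $b$ with explicit disks $D_e=\{z_1\in\R_{\geq0}\}\cap S^3$ and $D_h=\{z_2\in\R_{\geq0}\}\cap S^3$ bounded by $e$ and $h$; by symmetry of the linking number these are dual computations, and your counts ($t\in\{0,1/2\}$, resp.\ $t\in\{0,1/q,\dots,(q-1)/q\}$) are correct. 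Your argument is more elementary and self-contained in coordinates, while the paper's ties the answer to the open book structure (monodromy periods) that it uses elsewhere; your closing ``intuitive reading'' via punctures of the page is in fact exactly the paper's proof. For part (i) both arguments are the same (cores of the genus-one Heegaard splitting form a positive Hopf link), with your explicit disk $D_h$ making it concrete. Your caution about $b$ being the regular fiber rather than the vanishing set $\{z_1^2+z_2^q=0\}$ is well taken and resolves a real tension in Remark \ref{rmrk:Milnor}: with the stated weights $(2,q)$ that vanishing set is not even invariant under the circle action, and it is Remark \ref{cor:CHdetails}(iii) (the binding is a regular fiber) that the Corollary relies on. One small imprecision: positive transversality of the Reeb flow to the pages (Remark \ref{rem:obd}) justifies the signs for intersections with pages, not literally for your disks $D_e$, $D_h$; for those you should either check one intersection point directly and use the orientation-preserving $S^1$-symmetry relating the others, or fall back on your homological identity $[b]=2\mu_1+q\lambda_1$, either of which closes the point without difficulty.
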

\begin{proof}
Conclusion (i) follows from the fact that the open book decomposition $(T(2,q),\pi)$ of Remark \ref{rmrk:Milnor} can be expressed via an $S^1$-action which is free except at the intersections of $S^3$ with the axes in $\C^2$. Thus $S^3$ is the union of two solid tori whose cores are $e$ and $h$, which link once.

Conclusions (ii, iii)  follow from the fact that $\fp(e)$ and $\fp(h)$ are periodic points of the monodromy of the open book $(T(2,q),\pi)$ of periods $2$ and $q$, respectively (the periods can be computed from Remark \ref{cor:CHdetails}(i, ii)). Their linking numbers with the binding equal their intersection numbers with the page, which are precisely these periods. 

\end{proof}

\section{Generalities regarding the ECH index}\label{s:generalities}

 In this section we provide some generalities on the ECH index, construct surfaces, and explain  different trivializations of $\xi_{std}$ along iterates of Reeb orbits which project to critical points of the Morse-Smale functions {on} $\CP^1_{p,q}$.   We also provide a change of trivialization formula {for these iterated Reeb orbits} and compute it in the desired settings. Our computation of the ECH index, completed in \S \ref{s:ECHI}, will make use of three different trivializations.  These are the constant trivialization, orbibundle trivialization, and page trivialization.    The ``orbibundle" trivialization provides a global trivialization of $\xi$, which makes computing the relative first Chern numbers and Conley-Zehnder index relatively straightforward, which we carry out in \S \ref{ss:chern} and \S \ref{ss:CZ}, respectively.  Understanding the relative intersection pairing in \S \ref{ss:Q} is more involved and requires a variety of different trivializations and surfaces.

\subsection{Properties of the ECH index}\label{ss:ECHI}

In this section we recall the necessary facts about the ECH index (and clarify some notation), and we prove Proposition \ref{prop:nonsimpleCOT}, which allows us to adjust the terms in the ECH index when using covers of simple Reeb orbits. We first define the ECH index.  To learn more about the wonders of the ECH index see \cite[\S 2]{Hrevisit}.

The definition of the ECH index depends on three components: the relative first Chern number $c_\tau$, which detects the contact topology of the curves; the relative intersection pairing $Q_\tau$, which detects the algebraic topology of the curves; and the Conley-Zehnder terms, which detect the contact geometry of the Reeb orbits. {For Reeb currents $\alpha$ and $\beta$ on $Y$, the set $H_2(Y,\alpha,\beta)$ denotes the 2-chains $Z$ with $\partial Z=\alpha-\beta$, modulo boundaries of 3-chains. }  If $Z \in H_2(Y,\alpha,\beta)$ and $\tau$ is an appropriate trivialization of $\xi$ over the Reeb orbits $\{\alpha_i\}$ and $\{\beta_j\}$, which is symplectic with respect to $d\lambda$, {we define the ECH index as follows:} 
\begin{definition}[ECH index]\label{defn:ECHI} 
Let $\alpha=\{(\alpha_i,m_i)\}$ and $\beta=\{(\beta_j,n_j)\}$ be Reeb currents  in the same homology class, $\sum_i m_i [\alpha_i]=\sum_j n_j [\beta_j]= \Gamma\in H_1(Y).$   Given $Z \in H_2(Y,\alpha,\beta)$, we define the \emph{ECH index} to be
\[
I(\alpha,\beta,Z) =  c_\tau(Z) + Q_\tau(Z) +  CZ^I_\tau(\alpha) -CZ^I_\tau(\beta),
\]
where $CZ^I(\gamma): =  \sum_i \sum_{k=1}^{m_i}CZ_\tau(\gamma_i^k)$.
When $\alpha$ and $\beta$ are clear from context, we use the notation $I(Z)$, and when $\beta=\emptyset$ {and $Z$ is clear from context} we use the notation $I(\alpha)$. The \emph{relative first Chern number} $c_\tau$ is defined in \S\ref{ss:chern}, the \emph{relative intersection pairing} $Q_\tau$ is defined in \S\ref{ss:Q}, and the \emph{Conley-Zehnder index} $CZ_\tau$ is defined in \S \ref{ss:CZ}.
\end{definition}

\begin{remark}
The first Chern number is linear in the multiplicities of the Reeb currents and the relative intersection term is quadratic (see Lemma \ref{lem:formulas}).   The ``total Conley-Zehnder" index term $CZ_\tau^I$ behaves in a complicated way with respect to the multiplicities, depending on the trivialization $\tau$, but is generally quadratic unless a very special trivialization is used (because it is a sum of linear terms, see (\ref{CZ:changetriv})). 
\end{remark}

We next recall the following general properties of the ECH index, cf. \cite[\S 3.3]{Hindex}.

\begin{theorem}[{\cite[Prop.~1.6]{Hindex}}]\label{thm:Iproperties}The ECH index has the following basic properties: \hfill
\begin{enumerate}[{\em (i)}]
\itemsep-.25em
\item {\em(Well Defined)} The ECH index $I(Z)$ is independent of the choice of trivialization.
\item\label{property:indexamb} {\em (Index Ambiguity Formula) }If $Z' \in H_2(Y,\alpha,\beta)$ is another relative homology class, {and $Z-Z'$ is as defined in Remark \ref{rmk:H2}(ii),} then
\[
I(Z)-I(Z')= \langle Z-Z', c_1(\xi) + 2 \mbox{\em PD}(\Gamma) \rangle.
\]
\item {\em (Additivity) } If $\delta$ is another orbit set in the homology class $\Gamma$, and if $W \in H_2(Y,\beta,\delta)$, then $Z+W \in  H_2(Y,\alpha,\delta)$ is defined as in Remark \ref{rmk:H2}(iii) and 
\[
I(Z+W)=I(Z)+I(W).
\]
\item {\em (Index Parity) }If $\alpha$ and $\beta$ are generators of the ECH chain complex (in particular all hyperbolic orbits have multiplicity 1), then $(-1)^{I(Z)} = \varepsilon(\alpha)\varepsilon(\beta),$ where $\varepsilon(\alpha)$ denotes $-1$ to the number of positive hyperbolic orbits in $\alpha$.
\end{enumerate}
\end{theorem}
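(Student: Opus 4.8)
The plan is to derive all four properties from the way the three ingredients $c_\tau$, $Q_\tau$, and $\CZ^I_\tau$ separately transform under (a) change of trivialization, (b) change of relative homology class, and (c) concatenation of relative classes; once these componentwise rules are recorded, each of (i)--(iv) follows by combining them. First I would assemble the three change-of-trivialization formulas from \S\ref{ss:chern}--\S\ref{ss:CZ}: writing $\eta(\gamma)\in\Z$ for the winding number of $\tau$ relative to $\tau'$ along an embedded orbit $\gamma$, the relative first Chern number changes \emph{linearly} in the multiplicities (each orbit of multiplicity $m$ contributing a term proportional to $m\,\eta$), the relative intersection pairing changes \emph{quadratically} (each orbit contributing a term proportional to $m^2\eta$), and the Conley--Zehnder term changes one cover at a time via $\CZ_\tau(\gamma^k)-\CZ_{\tau'}(\gamma^k)=2k\,\eta(\gamma)$.

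For the \textbf{Well Defined} property (i) I would sum the per-cover shift over $k=1,\dots,m_i$, obtaining $\sum_{k=1}^{m_i}2k\,\eta(\alpha_i)=m_i(m_i+1)\eta(\alpha_i)=(m_i+m_i^2)\eta(\alpha_i)$, and observe that this matches in magnitude, and opposes in sign, the combined linear change (proportional to $m_i$) of $c_\tau$ and quadratic change (proportional to $m_i^2$) of $Q_\tau$, with the analogous cancellation along $\beta$; hence $I(Z)=c_\tau(Z)+Q_\tau(Z)+\CZ^I_\tau(\alpha)-\CZ^I_\tau(\beta)$ is independent of $\tau$. For the \textbf{Index Ambiguity Formula} (ii), the difference $Z-Z'$ of two relative classes with the same boundary is a genuine class in $H_2(Y;\Z)$ (Remark \ref{rmk:H2}(ii)); the Conley--Zehnder term does not depend on $Z$, so only $c_\tau$ and $Q_\tau$ contribute, via $c_\tau(Z)-c_\tau(Z')=\langle c_1(\xi),Z-Z'\rangle$ and $Q_\tau(Z)-Q_\tau(Z')=\langle 2\op{PD}(\Gamma),Z-Z'\rangle$, which sum to the claimed pairing with $c_1(\xi)+2\op{PD}(\Gamma)$.

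For \textbf{Additivity} (iii) I would use that $c_\tau$ and $Q_\tau$ are each additive under the concatenation $Z+W$ of Remark \ref{rmk:H2}(iii): additivity of $c_\tau$ follows by counting zeros of a section of $\xi$ that restricts to the chosen trivialization along the shared orbit set $\beta$, while additivity of $Q_\tau$ follows after choosing surface representatives of $Z$ and $W$ that are disjoint away from a collar of $\beta$ and whose $\tau$-framed pushoffs near $\beta$ introduce no extra intersection. The Conley--Zehnder difference telescopes, $(\CZ^I_\tau(\alpha)-\CZ^I_\tau(\beta))+(\CZ^I_\tau(\beta)-\CZ^I_\tau(\delta))=\CZ^I_\tau(\alpha)-\CZ^I_\tau(\delta)$, yielding $I(Z+W)=I(Z)+I(W)$.

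For \textbf{Index Parity} (iv) I would reduce mod $2$ (where the sign ambiguities become irrelevant) and compute the parity of each term in a fixed trivialization, which is legitimate by (i). For an embedded orbit and its covers the Conley--Zehnder parities are fixed: every cover of an elliptic orbit is odd, every cover of a negative hyperbolic orbit has parity equal to its covering multiplicity, and every cover of a positive hyperbolic orbit is even. Since hyperbolic orbits occur with multiplicity $1$ in a generator, $\CZ^I_\tau(\alpha)$ has the parity of the number of elliptic covers in $\alpha$ plus the number of negative hyperbolic orbits in $\alpha$. The remaining, and in my view principal, obstacle is to show that $c_\tau(Z)+Q_\tau(Z)\pmod 2$ supplies exactly the correction term converting this bookkeeping into $(-1)^{\#\{\text{positive hyperbolic}\}}=\varepsilon(\alpha)\varepsilon(\beta)$; concretely one shows $c_\tau+Q_\tau$ is congruent mod $2$ to the total multiplicity of the two orbit sets, which rests on the mod-$2$ relation between $c_\tau+Q_\tau$ and the writhe and linking data of the orbits established in \cite[\S 3.3]{Hindex}, and which I would invoke to close the argument.
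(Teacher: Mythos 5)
Your proposal cannot be checked against an in-paper argument, because the paper offers none: Theorem \ref{thm:Iproperties} is quoted wholesale from \cite[Prop.~1.6]{Hindex}, and the closest the paper comes to a proof is recording the componentwise transformation rules in Lemma \ref{lem:formulas}. Measured against the standard argument in \cite{Hindex}, your treatment of (i)--(iii) is correct and is essentially that argument: the change-of-trivialization shifts $m_i\eta_i$ (for $c_\tau$), $m_i^2\eta_i$ (for $Q_\tau$), and $-(m_i+m_i^2)\eta_i$ (for $\CZ^I_\tau$) cancel termwise, giving (i); the $Z$-dependence of $c_\tau$ and $Q_\tau$, with the factor of $2$ in the latter correctly arising from varying both slots of the symmetric pairing, gives (ii); and linearity of $c_\tau$ and $Q_\tau$ under concatenation plus telescoping of the Conley--Zehnder sums gives (iii).

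Part (iv) is where there is a genuine gap. Your parity bookkeeping for $\CZ^I_\tau$ is right, and your reduction of (iv) to the congruence $c_\tau(Z)+Q_\tau(Z)\equiv\sum_i m_i+\sum_j n_j\pmod 2$ is exactly the correct crux (note that mod $2$ this quantity is independent of $\tau$, since $m\equiv m^2$, and independent of $Z$, since $c_1(\xi)\equiv w_2(\xi)=0$ on an oriented, hence parallelizable, closed $3$-manifold). But you then close the argument by invoking \cite[\S 3.3]{Hindex} --- the very source the theorem is quoted from --- so the single step of the theorem that is not formal bookkeeping is never actually proved. To close it yourself: take an admissible representative $S$ of $Z$ (its interior is embedded by definition); the relative adjunction formula gives $c_\tau(Z)=\chi(S)+Q_\tau(Z)+w_\tau(S)$, hence $c_\tau(Z)+Q_\tau(Z)\equiv\chi(S)+w_\tau(S)\pmod 2$. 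Now $\chi(S)$ is congruent mod $2$ to the total number of boundary circles of $S$, while the writhe of the $m_i$-strand braid of $S$ around $\alpha_i$, whose closure has $c_i^+$ components, satisfies $w\equiv m_i-c_i^+\pmod 2$, because the braid's underlying permutation is a product of one transposition per crossing and a permutation of $m_i$ letters with $c_i^+$ cycles has sign $(-1)^{m_i-c_i^+}$; similarly at the negative ends. Summing over all ends, the component counts from $\chi(S)$ cancel against those from the writhe, leaving $c_\tau(Z)+Q_\tau(Z)\equiv\sum_i m_i+\sum_j n_j\pmod 2$, which combined with your Conley--Zehnder bookkeeping yields $(-1)^{I(Z)}=\varepsilon(\alpha)\varepsilon(\beta)$.
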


Next we collect facts about the components of the ECH index, upon which we will rely heavily.  {We first fix some notation for trivializations, used throughout this section and the next. Given a nondegenerate Reeb orbit $\gamma:\R/T\Z\to Y$,  denote the set of homotopy classes of symplectic trivializations of the 2-plane bundle $\gamma^*\xi$ over $S^1=\R/T\Z$ by $\mathcal{T}(\gamma)$.   After fixing trivializations $\tau_i^+ \in \mathcal{T}(\alpha_i)$ for each $i$ and $\tau_j^- \in \mathcal{T}(\beta_j)$, we denote this set of trivialization choices by $\tau \in \mathcal{T}(\alpha,\beta)$.   The trivialization $\tau$ determines a trivialization of $\xi|_C$ over the ends of $C$ up to homotopy.  }  As spelled out in \cite[Rem.~2.4]{preech}, we use the sign convention that if $\tau_1, \ \tau_2: \gamma^*\xi \to S^1 \times \R^2$ are two trivializations then
\begin{equation}\label{eqn:trivchange}
\tau_1 - \tau_2 = \mbox{deg} \left(\tau_2 \circ \tau_1^{-1} : S^1 \to \mbox{Sp}(2,\R) \cong S^1\right).
\end{equation}

\begin{lemma}\label{lem:formulas} Let $Z, Z_1, Z_2\in H_2(Y,\alpha,\beta)$, $Z'\in H_2(Y,\alpha',\beta')$, and $W\in H_2(Y,\beta,\delta)$. We have:
\begin{enumerate}[{\em (i)}]
\itemsep-.25em
\item \emph{(Dependence on $Z$: \cite[(5), Lem.~2.5~(a)]{Hindex})} The relative first Chern number depends only on $\alpha, \beta, \tau,$ and $Z$, that is,
\begin{equation}
c_\tau(Z_1) - c_\tau(Z_2) = \langle c_1(\xi), Z_1 - Z_2 \rangle.
\end{equation}
Similarly, the relative intersection pairing depends only on $\alpha,\beta,\tau, Z$, and $Z'$, that is,
\[
Q_\tau(Z_1,Z')-Q_\tau(Z_2,Z')=[Z_1-Z_2]\cdot\Gamma.
\]

\item\label{item:clin} \emph{(Linearity with respect to concatenation)} Using trivializations which agree over the orbits in $\beta$, the relative first Chern number and relative intersection pairing are linear with respect to concatenation addition\footnote{See Remark \ref{rmk:H2}(iii) for a precise definition.}:
\[
\begin{split}
c_\tau(Z+W)=  & \ c_\tau(Z)+c_\tau(W), \\
Q_\tau(Z+W,Z'+W')= & \  Q_\tau(Z,Z')+Q_\tau(W,W').
\end{split}
\]

\item\label{item:linbil} \emph{(Linearity/bilinearity with respect to union: \cite[(3.11)]{Hrevisit})} The relative first Chern number is linear with respect to union addition\footnote{See Remark \ref{rmk:H2}(iv) for a precise definition.},
\[
c_\tau(Z\uplus Z')=c_\tau(Z)+c_\tau(Z'),
\]
while the relative intersection pairing is symmetric, and it is bilinear with respect to union addition,
\begin{align*}
Q_\tau(Z,Z')&=Q_\tau(Z',Z)
\\Q_\tau(Z\uplus W,Z')&=Q_\tau(Z,Z')+Q_\tau(W,Z').
\end{align*}

\item\label{item:cot} \emph{(Change of trivialization: \cite[(6), Lem.~2.5~(b)]{Hindex} and \cite[(2.12)]{Hrevisit})} Given another collection of trivialization choices $\tilde\tau = \left( \{ {\tilde\tau}_i^+ \}, \{ {\tilde\tau}_j^-\} \right) \in \mathcal{T}(\alpha,\beta)$ over the Reeb currents $\alpha = \{ (\alpha_i,m_i)\}$ and $\beta = \{ (\beta_j,n_j)\}$, we have
\begin{align}
c_\tau(Z) - c_{\tilde\tau}(Z) &= \sum_i m_i\left(\tau_i^+-{\tilde\tau}_i^+\right) - \sum_j n_j \left(\tau_j^--{\tilde\tau}_j^-\right), \label{cherntriv} \\
Q_\tau(Z,Z') - Q_{\tilde\tau}(Z,Z') & = \sum_i m_i m_i' (\tau_i^+ - \tilde\tau_i^+) - \sum_i n_j n_j' (\tau_i^- - \tilde\tau_i^-), \label{Qtriv}\\
CZ_\tau^I(\alpha) - CZ_{\tilde\tau}^I(\alpha) & = \sum_i (m_i^2+m_i)(\tilde \tau_i - \tau_i). \label{CZItriv} 
\end{align}
\end{enumerate}
\end{lemma}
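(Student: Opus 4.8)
The plan is to treat Lemma \ref{lem:formulas} as a consolidation of the standard structural properties of the ECH index components established by Hutchings in \cite{Hindex, Hrevisit}, reducing each assertion to the corresponding cited statement and supplying a direct argument from the definitions for the one property, concatenation linearity, which is not quoted verbatim there. Throughout I would keep careful track of the sign convention \eqref{eqn:trivchange} for comparing trivializations, since the final formula \eqref{CZItriv} is where that convention does real work.

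For part (i), the point is that the difference class $Z_1-Z_2$ is represented by a \emph{closed} surface in $H_2(Y)$, so the relative first Chern number, defined as a signed count of zeros of a generic section of $\xi$ that is nonvanishing and constant with respect to $\tau$ near $\partial Z$, reduces on $Z_1-Z_2$ to the absolute pairing $\langle c_1(\xi), Z_1-Z_2\rangle$; this is exactly \cite[Lem.~2.5~(a)]{Hindex}. The analogous statement for the relative intersection pairing, $Q_\tau(Z_1,Z')-Q_\tau(Z_2,Z')=[Z_1-Z_2]\cdot\Gamma$, follows the same way: the pairing of the closed class $Z_1-Z_2$ computes the absolute intersection number $[Z_1-Z_2]\cdot\Gamma$, again by \cite[Lem.~2.5~(a)]{Hindex}. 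Part (iii) is \cite[(3.11)]{Hrevisit}: linearity of $c_\tau$ and symmetry and bilinearity of $Q_\tau$ under the union operation (Remark \ref{rmk:H2}(iv)) are immediate from the additivity of the defining section and intersection counts over disjoint unions of representatives. The first two change-of-trivialization formulas \eqref{cherntriv} and \eqref{Qtriv} are \cite[Lem.~2.5~(b)]{Hindex} together with \cite[(2.12)]{Hrevisit}, so I would simply cite them.

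The remaining two items I would argue directly. For part (ii), I would use that a representative of $Z+W$ (Remark \ref{rmk:H2}(iii)) is obtained by gluing a representative of $Z$ to one of $W$ along the common ends over $\beta$; because the chosen trivializations agree over $\beta$, the section computing $c_\tau$ glues across the seam without creating or cancelling zeros, giving $c_\tau(Z+W)=c_\tau(Z)+c_\tau(W)$, and the intersection count defining $Q_\tau$ splits as $Q_\tau(Z+W,Z'+W')=Q_\tau(Z,Z')+Q_\tau(W,W')$ since the glued representatives meet only in the interiors of the respective pieces. For the last formula \eqref{CZItriv}, I would start from the single-orbit change-of-trivialization rule for the Conley--Zehnder index (cf.~\cite{lecture}), $\CZ_\tau(\alpha_i^k)-\CZ_{\tilde\tau}(\alpha_i^k)=2k(\tilde\tau_i-\tau_i)$, which records that passing to the $k$-th iterate multiplies the relevant winding number by $k$, with the sign fixed by \eqref{eqn:trivchange}. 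Summing over $k$ via the definition $\CZ^I_\tau(\alpha)=\sum_i\sum_{k=1}^{m_i}\CZ_\tau(\alpha_i^k)$ and the identity $\sum_{k=1}^{m_i}2k=m_i^2+m_i$ produces exactly $\sum_i(m_i^2+m_i)(\tilde\tau_i-\tau_i)$.

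The genuinely delicate step is this last computation: unlike $c_\tau$ and $Q_\tau$, the total Conley--Zehnder term is not (bi)linear in the multiplicities, and its change under a trivialization shift accumulates \emph{quadratically} precisely because $\CZ^I_\tau$ sums over all iterates $\alpha_i^k$ rather than evaluating a single bundle-theoretic quantity. The main obstacle is therefore bookkeeping: one must correctly identify the per-iterate winding contribution, confirm the factor $k$ (rather than a constant) multiplying $(\tilde\tau_i-\tau_i)$, and verify the arithmetic $\sum_{k=1}^{m_i}2k=m_i(m_i+1)$, all while respecting the orientation convention of \eqref{eqn:trivchange} so that $\tilde\tau_i-\tau_i$ appears with the correct sign. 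Once these conventions are pinned down, the remaining verifications are routine.
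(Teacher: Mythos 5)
Your proposal is correct and matches the paper's treatment: the paper likewise disposes of (i), (iii), and (iv) by citing \cite{Hindex} and \cite{Hrevisit}, and handles the uncited linearity properties with the remark that they "follow immediately from the definitions of $c_\tau$ and $Q_\tau$." Your extra details — the gluing argument for concatenation additivity and the summation $\sum_{k=1}^{m_i}2k = m_i^2+m_i$ applied to \eqref{CZ:changetriv} to get \eqref{CZItriv} — are exactly the routine verifications the paper leaves implicit (the latter is even flagged in the remark following the lemma, which notes $CZ^I_\tau$ is quadratic "because it is a sum of linear terms").
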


In Lemma \ref{lem:formulas}, the properties that are given without citation (the linearity properties) follow immediately from the definitions of $c_\tau$ and $Q_\tau$.

\begin{remark}\label{rmk:formulas}
\begin{enumerate}[{ (i)}]
\itemsep-.25em
\item {When $Y$ is $S^3$}, Lemma \ref{lem:formulas} (i) will not be used because each $H_2(S^3,\alpha,\beta)$ contains only one element, as in Remark \ref{rmk:H2} (i).

\item A consequence of (\ref{item:linbil}) is that for union addition
\[
Q_\tau(Z\uplus Z',Z\uplus Z')=Q_\tau(Z)+2Q_\tau(Z,Z')+Q_\tau(Z'),
\]
thus $Q_\tau(mZ)=m^2Q_\tau(Z).$

\item In Lemma \ref{lem:formulas} (iii) it may be the case that $\gamma_i$ does not appear in $\alpha'$; in this case $m_i'=0$, and similarly when $\gamma_j$ does not appear in $\beta'$ then $n_j'=0$. The trivialization $\tau$ is a trivialization of $\xi$ over all Reeb orbits in the sets $\alpha, \alpha', \beta,$ and $\beta'$.  As a consequence, when $\alpha$ and $\alpha'$ share no underlying orbits and $\beta$ and $\beta'$ share no underlying orbits then $Q_\tau(Z,Z')$ is independent of $\tau$. 
\end{enumerate}
\end{remark}

\subsection{Trivializations and surfaces for $T(2,q)$ fibrations}\label{ss:trivs} 

The Reeb orbit $b$ realizing the binding is a regular fiber, so there are three trivializations which can be used: the constant $\tau_0$, page $\tau_\Sigma$, and orbibundle $\tau_{orb}$ trivialization.  The constant trivialization is not available for fibers which project to orbifold points.   The {constant} trivialization can be geometrically ``extended" from the binding $b$ to obtain trivializations along $h^2$ and $e^q$. {See Remarks \ref{rmk:0eh}-\ref{rmk:T2q0Sig}.}   

Since some trivializations are easier to work with than others, we establish some change in trivialization formulas to relate them.  Additional details on how various components of the ECH index are impacted by changes of trivialization are given in \S \ref{ss:ECHI}. 

\begin{remark} In our computations of the relative first Chern number, Conley-Zehnder index, and relative self intersection pairing, we minimize the number of subscripts used distinguish trivializations by suppressing $\tau$. This means we use the notation
\[
c_0:=c_{\tau_0}, \quad Q_0:=Q_{\tau_0}, \quad CZ_0:=CZ_{\tau_0},
\]
and the obvious analogues for other trivializations.
\end{remark}

We now define the trivializations which we will use throughout our computations.  We do this via surfaces, and thus first note several facts about $H_2(Y,\alpha,\beta)$ which will help us describe the classes in which those surfaces live.
\begin{remark}\label{rmk:H2}

{Relative} homology classes in $H_2(Y,\alpha,\beta)$ admit the following properties.
\begin{enumerate}[{(i)}]
\itemsep-.25em
\item Since $H_2(Y,\alpha,\beta)$ is affine over $H_2(Y)$, when $Y=S^3$ or $L(k,\ell)$ it contains only one element. When $\beta=\emptyset$ we denote this element by $Z_\alpha$.
\item Since $H_2(Y,\alpha,\beta)$ is affine over $H_2(Y)$, if $Z,Z'\in H_2(Y,\alpha,\beta)$ then $Z-Z'$ denotes their difference as an element of $H_2(Y)$.
\item Let $\delta$ be another Reeb current in the homology class $[\alpha]=[\beta]$. If $Z\in H_2(Y,\alpha,\beta)$ and $W\in H_2(Y,\beta,\delta)$, we can define their sum
\[
Z+W\in H_2(Y,\alpha,\delta),
\]
by gluing representatives along $\beta$. We will refer to this as ``\emph{concatenation addition}'' and  denote it by $+$.
\item When $Z\in H_2(Y,\alpha,\beta)$ and $Z'\in H_2(Y,\alpha',\beta')$, we can define their sum
\[
Z+Z'\in H_2(Y,\alpha\alpha',\beta\beta'),
\]
where concatenation of Reeb currents denotes union of the underlying orbits with corresponding multiplicities added. We will refer to this as ``\emph{union addition}'' and denote it by $\uplus$. Furthermore, we will use the notation
\[
mZ:=\underbrace{Z\uplus\cdots\uplus Z}_{m}.
\]
\end{enumerate}
\end{remark}

We now describe the surface representatives of the elements of relevant $H_2(S^3,\alpha,\beta)$ sets. We use $[S]$ to denote the equivalence class $Z$ in $H_2(Y,\alpha,\beta)$ of a surface $S$ in $Y$ with boundary on $\alpha-\beta$.

\begin{definition}[Surfaces for $T(2,q)$]\label{def:sfcs2} 

We define the following three surfaces in $S^3$, where $\cup_b$ means that we are attaching two surfaces along their common boundary $b$: 

\begin{itemize}
\itemsep-.25em
\item The surface $\Sigma$, which has one boundary component and genus $(q-1)/2$, is the page of the open book decomposition discussed in \S\ref{ss:OBDs}. Thus $\partial\Sigma=b=T(2,q)$ and $[\Sigma]=Z_b$.

\item The surface $S_e$ is the preimage under $\fp$ of a short ray connecting $\fp(e)$ to a regular fiber $b$. Thus $\partial S_e=e^q-b$ and $[S_e\cup_b\Sigma]=[S_e]+[\Sigma]=Z_{e^q}=qZ_e$. \item The surface $S_h$ is the preimage under $\fp$ of a short ray connecting $\fp(h)$ to a regular fiber $b$. Thus $\partial S_h={h^2}-b$ and $[S_h\cup_b\Sigma]=[S_h]+[\Sigma]=Z_{{h^2}}={2}Z_h$.

\end{itemize}

\end{definition}

Assume $\gamma\in\partial S$. A trivialization $\tau$ over an orbit $\gamma$ ``has linking number zero with respect to $S$" or ``is the $S$-trivialization" if the pushoff of $\gamma$ into $S$ is considered to be constant with respect to $\tau$; see Remarks \ref{rem:linkingtriv} and \ref{rem:c1sl}. We denote such a trivialization by $\tau_S$.

\begin{remark}[Trivializations for $T(2,q)$]\label{rmk:tau2q}
The trivializations we will use are:
\begin{itemize}
\itemsep-.25em
\item The page trivialization $\tau_\Sigma$ over $b$,

\item The \emph{constant trivialization} $\tau_0$ over $b$, {which} has two related surface trivializations (see Remark \ref{rmk:0eh}):
\begin{itemize}
\itemsep-.25em
\item[*] $\tau_e:=\tau_{S_e}$ over $b$ and $e^q$,
\item[*] $\tau_h:=\tau_{S_h}$ over $b$ and ${h^2}$.
\end{itemize}

\item The \emph{orbibundle trivialization} $\tau_{orb}$ over $b, e$, and $h$; this can be used as a black box and is a pullback trivialization as explained and utilized in \cite[\S 3]{hong-cz} for Conley-Zehnder computations of fibers in terms of the orbifold Chern class of the base $\CP^1_{2,q}$; cf. \S\ref{ss:orbitau}.
\end{itemize}
{We will use $\tau(\gamma)$ to indicate the orbit to which we restrict the trivialization.}
\end{remark}

See \S\ref{ss:consttau} for details on the constant trivialization, including the computation of the Conley-Zehnder index. It has the following topological relationships to the surfaces. 
\begin{definition}\label{def:consttriv}
The \emph{constant trivialization} $\tau_0$ can be defined for any orbit $\gamma$ which is a fiber of a prequantization bundle (as in \cite{preech}) or regular fiber of a prequantization orbibundle $\fp:Y\to\Sigma$. It is the trivialization in which the unperturbed linearized Reeb flow is the identity, meaning an identification of $\xi|_\gamma$ with $T_{\fp(\gamma)}\Sigma\times S^1$, which is homotopic to the trivialization which restricts to $\fp_*$ on each contact plane.
\end{definition}

\begin{remark}\label{rmk:0eh} In Definition \ref{def:consttriv}, the condition that the linearized Reeb flow be the identity makes sense over any fiber orbit whose neighbors are all regular fibers as well, that is, even over exceptional fibers. The only necessary adjustment is that we take a cover of the orbit whose covering multiplicity is a multiple of the corresponding orbifold point's isotropy. However, it is simpler to define these as the trivializations having linking number zero with respect to a surface that is a union of nearby fibers, such as $S_e$ or $S_h$. One of the boundaries of this surface will be a cover of an exceptional fiber whose order is that fiber's orbifold point's isotropy. For example, in the case of $S_e$, it will be a $q$-fold cover, and in the case of $S_h$, a 2-fold cover. For a visualization of such a surface, see the mesh surface in \cite[Fig.~3]{weiler}. 
\end{remark}
\begin{remark}[$T(2,q)$ constant and surface trivializations]\label{rmk:T2q0Sig}
Lemma \ref{lem:taudiff2}(i) shows that $\tau_0(b)=\tau_e(b)=\tau_h(b)$. Moreover, we can think of the trivializations $\tau_e$ and $\tau_h$ as extensions of the constant trivialization over $e^q$ and $h^2$. This heuristic is supported by computing the Conley-Zehnder indices. Combining Proposition \ref{prop:nonsimpleCOT}, Lemma \ref{lem:orbtrivCZ2}, and Lemma \ref{lem:e3h2diff} shows that
\begin{align*}
CZ_e(e^{qk})&=CZ_{orb}(e^{qk})+2k(\tau_{orb}(e^q)-\tau_e(e^q))=2(2+q)k-1+2k(-2-q)=-1;
\\CZ_h({h^{2k}})&=CZ_{orb}({h^{2k}})+2k(\tau_{orb}({h^2})-\tau_h({h^2}))=2({2+q})k+2k({-2-q})=0.
\end{align*}
These are the values taken if they were regular fibers, analogous to \cite[Lem.~3.9]{preech}.
\end{remark}

\subsection{General change of trivialization formulae}

We now review and compute some changes of trivialization.  There are many natural trivializations only defined over covers of simple orbits: see \cite[\S5.1]{weiler}, where trivializations of contact structures over rationally (but not necessarily integrally) nullhomologous knots are used to define the knot filtration when $b_1=0$ but $H_1\neq0$. If $\gamma$ is an embedded orbit, then for every trivialization of $\xi$ over $\gamma^k$ which does arise as the $k$-fold cover of a trivialization of $\xi$ over $\gamma$, there will be $k-1$ trivializations which do not.

For example, in the $T(2,q)$ setting, our trivializations $\tau_e$ and $\tau_h$ over $e^q$ and $h^2$, respectively (see Remark \ref{rmk:tau2q}), are of this type, while $\tau_{orb}$ is a trivialization over the underlying embedded orbits $e$ and $h$. Since it is much easier to find $\tau$-representatives when $\tau=\tau_e, \tau_h$ rather than when $\tau=\tau_{orb}$, we require change-of-trivialization formulas for trivializations over covers of simple orbits in order to prove Lemma \ref{lem:QeQh}, which computes the relative self-intersection numbers of $e$ and $h$.

\begin{proposition}\label{prop:nonsimpleCOT} Modifications of the formulas in Lemma \ref{lem:formulas} and (\ref{CZ:changetriv}) hold for trivializations over covers of embedded orbits. Specifically, assume
\begin{itemize}
\itemsep-.25em
\item $\gamma$ is an embedded orbit,
\item $\tau(\gamma^k)$ and $\tilde\tau(\gamma^k)$ are trivializations over a cover $\gamma^k$ of a simple orbit $\gamma$,
\item $\alpha=\{(m_i,\gamma_i)\}\cup\{(mk,\gamma)\}$ and $\alpha'=\{(m_i',\gamma_i')\}\cup\{(m'k,\gamma)\}$, {where no $\gamma_i$ or $\gamma_i'$ is $\gamma$,}
\item $\tau$ and $\tilde\tau$ extend to elements of $\mathcal{T}(\alpha,\beta)$, and
\item $Z\in H_2(Y,\alpha,\beta)$ and $Z'\in H_2(Y,\alpha',\beta')$.
\end{itemize}
Then 
\begin{enumerate}[{\em (i)}]
\itemsep-.25em
\item $c_\tau(Z)-c_{\tilde\tau}(Z)=m\left(\tau(\gamma^k)-\tilde\tau(\gamma^k)\right)+\sum_im_i\left(\tau^+_i-\tilde\tau^+_i\right)-\sum_jn_j\left(\tau^-_j-\tilde\tau^-_j\right)$
\item $Q_\tau(Z,Z')-Q_{\tilde\tau}(Z,Z')=mm'k\left(\tau(\gamma^k)-\tilde\tau(\gamma^k)\right)+\sum_im_im'_i\left(\tau^+_i-\tilde\tau^+_i\right)-\sum_jn_jn'_j\left(\tau^-_j-\tilde\tau^-_j\right)$
\item $CZ_\tau(\gamma^{mk})-CZ_{\tilde\tau}(\gamma^{m'k})=2m(\tilde\tau(\gamma^k)-\tau(\gamma^k))$
\end{enumerate}
\end{proposition}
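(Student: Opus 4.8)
The plan is to reduce Proposition~\ref{prop:nonsimpleCOT} to the already-established change-of-trivialization formulas in Lemma~\ref{lem:formulas}(iv) and equation~(\ref{CZ:changetriv}) by the standard device of replacing a cover $\gamma^k$ of a simple orbit with an embedded orbit in an auxiliary ambient space. The point is that the formulas in Lemma~\ref{lem:formulas} are stated for trivializations that arise from trivializations over the \emph{embedded} underlying orbits, but the trivializations $\tau(\gamma^k)$ and $\tilde\tau(\gamma^k)$ appearing here need not be $k$-fold covers of trivializations over $\gamma$. The key observation is that the relevant quantities $c_\tau$, $Q_\tau$, and $CZ_\tau$ depend only on the trivialization of $\xi$ along the relevant ends and the homotopy-theoretic data, so the difference of two trivializations over the \emph{same} cover $\gamma^k$ is computed just as in the embedded case, but the multiplicity bookkeeping changes.

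First I would isolate the contribution of the orbit $\gamma$ and treat the remaining orbits $\gamma_i, \gamma_i', \beta_j, \beta_j'$ exactly as in Lemma~\ref{lem:formulas}(iv), since over those orbits $\tau$ and $\tilde\tau$ genuinely differ by covers of trivializations over embedded orbits and the usual formulas apply verbatim. This produces the sums $\sum_i m_i(\tau_i^+ - \tilde\tau_i^+)$, $\sum_j n_j(\tau_j^- - \tilde\tau_j^-)$, and their analogues for $Q$, leaving only the $\gamma$-term to be analyzed. For the first Chern number $c_\tau$, the relevant quantity is the signed count of zeros of a section over a surface $Z$ with boundary on $\gamma^{mk}$ (among other orbits); changing the trivialization along the boundary component at $\gamma^{mk}$ by the integer $\tau(\gamma^k)-\tilde\tau(\gamma^k)$, measured over the embedded orbit $\gamma$, contributes $m$ copies of this winding difference because the total multiplicity $mk$ wraps $m$ times around the $k$-fold cover. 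This yields the term $m(\tau(\gamma^k)-\tilde\tau(\gamma^k))$ in (i).

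For the relative intersection pairing $Q_\tau$ in (ii), the key is that $Q$ is bilinear and that the self-linking correction from changing a trivialization over an orbit of multiplicity $mk$ (on the $Z$ side) and $m'k$ (on the $Z'$ side) is governed by the winding difference measured over the embedded orbit $\gamma$. Here the correct coefficient is $mm'k$: the total multiplicities are $mk$ and $m'k$, but because $\tau(\gamma^k)$ is a trivialization over the $k$-fold cover, the winding difference per unit of embedded-orbit multiplicity is $\frac{1}{k}$ of the winding over $\gamma^k$, so the product $(mk)(m'k)\cdot\frac{1}{k}=mm'k$ emerges. I would justify this carefully by writing $Q_\tau(Z,Z')-Q_{\tilde\tau}(Z,Z')$ as a linking-number correction and tracking how the two boundary components at $\gamma$ contribute. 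For the Conley-Zehnder term (iii), I would apply (\ref{CZ:changetriv}) directly to the single orbit $\gamma^{mk}$: changing the trivialization from $\tilde\tau(\gamma^k)$ to $\tau(\gamma^k)$ shifts $CZ$ by twice the winding difference times the multiplicity measured appropriately, giving $2m(\tilde\tau(\gamma^k)-\tau(\gamma^k))$, where the factor $m$ (rather than $mk$) again reflects that the trivialization lives over the $k$-fold cover.

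The main obstacle I anticipate is getting the multiplicity factors exactly right, in particular distinguishing the coefficient $m$ in (i) and (iii) from the coefficient $mm'k$ in (ii), and being precise about what ``$\tau(\gamma^k)-\tilde\tau(\gamma^k)$'' means as an integer when $\tau(\gamma^k)$ is not pulled back from $\gamma$. The cleanest way to handle this is to fix a reference trivialization $\tau_0(\gamma)$ over the embedded orbit, write both $\tau(\gamma^k)$ and $\tilde\tau(\gamma^k)$ relative to its $k$-fold cover $\tau_0(\gamma)^k$, and reduce each statement to the embedded-orbit formulas of Lemma~\ref{lem:formulas} applied with multiplicities $mk$ and $m'k$, then convert back using the relation $\tau(\gamma^k)-\tilde\tau(\gamma^k)$ as a difference of integers in $\mathcal{T}(\gamma^k)$. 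I expect the bilinear term to be the most delicate, since the cross terms in the union-addition expansion of $Q$ must be matched against the $mm'k$ coefficient, and a sign or factor-of-$k$ error is easy to introduce there.
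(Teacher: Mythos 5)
Your parts (i) and (iii) are correct and are essentially the paper's own proofs: one runs the standard arguments with $\gamma^k$ playing the role of the embedded orbit (for (i), take a representative whose ends on covers of $\gamma$ are $m$ ends on $\gamma^k$; for (iii), apply (\ref{CZ:changetriv}) to the orbit $\gamma^k$ and its $m$-fold iterate). The gap is in (ii), which you correctly flag as the delicate point but never actually close. Both justifications you offer fail precisely in the only case where the proposition says something new, namely when $\tau(\gamma^k)-\tilde\tau(\gamma^k)$ is not divisible by $k$. The scaling heuristic (``the winding difference per unit of embedded-orbit multiplicity is $\frac{1}{k}$ of the winding over $\gamma^k$'') presupposes that the trivialization difference can be redistributed over the embedded orbit, i.e.\ that both trivializations are pullbacks from $\gamma$; for a non-pullback difference, ``$\frac{1}{k}$ of a twist'' is not a change of trivialization over $\gamma$, and (\ref{Qtriv}) cannot be invoked. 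Your fallback --- fixing a reference pullback $\tau_0(\gamma)^k$ and ``reducing to the embedded-orbit formulas of Lemma \ref{lem:formulas} with multiplicities $mk$ and $m'k$'' --- is circular: Lemma \ref{lem:formulas}(iv) only computes the effect of changing one pullback trivialization to another, so computing $Q_\tau(Z,Z')-Q_{\tau_0(\gamma)^k}(Z,Z')$ for a non-pullback $\tau(\gamma^k)$ is an instance of Proposition \ref{prop:nonsimpleCOT}(ii) itself, not of Lemma \ref{lem:formulas}.

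What is missing is the geometric input at the level of braids, which is exactly what the paper's proof supplies. Fix one end of $S$ and one end of $S'$ on $\gamma^k$ (the factor $mm'$ counts the ways of matching such ends), giving $k$-stranded braids $\zeta,\zeta'$ in a tubular neighborhood of $\gamma$. With respect to the identification of this neighborhood determined by $\tau$, passing to $\tilde\tau$ amounts to inserting $\tau(\gamma^k)-\tilde\tau(\gamma^k)$ meridian twists on one strand of $\zeta$ and one strand of $\zeta'$; each such twist links once with each of the $k$ strands of the other braid, so
\[
\ell_{\tilde\tau}(\zeta,\zeta')=\ell_\tau(\zeta,\zeta')+k\left(\tau(\gamma^k)-\tilde\tau(\gamma^k)\right),
\]
which is where the coefficient $mm'k$ comes from. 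Alternatively, your reduction could be repaired by first proving that the correction to $Q$ is additive in the integer difference $\tau(\gamma^k)-\tilde\tau(\gamma^k)$ and then evaluating on differences in $k\Z$, where your pullback computation is valid; but that additivity is itself a statement about how changes of framing act on linking numbers of $k$-stranded braids, so some version of the braid computation is unavoidable.
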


{Note that the proof of Proposition \ref{prop:nonsimpleCOT} relies on the definitions of $c_\tau, Q_\tau$, and $CZ_\tau$ in \S\ref{ss:chern}-\ref{ss:Q}. We include it here because of the similarities between the result and those in \S \ref{ss:ECHI} and because it is motivated by the trivializations defined in \S\ref{ss:trivs}.}

\begin{proof}
The proofs of (i) and (iii) are the same as in the case of a simple orbit. For (i), replace the curve bounded by $\gamma$ with one bounded by $\gamma^k$; for (iii), as the Conley-Zehnder index is defined for any Reeb orbit, simply replace the Reeb orbit $\gamma$ with $\gamma^k$.

To prove (ii), let $S$ and $S'$ be admissible representatives of $Z$ and $Z'$, respectively. It is enough to consider a single pair of ends, on each of $S, S'$, on $\gamma^k$ (the coefficient $mm'$ arises from the number of ways to match these ends of $S$ and $S'$), determining braids $\zeta$ and $\zeta'$. Our goal is to show
\[
\ell_{\tilde\tau}(\zeta,\zeta')=\ell_\tau(\zeta,\zeta')+k(\tau(\gamma^k)-\tilde\tau(\gamma^k)).
\]

In the tubular neighborhood of $\gamma$ identified by $\tau$ with $S^1\times\D^2$, arrange $\zeta$ and $\zeta'$ so that there is an interval (in the $S^1$ factor) on which the $k$ strands of $\zeta$ project to a set $\{r_1,0\},\dots,\{r_k,0\}$ of points in $\D^2$, the $k$ strands of $\zeta'$ project to $\{(r_{k+1},0),\dots,(r_{2k},0)\}$, and $r_1<\cdots<r_{2k}$. The effect of changing the trivialization from $\tau$ to $\tilde\tau$ can be expressed by adding $\tau(\gamma^k)-\tilde\tau(\gamma^k)$ copies of the meridian of $S^1\times\D^2$ to the strands projecting to both $(r_k,0)$ and $(r_{k+1},0)$. This adds exactly $k(\tau(\gamma^k)-\tilde\tau(\gamma^k))$ to the linking number of $\zeta$ and $\zeta'$, because the strand projecting to $(r_{k+1},0)$ now links $\tau(\gamma^k)-\tilde\tau(\gamma^k)$ times with each strand of $\zeta$ (including the one projecting to $(r_k,0)$, which has also been twisted). We have thus proved (ii).
\end{proof}

  We now collect the various change in trivialization formulas. Some proofs require the relative first Chern number and Conley-Zehnder computations carried out in \S \ref{ss:chern}-\ref{ss:CZ} as a black box.

\subsection{Changes of trivializations for $T(2,q)$ fibrations}\label{ss:cotTpq}
We now collect the change in trivialization formulas for the binding.  

\begin{lemma}\label{lem:taudiff2} For trivializations defined along the binding orbit $b$ realizing $T(2,q)$, we have:
\begin{enumerate}[{\em (i)}]
\itemsep-.25em
\item $\tau_e(b)=\tau_h(b)=\tau_0(b)$,
\item $\tau_0(b)-\tau_\Sigma(b)=2q$,
\item $\tau_0(b)-\tau_{orb}(b)=2+q$, 
\item $\tau_{orb}(b)-\tau_\Sigma(b)=q-2$.
\end{enumerate}
\end{lemma}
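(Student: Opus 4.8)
The plan is to prove the four identities in order, reading off the framings from the geometry of the surfaces $\Sigma, S_e, S_h$ of Definition \ref{def:sfcs2}, from fiber linking numbers, and from the change-of-trivialization formulas, and deducing the last identity by arithmetic. For (i) I would argue directly from Definition \ref{def:consttriv} and Remark \ref{rmk:0eh}: the constant trivialization $\tau_0$ over the \emph{regular} fiber $b$ is, up to homotopy, the trivialization with linking number zero relative to any surface that is locally a union of nearby fibers. Since $S_e$ (resp.\ $S_h$) is by construction $\fp^{-1}$ of a short ray terminating at $\fp(b)$, near $b$ it is exactly such a union of regular fibers; hence the $S_e$- and $S_h$-pushoffs of $b$ coincide with the constant pushoff, giving $\tau_e(b)=\tau_h(b)=\tau_0(b)$.

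For (ii) I would compute $\tau_0(b)-\tau_\Sigma(b)$ as a difference of linking numbers of pushoffs, using the sign convention (\ref{eqn:trivchange}) together with Remarks \ref{rem:linkingtriv} and \ref{rem:c1sl}. The page trivialization $\tau_\Sigma$ is the Seifert framing, whose pushoff bounds in $\Sigma$ and so has linking number $0$ with $b$. The constant pushoff of $b$ is a nearby regular fiber, and two regular fibers of the Seifert fibration of Proposition \ref{prop:SeifertInvts} link $pq=2q$ times (this is $-1/e(Y)$ for the Euler number $e(Y)=-\tfrac{1}{2q}$, and reduces to the Hopf case when $p=q=1$). Thus $\tau_0(b)-\tau_\Sigma(b)=2q-0=2q$.

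For (iii) I would use the Conley--Zehnder computations of \S\ref{ss:chern}--\ref{ss:CZ} as a black box and extract the framing difference from the simple-orbit change-of-trivialization formula, cf.\ (\ref{CZItriv}) and Proposition \ref{prop:nonsimpleCOT}(iii). Over the index-$2$ critical point the constant-trivialization index is $CZ_0(b^m)=1$ for every iterate in the relevant action window (the value $\mathrm{ind}-1$, matching the pattern recorded in Remark \ref{rmk:T2q0Sig}), while Lemma \ref{lem:orbtrivCZ2} gives $CZ_{orb}(b^m)=2(2+q)m+1$. Substituting into $CZ_0(b^m)-CZ_{orb}(b^m)=2m(\tau_{orb}(b)-\tau_0(b))$ gives $-2(2+q)m=2m(\tau_{orb}(b)-\tau_0(b))$, hence $\tau_0(b)-\tau_{orb}(b)=2+q$; reassuringly the constant $+1$ offsets cancel, as they must since a change of trivialization alters $CZ$ only by a term linear in $m$. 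Finally (iv) follows by arithmetic, $\tau_{orb}(b)-\tau_\Sigma(b)=(\tau_0(b)-\tau_\Sigma(b))-(\tau_0(b)-\tau_{orb}(b))=2q-(2+q)=q-2$, which I would note is exactly the maximal self-linking number $pq-p-q$ of $T(2,q)$, providing an independent consistency check.

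The main obstacle is pinning down the two absolute inputs with the correct signs: the linking number $2q$ of two regular fibers in (ii), and the orbibundle Conley--Zehnder value in (iii). The linking computation must be reconciled with the orientation and sign convention (\ref{eqn:trivchange}), and the Conley--Zehnder route rests on the computations of \S\ref{ss:chern}--\ref{ss:CZ}; I would therefore invoke only the structural facts (linearity in $m$, the $\mathrm{ind}-1$ normalization of $CZ_0$, and the explicit $CZ_{orb}$ of Lemma \ref{lem:orbtrivCZ2}) so that the two $+1$ offsets cancel and the extraction of the single integer $\tau_0(b)-\tau_{orb}(b)$ is unambiguous.
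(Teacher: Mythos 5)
Your proof is correct, and for parts (i), (iii), and (iv) it follows the paper's own argument essentially verbatim: (i) via the observation that a nearby fiber is a constant pushoff for all three trivializations, (iii) via the change-of-trivialization formula (\ref{CZ:changetriv}) combined with Corollary \ref{cor:CZ0b} and Lemma \ref{lem:orbtrivCZ2} (your computation for general iterates $b^m$ reduces to the paper's $m=1$ case, and the $+1$ offsets cancel exactly as you note), and (iv) by the same arithmetic. The one genuine divergence is (ii): the paper works in solid-torus coordinates $S^1_t\times\mathbb{D}^2_{r,\theta}$ near $b$, uses the fractional Dehn twist coefficient $1/2q$ of the open book to parametrize a nearby fiber as $(t,r,4q\pi t)$, and reads off the degree of $\tau_0\circ\tau_\Sigma^{-1}$ directly; you instead identify the framing difference with a difference of linking numbers of pushoffs and input the fact that two regular fibers of the Seifert fibration of Proposition \ref{prop:SeifertInvts} link $pq=2q=-1/e(Y)$ times, while the page (Seifert) pushoff links zero. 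Both routes rest on the same geometric fact — a nearby fiber links $b$ exactly $2q$ times, equivalently meets the page $2q$ times, cf. Remark \ref{def:degree}(ii) — so the difference is one of packaging: your linking-number argument is shorter and generalizes immediately to arbitrary $T(p,q)$, whereas the paper's explicit coordinate computation has the advantage of pinning down the overall sign against the convention (\ref{eqn:trivchange}) with no further checking, which is precisely the point your version leaves open (and which you correctly flag as the remaining obstacle). Your closing consistency check, that (iv) recovers the maximal self-linking number $2q-2-q$, agrees with Remark \ref{rem:c1sl}.
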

\begin{proof}
\begin{enumerate}[{ (i)}]
\itemsep-.25em
\item In all these trivializations, a nearby fiber is a constant pushoff. Since the homotopy class of a constant pushoff determines the trivialization, they agree.

\item The homotopy equivalence $\operatorname{Sp}(2,\R)\approx S^1$ sends the rotation matrix by angle $\theta$ to $\theta\in S^1=\R/2\pi\Z$. Thus it suffices to show that $\tau_0\circ\tau_\Sigma^{-1}:\R/\Z\to\operatorname{Sp}(2,\R)$ is homotopy equivalent to the map sending $t$ to rotation by $-4q\pi t$.

In solid torus coordinates $S^1_t\times\mathbb{D}^2_{r,\theta}$ near $b=(t,0,0)$, because the fractional Dehn twist coefficient of the open book decomposition is $1/2q$, a nearby fiber can be parametrized as $(t,r,4q\pi t)$ so that it wraps positively $2q$ times around the binding $b$. The pullback $b^*\xi$ can be identified with $T\mathbb{D}^2\cong\R^2$, and under this identification the trivialization $\tau_\Sigma$ is simply the identity, as is its inverse. Thus the trivialization $\tau_0$, which sends $(t,r,4q\pi t)$ to the curve $(t,r,0)$ in $S^1\times\R^2$ which does not wrap at all around the central fiber $S^1\times\{(0,0)\}$, must subtract $4q\pi t$ from the $\theta$ coordinate, or in other words, rotate $2q$ times negatively around the central fiber. Thus $\tau_0-\tau_\Sigma=-(\tau_\Sigma-\tau_0)=2q$.

\item Using the change-of-trivialization formula (\ref{CZ:changetriv}) for the Conley-Zehnder index referenced above, Corollary \ref{cor:CZ0b}, and Lemma \ref{lem:orbtrivCZ2} computing $CZ_\tau(b)$ when $\tau=\tau_0, \tau_{orb}$ respectively, we obtain
\[
2(\tau_0(b)-\tau_{orb}(b))=CZ_{orb}(b)-CZ_0(b)=2(2+q)+1-1.
\]

\item Using conclusions (ii) and (iii), we obtain
\[
\tau_{orb}(b)-\tau_\Sigma(b)=\left(\tau_0(b)-\tau_\Sigma(b)\right)+\left(\tau_{orb}(b)-\tau_0(b)\right)=2q-2-q.
\]
\end{enumerate}
\end{proof}

We now collect the change of trivialization formulas along the exceptional fibers.  First we consider $T(2,q)$.

\begin{lemma}\label{lem:e3h2diff} 
The change of trivialization formulae along $e^q$ and $h^2$ are
\begin{enumerate}[{\em (i)}]
\itemsep-.25em
\item $\tau_e(e^q)-\tau_{orb}(e^q)=2+q$,
\item $\tau_h(h^2)-\tau_{orb}(h^2)=2+q$.
\end{enumerate}
\end{lemma}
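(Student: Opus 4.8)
The plan is to mirror the proof of Lemma \ref{lem:taudiff2}(iii): convert the trivialization difference into a difference of Conley--Zehnder indices via the change-of-trivialization formula for covers of simple orbits, and then read off the two Conley--Zehnder indices from the two independent computations already available, one in the orbibundle trivialization and one in the surface trivialization.

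First I would apply Proposition \ref{prop:nonsimpleCOT}(iii) to the single orbit $e^q=e^{1\cdot q}$ (so $\gamma=e$, $k=q$, $m=1$) with $\tau=\tau_e$ and $\tilde\tau=\tau_{orb}$. This gives
\[
CZ_e(e^q)-CZ_{orb}(e^q)=2\bigl(\tau_{orb}(e^q)-\tau_e(e^q)\bigr),
\]
so that $\tau_e(e^q)-\tau_{orb}(e^q)=\tfrac12\bigl(CZ_{orb}(e^q)-CZ_e(e^q)\bigr)$. The same formula with $\gamma=h$, $k=2$, $m=1$ yields $\tau_h(h^2)-\tau_{orb}(h^2)=\tfrac12\bigl(CZ_{orb}(h^2)-CZ_h(h^2)\bigr)$.

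Next I would substitute the two Conley--Zehnder values. From the orbibundle computation (Lemma \ref{lem:orbtrivCZ2}) I take $CZ_{orb}(e^q)=2(2+q)-1$ and $CZ_{orb}(h^2)=2(2+q)$. The crucial independent inputs are the surface-trivialization indices $CZ_e(e^q)=-1$ and $CZ_h(h^2)=0$, which are computed directly in \S\ref{ss:consttau}: because $\tau_e$ (resp. $\tau_h$) is the linking-zero trivialization with respect to $S_e$ (resp. $S_h$), the exceptional fiber $e^q$ (resp. $h^2$) behaves like a regular fiber over the index-$0$ (resp. index-$1$) critical point of $H_{2,q}$, so its Conley--Zehnder index equals the corresponding regular-fiber value, exactly as in \cite[Lem.~3.9]{preech}. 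Plugging in gives
\[
\tau_e(e^q)-\tau_{orb}(e^q)=\tfrac12\bigl((2(2+q)-1)-(-1)\bigr)=2+q,\qquad
\tau_h(h^2)-\tau_{orb}(h^2)=\tfrac12\bigl(2(2+q)-0\bigr)=2+q,
\]
which are exactly (i) and (ii).

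The one point requiring care --- and the main obstacle --- is that the values $CZ_e(e^q)=-1$ and $CZ_h(h^2)=0$ must be established on their own terms in \S\ref{ss:consttau}, from the definition of $\tau_e,\tau_h$ as constant/linking-zero extensions, rather than derived from this lemma. Otherwise the argument is circular with Remark \ref{rmk:T2q0Sig}, which runs the same identity in the opposite direction, using Lemma \ref{lem:e3h2diff} together with Lemma \ref{lem:orbtrivCZ2} to recompute $CZ_e(e^{qk})$ and $CZ_h(h^{2k})$ for all $k$. A secondary bookkeeping point is matching the cover and iterate indices correctly in Proposition \ref{prop:nonsimpleCOT}(iii): here the ``$k$'' of the proposition is the isotropy order ($q$ for $e$, $2$ for $h$) while the iterate multiplicity is $m=1$, so the factor $2m=2$ is precisely what converts the Conley--Zehnder difference into the trivialization difference.
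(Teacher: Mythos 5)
Your overall strategy --- convert the trivialization difference into a difference of indices via Proposition \ref{prop:nonsimpleCOT} and then plug in two independently known quantities --- is structurally sound, but the two quantities you feed into it are not independently known, and this is a genuine gap rather than a bookkeeping issue. You claim that $CZ_e(e^q)=-1$ and $CZ_h(h^2)=0$ ``are computed directly in \S\ref{ss:consttau}.'' They are not: that subsection (Lemma \ref{consttrivlem} and Corollary \ref{cor:CZ0b}) only treats fibers over points with \emph{trivial} isotropy, i.e.\ the binding $b$, where the constant trivialization is actually defined. For the exceptional fibers, the paper never computes $CZ_e(e^{qk})$ or $CZ_h(h^{2k})$ from first principles; these values appear only in Remark \ref{rmk:T2q0Sig}, where they are \emph{derived} from Lemma \ref{lem:e3h2diff} itself (combined with Proposition \ref{prop:nonsimpleCOT} and Lemma \ref{lem:orbtrivCZ2}). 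The heuristic that $\tau_e,\tau_h$ are ``extensions of the constant trivialization'' over $e^q,h^2$ is exactly what Remark \ref{rmk:0eh} states without proof and what Remark \ref{rmk:T2q0Sig} then \emph{justifies a posteriori} using this lemma. So as written your argument is circular: the independent input you flag as ``the main obstacle'' is precisely the nontrivial content, and supplying it would require proving that the linking-zero trivializations along $S_e,S_h$ make the linearized (degenerate) return map the identity and then running a Robbin--Salamon computation over the orbifold points --- work the paper deliberately avoids.

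The paper closes the loop differently: it uses the \emph{relative first Chern number} version, Proposition \ref{prop:nonsimpleCOT}(i), rather than the Conley--Zehnder version (iii). Taking $Z=Z_{e^q}$ with $m=1$, $k=q$ gives $\tau_e(e^q)-\tau_{orb}(e^q)=c_e(Z_{e^q})-c_{orb}(Z_{e^q})$. Both Chern numbers are computable by honest geometry already in place: $c_{orb}(Z_{e^q})=q\,c_{orb}(Z_e)=0$ because $\tau_{orb}$ is a global trivialization, while Lemma \ref{lem:relfirstCherncalc2}(iv) gives $c_e(Z_{e^q})=c_e([S_e])+c_0([\Sigma])=0+(2+q)$, using the concatenation $Z_{e^q}=[S_e\cup_b\Sigma]$, the fact that $S_e$ is a union of fibers (so the projection $\fp$ restricted to it has degree zero), the agreement $\tau_e(b)=\tau_0(b)$ from Lemma \ref{lem:taudiff2}(i), and $c_0([\Sigma])=2+q$. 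The case of $h^2$ is identical with $S_h$ in place of $S_e$. If you want to rescue your CZ-based route, you would first need to prove the content of Remark \ref{rmk:0eh} rigorously; the Chern-number route is available now and needs no new input.
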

\begin{proof}

Using the change-of-trivialization formula for the relative first Chern number concerning trivializations over multiply covered orbits (Proposition \ref{prop:nonsimpleCOT}(i) mentioned above), we have
\[
\tau_e(e^q)-\tau_{orb}(e^q)=c_e(Z_{e^q})-c_{orb}(Z_{e^q})=2+q-0.
\]
Lemma \ref{lem:relfirstCherncalc2}(iv) computes $c_e(Z_{e^q})$, while $c_{orb}(Z_{e^q})=qc_{orb}(Z_e)=0$ by the linearity formula for the relative first Chern number with respect to union addition explained in Lemma \ref{lem:formulas} (\ref{item:linbil}). This proves (i); the proof of (ii) is similar.
\end{proof}

\section{Components of the ECH index}\label{s:components}
 In this section we compute the components of the embedded contact homology index for the $T(2,q)$ fibrations of $S^3$.  First we compute relative first Chern numbers, then the Conley-Zehnder indices, and finally  the relative intersection pairing.   
 
\subsection{Relative first Chern numbers}\label{ss:chern}

The \emph{relative first Chern number} of the complex line bundle $\xi|_C$ with respect to the trivialization $\tau \in \mathcal{T}(\alpha,\beta)$, is denoted by
\[
c_\tau(C)  = c_1(\xi|_C,\tau),
\]
and defined as follows.  Let $\pi_Y:\R\times Y\to Y$ denote projection onto $Y$. We define  $c_1(\xi|_C,\tau)$ to be the algebraic count of zeros of a generic section $\psi$ of $\xi|_{[\pi_YC]}$ which on each end is nonvanishing and constant with respect to the trivialization on the ends.  In particular, given a class $Z\in H_2(Y,\alpha,\beta)$ we represent $Z$ by a smooth map $f: S \to Y$ where $S$ is a compact oriented surface with boundary.  Choose a section $\psi$ of $f^*\xi$ over $S$ such that $\psi$ is transverse to the zero section and $\psi$ is nonvanishing over each boundary component of $S$ with winding number zero with respect to the trivialization $\tau$.  We define 
\[
c_\tau(Z) : = \# \psi^{-1}(0),
\]
where `\#' denotes the signed count.

Thus, given another collection of trivialization choices up to homotopy $\tau' = \left( \{ {\tau'}_i^+ \}, \{ {\tau'}_j^-\} \right) \in \mathcal{T}(\alpha,\beta)$, by the convention \eqref{eqn:trivchange}, we have
\begin{equation}\label{eq:cherntriv}
c_\tau(Z) - c_{\tau'}(Z) = \sum_i m_i\left(\tau_i^+-{\tau}_i^{+'}\right) - \sum_j n_j \left(\tau_j^--{\tau}_j^{-'}\right);
\end{equation}
this is also reviewed in Lemma \ref{lem:formulas} (\ref{item:cot}). Moreover, we will also use the fact that $c_\tau$ is linear under both $+$ and $\uplus$; see Lemma \ref{lem:formulas}.

We now compute the relative first Chern number $c_\tau(Z)$ where $Z=[S]$ for the surfaces $S$ of Definition \ref{def:sfcs2}. {(Note that we compute the ECH index in \S\ref{s:ECHI} using only $c_{orb}$, but we required the values of $c_\tau$ for other $\tau$ for our computations in \S\ref{ss:cotTpq}.)}

\begin{lemma}\label{lem:relfirstCherncalc2} For $T(2,q)$, we have
\begin{enumerate}[{\em (i)}]
\itemsep-.25em
\item $c_{orb}([\Sigma])=0$,
\item $c_0([\Sigma])=2+q$,
\item\label{ctauS} $c_\Sigma([\Sigma])=2-q$,
\item $c_e(Z_{e^q})=c_h(Z_{h^2})=2+q$,
\item $c_{orb}(Z_e)=c_{orb}(Z_h)=0$.
\end{enumerate}
\end{lemma}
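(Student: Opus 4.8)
The plan is to treat the orbibundle trivialization as the anchor and propagate everything else through the change-of-trivialization formula \eqref{eq:cherntriv} together with the trivialization differences along $b$ recorded in Lemma \ref{lem:taudiff2}. The key point is that $\tau_{orb}$ extends to a global trivialization of $\xi$ over $S^3$ (as noted in \S\ref{ss:trivs} and used as a black box via \cite[\S3]{hong-cz} and \S\ref{ss:orbitau}; recall that any complex line bundle over $S^3$ is trivial because $H^2(S^3;\Z)=0$). Consequently a section that is $\tau_{orb}$-constant along the boundary orbits extends over any bounding surface as a nonvanishing section, so $c_{orb}(Z)=0$ for every relative class $Z$ whose boundary lies on fibers. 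This immediately gives (i), $c_{orb}([\Sigma])=0$, and (v), $c_{orb}(Z_e)=c_{orb}(Z_h)=0$, by the same argument.

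For (ii) and (iii) the boundary of $[\Sigma]$ is the single orbit $b$ with multiplicity one, so \eqref{eq:cherntriv} gives $c_0([\Sigma])-c_{orb}([\Sigma])=\tau_0(b)-\tau_{orb}(b)=2+q$ by Lemma \ref{lem:taudiff2}(iii), whence $c_0([\Sigma])=2+q$; and $c_\Sigma([\Sigma])=c_{orb}([\Sigma])-(\tau_{orb}(b)-\tau_\Sigma(b))=-(q-2)=2-q$ by Lemma \ref{lem:taudiff2}(iv). As an independent check on (iii), I would compute $c_\Sigma([\Sigma])$ geometrically: since $\lambda$ is adapted to the open book (Remark \ref{rem:obd}) the Reeb field is transverse to the pages, so projection along $R$ identifies $\xi|_\Sigma\cong T\Sigma$, under which the page trivialization $\tau_\Sigma$ becomes the framing tangent to $\partial\Sigma$; Poincar\'e--Hopf then yields $c_\Sigma([\Sigma])=\chi(\Sigma)=2-2g-1=2-q$, as $\Sigma$ has genus $g=(q-1)/2$ and one boundary component. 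The two computations agree, over-determining the answer consistently.

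For (iv) I would represent $Z_{e^q}$ by the glued surface $S_e\cup_b\Sigma$ of Definition \ref{def:sfcs2} and use concatenation linearity of $c_\tau$ (Lemma \ref{lem:formulas}(\ref{item:clin})) with the common trivialization $\tau_e$ over $b$. Since $\tau_e(b)=\tau_0(b)$ by Lemma \ref{lem:taudiff2}(i), the $\Sigma$-piece contributes $c_{\tau_e}([\Sigma])=c_0([\Sigma])=2+q$. The $S_e$-piece contributes nothing: $S_e$ is an annulus built from fibers (the $\fp$-preimage of a ray), the line field $\xi\cap TS_e$ furnishes a nonvanishing section of $\xi|_{S_e}$ restricting on each end to the $S_e$-pushoff, and this pushoff is $\tau_e=\tau_{S_e}$-constant by the definition of the surface trivialization, so the two winding-zero boundary framings are consistent and $c_e([S_e])=0$. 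Hence $c_e(Z_{e^q})=0+(2+q)=2+q$, and the identical argument with $S_h$ gives $c_h(Z_{h^2})=2+q$.

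The main obstacle is the vanishing $c_e([S_e])=c_h([S_h])=0$: one must verify carefully that the fiberwise line field $\xi\cap TS_e$ extends without zeros across the exceptional end, where the boundary orbit is the $q$-fold (resp.\ $2$-fold) cover of the singular fiber, and that the induced boundary framing is genuinely $\tau_{S_e}$ rather than differing from it by some winding. A second point requiring care is avoiding circularity: (iv) and (v) must be established without invoking Lemma \ref{lem:e3h2diff}, whose computation $\tau_e(e^q)-\tau_{orb}(e^q)=2+q$ relies precisely on these values. The route above sidesteps this by deducing $c_e(Z_{e^q})$ from $c_0([\Sigma])$ and the vanishing on $S_e$, rather than from a change of trivialization over $e^q$.
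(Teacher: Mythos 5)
Your proposal is correct, and its skeleton matches the paper's proof: parts (i) and (v) are argued identically (globality of $\tau_{orb}$), and part (iv) has the same structure (concatenation linearity over $b$, $\tau_e(b)=\tau_h(b)=\tau_0(b)$, and the vanishing $c_e([S_e])=c_h([S_h])=0$). The genuine difference is in how (ii) is obtained, and hence in the logical dependency structure. The paper computes $c_0([\Sigma])$ \emph{directly}, as the degree $2q$ of the covering $\fp|_\Sigma:\Sigma\to\CP^1_{2,q}$ times the orbifold Euler characteristic $\frac{2+q}{2q}$, in analogy with \cite[Lem.~3.12]{preech}, and then deduces (iii) from (ii) via Lemma \ref{lem:taudiff2}(ii); you instead anchor everything to $c_{orb}([\Sigma])=0$ and propagate through Lemma \ref{lem:taudiff2}(iii)--(iv), whose proofs rest on the Conley--Zehnder/Robbin--Salamon computations (Corollary \ref{cor:CZ0b} and Lemma \ref{lem:orbtrivCZ2}) rather than on any relative Chern number, so your route is non-circular and valid. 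What the paper's route buys is an independent geometric computation of $c_0([\Sigma])$ that serves as a consistency check on the CZ machinery; what your route buys is economy (one geometric input instead of two) plus your Poincar\'e--Hopf verification $c_\Sigma([\Sigma])=\chi(\Sigma)=2-q$, which is not in the paper but is consistent with the self-linking discussion in Remark \ref{rem:c1sl}. In (iv), your line-field argument ($\xi\cap TS_e$ is a nonvanishing section restricting to the pushoff framing, hence $\tau_{S_e}$-constant on both ends) is a legitimate and more detailed justification of what the paper dispatches in one line by saying the restriction of $\fp$ to the fiber-union $S_e$ has degree zero. Finally, you correctly identified and avoided the one real trap: Lemma \ref{lem:e3h2diff} is proved \emph{from} parts (iv)--(v) via Proposition \ref{prop:nonsimpleCOT}(i), so invoking it here would be circular; the paper's proof avoids it for the same reason.
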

\begin{proof}
\begin{enumerate}[{ (i)}]
\itemsep-.25em
\item The trivialization $\tau_{orb}$ is global, so it extends from $b$ over the interior of any representative of the class $Z_b$. Thus $c_{orb}([\Sigma])=0$. 

\item The degree of the covering $\fp:\Sigma\to\CP^1_{2,q}$ is $2q$ and the orbifold Euler characteristic of the base of $\fp$ is $(2+q)/2q$, thus in analogy to \cite[Lem.~3.12]{preech}, %
\[
c_0([\Sigma])=2q\left(\frac{2+q}{2q}\right)=2+q.
\]

\item Using the change of trivialization formula and Lemma \ref{lem:taudiff2}(ii),
\[
c_\Sigma([\Sigma])=c_0([\Sigma])+\tau_\Sigma(b)-\tau_0(b)= 2+q -2q 
\]

\item Both $S_e$ and $S_h$ are a union of fibers, so the degree of the restriction of $\fp$ to each of them is zero and $c_e([S_e])=c_h([S_h])=0$. Thus by the linearity of the relative first Chern number under concatenation addition, Lemma \ref{lem:formulas}(\ref{item:clin}), as well as the fact that $\tau_e(b)=\tau_h(b)=\tau_0(b)$ by Lemma \ref{lem:taudiff2}(i),
\[
c_e(Z_{e^q})=c_e([S_e\cup\Sigma])=c_e([S_e])+c_0([\Sigma])=0+2+q
\]
and similarly for $c_h(Z_{h^2})$.

\item The trivialization $\tau_{orb}$ is global, so it extends from $e$ (respectively, $h$) over the interior of any representative of the class $Z_e$ (respectively, the class $Z_h$). Thus $c_{orb}(Z_e)=c_{orb}(Z_h)=0$. However, we may also verify this using the change-of-trivialization formula for covers of embedded orbits, Proposition \ref{prop:nonsimpleCOT}(i).

\end{enumerate}
\end{proof}

\subsection{Conley-Zehnder indices}\label{ss:CZ}
We now compute the Conley-Zehnder index of $b$ using the constant and orbibundle trivializations, and the Conley-Zehnder index of $e^q$ and $h^2$ using the orbibundle trivialization, which also provides the formulae for other iterates of $e$ and $q$.  To switch to the page trivialization, we will make use of the change of trivialization formulae computed previously.   We first review some basic definitions and properties of the Conley-Zehnder index.

Given a Reeb orbit $\gamma: \R / T\Z \to Y$, the linearized Reeb flow along $\gamma$ with respect to a choice of a trivialization $\tau \in \mathcal{T}(\gamma)$ from time 0 to time $t \in \R$ defines a symplectic map $P_{\gamma(t)}:\xi_{\gamma(0)} \to \xi_{\gamma(t)}$.  The \emph{symplectic return map} 
is defined to be $P_{\gamma(T)}$.  (Note that $P_{\gamma(0)}$ is the identity matrix.) 
If we assume $\gamma$ is nondegenerate then the path of symplectic matrices $\{ P_{\gamma(t)} \ | \ 0 \leq t \leq T \}
$
has a well-defined \emph{Conley-Zehnder} index, which we denote by
\[
CZ_\tau(\gamma) := CZ(P_{\gamma(t)}) \in \Z.
\]
In three dimensions, the Conley-Zehnder index can be more explicitly described in the {nondegenerate} setting as follows:
\begin{itemize}
\itemsep-.25em
\item If $\gamma$ is \emph{hyperbolic}, meaning the eigenvalues of the linearized return map are real, then there is an integer $n\in \Z$ such that the linearized Reeb flow along $\gamma$ rotates the eigenspaces of the linearized return map by angle $n\pi$ with respect to $\tau$.  We have:
\[
CZ_\tau(\gamma^k) = kn.
\]
The integer $n$ is always even when $\gamma$ is positive hyperbolic and always odd when $\gamma$ is negative hyperbolic.  We call $n$ the \emph{monodromy angle} of $\gamma$.
\item If $\gamma$ is \emph{elliptic}, meaning the eigenvalues of the linearized return map lie on the unit circle, then $\tau$ is homotopic to a trivialization in which the linearization of the time $t$ Reeb flow $\xi_{\gamma(0)} \to \xi_{\gamma(t)}$ along $\gamma$ rotates by angle $2\pi\theta_t$ for each $t \in [0,T]$, where $\theta:[0,T] \to \R$ is continuous and $\theta_0=0$. The nondegeneracy assumption forces $\theta_T$ to be irrational.  We have:
\begin{equation}\label{CZ:elliptic}
CZ_\tau(\gamma^k) = 2 \lfloor k\theta_T \rfloor + 1
\end{equation}
We call $\theta_T$ the \emph{monodromy angle} of $\gamma$.  (In some literature, $\theta_T$ is called the rotation angle, but we will use the terminology `rotation number' to designate the $\theta_T$ obtained from a specific homotopy class of trvializations, see Remark \ref{rem:linkingtriv} below.)
\end{itemize}

The Conley-Zehnder index depends only on the Reeb orbit $\gamma$ and the homotopy class of $\tau \in \mathcal{T}(\gamma)$.  If $\tau' \in \mathcal{T}(\gamma)$ is another trivialization then we have 
\begin{equation}\label{CZ:changetriv}
CZ_\tau(\gamma^k) - CZ_{\tau'}(\gamma^k) = 2k(\tau' - \tau).
 \end{equation}

\begin{remark}\label{rem:linkingtriv}
Our computation of knot filtered ECH with respect to the binding in \S \ref{s:spectral} will make use of the page trivialization $\tau_\Sigma: \xi|_\gamma \simeq  \R^2$,  as when $\gamma$ is the binding, a pushoff of $\gamma$ via this trivialization has linking number zero with $\gamma$.  If $H_1(Y)=0$, then we can associate to any elliptic Reeb orbit $\gamma$ a well-defined \emph{rotation number} 
\[
\op{rot}(\gamma):=\theta_T \in \R
\]
 in terms of the symplectic trivialization which, when used to push the elliptic Reeb orbit $\gamma$ off itself, has linking number zero.
\end{remark}

\begin{remark}\label{rem:c1sl}
When defining the Conley-Zehnder index of a nondegenerate Reeb orbit, it is typical to use a different homotopy class of trivialization $\tau$ than the one described in Remark \ref{rem:linkingtriv}, which extends to a trivialization of $\xi$ over a surface bounded by $\gamma$, rather than one yielding a zero linking number with respect to the pushoff of the Reeb orbit.  These two trivializations differ by the self-linking number of the transverse knot $\gamma$.   The \emph{self-linking number} of a simple Reeb orbit $\gamma$, or more generally, of any transverse knot (oriented and positively transverse to $\xi$) is defined as follows.  Let $\tau$ denote the homotopy class of a symplectic trivialization of $\xi|_\gamma$ for which a pushoff of $\gamma$ has linking number 0 with $\gamma$.  Let $\Sigma$ be a Seifert surface for $\gamma$.  Then 
\[
\op{sl}(\gamma):=-c_1(\xi|_\Sigma, \tau).
\]
We have $c_\Sigma([\Sigma]) = 2+q-2q$, so $2q-2-q$ is the self linking which corresponds to the rotation angle of the binding being $2q$.  Using the trivialization that extends over a disk, we will show that the monodromy angle is $2+q$.

\end{remark}

We have the following formula for the Conley-Zehnder indices of iterates of Reeb orbits associated to $\lambda_{2,q,\epsilon}$ which project to critical points $x$ of $H_{2,q}$.  We denote the $k$-fold iterate of an orbit which projects to $x \in \mbox{Crit}(H_{2,q})$ by $\gamma_x^k$.  This formula relies on an extension of the Conley-Zehnder index to degenerate orbits, which are the fibers of the prequantization orbibundle associated to $\lambda_{2,q}$.  namely that of the Robbin-Salamon index as in \cite{RS}.  Detailed background, various technicalities, and associated proofs can be found in \cite[\S 4]{jo2}, \cite{vknotes}, \cite[\S 3]{hong-cz}. The following {lemma} provides an overview of how these results are used in the setting at hand.

\begin{lemma}\label{lempre}
Fix $L>0$ and let ${H_{2,q}}$ be a Morse-Smale function on $\CP^1_{2,q}$ which is $C^2$ close to 1 as in Proposition \ref{prop:morse2}.  Then there exists $\varepsilon >0$ such that all periodic orbits $\gamma$ of $R_{2,q,\varepsilon}$ with action $\mathcal{A}(\gamma) <L$ are nondegenerate and project to critical points of $H_{2,q}$.  The Conley-Zehnder index such a Reeb orbit over $x \in \mbox{\em Crit}(H_{2,q})$ is given by
\[
\begin{array}{lcl}
CZ_\tau(\gamma_x^k) &=& {RS}_\tau(\gamma^k) -1 + \mbox{\em index}_xH_{2,q},\\
\end{array}
\]
where $RS$ stands for the Robbin-Salamon index, which can be associated to a degenerate Reeb orbit \cite{RS}.
\end{lemma}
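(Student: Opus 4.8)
The plan is to realize Lemma \ref{lempre} as an instance of the standard Morse--Bott formula for the Conley--Zehnder index, adapted to the prequantization orbibundle $\fp\colon S^3 \to \CP^1_{2,q}$. First I would record the Morse--Bott structure of the unperturbed flow: since $R_{2,q}$ generates the $S^1$-action of the Seifert fibration, for each multiplicity $k$ the iterates $\gamma^k$ of the fibers sweep out a Morse--Bott family of closed Reeb orbits parametrized by the base $\CP^1_{2,q}$. The linearized return map of such a fiber has $1$ as an eigenvalue, with generalized eigenspace exactly the directions tangent to this family, so the degeneracy is Morse--Bott of dimension equal to $\dim \CP^1_{2,q} = 2$; the Robbin--Salamon index $RS_\tau(\gamma^k)$ of the resulting degenerate path of symplectic matrices is then well defined as in \cite{RS}.

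The nondegeneracy and localization statement---that for $\varepsilon$ small all orbits of action less than $L$ are nondegenerate and project to critical points of $H_{2,q}$---is exactly Lemma \ref{lem:efromL}(i), so nothing new is needed there. The heart of the matter is the index shift. Using the perturbed Reeb field (\ref{perturbedreeb1}), near an orbit over a critical point $x$ the linearized return map factors, up to terms vanishing as $\varepsilon \to 0$, into the unperturbed contribution (which carries the rotation computing $RS_\tau(\gamma^k)$) composed with the time-$T$ linearized flow of the horizontal lift $\widetilde{X}_{H_{2,q}}$ of $X_{H_{2,q}}$, whose infinitesimal generator at $x$ is governed by the Hessian of $H_{2,q}$. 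By the general Morse--Bott perturbation computation for the Conley--Zehnder index worked out in \cite[\S4]{jo2}, \cite{vknotes}, and \cite[\S3]{hong-cz}, the nondegenerate orbit over $x$ satisfies
\[
CZ_\tau(\gamma_x^k) = RS_\tau(\gamma^k) - \tfrac{1}{2}\dim \CP^1_{2,q} + \mathrm{index}_x H_{2,q} = RS_\tau(\gamma^k) - 1 + \mathrm{index}_x H_{2,q},
\]
which is the claimed formula; the $-1$ is precisely $-\tfrac{1}{2}\dim \CP^1_{2,q}$, matching the prequantization bundle computation of \cite{preech}.

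I expect the main obstacle to be the exceptional fibers $e$ and $h$, where the base point $x$ has nontrivial isotropy ($\Z/q$ and $\Z/2$ respectively) and the Morse--Bott family is singular in the orbifold sense, so the smooth Morse--Bott formula cannot be invoked verbatim. Here I would pass to the cover $\gamma_x^{k|\Gamma_x|}$ realizing an honest fiber and use the orbifold Robbin--Salamon computations of \cite[\S3]{hong-cz} to see that the isotropy contributes only through the $RS_\tau$ term, while the localized perturbation analysis producing the $-1 + \mathrm{index}_x H_{2,q}$ correction is unaffected by the cyclic quotient: in the orbifold chart the Hessian of $H_{2,q}$ still has Morse index $\mathrm{index}_x H_{2,q}$, and the normal family is still two real dimensional. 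Verifying that the trivialization $\tau$ used for $RS_\tau$ is the same as the one used for $CZ_\tau$, and that no additional isotropy shift sneaks into the correction term, is the one place where genuine care is required; the explicit orbibundle trivialization $\tau_{orb}$ of \S\ref{ss:trivs} is what I would use to carry out this bookkeeping.
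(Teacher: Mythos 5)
Your proposal is correct and follows essentially the same route as the paper: the paper gives no independent argument for this lemma, instead presenting it as an overview of the Morse--Bott perturbation formula $CZ_\tau = RS_\tau - \tfrac{1}{2}\dim\CP^1_{2,q} + \mathrm{index}_x H_{2,q}$ established in exactly the references you invoke (\cite[\S 4]{jo2}, \cite{vknotes}, \cite[\S 3]{hong-cz}), with nondegeneracy and localization supplied by Lemma \ref{lem:efromL}(i) and the isotropy points handled, as you propose, by passing to the $|\Gamma_x|$-fold covers and Hong's orbifold Robbin--Salamon computation. Your identification of the $-1$ as $-\tfrac{1}{2}\dim\CP^1_{2,q}$ and your flagged concern about consistent trivializations at the exceptional fibers match the paper's treatment (which resolves the latter via $\tau_{orb}$ in Lemma \ref{lem:orbtrivCZ2}).
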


\subsubsection{Constant trivialization}\label{ss:consttau}
The constant trivialization $\tau_0$ in Definition \ref{def:consttriv}, as for example considered in \cite[\S 3]{preech} can be used to compute the Conley-Zehnder index of the fibers which project to nonsingular points as follows. 
Let $x\in \CP^1_{2,q} $ be any point with with trivial isotropy.  Then for any point $y \in \fp^{-1}(x)$, a fixed trivialization of $T_x\CP^1_{2,q}$  allows us to trivialize $\xi_y$ because $\xi_y \cong T_x\CP^1_{2,q}$ This trivialization is invariant under the linearized Reeb flow and can be thought of as a  \emph{constant trivialization} over the orbit $\gamma_x$ because the linearized Reeb flow, with respect to this trivialization, is the identity map.

Using this constant trivialization, we have the following result regarding the Robbin-Salamon index, see \cite[Lem. 3.3]{vknotes}, \cite[Lem. 4.8]{jo2}.

\begin{lemma}\label{consttrivlem}
Let $x\in \CP^1_{2,q} $ be any point with with trivial isotropy and let $\gamma_x = \fp^{-1}(x) $ be the $S^1$ fiber realizing a Reeb orbit of $\lambda_{2,q}$, which projects to $x$.   Then for the constant trivialization $\tau_0$ we obtain $RS_{0}(\gamma_x)=0$ and $RS_{0}(\gamma_x^k) =0$, where $RS$ denotes the Robbin-Salamon index. \end{lemma}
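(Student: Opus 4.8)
The plan is to reduce the claim to the single fact that the Robbin--Salamon index of the constant path at the identity in $\operatorname{Sp}(2,\R)$ vanishes. By Definition \ref{def:consttriv}, the constant trivialization $\tau_0$ over a regular fiber $\gamma_x = \fp^{-1}(x)$ is exactly the one in which the unperturbed linearized Reeb flow of $\lambda_{2,q}$ is the identity map on $\xi|_{\gamma_x}$ at every time. Consequently, writing $P_{\gamma_x(t)} : \xi_{\gamma_x(0)} \to \xi_{\gamma_x(t)}$ for the linearized flow expressed in $\tau_0$, the associated path of symplectic matrices $\{P_{\gamma_x(t)}\}_{t \in [0,T]}$ is the constant path at $\mathrm{Id}$.

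First I would make the constant-at-identity statement explicit. Since $x \in \CP^1_{2,q}$ has trivial isotropy, a neighborhood of $\gamma_x$ is foliated by nearby regular $S^1$-fibers, and the Reeb flow of $\lambda_{2,q}$ coincides with the $S^1$-action generating the Seifert fibration; hence its time-$T$ return map preserves each nearby fiber. Under the identification $\xi_{\gamma_x(0)} \cong T_x \CP^1_{2,q}$ furnished by $\fp_*$---which is precisely the trivialization $\tau_0$---the linearized flow is therefore the identity for all $t \in [0,T]$. This is the content of the computations recorded in \cite[Lem.~3.3]{vknotes} and \cite[Lem.~4.8]{jo2}, and mirrors the prequantization bundle case of \cite[\S 3]{preech}.

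Next I would invoke the behavior of the Robbin--Salamon index on a constant path. The $RS$ index is assembled from crossing forms $\Gamma_t(v) = \omega\!\left(v, \tfrac{d}{dt}P_{\gamma_x(t)}\, v\right)$ evaluated on $\ker(P_{\gamma_x(t)} - \mathrm{Id})$, together with half-signature contributions at the two endpoints. For the constant path one has $\tfrac{d}{dt}P_{\gamma_x(t)} \equiv 0$, so every crossing form---including those at $t=0$ and $t=T$---vanishes identically and contributes signature $0$. Hence $RS_0(\gamma_x) = 0$. For the iterate $\gamma_x^k$ the linearized flow is the same constant path traversed over $[0,kT]$, so it is again the constant path at $\mathrm{Id}$ and the identical argument gives $RS_0(\gamma_x^k) = 0$.

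The main point requiring care is the normalization convention of \cite{RS} for a maximally degenerate (constant) path: one must confirm that the endpoint half-contributions $\tfrac12\operatorname{sign}(\Gamma_0)$ and $\tfrac12\operatorname{sign}(\Gamma_T)$ do not survive. Because the crossing forms vanish as forms---not merely at isolated regular crossings---there is no boundary contribution, and the value is exactly $0$ rather than a nonzero half-integer; this is the only subtlety in the argument.
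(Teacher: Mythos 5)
Your proof is correct and is essentially the paper's own argument: the paper offers no independent proof of this lemma, deferring to \cite[Lem.~3.3]{vknotes} and \cite[Lem.~4.8]{jo2}, where the computation is exactly as you describe --- by Definition \ref{def:consttriv} the linearized Reeb flow expressed in $\tau_0$ is the constant path at the identity, and the Robbin--Salamon index of that path (and of its $k$-fold iterate, which is again the constant identity path) is zero. The one point to sharpen is your final paragraph: the crossing-form sum formula of \cite{RS} is only valid for paths all of whose crossings are regular (isolated, with nondegenerate crossing form), a hypothesis the constant path violates at every $t$, so rather than asserting that identically vanishing crossing forms ``contribute nothing'' one should invoke Robbin--Salamon's Zero axiom (a path whose graph meets the diagonal in constant dimension has vanishing index), or equivalently perturb rel endpoints to a path with regular crossings only at $t=0,T$ and check that the two half-signature contributions cancel.
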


In conjunction with  Lemma \ref{lempre} we obtain the following computations.
\begin{corollary}\label{cor:CZ0b}
Fix $L>0$, and $H_{2,q}$ as in Proposition \ref{prop:morse2}.  Then there exists an $\varepsilon >0$ such that all $k$-fold iterates of $b$ with action $\A(b^k) <L$ are nondegenerate.  Then 
\[
CZ_{0}(b^k) = 1.
\]
\end{corollary}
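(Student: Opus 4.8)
The plan is to read off the result directly from the two preceding lemmas once the geometry of $b$ is correctly identified. First I would recall from Proposition \ref{prop:morse2} (equivalently Lemma \ref{lem:orbitseh}) that $b$ is the regular fiber projecting to the \emph{nonsingular} index $2$ critical point of $H_{2,q}$. In particular $\fp(b)$ has trivial isotropy, so $\op{index}_{\fp(b)}H_{2,q}=2$ and the constant trivialization $\tau_0$ of Definition \ref{def:consttriv} is genuinely available along $b$ and all of its iterates (no orbifold cover is required, in contrast to the exceptional fibers $e$ and $h$).

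Next, fixing $L>0$ and invoking Lemma \ref{lempre}, I would extract $\varepsilon>0$ so that every $k$-fold iterate $b^k$ with $\mathcal{A}(b^k)<L$ (action computed with $\lambda_{2,q,\varepsilon}$) is nondegenerate and projects to the critical point $\fp(b)$. Lemma \ref{lempre} then supplies the formula
\[
CZ_0(b^k) = RS_0(b^k) - 1 + \op{index}_{\fp(b)}H_{2,q},
\]
where $RS$ is the Robbin--Salamon index of the corresponding degenerate fiber of $\lambda_{2,q}$.

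Finally, because $\fp(b)$ has trivial isotropy, Lemma \ref{consttrivlem} applies verbatim and gives $RS_0(b^k)=0$ for every iterate $k$. Substituting this together with $\op{index}_{\fp(b)}H_{2,q}=2$ into the displayed formula yields
\[
CZ_0(b^k) = 0 - 1 + 2 = 1,
\]
as claimed. This is a short computation with no serious obstacle; the only point requiring care is verifying that $b$ lies over a nonsingular point, which is precisely what makes both the constant trivialization and the vanishing $RS_0(b^k)=0$ of Lemma \ref{consttrivlem} directly applicable, without the orbifold corrections that intervene for $e^q$ and $h^2$ in Remark \ref{rmk:T2q0Sig}.
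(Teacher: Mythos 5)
Your proposal is correct and follows exactly the paper's route: the corollary is stated there as an immediate consequence of combining Lemma \ref{lempre} (the formula $CZ_\tau(\gamma_x^k)=RS_\tau(\gamma^k)-1+\mbox{index}_xH_{2,q}$) with Lemma \ref{consttrivlem} (vanishing of $RS_0(b^k)$ over the trivial-isotropy point), giving $0-1+2=1$. Your added care in checking that $\fp(b)$ is nonsingular, so that $\tau_0$ and Lemma \ref{consttrivlem} apply without orbifold corrections, is precisely the point the paper relies on implicitly.
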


\subsubsection{Orbibundle trivialization}\label{ss:orbitau}
In order to compute the Conley-Zehnder indices of Reeb orbits which project to critical points we appeal to a global orbibundle trivialization which relates the Robbin-Salamon indices of the degenerate Reeb orbit fibers to the orbifold Chern class of the base $\CP^1_{2,q}$.   A bit of clarification may be helpful prior to stating Hong's result on this relationship that we will employ below.  

First, we note that that this global orbibundle trivialization is done in terms of pullback bundles, which is provided in full detail in \cite[\S 3]{hong-cz} to establish the below result, which generalizes the construction in \cite{vknotes}.  We use change of coordinate formulae to convert our relative intersection pairing terms using the aforementioned constant and page trivializations to this orbibundle trivialization, which obiviates the need to work explicitly with the orbibundle trivialization beyond its below application to the computation of the Robbin-Salamon and Conley-Zehnder indicies of the fiber orbits, and thus their monodromy angles, and the prior use of deducing that the relative Chern number vanishes.  

Second, the orbifold fundamental group is deserving of (more than) a few words; see also \cite[\S 1.5, 4]{bgbook} and \cite[\S 2.2]{orbnotes}.  The orbifold fundamental group $\pi_1^{orb}$ was first conceived by Thurston in terms of the group of deck transformations of a universal covering orbifold \cite[\S 13]{bill1}, though this did not fully make its way into \cite{bill2}.  The modern presentation realizing Thurston's construction comes by way of homotopy classes of loops on the pseudogroups representing the orbifold, though a more involved notion of homotopy classes is necessary to capture the local nature of pseudogroups, as pseudogroups are in essence groups of transformations where there ``may be some problems" with domains of definition.  Haeflinger gave an alternate \cite{hae} but equivalent approach \cite{hd}, by providing an explicit Borel-type construction of the classifying space of an orbifold; the latter permits the definitions of all higher homotopy groups as well.

\begin{theorem}{\em \cite[Thm.~3.1]{hong-cz}}
Let $(\cO, \omega, J)$ be a K\"ahler orbifold so that it admits an $S^1$-orbibundle $\fp: Y \to \cO$ such that $Y$ has a Besse contact structure,\footnote{In \cite{hong-cz}, a Besse contact structure is called a $K$-contact structure.} where 
 $d\lambda = \fp^*\omega$.  If 
\begin{enumerate}[\em (i)]
\itemsep-.25em
\item $c_1^{orb}(T\cO) = \nu_\cO [\omega] \in H^2(\cO, \Q) \mbox{ for some integer } \nu_\cO \in \Z;$
\item $\pi_1^{orb}(\cO) = 0;$
\item $Y$ is a manifold.
\end{enumerate}
Then the Robbin-Salamon index $RS_{orb}$ of the $|\Gamma_x|$-th iterate of a Reeb orbit $\gamma$ is given by
\[
RS_{orb}\left(\gamma^{|\Gamma_x|}\right) = 2 \nu_\cO,
\]
where  $\fp(\gamma)=x$ and $\Gamma_x$ is the isotropy group at $x$.
\end{theorem}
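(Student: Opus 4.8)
The plan is to follow Hong's argument in \cite[\S3]{hong-cz}, which adapts the manifold prequantization computation of \cite{vknotes} to the orbifold setting. The two moving parts are the construction of a global trivialization $\tau_{orb}$ of $\xi$ over $Y$, and the identification of the total rotation of the linearized Reeb flow in that trivialization with $\nu_\cO$. Throughout, the key structural input is that $R$ generates the $S^1$-action and $d\lambda=\fp^*\omega$, so that the contact distribution $\xi$ is isomorphic as a Hermitian line bundle to the pullback $\fp^*T\cO$.

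First I would establish the global orbibundle trivialization. From $\xi\cong\fp^*T\cO$ and the defining relation $c_1^{orb}(T\cO)=\nu_\cO[\omega]$, we compute
\[
\fp^*c_1^{orb}(T\cO)=\nu_\cO\,\fp^*[\omega]=\nu_\cO\,[\fp^*\omega]=\nu_\cO\,[d\lambda]=0 \ \ \text{in } H^2(Y;\R).
\]
Thus $c_1(\xi)$ is torsion. Using $\pi_1^{orb}(\cO)=0$, which controls this torsion and permits lifting sections through the local uniformizing systems, together with the hypothesis that $Y$ is a manifold, one concludes that $\fp^*T\cO$, and hence $\xi$, admits a global trivialization $\tau_{orb}$; this is the pullback trivialization built explicitly in \cite[\S3]{hong-cz}.

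Next I would reduce to a local model and compute the rotation. Work in a local uniformizing chart $(\tilde U,\Gamma_x,\varphi)$ around $x$, over which $Y$ restricts to a fibered solid torus and $\lambda$ has an explicit normal form. The $|\Gamma_x|$-fold iterate $\gamma^{|\Gamma_x|}$ is precisely the Reeb orbit whose period matches that of a regular fiber, so it closes up smoothly as an honest loop in $Y$. In the flow-adapted (constant-type) trivialization along this full fiber, the linearized return map is the identity and the associated path of symplectic matrices is trivial, so its Robbin-Salamon index vanishes, exactly as in Lemma \ref{consttrivlem} at regular points. The global trivialization $\tau_{orb}$ differs from this flow-adapted one by a winding whose total over the single loop $\gamma^{|\Gamma_x|}$ is $\nu_\cO$, which is where the normalization of the orbibundle's Euler class against $c_1^{orb}(T\cO)=\nu_\cO[\omega]$ is used.

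Finally I would assemble these via the (Robbin--Salamon analogue of the) change-of-trivialization formula (\ref{CZ:changetriv}): since the two trivializations differ by the winding $\nu_\cO$ over this loop, and since a path of rotations terminating at a full multiple of $2\pi$ contributes twice its total winding, we obtain
\[
RS_{orb}\bigl(\gamma^{|\Gamma_x|}\bigr)=RS_{\mathrm{flow}}\bigl(\gamma^{|\Gamma_x|}\bigr)+2\nu_\cO=0+2\nu_\cO=2\nu_\cO.
\]
The main obstacle is the third step: making the orbifold local model and the pullback trivialization rigorous through the local uniformizing systems, and pinning down that the discrepancy between the flow and orbibundle trivializations over the full fiber is \emph{exactly} $\nu_\cO$ rather than another multiple. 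This requires carefully tracking the interplay between the isotropy $\Gamma_x$, the Euler class normalization, and the orbifold Chern class, which is the technical heart of \cite[\S3]{hong-cz}.
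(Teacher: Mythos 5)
There is nothing in the paper to compare your argument against: this theorem is imported verbatim from \cite[Thm.~3.1]{hong-cz} and used as a black box, as the paper states explicitly at the start of \S\ref{ss:orbitau} (``the orbibundle trivialization \dots can be used as a black box''), so the paper contains no proof of this statement at all. Judged on its own, your sketch is a faithful reconstruction of Hong's pullback-trivialization strategy, and it is consistent with the paper's downstream computations: the flow/surface trivializations do give vanishing Robbin--Salamon index over the full fibers (combine Remark~\ref{rmk:T2q0Sig} with Lemma~\ref{lempre}, which gives $RS_e(e^{qk})=RS_h(h^{2k})=0$), the winding discrepancy between those trivializations and $\tau_{orb}$ is exactly $2+q=\nu_\cO$ for $\CP^1_{2,q}$ (Lemma~\ref{lem:e3h2diff}), and your final change-of-trivialization step, with the sign conventions of Proposition~\ref{prop:nonsimpleCOT}(iii), reproduces $RS_{orb}\bigl(\gamma^{|\Gamma_x|}\bigr)=0+2\nu_\cO$, matching the value $RS_{orb}(\gamma_x^{|\Gamma_x|k})=2(p+q)k$ recorded in the remark following the theorem. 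The two load-bearing steps you defer --- that hypotheses (i)--(iii) actually yield a global trivialization of $\xi$ (your torsion argument only shows $c_1(\xi)$ vanishes in real cohomology, and the precise role of $\pi_1^{orb}(\cO)=0$ in killing the integral obstruction is asserted rather than proved), and that the discrepancy over a full fiber is exactly $\nu_\cO$ rather than some other multiple --- are precisely the technical content of \cite[\S 3]{hong-cz}, and you flag them honestly. So as a standalone proof your write-up is incomplete at exactly those two points, but as an account of the cited argument (which is all the paper itself offers) it is structurally sound and matches the conventions used elsewhere in the paper.
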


\begin{remark}
Let $(S^3, \lambda_{p,q})$ be the {prequantization} $S^1$-orbibundle over the orbifold $(\CP^1_{p,q}, \omega_{p,q})$ of real Euler class $-\frac{1}{pq}$.  Then for the orbibundle trivialization $\tau_{orb}$ along the fiber $\gamma_x^{|\Gamma_x|}$, we have $RS_{{orb}}(\gamma_x^{|\Gamma_x|k})=2(p+q)k$.  (Recall that Example \ref{symporb} computed $[\omega_{p,q}] = [d\lambda_{p,q}]=\frac{1}{pq}$ while Definition \ref{ex:orbX} computed $c_1^{orb}(\CP_{p,q}) = \frac{p+q}{pq}$.)  
\end{remark}

In combination with Lemma \ref{lempre}, the above yields the following formulae for the Conley-Zehnder indices of iterates of orbits which project to critical points $x$ of $H_{2,q}$.

\begin{lemma}\label{lem:orbtrivCZ2}
Fix $L>0$ and $H_{2,q}$ a Morse-Smale function as in Proposition \ref{prop:morse2} on $\cpq$ which is $C^2$ close to 1.  Then there exists $\varepsilon>0$ such that all periodic orbits of $\gamma$ of $R_{2,q,\varepsilon}$ with action $\mathcal{A}(\gamma)<L$ are nondegenerate and project to critical points of $H_{2,q}$.  The Conley-Zehnder index of such a Reeb orbit over $x \in \op{Crit}(H_{2,q})$ is given by
\[
CZ_{orb}(\gamma_x^{k|\Gamma_x|}) = 2(2+q)k + \mbox{\em index}_x H_{2,q} -1. 
\]
In particular, we have:
\[
\begin{array}{lcl}
CZ_{orb}(b^k) & = & 2(2+q)k + 1, \\
CZ_{orb}(h^{2k}) &= & 2(2+q)k, \\
CZ_{orb}(e^{qk}) & = & 2(2+q)k -1. \\
\end{array}
\]
Thus,
\begin{itemize}
\itemsep-.35em
\item $b$ is elliptic of {monodromy angle} ${2+q} + \delta_{b,L}$, where $0<\delta_{b,L} \ll 1 $ is irrational;
\item $h$ is negative hyperbolic with rotation number $2+q$; 
\item $e$ is elliptic of {monodromy angle} $(2+q)/{q} - \delta_{e,L}$, where $0<\delta_{e,L} \ll 1 $ is irrational.
\end{itemize}
\end{lemma}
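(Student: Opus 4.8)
The plan is to combine the index decomposition of Lemma~\ref{lempre} with Hong's computation of the orbibundle Robbin--Salamon index. Applying Lemma~\ref{lempre} to the $k|\Gamma_x|$-fold iterate of the fiber over $x\in\op{Crit}(H_{2,q})$ gives
\[
CZ_{orb}\!\left(\gamma_x^{k|\Gamma_x|}\right)=RS_{orb}\!\left(\gamma_x^{k|\Gamma_x|}\right)-1+\op{index}_x H_{2,q},
\]
so the first task is to evaluate $RS_{orb}(\gamma_x^{k|\Gamma_x|})$ via Hong's theorem \cite[Thm.~3.1]{hong-cz}. I would first check its three hypotheses for $\fp:S^3\to\CP^1_{2,q}$: that $S^3$ is a manifold, that $\pi_1^{orb}(\CP^1_{2,q})=0$ (using $\gcd(2,q)=1$), and that $c_1^{orb}(T\CP^1_{2,q})=\nu_\cO[\omega_{2,q}]$ for an integer $\nu_\cO$. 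The last point is where Example~\ref{symporb} and Example~\ref{ex:orbX} enter: since $[\omega_{2,q}]=\tfrac{1}{2q}$ and $c_1^{orb}(\CP^1_{2,q})=\tfrac{2+q}{2q}$, we read off $\nu_\cO=2+q$, whence $RS_{orb}(\gamma_x^{|\Gamma_x|})=2(2+q)$ and, by the linear scaling of the Robbin--Salamon index under iteration in the orbibundle trivialization recorded in the Remark preceding the statement, $RS_{orb}(\gamma_x^{k|\Gamma_x|})=2(2+q)k$. Substituting yields the master formula $CZ_{orb}(\gamma_x^{k|\Gamma_x|})=2(2+q)k-1+\op{index}_x H_{2,q}$.

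The three displayed identities then follow by inserting the Morse data of Proposition~\ref{prop:morse2}: $b$ is the fiber over the nonsingular index-$2$ maximum ($|\Gamma_b|=1$), $h$ over the $\Z/2\Z$ index-$1$ saddle ($|\Gamma_h|=2$), and $e$ over the $\Z/q\Z$ index-$0$ minimum ($|\Gamma_e|=q$), giving $CZ_{orb}(b^k)=2(2+q)k+1$, $CZ_{orb}(h^{2k})=2(2+q)k$, and $CZ_{orb}(e^{qk})=2(2+q)k-1$. Nondegeneracy of every orbit of action below $L$, once $\varepsilon<\varepsilon(L)$, is inherited from Lemma~\ref{lem:efromL}(i).

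To extract the monodromy angles I would feed these values back into the elliptic and hyperbolic index formulas. The local models of Proposition~\ref{prop:morse2} identify $b$ and $e$, sitting over the definite critical points $r_2$ and $r_0$, as elliptic, and $h$, over the saddle $r_1$, as hyperbolic. For $b$, the elliptic formula (\ref{CZ:elliptic}) forces $\lfloor k\theta_b\rfloor=(2+q)k$; since the index-$2$ maximum perturbation increases the rotation and nondegeneracy makes $\theta_b$ irrational, $\theta_b=2+q+\delta_{b,L}$ with $0<\delta_{b,L}\ll1$ small enough (depending on $L$) that $k\delta_{b,L}<1$ throughout the window $\A<L$. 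For $e$, applying (\ref{CZ:elliptic}) to $e^{qk}$ gives $\lfloor qk\theta_e\rfloor=(2+q)k-1$, and the index-$0$ minimum perturbation decreases the rotation, so $\theta_e=(2+q)/q-\delta_{e,L}$ with small irrational $\delta_{e,L}>0$. For $h$, the hyperbolic relation $CZ_{orb}(h^{m})=mn$ with $m=2k$ gives monodromy angle $n=2+q$, which is odd because $q$ is odd, confirming that $h$ is negative hyperbolic.

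The genuinely delicate point is this last paragraph rather than the formula itself: the index decomposition and Hong's theorem produce the Conley--Zehnder integers almost mechanically, but translating them into the signed, small, irrational shifts $\delta_{b,L},\delta_{e,L}$ takes care. One must verify from the Morse--Bott model—how the Hessian of $\varepsilon H_{2,q}$ at a definite critical point twists the linearized Reeb flow relative to $\tau_{orb}$—that a maximum contributes a positive shift and a minimum a negative one, that each shift is small for small $\varepsilon$, and that it is irrational by nondegeneracy. This is the content underlying Lemma~\ref{lempre} and is the step I would treat most carefully; checking Hong's hypotheses and the bookkeeping with $|\Gamma_x|$ and $\op{index}_x$ are routine by comparison.
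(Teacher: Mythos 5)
Your proposal is correct and follows essentially the same route as the paper: the paper obtains the master formula by combining Lemma~\ref{lempre} with Hong's theorem (via the remark computing $RS_{orb}(\gamma_x^{k|\Gamma_x|})=2(p+q)k$ from $[\omega_{2,q}]=\tfrac{1}{2q}$ and $c_1^{orb}(\CP^1_{2,q})=\tfrac{2+q}{2q}$), plugs in the Morse indices from Proposition~\ref{prop:morse2}, and reads off the monodromy angles by inverting the elliptic and hyperbolic Conley--Zehnder formulas exactly as you do. Your extra care about the sign of the shifts $\delta_{b,L},\delta_{e,L}$ is fine but already forced by the computed indices, since $\lfloor k\theta\rfloor$ being $(2+q)k$ (resp.\ $(2+q)k-1$) for all $k$ in the action window pins down the sign of the irrational perturbation.
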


\begin{remark}
The computations in Table \ref{table:trefoil} show that our construction recovers the action filtered cylindrical contact homology of $(S^3, \ker \lambda_{2,3})$ via an action filtered chain complex with vanishing differential by way of \cite{jo1, jo2, leo}.  Since $h$ is a negative hyperbolic orbit, we discard all of its even iterates, thus there is always exactly one generator in every odd degree of the action filtered chain complex.  In contrast to the scheme utilized in \cite{mlyob}, we have that the binding and its iterates are generators of cylindrical contact homology.
\end{remark}

\begin{table}[h!]
\centering
\begin{tabular}{ || c | c | c | c | c | c | c | c  | c  | c  | c  | c  | c | c | c  | c | c | c | c || } 
\hline orbit  & $e$ & $h$  & $e^2$ & $e^3$ & $h^2$ & $b$ & $e^4$ & $h^3$ & $e^5$ & $e^6$ & $h^4$ & $b^2$ & $e^7$ & $h^5$ & $e^8$ & $e^9$ & $h^6$ & $b^3$ \\ 
\hline  $CZ_{orb}$  &  3 & 5  & 7 & 9 & 10 & 11 & 13 & 15 & 17 & 19 & 20 & 21 & 23  & 25 & 27 & 29 & 30 & 31\\
\hline  
\end{tabular}
\caption{Conley-Zehnder indices for the $T(2,3)$ open book decomposition}
\label{table:trefoil}
\end{table}

\subsubsection{Page trivialization aka push-off linking zero trivialization}\label{ss:pagetau}
Finally we consider the Conley-Zehnder indices and  monodromy angles of the bindings with respect to the the page trivialization $\tau_\Sigma$, described in \S \ref{ss:trivs}.  Since the page trivialization is the push-off linking zero trivialization, these {monodromy angles are the} rotation numbers  used in our computation of knot filtered ECH.  
\begin{lemma}\label{lem:pagetrivCZ2} For the $T(2,q)$ binding $b$, $CZ_\Sigma(b^B)=4qB+1$.  Thus with respect to the page trivialization  $b$ is elliptic with $\op{rot}(b)=2q+\delta_{b,L}$, where $\delta_{b,L}$ is an irrational number such that $0 < \delta_{b,L} \ll 1.$
\end{lemma}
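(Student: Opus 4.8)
The plan is to obtain $CZ_\Sigma(b^B)$ from a Conley--Zehnder index already computed in another trivialization by applying the change-of-trivialization formula \eqref{CZ:changetriv}, and then to read off the rotation number from the elliptic index formula \eqref{CZ:elliptic}. The cleanest route starts from the constant trivialization. By Corollary \ref{cor:CZ0b} we have $CZ_0(b^B)=1$ for all iterates $b^B$ whose action stays below the relevant threshold, and by Lemma \ref{lem:taudiff2}(ii) the trivializations satisfy $\tau_0(b)-\tau_\Sigma(b)=2q$. Substituting $\tau=\tau_\Sigma$ and $\tau'=\tau_0$ into \eqref{CZ:changetriv},
\[
CZ_\Sigma(b^B)=CZ_0(b^B)+2B\bigl(\tau_0(b)-\tau_\Sigma(b)\bigr)=1+2B\cdot 2q=4qB+1,
\]
as claimed. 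As a consistency check, one can run the same computation through the orbibundle trivialization, using $CZ_{orb}(b^B)=2(2+q)B+1$ from Lemma \ref{lem:orbtrivCZ2} and $\tau_{orb}(b)-\tau_\Sigma(b)=q-2$ from Lemma \ref{lem:taudiff2}(iv); the sum $2(2+q)B+2B(q-2)$ again collapses to $4qB$, giving the same answer.

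For the rotation number, I would recall that $b$ is elliptic (Lemma \ref{lem:orbitseh}), so \eqref{CZ:elliptic} gives $CZ_\Sigma(b^B)=2\lfloor B\,\rot(b)\rfloor+1$. Comparing with $CZ_\Sigma(b^B)=4qB+1=2(2qB)+1$ forces $\lfloor B\,\rot(b)\rfloor=2qB$ for every $B$ below the action threshold. Equivalently, I would transport the orbibundle monodromy angle $2+q+\delta_{b,L}$ of Lemma \ref{lem:orbtrivCZ2} to the page trivialization: changing the trivialization shifts the monodromy angle by $\theta_\Sigma-\theta_{orb}=\tau_{orb}(b)-\tau_\Sigma(b)=q-2$, so the page monodromy angle, which is by definition the rotation number, becomes $(2+q+\delta_{b,L})+(q-2)=2q+\delta_{b,L}$. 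Here $\delta_{b,L}$ is the same small positive irrational perturbation produced by $H_{2,q}$, so that $\rot(b)=2q+\delta_{b,L}$ with $0<\delta_{b,L}\ll 1$, and $\lfloor B(2q+\delta_{b,L})\rfloor=2qB$ holds precisely on the range $B<1/\delta_{b,L}$, which is guaranteed for iterates below the action threshold $L(\varepsilon)$.

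There is no genuine obstacle here, since every ingredient has already been assembled: this lemma is a bookkeeping consequence of the change-of-trivialization formula together with the constant (or orbibundle) Conley--Zehnder computation. The only point requiring care is the sign and normalization conventions. One must respect that \eqref{CZ:changetriv} reads $2k(\tau'-\tau)$, and that the induced shift of the monodromy angle is $\theta_\Sigma-\theta_{orb}=\tau_{orb}(b)-\tau_\Sigma(b)=q-2$, so that \eqref{CZ:elliptic} reproduces exactly $2qB$ rather than a value shifted by one. The irrationality of $\delta_{b,L}$ is what makes $b$ nondegenerate, hence \eqref{CZ:elliptic} applicable in the first place; its dependence on $L$ is harmless, as it only shrinks as the action threshold grows.
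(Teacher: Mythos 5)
Your proof is correct and takes essentially the same approach as the paper: the paper's proof is exactly your orbibundle ``consistency check,'' namely $CZ_\Sigma(b^B)=2B(\tau_{orb}(b)-\tau_\Sigma(b))+CZ_{orb}(b^B)=2B(q-2)+2(2+q)B+1=4qB+1$ via \eqref{CZ:changetriv}, Lemma \ref{lem:orbtrivCZ2}, and Lemma \ref{lem:taudiff2}(iv), and your primary route through $\tau_0$ is the same bookkeeping with the same ingredients, since Lemma \ref{lem:taudiff2}(iv) is itself obtained from (ii) and (iii). Your reading of the rotation number from the elliptic index formula, with $\lfloor B(2q+\delta_{b,L})\rfloor = 2qB$ below the action threshold, is likewise exactly what the paper intends (cf.\ the discussion in \S\ref{ss:pagetau} that the page trivialization is the push-off linking zero trivialization).
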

\begin{proof} Using (\ref{CZ:changetriv}) and Lemmas \ref{lem:orbtrivCZ2} and \ref{lem:taudiff2}(iv), we have
\[
CZ_\Sigma(b^B)=2B(\tau_{orb}(b)-\tau_\Sigma(b))+CZ_{orb}(b^B)=2B(q-2)+(2(2+q)B+1) = 4qB+1.
\]
\end{proof}

It may be of interest to note that a direct computation for the Conley-Zehnder index of the binding of an open book decomposition with disk like pages is given in \cite[Thm.~3.11]{bh}.  

\subsection{Relative intersection pairing}\label{ss:Q}

In order to compute the relative intersection pairing $Q_\tau(Z)$, we need to choose specific surfaces $S\subset[-1,1]\times Y$ representing a class $Z\in H_2(Y,\alpha,\beta)$.

\begin{definition}[{\cite[Def.~2.11]{Hrevisit}}]
Given $Z\in H_2(Y,\alpha,\beta)$ we define an \emph{admissible representative} of $Z$ to be a smooth map $f: S \to [-1,1] \times Y$, where $S$ is an oriented compact surface such that
\begin{enumerate}
\itemsep-.25em
\item The restriction of $f$ to the boundary $\partial S$ consists of positively oriented  covers of $\{ 1\} \times \alpha_i$ whose total multiplicity is $m_i$ and negatively oriented covers of $\{ -1\} \times \beta_j$ whose total multiplicity is $n_j$.
\item The projection $\pi_Y: [-1,1] \times Y \to Y$ satisfies $[\pi_Y(f(S))]=Z$. 
\item The restriction of $f$ to $\mbox{int}(S)$ is an embedding and $f$ is transverse to $\{-1,1\} \times Y$.
\end{enumerate}
Such an $S$ is said to be an {admissible surface} for $Z\in H_2(Y,\alpha,\beta)$.

\end{definition}

{An admissible representative $S$ of $Z\in H_2(Y,\alpha,\beta)$ determines braids around the component Reeb orbits $\alpha_i$ and $\beta_j$. Let $\zeta_i^+$ denote the braid around $\alpha_i$ given by $S\cap(\{1-\varepsilon\}\times Y)$ for $\varepsilon>0$ sufficiently small; it is well-defined up to isotopy in a tubular neighborhood of $\alpha_i$ chosen to not intersect the tubular neighborhood of to any other simple Reeb orbit in $\alpha$.  Note that $\zeta_i^+$ will have $m_i$ strands. Define $\zeta_j^-$ analogously in a tubular neighborhood of $\beta_j$.}

We now define the linking number of admissible representatives.  If $S'$ is an admissible representative of $Z' \in H_2(Y,\alpha',\beta')$ such that the interior of $S'$ does not intersect the interior of $S$ near the boundary, and with braids $\zeta_i^{+ '}$ and $\zeta_j^{- '}$, we can define the \emph{linking number of $S$ and $S'$} to be
\[
\ell_\tau(S,S') := \sum_i \ell_\tau(\zeta_i^+,\zeta_i^{+ '}) - \sum_j \ell_\tau(\zeta_j^-,\zeta_j^{- '}).
\]
  
Using this, we may now define the relative intersection pairing. Let  $S$ and $S'$ be two surfaces which are admissible representatives of $Z \in H_2(Y,\alpha,\beta)$ and $Z'\in H_2(Y,\alpha',\beta')$  whose interiors $\dot{S}$ and $\dot{S}'$ are transverse and do not intersect near the boundary.  We define 
the \emph{relative intersection pairing} by the following signed count
\begin{equation}
Q_\tau(Z,Z'):= \# \left( \dot{S} \cap \dot{S}'\right) - \ell_\tau(S,S').
\end{equation}
Moreover, $Q_\tau(Z,Z')$ is an integer which depends only on $\alpha, \beta, Z, Z'$ and $\tau$.  If $Z=Z'$ then we write $Q_\tau(Z):=Q_\tau(Z,Z)$ and call this the \emph{relative self-intersection number} of $Z$.

The above definition of the relative intersection pairing comes from \cite[\S2.7]{Hrevisit}, and it is particularly useful when we can find admissible representatives with $\#\left(\dot{S}\cap\dot{S}'\right)=0$ (see the proofs of Lemmas \ref{lem:crossQ}). However, often it is also desirable to compute $Q_\tau$ when the linking number term is zero, and this is how the relative intersection pairing was originally defined in \cite[\S2.4]{Hindex}. Those surfaces with $\ell_\tau(S,S')=0$ can be characterized geometrically, as in the following definition. Note that instead of following \cite[\S2.4]{Hindex}, we use the version for embedded surfaces appearing in \cite[\S3.3]{lecture}.\footnote{The definition in \cite[\S2.4]{Hindex} is only for immersed curves, and it requires that the boundary of $S$ consists of single covers of the $\alpha_i$ and $\beta_j$, which we do not want to be restricted to.}

\begin{definition} Assume $S$ is an admissible representative of $Z$, and furthermore that the following conditions hold:
\begin{enumerate}
\itemsep-.25em
\item $\pi_Y\circ f$ is an embedding near $\partial S$.
\item The $m_i$ (respectively, $n_j$) nonvanishing intersections of these embedded collars of $\partial S$ with $\xi$ lie in distinct rays emanating from the origin and do not rotate (with respect to $\tau$) as one goes around $\alpha_i$ (respectively, $\beta_j$).
\end{enumerate}
Then we say $S$ is a \emph{$\tau$-representative} of $Z$, and $
Q_\tau(Z,Z'):= \# \left( \dot{S} \cap \dot{S}'\right).$
\end{definition}

We start by noting that the surfaces defined in \S\ref{ss:trivs} are convenient for computing $Q_\tau$.

\begin{lemma}\label{lem:taurep} The following surfaces are $\tau$-representatives:
\begin{enumerate}[{\em (i)}]
\itemsep-.25em
\item The surface $\Sigma$ is a $\tau_\Sigma$-representative of $Z_b$.
\item The surface $S_e$ is a $\tau_e$-representative of the class in $H_2(S^3,e^q,b)$.
\item The surface $S_h$ is a $\tau_h$-representative of the class in $H_2(S^3,h^2,b)$.
\end{enumerate}
\end{lemma}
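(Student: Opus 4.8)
The plan is to verify, for each of the three surfaces, the two defining conditions of a $\tau$-representative directly from the geometry of the fibration, exploiting that in every case the relevant trivialization is by construction the linking-number-zero ($S$-)trivialization of the surface in question (Remark \ref{rmk:tau2q}). Admissibility of $\Sigma$, $S_e$, and $S_h$---that their boundaries are the prescribed covers $b$, $e^q-b$, and $h^2-b$, and that they represent $Z_b$, the class in $H_2(S^3,e^q,b)$, and the class in $H_2(S^3,h^2,b)$ respectively---was already recorded in Definition \ref{def:sfcs2}. I would begin by promoting each to an admissible representative $f\colon S\to[-1,1]\times Y$ by placing the bulk of the surface at level $s=0$ and attaching cylinders $[1-\eps,1]\times\alpha$ over the positive boundary and $[-1,-1+\eps]\times\beta$ over the negative boundary; since each surface is embedded in $Y$, the interior can be kept embedded and transverse to the slices, and $\pi_Y\circ f$ recovers the surface in $Y$, so the homology class is correct.

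First I would treat the page $\Sigma$, the simplest case, whose boundary is the single embedded orbit $b$. Condition (1) is immediate: $\Sigma$ is embedded and its projection near $b$ is an embedded collar. For condition (2) the distinct-rays requirement is vacuous (one strand), and the non-rotation requirement is exactly the statement that the pushoff of $b$ into $\Sigma$ is constant with respect to $\tau_\Sigma$, which holds by the definition of $\tau_\Sigma$ as the $\Sigma$-trivialization (equivalently, the page/push-off-linking-zero trivialization of Remark \ref{rem:linkingtriv}).

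Next I would handle $S_e$ and $S_h$ together, as they are structurally identical: each is the $\fp$-preimage of a radial arc $\rho$ in $\CP^1_{2,q}$ running from an orbifold point ($\fp(e)$ or $\fp(h)$) out to a regular point $\fp(b)$. Over the interior of $\rho$ the surface is a cylinder of regular fibers projecting embeddedly, so $\pi_Y\circ f$ is an embedding near the $b$-boundary, and near the exceptional boundary the collar is a braid whose strand count equals the isotropy order ($q$ for $e$, $2$ for $h$), matching $\partial S_e=e^q-b$ and $\partial S_h=h^2-b$. To check condition (2) at the exceptional fiber, I would pass to the model fibered solid torus $T_{q,\nu}$ (resp.\ $T_{2,\nu}$) of \S\ref{ss:Seifert}: there the $\Z/q$ (resp.\ $\Z/2$) isotropy action rotating the fibers exhibits the collar as the orbit of a single ray, so the $q$ (resp.\ $2$) strands lie in equally spaced, hence distinct, rays whose set is stationary in the $S_e$- (resp.\ $S_h$-)trivialization. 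Non-rotation is again built into the definition of $\tau_e=\tau_{S_e}$ and $\tau_h=\tau_{S_h}$; at the $b$-boundary the single strand does not rotate because $\tau_e(b)=\tau_h(b)=\tau_0(b)$ (Lemma \ref{lem:taudiff2}(i)) agrees with the constant pushoff.

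The main obstacle is precisely the distinct-rays condition for the multiply covered boundaries $e^q$ and $h^2$: one must confirm that the collar meets a transverse slice in $q$ (resp.\ $2$) distinct rays whose \emph{set} remains stationary in the surface trivialization even though the strands themselves are cyclically permuted as one traverses the simple orbit. I expect this to follow cleanly from the model computation, where the radial arc lifts to a surface meeting each slice in the $\Z/q$- (resp.\ $\Z/2$-)orbit of one ray, so the strands are automatically equally spaced and collectively non-rotating in the trivialization that declares the lift constant; the care required is in matching orientations and confirming the cylindrical lift to $[-1,1]\times Y$ introduces no spurious rotation. The remaining verifications are routine once the definitions are unwound.
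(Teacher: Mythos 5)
Your proposal is correct and takes essentially the same route as the paper: the paper's entire proof is the single observation that the conclusions ``follow immediately from the definitions of the trivializations in \S\ref{ss:trivs},'' since $\tau_\Sigma$, $\tau_e=\tau_{S_e}$, and $\tau_h=\tau_{S_h}$ are by construction the linking-number-zero trivializations of these very surfaces. Your explicit verification --- the collar construction in $[-1,1]\times Y$, and the local fibered-solid-torus model showing the $q$ (resp.\ $2$) strands sit in equally spaced rays that are cyclically permuted but collectively stationary in the surface trivialization --- is a correct unwinding of exactly what the paper leaves implicit.
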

\begin{proof} The conclusions follow immediately from the definitions of the trivializations in \S\ref{ss:trivs}.
\end{proof}

We prove several relative intersection pairings are zero.

\begin{lemma}\label{lem:zero} The following relative intersection pairings are zero.
\begin{enumerate}[{\em (i)}]
\itemsep-.25em
\item $Q_e([S_e])=Q_h([S_h])=0$,
\item $Q_\tau([S_e],[S_h])=0$, where $\tau(e)=\tau_e(e), \tau(h)=\tau_h(h)$, and $\tau(b)=\tau_0(b)$.
\end{enumerate}
\end{lemma}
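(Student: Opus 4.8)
The plan is to prove both parts by exhibiting admissible representatives whose interiors are disjoint, so that the intersection count $\#(\dot{S}\cap\dot{S}')$ vanishes, and by invoking the $\tau$-representative property of $S_e$ and $S_h$ established in Lemma \ref{lem:taurep} to eliminate the linking term. The essential geometric input is that $S_e$ and $S_h$ are unions of fibers over embedded arcs in the base orbifold $\CP^1_{2,q}$: by Definition \ref{def:sfcs2}, $S_e=\fp^{-1}(\rho_e)$ for an arc $\rho_e$ running from $\fp(e)$ to $\fp(b)$, and $S_h=\fp^{-1}(\rho_h)$ for an arc $\rho_h$ running from $\fp(h)$ to $\fp(b)$. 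Since distinct points of the base have disjoint fibers, two fiber surfaces over arcs with disjoint interiors automatically have disjoint interiors in $Y$.

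For part (i), I would recall that $S_e$ is a $\tau_e$-representative of its class by Lemma \ref{lem:taurep}(ii), so that $Q_e([S_e])=\#(\dot{S}_e\cap\dot{S}_e')$ for any admissible representative $S_e'$ of the same class (which is the unique element of $H_2(S^3,e^q,b)$ by Remark \ref{rmk:H2}(i)) whose interior is transverse to $S_e$ and disjoint from it near the boundary. I would take $S_e'=\fp^{-1}(\rho_e')$, where $\rho_e'$ is a second embedded arc from $\fp(e)$ to $\fp(b)$ sharing only its endpoints with $\rho_e$; such an arc exists on the sphere, and approaching the shared endpoints from distinct directions arranges that the two surfaces do not meet near $\partial$. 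Then $\dot{S}_e$ and $\dot{S}_e'$ are disjoint, so $Q_e([S_e])=0$, and the identical argument with a pair of arcs from $\fp(h)$ to $\fp(b)$ yields $Q_h([S_h])=0$.

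For part (ii), the key observation is that the trivialization $\tau$ restricts on the boundary orbits $e$ and $b$ of $S_e$ to $\tau_e(e)$ and $\tau_0(b)$, and $\tau_0(b)=\tau_e(b)$ by Lemma \ref{lem:taudiff2}(i); hence $S_e$, being a $\tau_e$-representative, is also a $\tau$-representative. Consequently $Q_\tau([S_e],[S_h])=\#(\dot{S}_e\cap\dot{S}_h)$ for any admissible representative $S_h$ of $[S_h]$ transverse to $S_e$ and disjoint near the boundary. Choosing $\rho_e$ and $\rho_h$ to be disjoint in their interiors — possible since they need only share the common endpoint $\fp(b)$ — makes $\dot{S}_e$ and $\dot{S}_h$ disjoint, so $\#(\dot{S}_e\cap\dot{S}_h)=0$ and the pairing vanishes.

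The step I expect to be the main point requiring care is the behavior over the shared fiber $b$: both $S_e$ and $S_h$ terminate on the single orbit $b$ (on their negative ends), so one must approach $\fp(b)$ along distinct rays in order that the two boundary braids lie in distinct rays and the interiors do not meet near $\partial S$. This is precisely the compatibility condition built into the $\tau$-representative formulation, and the equality $\tau_e(b)=\tau_h(b)=\tau_0(b)$ is exactly what makes the non-rotation condition consistent with the single trivialization $\tau(b)=\tau_0(b)$ specified in part (ii). As equivalent bookkeeping one could instead compute $Q_\tau([S_e],[S_h])=\#(\dot{S}_e\cap\dot{S}_h)-\ell_\tau(S_e,S_h)$ directly and observe that the only common boundary orbit is $b$, whose two single-strand braids are constant with respect to $\tau_0(b)$ and hence unlinked, so that $\ell_\tau(S_e,S_h)=0$.
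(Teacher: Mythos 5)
Your proposal is correct and follows essentially the same route as the paper: both use Lemma \ref{lem:taurep} to reduce each pairing to a geometric intersection count, which vanishes because the surfaces are unions of fibers over arcs in $\CP^1_{2,q}$ that can be taken to have disjoint interiors. Your write-up simply makes explicit the pushed-off representatives (fiber surfaces over a second arc) and the vanishing of the linking term over $b$, details the paper's terse proof leaves implicit.
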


\begin{proof}

The surfaces $S_e$ and $S_h$ are $\tau$-representatives of the single element in $H_2(S^3,e^q,b)$ and $H_2(S^3,h^2,b)$, respectively, by Lemma \ref{lem:taurep} (ii, iii). Thus the relative intersection pairings in all three cases equal the intersection numbers of the surfaces in $\R\times S^3$. These are all zero because even the intersection numbers between the projections of the surfaces to $S^3$ are zero.

\end{proof}

We compute the remaining relative intersection pairings, separated by proof method.

\begin{lemma}\label{lem:Qb} The following formulas hold.
\begin{enumerate}[{\em (i)}]
\itemsep-.25em
\item $Q_\Sigma(Z_b)=0$,
\item $Q_{orb}(Z_b)=q-2$,
\item $Q_0(Z_b)=2q$.
\end{enumerate}
\end{lemma}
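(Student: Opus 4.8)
The plan is to establish (i) directly from the open book geometry and then deduce (ii) and (iii) purely formally by change of trivialization.

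First I would prove (i). By Lemma~\ref{lem:taurep}(i) the page $\Sigma$ is a $\tau_\Sigma$-representative of $Z_b$, so that $Q_\Sigma(Z_b)=\#(\dot S\cap\dot S')$ for any two admissible $\tau_\Sigma$-representatives $S, S'$ of $Z_b$ whose interiors are transverse and disjoint near the boundary. I would choose $S$ and $S'$ to project under $\pi_Y$ to two distinct pages $\overline{\pi^{-1}(\theta)}$ and $\overline{\pi^{-1}(\theta')}$ of the open book $(T(2,q),\pi)$ discussed in \S\ref{ss:OBDs}. Distinct pages of an open book share only the binding $b$ as common boundary and have disjoint interiors in $S^3$. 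Because the page trivialization $\tau_\Sigma$ is precisely the trivialization for which the pushoff of $b$ into a page is constant (i.e.\ has linking number zero with $b$, cf.\ the definition of $\tau_S$ in \S\ref{ss:trivs}), the two pages furnish $\tau_\Sigma$-representatives whose boundary braids $\zeta^+,\zeta^{+'}$ near $b$ lie on distinct angular rays and do not rotate with respect to $\tau_\Sigma$. As the $Y$-projections of the two surfaces are disjoint away from $b$, their interiors in $[-1,1]\times S^3$ do not intersect, so $Q_\Sigma(Z_b)=0$.

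Then I would obtain (ii) and (iii) from the change-of-trivialization formula \eqref{Qtriv} in Lemma~\ref{lem:formulas}(\ref{item:cot}). For the relative self-intersection of $Z_b$ we have $\alpha=\{(b,1)\}$ and $\beta=\emptyset$, so with $Z=Z'=Z_b$ the formula collapses to $Q_\tau(Z_b)-Q_{\tilde\tau}(Z_b)=\tau(b)-\tilde\tau(b)$, since the single multiplicities satisfy $m_b=m_b'=1$ and the $\beta$-sum is empty. Taking $\tilde\tau=\tau_\Sigma$ and $\tau=\tau_{orb}$, Lemma~\ref{lem:taudiff2}(iv) gives $\tau_{orb}(b)-\tau_\Sigma(b)=q-2$, whence $Q_{orb}(Z_b)=0+(q-2)=q-2$. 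Taking instead $\tau=\tau_0$, Lemma~\ref{lem:taudiff2}(ii) gives $\tau_0(b)-\tau_\Sigma(b)=2q$, whence $Q_0(Z_b)=2q$.

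The main obstacle is part (i): one must verify carefully that two distinct pages genuinely serve as $\tau_\Sigma$-representatives with transverse, disjoint interiors, and that their boundary braids behave as the $\tau_\Sigma$-representative definition requires near $b$ (lying on distinct rays and not rotating relative to $\tau_\Sigma$). This is exactly where the open book structure and the defining linking-zero property of the page trivialization are essential. Once (i) is in hand, parts (ii) and (iii) are immediate consequences of Lemmas~\ref{lem:formulas} and \ref{lem:taudiff2}.
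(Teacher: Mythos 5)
Your proposal is correct and follows essentially the same route as the paper: part (i) is established by exhibiting two disjoint-interior $\tau_\Sigma$-representatives of $Z_b$ (the paper phrases this as pushing the page off itself via the Reeb flow, which indeed produces your second page), and parts (ii) and (iii) are then read off from the change-of-trivialization formula together with Lemma \ref{lem:taudiff2}(iv) and (ii), exactly as in the paper's proof. The only minor discrepancy is that the paper cites Lemma \ref{lem:taudiff2}(iii) for part (ii) where the quantity actually used, $\tau_{orb}(b)-\tau_\Sigma(b)=q-2$, is its item (iv) — your citation is the accurate one.
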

\begin{proof}
\begin{enumerate}[{ (i)}]
\itemsep-.25em

\item Recall that $\Sigma$ is a $\tau_\Sigma$-representative of $Z_b$. Thus $Q_\Sigma([\Sigma])=0$ because $\Sigma$ can be pushed off of itself in $S^3$ via the Reeb flow, which sends it to another surface whose boundary is constant with respect to $\tau_\Sigma$. 

\item We use the change-of-trivialization formula Lemma \ref{lem:formulas} (iv) and Lemma \ref{lem:taudiff2}(iii):
\[
Q_{orb}([\Sigma])=Q_\Sigma([\Sigma])+\tau_{orb}(b)-\tau_\Sigma(b)=0+q-2.
\]

\item We use the change-of-trivialization formula Lemma \ref{lem:formulas}(iv) and Lemma \ref{lem:taudiff2}(ii):
\[
Q_0([\Sigma])=Q_\Sigma([\Sigma])+\tau_0-\tau_\Sigma=0+2q.
\]
Note that this agrees with the quantity $-ed^2$ computed in \cite[Lem.~3.13]{preech} for the case of a prequantization bundle, because $e=-\frac{1}{2q}$  while $d=d(b)=2q$; we expect this agreement because $b$ is a regular fiber of a prequantization orbibundle.
\end{enumerate}
\end{proof}

\begin{lemma}\label{lem:QeQh} The following formulas hold.
\begin{enumerate}[{\em (i)}]
\itemsep-.25em
\item $Q_{orb}(Z_e)=-1$,
\item $Q_{orb}(Z_h)=-1$.
\end{enumerate}
\end{lemma}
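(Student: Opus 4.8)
The plan is to compute $Q_{orb}(Z_e)$ by exploiting the fact that it is $e^q$, rather than $e$ itself, that bounds a fiber surface which can be capped off by a page. By Definition \ref{def:sfcs2} we have $[S_e\cup_b\Sigma]=Z_{e^q}=qZ_e$, so the quadratic scaling of the relative self-intersection pairing under union addition (Remark \ref{rmk:formulas}(ii)) gives $Q_{orb}(Z_{e^q})=q^2Q_{orb}(Z_e)$. On the other hand, $Z_{e^q}$ is the concatenation of $[S_e]\in H_2(S^3,e^q,b)$ with $[\Sigma]=Z_b\in H_2(S^3,b,\emptyset)$ along $b$, so the concatenation-linearity of $Q_\tau$ (Lemma \ref{lem:formulas}(\ref{item:clin})), applied with $\tau_{orb}$ used consistently over the shared orbit $b$, yields
\[
q^2Q_{orb}(Z_e)=Q_{orb}([S_e])+Q_{orb}(Z_b).
\]
Since $Q_{orb}(Z_b)=q-2$ is already computed in Lemma \ref{lem:Qb}(ii), everything reduces to computing $Q_{orb}([S_e])$.

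To obtain $Q_{orb}([S_e])$, I would start from the vanishing $Q_e([S_e])=0$ of Lemma \ref{lem:zero}(i) — which holds because $S_e$ is a $\tau_e$-representative built from fibers (Lemma \ref{lem:taurep}(ii)) — and then change the trivialization from $\tau_e$ to $\tau_{orb}$ over both boundary orbits. Because the boundary $e^q$ is a cover of the embedded orbit $e$, this requires the cover-sensitive change-of-trivialization formula Proposition \ref{prop:nonsimpleCOT}(ii) (with $\gamma=e$, $k=q$, $m=m'=1$) on the $e^q$-end, together with the ordinary simple-orbit term on the $b$-end. Feeding in $\tau_e(e^q)-\tau_{orb}(e^q)=2+q$ from Lemma \ref{lem:e3h2diff}(i) and $\tau_e(b)-\tau_{orb}(b)=\tau_0(b)-\tau_{orb}(b)=2+q$ from Lemma \ref{lem:taudiff2}(i),(iii) gives
\[
Q_e([S_e])-Q_{orb}([S_e])=q(2+q)-(2+q)=(q-1)(q+2),
\]
so $Q_{orb}([S_e])=-(q-1)(q+2)$.

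Substituting back, $q^2Q_{orb}(Z_e)=-(q-1)(q+2)+(q-2)=-q^2$, whence $Q_{orb}(Z_e)=-1$, proving (i). For (ii) I would run the identical argument with $h^2$ in place of $e^q$: now $Z_{h^2}=2Z_h$ forces $Q_{orb}(Z_{h^2})=4Q_{orb}(Z_h)$, the concatenation reads $4Q_{orb}(Z_h)=Q_{orb}([S_h])+Q_{orb}(Z_b)$, and Proposition \ref{prop:nonsimpleCOT}(ii) with $k=2$ together with Lemma \ref{lem:e3h2diff}(ii) gives $Q_{orb}([S_h])=-(2+q)$, so that $4Q_{orb}(Z_h)=-(2+q)+(q-2)=-4$ and $Q_{orb}(Z_h)=-1$. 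The one genuinely delicate point — the likely main obstacle — is the bookkeeping in the change-of-trivialization step: one must apply the cover-weighted coefficient $mm'k$ to the $e^q$- (resp.\ $h^2$-)end while using the unweighted coefficient on the $b$-end, since conflating these or dropping the factor of $k$ would destroy the exact cancellation down to $-q^2$ (resp.\ $-4$).
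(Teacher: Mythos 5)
Your argument is correct and is essentially the paper's proof: both rest on the decomposition $Z_{e^q}=[S_e]+[\Sigma]$ (resp.\ $Z_{h^2}=[S_h]+[\Sigma]$), the vanishing $Q_e([S_e])=Q_h([S_h])=0$ from Lemma \ref{lem:zero}(i), the cover-sensitive change-of-trivialization formula of Proposition \ref{prop:nonsimpleCOT}(ii) fed with Lemma \ref{lem:e3h2diff} and Lemma \ref{lem:taudiff2}, and the quadratic scaling $Q_{orb}(qZ_e)=q^2Q_{orb}(Z_e)$ under union addition. The only difference is bookkeeping order: the paper changes trivialization on the assembled class $qZ_e$ (so only the $e^q$-end term appears) and then splits $Q_e(Z_{e^q})$ by concatenation using $Q_0(Z_b)=2q$ from Lemma \ref{lem:Qb}(iii), whereas you change trivialization on the piece $[S_e]$ (picking up both end terms) and split in the $\tau_{orb}$ trivialization using $Q_{orb}(Z_b)=q-2$ from Lemma \ref{lem:Qb}(ii), producing the identical cancellation down to $-q^2$ (resp.\ $-4$).
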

\begin{proof}
\begin{enumerate}[{ (i)}]
\itemsep-.25em
\item Using the change-of-trivialization formula from Proposition \ref{prop:nonsimpleCOT}(ii) and the value of $\tau_{orb}(e^q)-\tau_e(e^q)$ computed in Lemma \ref{lem:e3h2diff}(i), we have
\begin{align*}
Q_{orb}(qZ_e)&=Q_e(qZ_e)+q(\tau_{orb}(e^q)-\tau_e(e^q))
\\&=Q_e(Z_{e^q})+q(-2-q)
\\&=Q_e([S_e]+[\Sigma])-2q-q^2
\\&=-q^2
\end{align*}
where we have used linearity with respect to concatenation, Lemma \ref{lem:formulas} (\ref{item:clin}), and the fact that $\tau_e(b)=\tau_0(b)$, Lemma \ref{lem:taudiff2}(i), in the fourth line, and Lemma \ref{lem:zero} (i) to compute $Q_e([S_e])$ in the fifth line. Thus by bilinearity with respect to union addition, see Remark \ref{rmk:formulas}~(ii), $Q_{orb}(Z_e)=\frac{Q_{orb}(qZ_e)}{q^2}=-1.$

\item By the change-of-trivialization formula from Proposition \ref{prop:nonsimpleCOT} (ii) and the value of $\tau_{orb}(h^2)-\tau_h(h^2)$ computed in Lemma \ref{lem:e3h2diff} (ii), we have
\begin{align*}
Q_{orb}(2Z_h)&=Q_h(2Z_h)+2(\tau_{orb}(h^2)-\tau_h(h^2))
\\&=Q_h(Z_{h^2})+2(-2-q)
\\&=Q_h([S_h]+[\Sigma])-2(2+q)
\\&=-4,
\end{align*}
where in the fourth line we have used the fact that $Q_\tau$ is linear with respect to concatenation addition and the fact that $\tau_h(b)=\tau_0(b)$, Lemma \ref{lem:taudiff2}(i). We have also used Lemma \ref{lem:zero}(i) to compute $Q_h([S_h])$ in the fifth line. Thus by bilinearity with respect to union addition, see Remark \ref{rmk:formulas}(ii), $
Q_{orb}(Z_h)=\frac{Q_{orb}(2Z_h)}{4}=-1.$

\end{enumerate}
\end{proof}

\begin{lemma}\label{lem:crossQ} The following formulas hold.
\begin{enumerate}[{\em (i)}]
\itemsep-.25em
\item $Q_{orb}(Z_e,Z_h)=1$,
\item $Q_{orb}(Z_e,Z_b)=2$,
\item $Q_{orb}(Z_h,Z_b)=q$.
\end{enumerate}
\end{lemma}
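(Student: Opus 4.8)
The plan is to recognize each of these three cross pairings as nothing more than the ordinary linking number in $S^3$ of the two underlying embedded orbits, values which have already been recorded in Corollary \ref{cor:linkingnumber}. Concretely, I would first establish the general principle that for two \emph{distinct} embedded Reeb orbits $\gamma_1,\gamma_2\subset S^3$ one has $Q_\tau(Z_{\gamma_1},Z_{\gamma_2})=\ell(\gamma_1,\gamma_2)$, and then simply read off (i)--(iii) from $\ell(e,h)=1$, $\ell(e,b)=2$, and $\ell(h,b)=q$. The appearance of exactly $1,2,q$ on both sides is the tell that this is the right mechanism.

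I would begin by noting that since $\{\gamma_1\}$ and $\{\gamma_2\}$ share no underlying orbit and both negative ends are empty, Remark \ref{rmk:formulas}(iii) guarantees that $Q_\tau(Z_{\gamma_1},Z_{\gamma_2})$ is independent of $\tau$; in particular it equals $Q_{orb}$, so I am free to compute with whichever admissible representatives are most convenient. I would then choose admissible representatives $S,S'$ of $Z_{\gamma_1},Z_{\gamma_2}$ that project under $\pi_Y$ to honest Seifert surfaces $F_1,F_2$ of $\gamma_1,\gamma_2$ in $S^3$ (these exist since $H_1(S^3)=0$, even for the exceptional fibers $e,h$). Because the braids entering $\ell_\tau(S,S')$ are taken around \emph{common} boundary orbits, and here $\gamma_1\neq\gamma_2$ while $\beta=\beta'=\emptyset$, the linking term vanishes identically: $\ell_\tau(S,S')=0$. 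Hence $Q_\tau(Z_{\gamma_1},Z_{\gamma_2})=\#(\dot S\cap\dot S')$, the signed interior intersection count.

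It then remains to identify that count with the linking number. I would position $S$ so that it meets $\{1\}\times S^3$ only along $\gamma_1$ and descends into the interior over $F_1$, and position $S'$ to lie over $F_2$ at a slightly lower slice $\{1-\eps\}\times S^3$. Slicing at level $1-\eps$ exhibits $\dot S\cap\dot S'$ as $\{1-\eps\}\times(\gamma_1'\cap F_2)$, where $\gamma_1'$ is a pushoff of $\gamma_1$ into $F_1$; since $\gamma_1'$ is isotopic to $\gamma_1$ and disjoint from $\gamma_2$, the signed count equals $\ell(\gamma_1',\gamma_2)=\ell(\gamma_1,\gamma_2)$. Substituting Corollary \ref{cor:linkingnumber} gives (i) $Q_{orb}(Z_e,Z_h)=\ell(e,h)=1$, (ii) $Q_{orb}(Z_e,Z_b)=\ell(e,b)=2$, and (iii) $Q_{orb}(Z_h,Z_b)=\ell(h,b)=q$.

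The main obstacle is the bookkeeping in this last step: confirming that the interior intersection number is well defined for admissible (as opposed to strictly $\tau$-) representatives, and that the orientations are arranged so that the signed count reproduces $+\ell(\gamma_1,\gamma_2)$ rather than its negative. As a cross-check, and as a route for part (i) that stays entirely inside the already-developed machinery, I would instead use the disjoint unions of fibers $S_e,S_h$, which satisfy $\#(\dot S_e\cap\dot S_h)=0$ and for which Lemma \ref{lem:zero}(ii) gives $Q_\tau([S_e],[S_h])=0$ in the surface trivializations. Changing trivialization to $\tau_{orb}$ across the shared orbit $b$ via Lemma \ref{lem:taudiff2}(iii) and the formula in Lemma \ref{lem:formulas}(iv) yields $Q_{orb}([S_e],[S_h])=2+q$; combining this with $Q_{orb}(Z_b)=q-2$ from Lemma \ref{lem:Qb}(ii) through concatenation linearity (Lemma \ref{lem:formulas}(ii)) gives $Q_{orb}(Z_{e^q},Z_{h^2})=2q$, and bilinearity under union addition then recovers $Q_{orb}(Z_e,Z_h)=1$, in agreement with the linking computation.
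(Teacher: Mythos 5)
Your proposal is correct and follows essentially the same route as the paper: since the ends of the two relative classes are disjoint, the $\ell_\tau$ term vanishes, so $Q_\tau(Z,Z')$ equals the signed interior intersection count, which is identified with the linking number of the boundary orbits and then read off from Corollary \ref{cor:linkingnumber}. Your trivialization-change cross-check for (i), via Lemmas \ref{lem:zero}, \ref{lem:taudiff2}, \ref{lem:Qb} and bilinearity, is a valid independent confirmation but not part of the paper's argument.
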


\begin{proof}
Whenever the ends of $Z$ and $Z'$ are disjoint and connected, the $\ell_\tau$ term in the definition of $Q_\tau$ is zero. Thus for any trivialization $\tau$ and admissible representatives $S$ and $S'$ of $Z$ and $Z'$, we have
\[
Q_\tau(Z,Z')=\#(\dot{S}\cap\dot{S'})=\#(S\cap S')=\ell(\partial S,\partial S').
\]
Corollary \ref{cor:linkingnumber} computes the relevant linking numbers.

\end{proof}

\section{Computation of embedded contact homology}\label{s:ECHI}

In this section we compute the  action filtered ECH chain complex for the contact form $\lambda_{2,q,\varepsilon}$. 
We establish our ECH index theorem and then summarize the results of this section in \S\ref{ss:2}. In \S\ref{sss:cc2} we finish the description of the ECH chain complex of $\lambda_{2,q,\varepsilon}$ up to action $L(\varepsilon)$. In particular, we prove that the differential disappears for index reasons, enabling us to compute
\[
\lim_{\varepsilon\to0}ECH^{L(\varepsilon)}_*(S^3,\lambda_{2,q,\varepsilon},J)=\begin{cases}\Z/2&\text{ if }* \in 2\Z_{\geq 0}
\\0&\text{else}\end{cases}
\] 
 combinatorially, including the $T(2,q)$ knot filtration in \S\ref{s:spectral}.

Other {well-known} means of recovering $ECH_*(S^3, \xi_{std})$ can be achieved via the irrational ellipsoid in \cite[\S3.7]{lecture}, or even the prequantization bundle over $S^2$ with Euler class -1 in \cite[\S7.2.2]{preech}. {The utility of our computation is not to obtain the homology, but to prove that our chain complex consists precisely of one generator per index, and to identify the degree of that generator as a function of its index. This identification will be heavily relied on in \S\ref{s:spectral}.}

\subsection{Summary of calculations for $T(2,q)$}\label{ss:2}

Recall that the generators of $ECC_*^{L(\varepsilon)}(S^3,\lambda_{2,q,\varepsilon}, J)$ are of the form $b^Bh^He^E$, where $B,E \in \Z_{\geq 0}$ and $H=0,1$.\footnote{Recall that the Reeb vector field $R_{2,q,\varepsilon}$ admits two orbits realizing singular fibers of the Seifert fibration, called $e$ and $h$ and of isotropy $\Z/q$ and $\Z/2$, respectively, with $T(2,q)$ as a regular fiber $b$.  We obtain $\lambda_{2,q,\varepsilon}$ by perturbing $\lambda_{2,q}$ using the the Morse function $H_{2,q}$ of Proposition \ref{prop:morse2} on the base orbifold $\CP^1_{2,q}$.  As described in Lemma \ref{lem:orbitseh}, up to large action the only Reeb orbits of $R_{2,q,\varepsilon}$ are iterates of $e$ (elliptic), $h$ (negative hyperbolic), and $b$ (elliptic). }

\begin{notation}
When specifying a particular Reeb current with multiplicative notation, we will follow the convention that $m>0$, omitting the term $(\alpha, m)$ which in multiplicative notation is expressed as $\alpha^m$ if $m=0$. For an unspecified or general Reeb current, however, we will allow $m=0$, and it will correspond to an omitted Reeb current in the usual notation.
\end{notation}

Our computation of the ECH index is as follows. 

\begin{theorem}\label{thm:ECHI}
The ECH index $I$ for $(S^3,\lambda_{2,q,\varepsilon})$ satisfies the following formula.  For any Reeb current $e^Eh^Hb^B$ with action less than $L(\varepsilon)$, where $E=qm+r, 0\leq r\leq q-1$: 
\begin{multline}
I(b^Bh^He^E)=2EH-H^2+(q+2)H+2qB^2+(q+3)B+4EB+2qHB \label{eqn:IEHB} \\ 
+\frac{2}{q}E^2+(q+1)m-\frac{2}{q}r^2+2r+\left\lfloor\frac{2r}{q}\right\rfloor(2r-q+1)
\end{multline}
where $E=qm+r, 0\leq r\leq q-1$. 
\end{theorem}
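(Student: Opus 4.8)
The plan is to compute the ECH index $I(b^Bh^He^E)$ from its definition (Definition \ref{defn:ECHI}),
\[
I(\alpha)=c_\tau(Z_\alpha)+Q_\tau(Z_\alpha)+CZ^I_\tau(\alpha),
\]
using the global orbibundle trivialization $\tau_{orb}$ throughout, so that all three terms are available from the computations in \S\ref{s:components}. Because $H_1(S^3)=0$, the relative homology class $Z_\alpha\in H_2(S^3,\alpha,\emptyset)$ is unique (Remark \ref{rmk:H2}(i)), so there is no trivialization ambiguity to track and $I$ is simply a sum of three explicit pieces. First I would assemble the relative first Chern term: since $c_{orb}$ is linear under union addition (Lemma \ref{lem:formulas}(\ref{item:linbil})) and $c_{orb}(Z_b)=c_{orb}([\Sigma])=0$, $c_{orb}(Z_e)=c_{orb}(Z_h)=0$ by Lemma \ref{lem:relfirstCherncalc2}(i,v), the entire relative first Chern number vanishes: $c_{orb}(Z_{b^Bh^He^E})=0$.

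Next I would compute the relative intersection pairing $Q_{orb}(Z_\alpha)$ for $\alpha=b^Bh^He^E$. Writing $Z_\alpha = B\,Z_b \uplus H\,Z_h \uplus E\,Z_e$ and expanding by the bilinearity and symmetry of $Q_{orb}$ under union addition (Lemma \ref{lem:formulas}(\ref{item:linbil}), Remark \ref{rmk:formulas}(ii)), I get
\[
Q_{orb}(Z_\alpha)=B^2Q_{orb}(Z_b)+H^2Q_{orb}(Z_h)+E^2Q_{orb}(Z_e)+2BH\,Q_{orb}(Z_b,Z_h)+2BE\,Q_{orb}(Z_b,Z_e)+2HE\,Q_{orb}(Z_h,Z_e).
\]
Substituting the self-intersection values $Q_{orb}(Z_b)=q-2$ (Lemma \ref{lem:Qb}(ii)), $Q_{orb}(Z_e)=Q_{orb}(Z_h)=-1$ (Lemma \ref{lem:QeQh}), and the cross terms $Q_{orb}(Z_e,Z_b)=2$, $Q_{orb}(Z_h,Z_b)=q$, $Q_{orb}(Z_e,Z_h)=1$ (Lemma \ref{lem:crossQ}) yields a closed polynomial in $B,H,E$.

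The remaining and most delicate term is the total Conley-Zehnder index $CZ^I_{orb}(\alpha)=\sum_{k=1}^B CZ_{orb}(b^k)+\sum_{k=1}^H CZ_{orb}(h^k)+\sum_{k=1}^E CZ_{orb}(e^k)$. The binding and hyperbolic sums are routine: $CZ_{orb}(b^k)=2(2+q)k+1$ and $CZ_{orb}(h^{2k})=2(2+q)k$ from Lemma \ref{lem:orbtrivCZ2} give clean arithmetic-series sums, and since $H\in\{0,1\}$ the hyperbolic contribution collapses to $CZ_{orb}(h^H)=(2+q)H$ — though I would need to incoporate the correct index for $h$ (a negative hyperbolic orbit with even monodromy angle $2+q$, so $CZ_{orb}(h^k)=(2+q)k$). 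The hard part, and the main obstacle, is the elliptic sum $\sum_{k=1}^E CZ_{orb}(e^k)$. Here $e$ has monodromy angle $(2+q)/q-\delta_{e,L}$ (Lemma \ref{lem:orbtrivCZ2}), so by the elliptic formula (\ref{CZ:elliptic}) each term is $2\lfloor k((2+q)/q-\delta_{e,L})\rfloor+1=2\lfloor k+2k/q-k\delta_{e,L}\rfloor+1$. Since $\delta_{e,L}$ is an arbitrarily small positive irrational, the floor equals $2k+\lfloor 2k/q\rfloor$ except when $2k/q$ is an integer, where the tiny subtraction of $k\delta_{e,L}$ forces a drop by one. I would therefore evaluate $\sum_{k=1}^E(2\lfloor 2k/q\rfloor)$ carefully, writing $E=qm+r$ with $0\le r\le q-1$ and summing the floor function over complete and partial periods; the periodic correction terms are exactly what produce the nonpolynomial pieces $(q+1)m-\tfrac{2}{q}r^2+2r+\lfloor 2r/q\rfloor(2r-q+1)$ and the $\tfrac{2}{q}E^2$ coming from the leading $2k/q$ growth. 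Executing this lattice-point count correctly — keeping precise track of the $\delta_{e,L}\to 0^+$ boundary cases — is the crux; once it is done, summing $c_{orb}+Q_{orb}+CZ^I_{orb}$ and collecting terms gives (\ref{eqn:IEHB}), and I would verify the result against Table \ref{table:trefoil} in the case $q=3$ as a consistency check.
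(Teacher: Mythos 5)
Your proposal is correct and follows essentially the same route as the paper: reduce everything to the orbibundle trivialization where $c_{orb}$ vanishes, expand $Q_{orb}$ by bilinearity using Lemmas \ref{lem:Qb}(ii), \ref{lem:QeQh}, and \ref{lem:crossQ}, and isolate the elliptic sum $CZ^I_{orb}(e^E)$ as the crux, evaluated by the same $E=qm+r$ lattice-point count with the $\delta_{e,L}\to0^+$ correction at multiples of $q$ (your parenthetical ``the floor equals $2k+\lfloor 2r/q\rfloor$'' should read $k+\lfloor 2k/q\rfloor$, but the decomposition you then sum is the right one). The paper executes exactly this plan, splitting the partial-period sum into the ranges $r\leq\lfloor q/2\rfloor$ and $r\geq\lceil q/2\rceil$.
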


\begin{proof}
If we can show
\begin{equation}\label{eqn:CZIorbeE}
CZ^I_{orb}(e^E)=\frac{q+2}{q}E^2+(q+1)m-\frac{2}{q}r^2+2r+\left\lfloor\frac{2r}{q}\right\rfloor(2r-q+1)
\end{equation}
and
\[
CZ^I_{orb}(b^B)=(q+2)B^2+(q+3)B,
\]
then (\ref{eqn:IEHB}) follows from Definition \ref{defn:ECHI} of the ECH index, linearity of $c_\tau$ and bilinearity of $Q_\tau$ with respect to union addition, Lemma \ref{lem:formulas} (\ref{item:linbil}), as well as Lemmas \ref{lem:orbtrivCZ2}, \ref{lem:relfirstCherncalc2}(i,v), \ref{lem:Qb}(ii), \ref{lem:QeQh}, and \ref{lem:crossQ}.

The formula for $CZ^I_{orb}(b^B)$ can be quickly obtained from Lemma \ref{lem:orbtrivCZ2}:
\[
CZ^I_{orb}(b^B)=\sum_{i=1}^b(2(q+2)i+1)=(q+2)(B+1)B+B.
\]

The formula for $CZ^I_{orb}(e^E)$ where $E=qm+r$ is elementary but slightly complicated to obtain. First we compute a formula for $CZ(e^E)$ in terms of $m$ and $r$. Let $(q+2)/q-\delta_{e,\varepsilon}$ denote the monodromy angle of $e$ with respect to the contact form $\lambda_\varepsilon$ and the trivialization $\tau_{orb}$; recall $\delta_{e,\varepsilon}>0$ and approaches zero as $\varepsilon\to0$ (see Lemma \ref{lem:orbtrivCZ2}). Then
\begin{align*}
CZ_{orb}(e^{qm+r})&=2\left\lfloor(qm+r)\left(\frac{q+2}{q}-\delta_{e,\varepsilon}\right)\right\rfloor+1
\\&=2(q+2)m+2r+2\left\lceil\frac{2r}{q}\right\rceil-1.
\end{align*}
To obtain $CZ^I_{orb}(e^E)$ we first compute $CZ^I_{orb}(e^{qm})$:
\begin{align*}
CZ^I_{orb}(e^{qm})&=\sum_{i=0}^{m-1}\sum_{j=1}^qCZ_{orb}(e^{qi+j})
\\&=\sum_{i=0}^{m-1}\left(\sum_{j=1}^{\lfloor q/2\rfloor}(2(q+2)i+2j+1)+\sum_{j=\lceil q/2\rceil}^{q-1}(2(q+2)i+2j+3)+(2(q+2)(i+1)-1)\right)
\\&=\sum_{i=0}^{m-1}2q(q+2)i+q^2+3q+1
\\&=\frac{q+2}{q}E^2+(q+1)m,
\end{align*}
using the $r=0$ case of
\[
E^2=(qm+r)^2=q^2m^2+2qmr+r^2.
\]
This proves (\ref{eqn:CZIorbeE}) when $r=0$.  Now for $r=1,\dots,\lfloor q/2\rfloor$, we have
\begin{align*}
CZ^I_{orb}(e^{qm+r})&=CZ^I_{orb}(e^{qm})+\sum_{j=1}^rCZ_{orb}(e^{qm+j})
\\&=q(q+2)m^2+(q+1)m+\sum_{j=1}^r\left(2(q+2)m+2j+1\right)
\\&=\frac{q+2}{q}E^2+(q+1)m-\frac{2}{q}r^2+2r,
\end{align*}
while when $r=\lceil q/2\rceil,\dots,q-1$, we have
\begin{align*}
CZ^I_{orb}(e^{qm+r})&=CZ^I_{orb}(e^{qm+\lfloor q/2\rfloor})+\sum_{j=\lceil q/2\rceil}^rCZ_{orb}(e^{qm+j})
\\&=q(q+2)m^2+(q+1)m+(2(q+2)m+1)\left\lfloor\frac{q}{2}\right\rfloor+\left\lfloor\frac{q}{2}\right\rfloor\left(\left\lfloor\frac{q}{2}\right\rfloor+1\right)
\\&\;\;\;\;+\sum_{j=\lceil q/2\rceil}^r\left(2(q+2)m+2j+3\right)
\\&\;\;\;\;+(2(q+2)+3)\left(r-\left\lfloor\frac{q}{2}\right\rfloor\right)+r(r+1)-\left\lfloor\frac{q}{2}\right\rfloor\left(\left\lfloor\frac{q}{2}\right\rfloor+1\right)
\\&=\frac{q+2}{q}E^2+(q+1)m-\frac{2}{q}r^2+4r-q+1.
\end{align*}
\end{proof}

\begin{table}[h!]
\begin{subtable}[c]{0.45\textwidth}
\centering
\begin{tabular}{||c|c|c||}
\hline
degree&generator&index
\\\hline 0& $\emptyset$ &0
\\\hline 2&$e$&2
\\\hline 3&$h$&4
\\\hline 4&$e^2$&6
\\\hline 5&$he$&8
\\\hline 6&$e^3$&10
\\\hline 6&$b$&12
\\\hline 7&$he^2$&14
\\\hline 8&$e^4$&16
\\\hline 8&$be$&18
\\\hline 9&$he^3$&20
\\\hline 9&$bh$&22
\\\hline 10&$e^5$&24
\\\hline 10&$be^2$&26
\\\hline 11&$he^4$&28
\\\hline 11&$bhe$&30
\\\hline 12&$e^6$&32
\\\hline 12&$be^3$&34
\\\hline 12&$b^2$&36
\\\hline 13&$he^5$&38
\\\hline 13&$bhe^2$&40
\\\hline 14&$e^7$&42
\\\hline 14&$be^4$&44
\\\hline 14&$b^2e$&46
\\\hline 15&$he^6$&48
\\\hline 15&$bhe^3$&50
\\\hline 15&$b^2h$&52
\\\hline 16&$e^8$&54
\\\hline 16&$be^5$&56
\\\hline 16&$b^2e^2$&58
\\\hline 17&$he^7$&60
\\\hline 17&$bhe^4$&62
\\\hline 17&$b^2he$&64
\\\hline 18&$e^9$&66
\\\hline 18&$be^6$&68
\\\hline
\end{tabular}
\subcaption{generators for $T(2,3)$}
\end{subtable}
\begin{subtable}[c]{0.45\textwidth}
\centering
\begin{tabular}{||c|c|c||}
\hline
degree&generator&index
\\\hline 0& $\emptyset$ &0
\\\hline 2&$e$&2
\\\hline 4&$e^2$&4
\\\hline 5&$h$&6
\\\hline 6&$e^3$&8
\\\hline 7&$he$&10
\\\hline 8&$e^4$&12
\\\hline 9&$he^2$&14
\\\hline 10&$e^5$&16
\\\hline 10&$b$&18
\\\hline 11&$he^3$&20
\\\hline 12&$e^6$&22
\\\hline 12&$be$&24
\\\hline 13&$he^4$&26
\\\hline 14&$e^7$&28
\\\hline 14&$be^2$&30
\\\hline 15&$he^5$&32
\\\hline 15&$bh$&34
\\\hline 16&$e^8$&36
\\\hline 16&$be^3$&38
\\\hline 17&$he^6$&40
\\\hline 17&$bhe$&42
\\\hline 18&$e^9$&44
\\\hline 18&$be^4$&46
\\\hline 19&$he^7$&48
\\\hline 19&$bhe^2$&50
\\\hline 20&$e^{10}$&52
\\\hline 20&$be^5$&54
\\\hline 20&$b^2$&56
\\\hline 21&$he^8$&58
\\\hline 21&$bhe^3$&60
\\\hline 22&$e^{11}$&62
\\\hline 22&$be^6$&64
\\\hline 22&$b^2e$&66
\\\hline 23&$he^9$&68
\\\hline
\end{tabular}
\subcaption{generators for $T(2,5)$}
\end{subtable}
\caption{Note the differences in the number of generators of fixed degree between the $\lambda_{2,3,\varepsilon}$ and $\lambda_{2,5,\varepsilon}$ contact forms.  Neither $\lambda_{2,3,\varepsilon}$  nor $\lambda_{2,5,\varepsilon}$  admit a generator of degree 1, and for $\lambda_{2,5,\varepsilon}$ there are no generators in degree 3. }
\label{table:gen2}
\end{table}

\begin{remark}[Evenness of ECH index] We note that the ECH index appearing in Theorem \ref{thm:ECHI} is always an even integer, as it must be by the Index Parity property, Theorem \ref{thm:Iproperties} (iv), because $h$ is negative hyperbolic by Lemma \ref{lem:orbitseh}. Because $q$ is odd, the only terms which are not even integers are
\[
-H^2,\quad (q+2) H,\quad \frac{2}{q}E^2,\quad \mbox{and } -\frac{2}{q}r^2.
\]
If $H=1$ then the sum $-H^2+(q+2)H=q+1$ is even. Moreover,
\[
\frac{2}{q}(E^2-r^2)=\frac{2}{q}(q^2m^2+2qmr)=2qm^2+4mr,
\]
which is an even integer.  Thus in order to obtain the ECH of $S^3$ as computed in \cite[\S3.7]{lecture}, the ECH differential must vanish; this also follows as a direct consequence of the combinatorial results in \S\ref{sss:cc2}.
\end{remark}

 The formula for the ECH index appearing in Theorem \ref{thm:ECHI} is rather opaque.  To make sense of it, recall the notion of degree of a generator, adapted to the case $\beta=\emptyset$ in Definition \ref{def:degreep}:
 \[
 d(b^Bh^He^E) = \frac{B+\frac{1}{2}H+\frac{1}{q}E}{| e|} = 2qB + qH +2E.
 \]

\begin{remark} We now summarize the key properties of the relationship between the ECH index of a generator and its degree so as to better elucidate the formula for the ECH index in Theorem \ref{thm:ECHI} and its consequences in \S\ref{sss:cc2}.
\begin{itemize}
\itemsep-.25em
\item If $d(\alpha)>d(\beta)$ then $I(\alpha)>I(\beta)$; moreover, the lowest index appearing for a generator of degree $d+1$ is exactly two more than the highest index appearing for a generator of degree $d$, so long as $d$ is large enough. See Lemma \ref{lem:dd+1}.
\item Within a given degree, the ECH index increases by twos in lexicographic order on triples $(B,H,E)$.\footnote{Note that within a given degree $H$ is constant, because its contribution to the degree has odd parity while the contributions of $E$ and $B$ have even parity.}
\item A generator of the form $b^B$ has degree $d=2qB$ and index $I=2qB^2+(q+3)B$, thus
\[
d=\frac{q+3+\sqrt{(q+3)^2+8Iq}}{2}.
\]
{The function $I\mapsto d$ in general is a more complicated approximation of the above; see \S \ref{ss:index-degree}.}
\end{itemize}
We provide a list of generators organized by degree and index for  $\lambda_{2,3,\varepsilon}$ in
Table \ref{table:gen2}(a) and for  $\lambda_{2,5,\varepsilon}$ in Table \ref{table:gen2}(b).

\end{remark}

We prove the following proposition  in \S\ref{sss:cc2}, which will be key to our computation of the knot filtration in \S\ref{s:spectral}.  This is primarily done combinatorially, with a minimal invocation of results on ECH cobordism maps.

\begin{proposition}\label{prop:bijectionZ2}
For any $\lambda$-compatible almost complex structure $J$, we have
\[
ECC_*^{L(\varepsilon)}(S^3,\lambda_{2,q,\varepsilon},J)=\begin{cases}
\Z/2&\text{ if }*\in 2\Z_{\geq0}
\\0&\text{ otherwise},
\end{cases}
\]
so long as $*$ is small enough relative to $L(\varepsilon)$ (see Lemma \ref{lem:orbitseh}).

Moreover, for $\varepsilon>\varepsilon'$, there is an exact symplectic cobordism from $(S^3,\lambda_{2,q,\varepsilon})$ to $(S^3,\lambda_{2,q,\varepsilon'})$, and the compositions of the inclusion-induced maps $\iota^{L(\varepsilon),L(\varepsilon')}$ and the cobordism maps $\Phi^L$  defined in \cite[Thm.~2.17]{preech} are the canonical bijection on generators.
\end{proposition}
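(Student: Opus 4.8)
The plan is to first establish the statement about $ECC_*^{L(\varepsilon)}$ purely combinatorially using the ECH index formula of Theorem \ref{thm:ECHI}, and then to handle the cobordism/inclusion map claim by invoking the framework of \cite{preech} together with the vanishing of the differential. For the first part, I would argue that each nonnegative even integer $2k$ (with $k$ small enough relative to $L(\varepsilon)$) is the ECH index of exactly one generator $b^Bh^He^E$, and that odd integers and negative integers are never achieved. The key is to exploit the degree $d(b^Bh^He^E)=2qB+qH+2E$ of Definition \ref{def:degreep} as an organizing principle: I would show (this is essentially the content of the remarks following Theorem \ref{thm:ECHI}) that $I$ is strictly monotonic in $d$, that within a fixed degree $d$ the index increases in steps of two as $(B,H,E)$ runs in lexicographic order, and crucially that the highest index in degree $d$ and the lowest index in degree $d+1$ differ by exactly $2$ once $d$ is large enough. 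This last ``no gaps, no overlaps'' statement is exactly what Lemma \ref{lem:dd+1} provides, and combining it with the within-degree count gives a bijection between generators and $2\Z_{\geq 0}$ up to the relevant threshold. Since $h$ is negative hyperbolic (Lemma \ref{lem:orbitseh}), every generator has even index by the Index Parity property, Theorem \ref{thm:Iproperties}(iv), so there is nothing in odd gradings; I would cite the Evenness remark above to confirm the formula is consistent.

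Once the one-generator-per-even-grading statement is in hand, the vanishing of the differential is immediate for index reasons: $\partial$ drops the ECH index by one, but every generator sits in an even grading and there are no generators in odd gradings, so $\langle\partial\alpha,\beta\rangle$ can only be nonzero if $I(\alpha)-I(\beta)=1$, which never occurs. Hence $ECC_*^{L(\varepsilon)}=ECH_*^{L(\varepsilon)}$, giving the displayed $\Z/2$-in-each-even-grading answer. I would note this matches the known computation of $ECH_*(S^3,\xi_{std})$ in \cite[\S3.7]{lecture}, as the Evenness remark already anticipates.

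For the second part, the existence of the exact symplectic cobordism from $(S^3,\lambda_{2,q,\varepsilon})$ to $(S^3,\lambda_{2,q,\varepsilon'})$ for $\varepsilon>\varepsilon'$ follows from the fact that $\lambda_{2,q,\varepsilon}=(1+\varepsilon\fp^*H_{2,q})\lambda_{2,q}$ with $1+\varepsilon H_{2,q}$ pointwise larger for larger $\varepsilon$ (so the forms are ordered and the product $[-1,1]\times S^3$ with the interpolating symplectic form is exact), exactly as in the prequantization setting of \cite{preech}. I would then invoke \cite[Thm.~2.17]{preech} to obtain the cobordism maps $\Phi^L$ and the inclusion-induced maps $\iota^{L(\varepsilon),L(\varepsilon')}$, and argue that their composition is the identification of generators. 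The point is that both the source and target complexes have exactly one generator per even grading with no differential, so any degree-preserving chain map which is an isomorphism on homology (which these are, by the properties established in \cite[Thm.~2.17]{preech}) must be the canonical bijection on the level of generators; there is no room for a nontrivial map between one-dimensional spaces in each grading. I expect the main obstacle to lie in the combinatorial core of the first part---specifically verifying the ``difference exactly two between consecutive degrees'' claim (Lemma \ref{lem:dd+1}) uniformly, since the ECH index formula \eqref{eqn:IEHB} involves the floor term $\lfloor 2r/q\rfloor$ and the residue $r=E\bmod q$, so the within-degree ordering and the degree-to-degree transition must be checked against these arithmetic irregularities rather than following from a clean closed form.
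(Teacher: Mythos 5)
Your argument for the first claim follows the paper's own route (degree as the organizing invariant, Lemma \ref{lem:dy} for the within-degree steps of two, Lemma \ref{lem:dd+1} for the degree-to-degree transitions), but as written it has a hole exactly where Lemma \ref{lem:dd+1} gives out: that lemma only applies in degrees $d\geq 2q-1$, so monotonicity plus ``steps of two'' only tells you that the indices form a consecutive run of even integers \emph{from some degree on}. To get a bijection with $2\Z_{\geq0}$ starting at $0$ you must separately check that the finitely many generators of degree less than $2q$ --- by Lemma \ref{lem:dqxy} these are $e^i$ for $i=0,\dots,q-1$ and $he^j$ for $j=0,\dots,(q-1)/2$ --- have ECH indices filling out precisely the even integers from $0$ to $I(e^q)-2$ with no gaps or repeats. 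The paper does this by explicit computation from Theorem \ref{thm:ECHI} ($I(e^i)=2i$ for $i\leq(q-1)/2$, $I(e^i)=4i-q+1$ for $i\geq(q+1)/2$, $I(he^j)=q+4j+1$, $I(e^q)=3q+1$) together with the interleaving $I(he^j)+2=I(e^i)$. This is a fillable omission, but the ``arithmetic irregularities'' you flag at the end live exactly in this low-degree regime, not only inside Lemma \ref{lem:dd+1}.

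The second part contains a genuine gap. You argue the compositions must be the canonical bijection because they are ``degree-preserving chain maps which are isomorphisms on homology'' between complexes with one $\Z/2$ per even grading. Neither hypothesis is supplied by \cite[Thm.~2.17]{preech} (reproduced here as Theorem \ref{thm:cobmaps}): that theorem produces maps of \emph{ungraded} $\Z/2$-modules and asserts nothing about their being isomorphisms. An ungraded isomorphism of $\bigoplus_k\Z/2$ need not respect the splitting, and nothing in the cited axioms even rules out a non-isomorphism, so the rigidity argument cannot start; indeed the isomorphism/bijection property is precisely the content of the claim being proved, which makes your argument circular as stated. The paper's proof (like that of \cite[Prop.~3.2]{preech}, carried out in \cite[\S3.4]{preech}) instead works at the chain level: it invokes the Holomorphic Curves axiom together with \cite[Lem.~5.6]{cc2} --- Lemma \ref{lem:nicecobmap1} here, summarized as \cite[Lem.~2.18]{preech} --- which says that for product cobordisms of the form $([\varepsilon',\varepsilon]\times S^3,(1+s\fp^*H_{2,q})\lambda_{2,q})$, whose two ends have the same admissible Reeb currents below action $L$, the cobordism map is induced by the canonical identification of generators, \emph{provided} one verifies $L$-nondegeneracy, $L$-flatness (after an approximation), and $ECH^L$-genericity of the almost complex structures. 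That verification is the substance of the second half of the proof; even if you imported the isomorphism property from the Seiberg--Witten deformation argument (Proposition \ref{prop:admissible}), you would still need grading-respecting, chain-level control to pin down the map on generators, and that is what the holomorphic-curves machinery provides.
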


After taking direct limits, Proposition \ref{prop:bijectionZ2} immediately provides a combinatorial computation of the ECH of $(S^3,\xi_{std})$:\footnote{For a simpler computation of $ECH(S^3,\xi_{std})$, see \cite[\S3.7]{lecture}.}

\begin{corollary}\label{thm:ECHS3} The direct limit of the homologies of the chain complexes in Proposition \ref{prop:bijectionZ2} recovers the ECH of $(S^3,\xi_{std})$:
\[
\lim_{\varepsilon\to0}ECH_*^{L(\varepsilon)}(S^3,\lambda_{2,q,\varepsilon},J)=\begin{cases}
\Z/2&\text{ if }* \in 2\Z_{\geq0}
\\0&\text{ otherwise}.
\end{cases}
\]
\end{corollary}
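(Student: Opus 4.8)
The plan is to deduce Corollary \ref{thm:ECHS3} essentially formally from Proposition \ref{prop:bijectionZ2}, since all of the substantive work---identifying the generators, computing their gradings, and controlling the bonding maps---is already carried out there. First I would record that, in the stable range where $*$ is small relative to $L(\varepsilon)$, Proposition \ref{prop:bijectionZ2} shows $ECC_*^{L(\varepsilon)}(S^3,\lambda_{2,q,\varepsilon},J)$ is concentrated in even nonnegative gradings, with exactly one $\Z/2$ summand in each. The ECH differential lowers the absolute grading by $1$ (by additivity of the ECH index, Theorem \ref{thm:Iproperties}, as recalled in \S\ref{ss:overviewECH}), so it carries the degree-$2k$ part into the degree-$(2k-1)$ part. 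Since every chain group in odd grading vanishes throughout this range, the differential is identically zero, and hence
\[
ECH_*^{L(\varepsilon)}(S^3,\lambda_{2,q,\varepsilon},J)\cong ECC_*^{L(\varepsilon)}(S^3,\lambda_{2,q,\varepsilon},J)
\]
in each grading $*$ that is small enough relative to $L(\varepsilon)$. (This vanishing is also obtained combinatorially in \S\ref{sss:cc2}, so I would cross-reference it rather than re-derive it.)

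It then remains to compute the direct limit grading by grading. Fix a grading $*$. By Proposition \ref{prop:bijectionZ2}, for $\varepsilon>\varepsilon'$ small enough that $*$ lies in the stable range for both $L(\varepsilon)$ and $L(\varepsilon')$, the structure maps of the direct system---the composites of the inclusion-induced maps $\iota^{L(\varepsilon),L(\varepsilon')}$ and the cobordism maps $\Phi^L$ of \cite[Thm.~2.17]{preech}---are the canonical bijection on generators. Because the differentials vanish, these chain-level bijections descend to isomorphisms on homology in grading $*$. Therefore the directed system $\{ECH_*^{L(\varepsilon)}\}$ is eventually constant in each fixed grading, and the direct limit equals the stable value, namely $\Z/2$ when $*\in 2\Z_{\geq0}$ and $0$ otherwise. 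Since $L(\varepsilon)\to\infty$ as $\varepsilon\to0$, every grading eventually enters the stable range, so the computation holds in all gradings.

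The only point requiring genuine care---and the closest thing to an obstacle---is confirming that these bonding maps really do organize $\{ECH_*^{L(\varepsilon)}\}$ into a directed system whose limit computes $ECH_*(S^3,\xi_{std})$; this is exactly the Morse--Bott direct-limit framework of \cite[\S7]{preech} together with the filtered Seiberg--Witten input of \cite{cc2} already invoked in Proposition \ref{prop:directlimitcomputesfiberhomology}. Granting that framework, the corollary is immediate: stabilization in each grading follows from bijectivity on generators, and no independent analysis of the differential is needed beyond the parity observation above.
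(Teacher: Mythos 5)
Your proposal is correct and follows essentially the same route as the paper, which treats the corollary as an immediate consequence of Proposition \ref{prop:bijectionZ2} after taking direct limits: the parity argument for the vanishing of the differential (all generators have even ECH index, so odd chain groups vanish) is exactly the paper's observation in \S\ref{ss:2} and \S\ref{sss:cc2}, and the identification of the bonding maps with the canonical bijection on generators is precisely the content of the second half of Proposition \ref{prop:bijectionZ2}, which you correctly cite rather than re-prove. Your added care about eventual constancy of the directed system in each fixed grading, and the appeal to the Morse--Bott direct-limit framework of Proposition \ref{prop:directlimitcomputesfiberhomology} and \cite[\S 7]{preech}, is just an explicit unpacking of what the paper leaves implicit in the phrase ``after taking direct limits.''
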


\subsection{The ECH chain complex}\label{sss:cc2} 

 In this section we prove Proposition \ref{prop:bijectionZ2} and Corollary \ref{thm:ECHS3}.  We first prove several lemmas. Recall that by Definition~\ref{def:degree} the formula for the degree of a generator is $
d(b^Bh^He^E)=2qB+qH+2E$. Our first lemma sorts the generators by degree.

\begin{lemma}\label{lem:dqxy}
\begin{enumerate}[{\em (i)}]
\itemsep-.25em
\item In degrees $2i, i=0,\dots,q-1$, there is one generator, $e^i$.
\item In degrees $q+2i, i=0,\dots,q-1$, there is one generator, $he^i$.
\item Each other generator can be written in the form $b^yh^He^{qx+i}$ where $h^He^i$ is as described in (i) or (ii) above. Given two such generators $b^yh^He^{qx+i}$ and $b^{y'}h^{H'}e^{qx'+i'}$, they have the same degree if and only if $i=i'$ and $x+y=x'+y'$. This degree is $d(h^He^i)+2q(x+y)$.
\end{enumerate}
\end{lemma}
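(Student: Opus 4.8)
The plan is to derive all three parts directly from the degree formula $d(b^Bh^He^E)=2qB+qH+2E$ recorded in Definition~\ref{def:degree}, exploiting two elementary facts: that $q$ is odd, so the parity of $d$ detects $H$, and that division with remainder by $q$ is unique. No input beyond this formula is needed.

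For (i) I would solve $2qB+qH+2E=2i$ over $B,E\in\Z_{\ge0}$ and $H\in\{0,1\}$. Reducing modulo $2$ gives $qH\equiv0$, and since $q$ is odd this forces $H=0$; the equation then becomes $qB+E=i$, and as $0\le i\le q-1<q$ while $qB\le i$, the only possibility is $B=0$, $E=i$. Hence $e^i$ is the unique generator in degree $2i$. Part (ii) runs identically with an odd target: $2qB+qH+2E=q+2i$ has odd right-hand side, forcing $H=1$, after which cancelling $q$ leaves $qB+E=i<q$ and again $B=0$, $E=i$, so $he^i$ is the unique generator.

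For (iii) I would divide $E$ by $q$, writing $E=qx+i$ with $0\le i\le q-1$ and $x\ge0$, and set $y=B$, so that $b^Bh^He^E=b^yh^He^{qx+i}$ with base $h^He^i$ appearing in the list of (i) (if $H=0$) or (ii) (if $H=1$). Substituting into the degree formula gives
\[
d\big(b^yh^He^{qx+i}\big)=2qy+qH+2(qx+i)=2q(x+y)+\big(qH+2i\big)=d(h^He^i)+2q(x+y),
\]
which is the asserted value. To compare two such generators I would first observe that the parity of $d$ equals that of $H$, so equal degrees force $H=H'$; granting this, equality of degrees reduces to $q(x+y)+i=q(x'+y')+i'$, and since $0\le i,i'\le q-1$, uniqueness of division with remainder by $q$ yields $i=i'$ and $x+y=x'+y'$. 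The converse direction is immediate from the displayed formula once the common value of $H$ is fixed, and since equal degree already forces $H=H'$, the criterion ``$i=i'$ and $x+y=x'+y'$'' captures exactly the remaining content.

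I do not expect a genuine obstacle here; the one point deserving care is the final separation of $i$ from $x+y$, which relies on the normalization $0\le i\le q-1$ (otherwise $i$ and the quotient could be traded against each other by multiples of $q$) and on $q$ being odd (otherwise the parity argument pinning down $H$ would collapse). These are precisely the hypotheses that make the decomposition $E=qx+i$ canonical, so the remainder of the argument is routine bookkeeping.
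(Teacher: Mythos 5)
Your proposal is correct and follows essentially the same route as the paper's proof: both rely solely on the degree formula $d(b^Bh^He^E)=2qB+qH+2E$, use the parity of the degree (via $q$ odd) to pin down $H$, use the bound by $2q$ (equivalently, uniqueness of division with remainder by $q$) to separate $i$ from $x+y$, and dispose of the converse directions by direct substitution. The only cosmetic difference is that the paper phrases the final separation as $d(e^i),d(e^{i'})<2q$ while you cancel the factor of $2$ and invoke division by $q$; these are the same argument.
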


\begin{proof}
For all three conclusions, the fact that the described generators have the given degrees is a simple computation.

The reverse implication in (i), that the $e^i$ are the only generators of degree $2i, i=0,\dots,q-1$, follows from the fact that the degree is even and less than $2q$, thus $H=B=0$. Then $E=i$ follows from the formula for degree.

The reverse implication in (ii), that the $he^i$ are the only generators of degree $q+2i, i=0,\dots,q-1$, follows from the fact that the degree is odd and $q+2i-q$ is less than $2q$, thus $H=1$ and $B=0$. Then $E=i$ follows from the formula for degree.

Given a generator $b^Bh^He^E$, the first claim in (iii) follows by setting $i=E\mod q, x=\lfloor E/q\rfloor$, and $y=B$.

The second claim in (iii) has two implications. Assuming the degrees of $b^yh^He^{qx+i}$ and $b^{y'}h^{H'}e^{qx'+i'}$ are equal, we conclude $H=H'$ by parity of degree. Using the fact that
\[
d(b^Bh^He^E)=d(b^{B'}h^He^{E'}) \iff d(b^Be^E)=d(b^{B'}e^{E'}),
\]
we then compute
\[
d(e^i)+2q(x+y)=d(e^{i'})+2q(x'+y').
\]
Because $d(e^i), d(e^{i'})<2q$, this implies $d(e^i)=d(e^{i'})$ and $x+y=x'+y'$; part (i) implies $i=i'$.

The opposite implication in (iii) is immediate from the formula for degree.
\end{proof}

Our next lemma sorts generators with the same degree by their ECH index.

\begin{lemma}\label{lem:dy} If $d(b^yh^He^{qx+i})=d(b^{y'}h^He^{qx'+i})$ then
\[
I(b^yh^He^{qx+i})=I(b^{y'}h^He^{qx'+i})+2(y-y').
\]
\end{lemma}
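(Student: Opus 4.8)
The plan is to prove this by direct substitution into the explicit ECH index formula of Theorem~\ref{thm:ECHI}, exploiting that the two generators share the same $H$ and the same residue $i$, so that only the terms built from $B$ and $\lfloor E/q\rfloor$ can differ. First I would translate the hypothesis: writing $E=qx+i$ and $E'=qx'+i$ with $0\le i\le q-1$, the degree formula $d(b^Bh^He^E)=2qB+qH+2E$ gives
\[
2qy+qH+2(qx+i)=2qy'+qH+2(qx'+i),
\]
which simplifies immediately to $x+y=x'+y'$; call this common value $S$. This is the only content of the degree hypothesis I will use.

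Next I would apply the formula \eqref{eqn:IEHB} to each generator with $B=y$, $m=x$, $r=i$ (respectively $B=y'$, $m=x'$, $r=i$). Since $H$ and $r=i$ agree for both generators, every term depending only on $H$ and $i$ — namely $-H^2$, $(q+2)H$, $-\tfrac{2}{q}r^2$, $2r$, and $\lfloor 2r/q\rfloor(2r-q+1)$, together with the $2iH$ summand hidden in $2EH$ and the $\tfrac{2}{q}i^2$ summand hidden in $\tfrac{2}{q}E^2$ — is identical and cancels in the difference $I(b^yh^He^{qx+i})-I(b^{y'}h^He^{qx'+i})$. What survives is a polynomial $V(x,y)$ collecting the genuinely $x$- or $y$-dependent contributions:
\[
V(x,y)=2qHx+2qy^2+(q+3)y+4qxy+4iy+2qHy+2qx^2+4ix+(q+1)x,
\]
where the $2qx^2+4ix$ come from expanding $\tfrac{2}{q}(qx+i)^2$.

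The computational heart of the argument is then to substitute $x=S-y$ and show that $V(S-y,y)$ is affine in $y$ with slope exactly $2$. The key point — and the one step one must not fumble — is that the three quadratic-in-$y$ contributions cancel: $2qy^2$ (from $2qB^2$), $-4qy^2$ (from $4qxy$), and $+2qy^2$ (from $2qx^2$ inside $\tfrac{2}{q}E^2$) sum to zero. After this cancellation, collecting the linear coefficient gives $(q+3)-(q+1)=2$, with the $\pm 2qH$, $\pm 4qS$, and $\pm 4i$ contributions cancelling pairwise, so that $V(S-y,y)=C(S)+2y$ for a constant $C(S)$ depending only on $S$. Hence $V(x,y)-V(x',y')=2y-2y'=2(y-y')$, and since all remaining terms already cancelled, $I(b^yh^He^{qx+i})=I(b^{y'}h^He^{qx'+i})+2(y-y')$, as claimed.

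The only real obstacle is the careful bookkeeping of the cross terms $4EB$ and $\tfrac{2}{q}E^2$: these are precisely the terms that mix $x$ and $y$, and the whole lemma hinges on their quadratic parts conspiring to cancel under the constraint $x+y=S$. Everything else is routine arithmetic, and no input beyond Theorem~\ref{thm:ECHI} and the degree formula of Definition~\ref{def:degreep} is needed.
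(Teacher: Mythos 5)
Your proposal is correct and takes exactly the same route as the paper, which simply states that the lemma follows by "a straightforward computation using Theorem \ref{thm:ECHI}" together with the crucial observation that the degree hypothesis is equivalent to $x+y=x'+y'$. Your write-up fills in the bookkeeping the paper omits — in particular the cancellation of the quadratic-in-$y$ terms $2qy^2-4qy^2+2qy^2=0$ and the surviving linear coefficient $(q+3)-(q+1)=2$ — and that arithmetic checks out.
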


\begin{proof}
This is a straightforward computation using Theorem \ref{thm:ECHI}. Note that it is crucial to use the fact that $x+y=x'+y'$, or equivalently, that $x-x'=y-y'$.
\end{proof}

Lemma \ref{lem:dy} can also be interpreted as the fact that, within a given degree, the ECH index increases with respect to the lexicographic order on triples $(B,H,E)$.

The final lemma explains how to sort generators of adjacent degrees by their ECH index.

\begin{lemma}\label{lem:dd+1}
For $d\geq 2q-1$, the lowest index occurring for a generator of degree $d+1$ is two higher than the highest index occurring for a generator of degree $d$.
\end{lemma}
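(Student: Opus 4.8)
The plan is to reduce the statement, via the two preceding lemmas, to an explicit comparison of two specific generators, and then to verify the required index difference by direct substitution into Theorem~\ref{thm:ECHI}. Write $I_{\min}(d)$ and $I_{\max}(d)$ for the smallest and largest ECH index among generators of degree $d$. By Lemma~\ref{lem:dqxy} a degree $d$ determines $H\in\{0,1\}$ (by parity of $d$) and the residue $i\equiv E\pmod q$, and the generators of degree $d$ are exactly $b^yh^He^{q(s-y)+i}$ for $y=0,\dots,s$, where $s=s(d)$ is the largest admissible power of $b$. By Lemma~\ref{lem:dy} the index increases in $y$ with step $2$, so $I_{\min}(d)$ is attained at $y=0$, on the generator $h^He^{qs+i}$ (that is, $e^{d/2}$ if $d$ is even and $he^{(d-q)/2}$ if $d$ is odd), while $I_{\max}(d)$ is attained at $y=s$ and
\[
I_{\max}(d)=I_{\min}(d)+2s(d).
\]
It therefore suffices to prove $I_{\min}(d+1)=I_{\min}(d)+2s(d)+2$, reducing the claim to a comparison of the two $B=0$ generators of adjacent degrees together with the correction $2s(d)$.

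Next I would carry out this comparison, splitting on the parity of $d$ since this is what flips $H$. When $d$ is even the minimal generators are $e^{d/2}$ in degree $d$ and $he^{(d+1-q)/2}$ in degree $d+1$, whose $e$-exponents differ by $(q-1)/2$; when $d$ is odd they are $he^{(d-q)/2}$ and $e^{(d+1)/2}$, whose exponents differ by $(q+1)/2$. Using the elementary identity $I(he^E)=I(e^E)+2E+q+1$, which follows by subtracting the $H=0$ and $H=1$ cases of Theorem~\ref{thm:ECHI}, the claim becomes a statement purely about the increment of the $e$-tower index $I(e^\bullet)$ over a half-period, plus the bookkeeping term $2s(d)$. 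I would then substitute $E=qm+r$ into the formula of Theorem~\ref{thm:ECHI}, compute the relevant increment, and check that all contributions combine to exactly $+2$.

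The one genuinely delicate point is the floor term $\lfloor 2r/q\rfloor(2r-q+1)$. Shifting the $e$-exponent by $(q\pm1)/2$ moves the residue $r$ across the threshold $r=q/2$ at which this term switches on, producing two sub-cases in each parity; moreover, when the original residue $i$ is small, the shifted exponent can only be put into normalized form $qm'+r'$ by decrementing the multiple of $q$, which forces $s(d)\ge 1$. The hypothesis $d\ge 2q-1$ is exactly what makes this harmless: for even $d$ it forces $d\ge 2q$ and hence $s(d)\ge1$, so the small-residue sub-case is never vacuous, and it simultaneously rules out the degenerate transitions involving the empty odd degrees $1,3,\dots,q-2$. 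Verifying that in each of the (at most) two sub-cases per parity the two floor terms and the correction $2s(d)$ conspire to yield a net difference of $2$ is the main obstacle; everything else is the linear bookkeeping already packaged in Lemmas~\ref{lem:dqxy} and~\ref{lem:dy}.
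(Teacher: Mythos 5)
Your proposal is correct and follows essentially the same route as the paper: both use Lemma \ref{lem:dqxy} to identify the generators in each degree, Lemma \ref{lem:dy} to locate the extremal-index generators within a degree, and then reduce the claim to finitely many explicit identities verified by direct substitution into Theorem \ref{thm:ECHI}, with the case analysis driven by where the residue of the $e$-exponent sits relative to $q/2$. Your normalization to the $B=0$ generators (via $I_{\max}(d)=I_{\min}(d)+2s(d)$ and the identity $I(he^E)=I(e^E)+2E+q+1$) is only a bookkeeping variant of the paper's direct comparison of the highest-index generator of degree $d$ with the lowest-index generator of degree $d+1$ in its three residue cases, and your observation about the hypothesis $d\ge 2q-1$ forcing $m\ge 1$ in the small-residue sub-case matches the paper's treatment.
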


\begin{proof}
We split the proof, based on degree, into three pairs of identities, each of which compares two ECH indices. We first explain where the three pairs of identities come from.

From Lemma \ref{lem:dqxy}, we know that generators appear in an alternating ascending pattern according to degree, that the parity of degree and multiplicity $H$ of $h$ match, and that within a given degree, the sum of the multiplicity of $b$ and the floor of the multiplicity of $e$ divided by $q$ is constant. Thus we have two basic junctures at which we must verify the conclusion of the lemma: when degree changes from odd to even and when degree changes from even to odd. Each of Tables \ref{table:1}-\ref{table:3} thus presents a scenario where we are checking two such junctures.

The reason we present three different tables, determined by $E\mod q$, is because the form the generators take depends on degree, meaning the specific identities we must check to prove the lemma differ. Thinking of the generators $\emptyset, e, \dots, e^{q-1}$ as the basic set of generators, Lemma \ref{lem:dqxy} indicates how all other generators are obtained from this basic set, and how their degrees correspond. Essentially, for the lower (respectively, upper) half of values of $E\mod q$, the adjacent odd generators are of the form $b^Bhe^{E'}$ with $E'$ in the upper (respectively, lower) half of numbers modulo $q$. As there are precisely $q$ possible values of $E\mod q$, we must also handle the case when $E\mod q$ takes on the middle value, $(q-1)/2$, separately (Table \ref{table:2}).

This is most easily seen in Table \ref{table:gen2}(b), where $q=5$. For example, Table $\ref{table:1}$ includes the cases of degrees 9, 10, and 11 as well as 11, 12, and 13; Table \ref{table:2} addresses degrees 13, 14, and 15; finally Table \ref{table:3} handles degrees 15, 16, and 17 as well as 17, 18, and 19. At this point, we have reached a degree equal to $9\mod2q$ (equivalently, value of $E\mod q$), whereupon we repeat the argument, starting with Table \ref{table:1} and degrees 19, 20, and 21.

Note that within each case, we also need to identify the highest and lowest ECH index representatives of the shared degree; this identification relies on Lemma \ref{lem:dy}, which is used implicitly when constructing the tables to sort the generators vertically by ECH index. The only odd degree generators presented are those with the highest and lowest indices in their respective degrees (whichever we need to prove has ECH index adjacent to that of the lowest/highest ECH index representatives of the even degree under consideration).

The cases presented prove the lemma because they represent each pair of consecutive degrees where one entry in the pair has degree $2(qm+i)$ for $i=0,\dots,q-1$ and any $m\geq1$; this covers all adjacent pairs of degrees starting with $2q-1$ and $2q$ (by setting $m=1$ and $i=0$).

\medskip

\noindent\textbf{Case 1, $\mathbf{E\equiv_q 0,\cdots,(q-3)/2}$:} By Lemma \ref{lem:dqxy}, for $i=0,\dots,(q-3)/2$, the generators in consecutive degrees $2(qm+i)-1, 2(qm+i), 2(qm+i)+1$ are (sorted vertically within degree by increasing ECH index, using Lemma \ref{lem:dy}) as depicted in Table \ref{table:1}.

\begin{table}[h!]
\centering
\begin{tabular}{||c|c||}
\hline
degree & generator
\\\hline
$2(qm+i)-1$ & $b^{m-1}he^{i+\frac{q-1}{2}}$
\\\hline
\multirow{2}{*}{$2(qm+i)$} & $e^{qm+i}$ \\ & $\vdots$\\ & $b^me^i$
\\\hline
$2(qm+i)+1$ & $he^{q(m-1)+i+\frac{q+1}{2}}$
\\\hline
\end{tabular}
\caption{Here we have shown the generator of degree $2(qm+i)-1$ of highest ECH index, the generators of degree $2(qm+i)$ of lowest and highest ECH index, and the generator of degree $2(qm+i)+1$ of lowest ECH index.}
\label{table:1}
\end{table}

As shown in Table \ref{table:1}, in the case of consecutive degrees $2(qm+i)-1, 2(qm+i)$, and $2(qm+i)+1$ with $i=0,\dots,(q-3)/2$, we must prove that
\[
I(b^{m-1}he^{i+\frac{q-1}{2}})+2=I(e^{qm+i}) \mbox{ and } I(b^me^i)+2=I(he^{q(m-1)+i+\frac{q+1}{2}}).
\]

\medskip

\noindent\textbf{Case 2, $\mathbf{E\equiv_q (q-1)/2}$:} When $i=(q-1)/2$, the generators in these consecutive degrees are displayed in Table \ref{table:2}.

\begin{table}[h!]
\centering
\begin{tabular}{||c|c||}
\hline
degree & generator
\\\hline
$2qm+q-2$ & $b^{m-1}he^{q-1}$
\\\hline
\multirow{2}{*}{$2qm+q-1$} & $e^{qm+\frac{q-1}{2}}$ \\ & $\vdots$\\ & $b^me^{\frac{q-1}{2}}$
\\\hline
$2qm+q$ & $he^{qm}$
\\\hline
\end{tabular}
\caption{Here we have shown the generator of degree $2qm+q-2$ of highest ECH index, the generators of degree $2qm+q-1$ of lowest and highest ECH index, and the generator of degree $2qm+q$ of lowest ECH index.}
\label{table:2}
\end{table}

As shown in Table \ref{table:2}, in the case of consecutive degrees $2qm+q-2, 2qm+q-1$, and $2qm+q$, we must prove that
\[
I(b^{m-1}he^{q-1})+2=I(e^{qm+\frac{q-1}{2}}) \mbox{ and } I(b^me^{\frac{q-1}{2}})+2=I(he^{qm}).
\]

\medskip

\noindent\textbf{Case 3, $\mathbf{E\equiv_q (q+1)/2,\dots,q-1}$:} When $i=(q+1)/2,\dots,q-1$, the generators in consecutive degrees $2(qm+i)-1, 2(qm+i), 2(qm+i)+1$ are given in Table \ref{table:3}.

\begin{table}[h!]
\centering
\begin{tabular}{||c|c||}
\hline
degree & generator
\\\hline
$2(qm+i)-1$ & $b^mhe^{i-\frac{q+1}{2}}$
\\\hline
\multirow{2}{*}{$2(qm+i)$} & $e^{qm+i}$ \\ & $\vdots$\\ & $b^me^i$
\\\hline
$2(qm+i)+1$ & $he^{qm+i-\frac{q-1}{2}}$
\\\hline
\end{tabular}
\caption{Here we have shown the generator of degree $2(qm+i)-1$ of highest ECH index, the generators of degree $2(qm+i)$ of lowest and highest ECH index, and the generator of degree $2(qm+i)+1$ of lowest ECH index.}
\label{table:3}
\end{table}

As shown in Table \ref{table:3}, for the consecutive degrees $2(qm+i)-1, 2(qm+i)$, and $2(qm+i)+1$ with $i=(q+1)/2,\dots,q-1$, we must show that
\[
I(b^mhe^{i-\frac{q+1}{2}})+2=I(e^{qm+i}) \mbox{ and } I(b^me^i)+2=I(he^{qm+i-\frac{q-1}{2}}).
\]

All six identities can be checked using Theorem \ref{thm:ECHI} (using computer algebra system helps us avoid mistakes).
\end{proof}

To conclude this section, we first prove Proposition \ref{prop:bijectionZ2}, which computes the action-filtered chain complexes $ECC^{L(\varepsilon)}_*(S^3,\lambda_{2,q,\varepsilon},J)$ and the maps between them necessary to set up a directed system with $\varepsilon\to0$.

\begin{proof}[Proof of Proposition \ref{prop:bijectionZ2}] Lemma \ref{lem:dqxy}(i, ii) provides us with one generator of each even degree from zero to $q-1$, one generator of each degree from $q-1$ to $2q-1$, and one generator of each odd degree from $2q-1$ to $3q-2$. Using Lemma \ref{lem:dqxy}(iii), we can obtain generators of every degree $d\geq2q$, since each such $d$ can be written as $m+2qn$ for some positive integer $n$ and $m$ one of the degrees described in the previous sentence.

Lemmas \ref{lem:dqxy}(iii) and \ref{lem:dy} show that generators with the same degree have different (even) ECH indices. Lemma \ref{lem:dd+1} shows that for degrees at least $2q$, not only does ECH index increase by degree, but it increases by the smallest amount possible (two). Thus to show that the set of action filtered generators is in bijection with some subset of the nonnegative even integers, it remains to show that the generators of degrees $0,\dots,2q-1$ have ECH indices $0,\dots,I(\alpha)-2$, where $\alpha$ is the generator of degree $2q$ with the smallest ECH index.

By Lemma \ref{lem:dqxy}, these generators are $e^i, i=0,\dots,q-1$ or $he^j, j=0,\dots,(q-1)/2$, and $\alpha=e^q$. By Theorem \ref{thm:ECHI},
\begin{align*}
&I(e^i)=2i+\left\lfloor\frac{2i}{q}\right\rfloor(2i-q+1)=\begin{cases}2i&\text{if }0\leq i \leq(q-1)/2
\\4i-q+1&\text{if }(q+1)/2\leq i \leq q-1,
\end{cases}
\\&I(he^j)=q+4j+1, \mbox{ and }
\\&I(e^q)=3q+1.
\end{align*}
These indices are precisely the even numbers between $0$ and $q^2+q+2$, inclusive: the first quarter are the indices $2i$ of $e^i, i=0,\dots,(q-1)/2$, and then the rest come in consecutive pairs with $I(he^j)+2=I(e^i)$, as it is easy to check from the above formulas.

It remains to show that the inclusion-induced maps and cobordism maps compose to the canonical bijection (in either direction of the commutative diagram \cite[(2.16)]{preech}): the proof is practically identical to the analogous part of the proof of \cite[Prop.~3.2]{preech}, which appears in \cite[\S3.4]{preech}. This necessitates checking a list of conditions from \cite{cc2}, which are summarized in \cite[Lem.~2.18]{preech} and contained in \S \ref{s:cobordismfun}.
\end{proof}

\section{Spectral invariants of ECH}\label{s:spectral}

In this section we compute the ECH spectrum for $(S^3,\lambda_{2,q})$ and knot filtered ECH for $(S^3,\xi_{std})$ with respect to the standard transverse right handed $T(2,q)$ knot with rotation angle $2q + \delta$, where $\delta$ is either 0 or a sufficiently small positive irrational number. 
First, we establish the relationship between the ECH index of a generator and its degree in \S \ref{ss:index-degree}; this governs the behavior of the ECH spectral invariants.  In \S \ref{ss:ECHspectrum} we review basic properties of the ECH spectrum. In Proposition \ref{prop:ck} we prove that 
\[
c_k(S^3,\lambda_{2,q}) = N_k({1/2}, {1/q})
\]
using our chain complex described in \S\ref{s:topology}-\ref{s:ECHI}.

 In \S \ref{ss:kECH} we review the basics of knot filtered ECH and compute the knot filtration by relating it to the degree of Reeb current generators.  We then establish Theorem \ref{thm:kech-intro}.

\subsection{Relationship between index and degree}\label{ss:index-degree}
We first compute the function $I=2k\mapsto d$, which will govern the computations of both the ECH spectrum and knot filtered ECH.  Recall that from Definition \ref{def:degreep} and Remark \ref{def:degree} that the degree of a Reeb current is  $ d(b^Bh^He^E)= 
2qB+qH+2E.$

For $a,b\in\R$, let $N(a,b)$ denote the sequence $(am+bn)_{m,n \in \Z_{\geq0}}$ of nonnegative integer linear combinations of $a$ and $b$, written in increasing order with multiplicity. 
We use $N_k(a,b)$ to denote the $k^\text{th}$ element of this sequence, including multiples and starting with $N_0(a,b)=0$.

\begin{lemma}\label{lem:degNk} When $k$ is small enough relative to $L(\varepsilon)$, cf. Lemma \ref{lem:orbitseh}, the degree of any Reeb current whose homology class represents the generator of the group $ECH_{2k}^{L(\varepsilon)}(S^3,\lambda_{2,q,\varepsilon})$ is $N_k(2,q)$.
\end{lemma}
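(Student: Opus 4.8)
The plan is to show that the degrees of the generators of $ECH_{2k}^{L(\varepsilon)}$, read off in increasing order of $k$, reproduce exactly the sequence $N(2,q)$ of nonnegative integer combinations of $2$ and $q$. By Proposition \ref{prop:bijectionZ2}, for each even index $2k$ (with $*$ small relative to $L(\varepsilon)$) there is exactly one generator, so ``the degree of the generator of $ECH_{2k}^{L(\varepsilon)}$'' is unambiguous; I would first record this and fix, for each $k$, the unique generator $\alpha_k = b^{B}h^{H}e^{E}$ with $I(\alpha_k)=2k$. The goal is then the single identity $d(\alpha_k)=N_k(2,q)$.

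First I would compute the full set of degrees appearing among all generators $b^Bh^He^E$. Since $d(b^Bh^He^E)=2qB+qH+2E$, the set of realized degrees is precisely $\{2E + q(2B+H) : B,E\in\Z_{\geq 0},\ H\in\{0,1\}\}$; because $2B+H$ ranges over all of $\Z_{\geq 0}$ as $(B,H)$ ranges over $\Z_{\geq 0}\times\{0,1\}$, this set equals $\{2E + qM : E,M\in\Z_{\geq 0}\}$, which is exactly the set underlying $N(2,q)$. So the \emph{values} match as sets; the content of the lemma is that they match \emph{in order}, i.e. that the $k^{\text{th}}$ smallest index corresponds to the $k^{\text{th}}$ smallest degree (counted with multiplicity). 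Equivalently, I must prove that the map from generators to $\Z_{\geq 0}$ sending $\alpha \mapsto I(\alpha)$ and the map $\alpha \mapsto d(\alpha)$ induce the \emph{same} total ordering on generators (up to the ties within a fixed degree, which are resolved consistently).

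The key structural input is already assembled in the three sorting lemmas of \S\ref{sss:cc2}. Lemma \ref{lem:dqxy} shows generators are grouped by degree and that each value of $N(a,b)$ with $a=2,b=q$ is hit with the correct multiplicity (the number of $(B,H,E)$ with a given degree equals the number of representations of that degree as $2E+qM$); Lemma \ref{lem:dy} shows that within a fixed degree the ECH index is strictly monotincreasing in a fixed lexicographic order, so the index ties are broken consistently; and crucially Lemma \ref{lem:dd+1} shows that for $d\geq 2q-1$ the indices of generators of degree $d+1$ all exceed those of degree $d$, with the lowest of the former exactly two above the highest of the latter. Together these say: the bijection ``generators $\leftrightarrow$ nonnegative even integers'' given by $I$ is \emph{order-preserving} with respect to the ordering of generators by degree (and by the tie-breaking order within each degree). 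Therefore the $k^{\text{th}}$ generator in index order is the $k^{\text{th}}$ generator in degree order, whose degree is by definition $N_k(2,q)$. I would write the argument as an induction on $k$: the base cases $0\le d\le 2q-1$ are handled by the explicit index formulas displayed in the proof of Proposition \ref{prop:bijectionZ2} (where the indices of $e^i$ and $he^j$ are listed and shown to exhaust the even integers up to $q^2+q+2$), and the inductive step is precisely Lemma \ref{lem:dd+1} combined with Lemma \ref{lem:dy}.

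The main obstacle is the bookkeeping at the low-degree range $0\le d < 2q$, where Lemma \ref{lem:dd+1} does not apply and the correspondence between index order and degree order must be checked by hand against the explicit list $I(e^i), I(he^j)$; one has to verify that the interleaving of the $e^i$ and $he^j$ indices matches the interleaving of their degrees $2i$ and $q+2j$ as dictated by $N(2,q)$. This is the one place where the small-$k$ hypothesis (so that no action truncation artifacts intrude, cf. Lemma \ref{lem:orbitseh}) and the oddness of $q$ are both genuinely used, and it is essentially the content already verified in the proof of Proposition \ref{prop:bijectionZ2}; the remainder of the proof is a formal consequence of the three sorting lemmas.
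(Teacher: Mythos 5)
Your proposal is correct and takes essentially the same approach as the paper: the paper's proof likewise invokes Proposition \ref{prop:bijectionZ2} for the vanishing differential (so the homology equals the chain complex with one generator per even grading), sets up the bijection $(m,n)=(E,2B+H)$ between generators and $\Z^2_{\geq0}$ so that degrees are exactly the values $2m+qn$ counted with multiplicity, and cites Lemma \ref{lem:dd+1} for the monotonicity of degree with respect to index. Your additional care about the within-degree tie-breaking (Lemma \ref{lem:dy}) and the hand-checked low-degree range $0\le d<2q$ simply makes explicit what the paper leaves implicit in its brief appeal to the sorting lemmas.
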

\begin{proof}
The fact that each group $ECH_{2k}^{L(\varepsilon)}(S^3,\lambda_{2,q,\varepsilon})$ is generated by the homology class of a single Reeb current, all of which are cycles, follows from the computation of the differential in \S\ref{s:ECHI}.  The differential vanishes (Proposition \ref{prop:bijectionZ2}), and therefore the homology equals the chain complex.  (For $T(p,q)$ there will be a nonvanishing differential, and all even index generators will be closed.)

Let $(m,n)=(E,2B+H)$. This defines a bijection between ECH generators and $\Z^2_{\geq0}$ (to show surjectivity, note that $b^{\lfloor n/2\rfloor}h^{n-2\lfloor n/2\rfloor}e^m\mapsto(m,n)$; injectivity is easy to show). The fact that composing this bijection with $(m,n)\mapsto 2m+nq$ is monotonically increasing with respect to index follows from the fact that degree increases with respect to index, which is proven in Lemma \ref{lem:dd+1}.

\end{proof}

\subsection{The ECH spectrum }\label{ss:ECHspectrum}

We summarize the definitions and properties of action filtered ECH and the ECH spectrum, 
and we compute the ECH spectrum of $(S^3, \lambda_{2,q})$.

\begin{remark} Embedded contact homology contains a canonical class, called the \emph{contact invariant} $c(\xi) \in ECH(Y,\xi,0)$, which is the homology class of the cycle given by the empty set of Reeb orbits. 
\end{remark}

In the following, when $\Gamma$ is not specified, we define
\begin{equation}\label{e:decomp}
ECH_*(Y,\lambda) := \bigoplus_{\Gamma \in H_1(Y)} ECH_*(Y,\lambda,\Gamma).
\end{equation}
Recall that the symplectic action of a Reeb current $\A(\alpha)$ was defined in \eqref{eq:action} and that the ECH differential decreases symplectic action.  Thus for each $L\in \R$, there is a subcomplex $ECC^L(Y,\lambda,J)$ generated by the Reeb currents $\alpha$ for which $\A(\alpha) < L$, whose homology is the \emph{action filtered embedded contact homology} $ECH_*^L(Y,\lambda)$.  In \cite[Thm.~1.3]{cc2}, it is shown that action filtered ECH does not depend on $J$; it does however depend on the choice of contact form $\lambda$.  (In {the} literature, the word action is omitted; we include it to more readily distinguish action filtered ECH from knot filtered ECH.)

If $r>0$ is a constant then there is a canonical scaling isomorphism
\begin{equation}\label{e:scale}
ECH^L(Y,\lambda) = ECH^{rL}(Y,r\lambda)
\end{equation}
because $\lambda$ and $r \lambda$ have the same Reeb orbits up to reparametrization.  Moreover, for any generic $\lambda$-compatible almost complex structure $J$ there exists a unique $r\lambda$-compatible almost complex structure $J^r$ which agrees with $J$ on the contact planes, thus the bijection on Reeb orbits gives an isomorphism at the level of chain complexes:
\begin{equation}\label{eq:scale}
ECC^L(Y,\lambda, J) = ECC^{rL}(Y,r\lambda, J^r).
\end{equation}
For $L\leq L'$ there are also maps induced by the inclusion of chain complexes: 
\begin{equation}\label{eq:incl}
\begin{split}
\iota: ECH^L(Y,\lambda) \longrightarrow & \ ECH(Y,\lambda), \\
\iota^{L,L'}: ECH^L(Y,\lambda) \longrightarrow & \ ECH^{L'}(Y,\lambda).
\end{split}
\end{equation}
None of the maps in \eqref{eq:scale} and \eqref{eq:incl} depend on $J$ as a result of \cite[Thm.~1.3]{cc2}.

\begin{remark}[$U$-map]
If $Y$ is connected there is a degree -2 map
\[
U: ECH(Y,\lambda,\Gamma) \to ECH(Y,\lambda,\Gamma),
\]
which is induced by a chain map which is defined similarly to the differential.  However, instead of counting $J$-holomorphic curves in $\R \times Y$ with ECH index one modulo translation, it counts ECH index two curves that pass through a chosen generic point $z \in \R \times Y$, see \cite[\S 2.5]{shs}.  Taubes proved in \cite{taubesechswf5} that $U$ agrees with an analogous map on Seiberg-Witten Floer cohomology defined in \cite{KMbook} under the isomorphism of Theorem \ref{thm:taubes}.
\end{remark}

We now have all the necessary ingredients to define the ECH spectrum.

\begin{definition}\cite[\S 4]{qech}
Let $(Y, \lambda)$ be a closed connected\footnote{One can define the ECH spectrum for disconnected contact 3-manifolds, cf.~\cite[\S 1.5]{lecture}.} contact 3-manifold and assume that the contact invariant $c(\xi) \neq 0 \in ECH(Y,\xi,0)$.  The \emph{ECH spectrum} of $(Y,\lambda)$ is the sequence of real numbers
\[
0=c_0(Y,\lambda) < c_1(Y,\lambda) \leq c_2(Y,\lambda) \leq ... \leq \infty,
\]
defined as follows.  First suppose that $\lambda$ is nondegenerate. Then $c_k(Y,\lambda)$ is the infimum of $L$ such that there exists a class $\eta \in ECH^L(Y,\lambda,0)$ with $U^k\eta = c(\xi) = [\emptyset]$.  This is equivalent to $c_k(Y,\lambda)$ realizing the infimum over $L$ such that the generator of $ECH_{2k}(Y,\lambda,0)$ is in the image of the first inclusion induced map \eqref{eq:incl}.  If no such class exists $c_k(Y,\lambda) = \infty$, while $c_k(Y,\lambda) < \infty $ if and only if $c(\xi)$ is in the image of $U^k$ on $ECH(Y,\xi,0)$.

If $\lambda$ is degenerate, define 
\[
c_k(Y,\lambda):= \lim_{n \to \infty}c_k(Y,f_n\lambda),
\]
where $f_n:Y \to \R_{>0}$ are functions on $Y$ such that $f_n\lambda$ is nondegenerate  for each $n$ and $\lim_{n\to \infty}f_n =1$ in the $C^0$-topology.  In this setting, the spectral numbers $c_k$ still take values in the action spectrum of $\lambda$ and remain infimums over actions of admissible Reeb currents. 

\end{definition}

The ECH spectrum satisfies a number of nice properties, such as spectrality, monotonicity, and scaling. Thus it obstructs symplectic embeddings of symplectic manifolds with contact type boundary, and in many interesting cases the obstructions are sharp.  

We now compute the ECH spectrum of the degenerate contact form $\lambda_{2,q}$. 
\begin{proposition}\label{prop:ck} We have $c_k(S^3,\lambda_{2,q}) = N_k(1/2,1/q)$.
\end{proposition}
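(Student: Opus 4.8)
The plan is to compute $c_k(S^3,\lambda_{2,q})$ by first working with the nondegenerate perturbations $\lambda_{2,q,\varepsilon}$ and then passing to the degenerate limit. The key observation is that the ECH spectrum is governed entirely by the degree of the generator of $ECH_{2k}$, which Lemma \ref{lem:degNk} identifies as $N_k(2,q)$, combined with the action formula from Lemma \ref{lem:efromL}(ii) that relates action to fiber length. First I would recall that by Definition of the ECH spectrum, $c_k(S^3,\lambda_{2,q})$ is the infimum over $L$ such that the generator of $ECH_{2k}(S^3,\lambda_{2,q},0)$ lies in the image of the inclusion-induced map $\iota: ECH^L\to ECH$. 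Since the differential vanishes (Proposition \ref{prop:bijectionZ2}), each $ECH_{2k}^{L(\varepsilon)}$ is generated by the homology class of a single Reeb current, and by Lemma \ref{lem:degNk} that current has degree $N_k(2,q)$.

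The main computation is to translate degree into action. For a generator $\alpha=b^Bh^He^E$, the action under $\lambda_{2,q,\varepsilon}$ is, by Lemma \ref{lem:efromL}(ii),
\[
\mathcal{A}_{\lambda_{2,q,\varepsilon}}(b^Bh^He^E) = B(1+\varepsilon\fp^*H_{2,q}(x_b)) + \frac{H}{2}(1+\varepsilon\fp^*H_{2,q}(x_h)) + \frac{E}{q}(1+\varepsilon\fp^*H_{2,q}(x_e)),
\]
which in the limit $\varepsilon\to0$ converges to the \emph{unperturbed} action
\[
\mathcal{A}_{\lambda_{2,q}}(b^Bh^He^E) = B + \frac{H}{2} + \frac{E}{q}.
\]
Comparing with the degree formula $d(b^Bh^He^E)=2qB+qH+2E$ from Definition \ref{def:degreep}, I would observe that $\mathcal{A}_{\lambda_{2,q}}(\alpha) = \frac{1}{2q}\,d(\alpha)$, so that the action is exactly $d(\alpha)$ times the value $|e|=\tfrac{1}{2q}$ (consistent with the Remark that $d(b^B\alpha)=2q\,\mathcal{A}_{\lambda_{2,q}}(b^B\alpha)$). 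Since the generator of index $2k$ has degree $N_k(2,q)$, its unperturbed action is $\tfrac{1}{2q}N_k(2,q)$. Using the homogeneity $N_k(a\lambda,b\lambda)=\lambda N_k(a,b)$ for positive scalars $\lambda$, this equals $N_k(\tfrac{1}{2q}\cdot 2,\ \tfrac{1}{2q}\cdot q)=N_k(\tfrac{1}{q},\tfrac{1}{2})=N_k(\tfrac{1}{2},\tfrac{1}{q})$, giving the claimed value.

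To make this rigorous in the degenerate setting, I would invoke the definition $c_k(S^3,\lambda_{2,q}):=\lim_{\varepsilon\to0}c_k(S^3,\lambda_{2,q,\varepsilon})$. For each sufficiently small $\varepsilon$ (with $L(\varepsilon)$ large relative to $k$), the vanishing of the differential means the index-$2k$ generator is already a nonzero class realized at its own action level, so $c_k(S^3,\lambda_{2,q,\varepsilon})$ equals the action of that generator, which converges to $\tfrac{1}{2q}N_k(2,q)=N_k(\tfrac{1}{2},\tfrac{1}{q})$ as $\varepsilon\to0$. The one subtlety requiring care—and the step I expect to be the main obstacle—is verifying that the generator of $ECH_{2k}$ is genuinely the action-minimal representative of its class, i.e.\ that the infimum in the spectrum definition is achieved by precisely this generator rather than by some other cycle in the same homology class with smaller action. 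Because the homology in each even degree is one-dimensional and the differential vanishes, this follows once one checks that action is monotonic in the index (equivalently, that degree is monotonic in index, established in Lemma \ref{lem:dd+1}) so that no lower-action cycle can represent the class; I would cite Lemma \ref{lem:dd+1} and Lemma \ref{lem:degNk} to close this gap and confirm that the $U$-map and inclusion maps behave as required.
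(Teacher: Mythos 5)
Your proposal is correct and takes essentially the same route as the paper: both pass to the perturbed forms $\lambda_{2,q,\varepsilon}$, use Proposition \ref{prop:bijectionZ2} and Lemma \ref{lem:degNk} to identify the unique index-$2k$ generator and its degree $N_k(2,q)$, convert degree to action via Lemma \ref{lem:efromL}(ii) (giving $\mathcal{A}=d/(2q)$), and conclude by taking the degenerate limit $\varepsilon\to0$. The only comment worth adding is that your final ``subtlety'' is automatic: since the chain group in degree $2k$ is one-dimensional over $\Z/2$ with vanishing differential, the generator is the only nonzero cycle in that grading, so its action realizes the infimum without any further appeal to monotonicity beyond what is already contained in Lemma \ref{lem:degNk}.
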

\begin{proof} Let $f_n=1+\frac{1}{n}\fp^*H_{2,q}$. Once $n$ is large enough so that $c_k(Y,f_n\lambda_{2,q})<L(1/n)$, the capacities $c_k(Y,f_n\lambda_{2,q})$ are constant in $n$, therefore it suffices to compute these for $k$ small enough with respect to $L(1/n)$ so that all orbits are of the form $b^Bh^He^E$ (see Lemma \ref{lem:orbitseh}).

By Lemma \ref{lem:efromL}(ii) 
\[
\A(b^B)=B, \ \ \  \A(h)=\frac{1}{2}, \ \ \ \A(e^E)=\frac{E}{q}.\]
Therefore
\begin{equation}\label{eqn:Ad}
\A(b^Bh^He^E)=\frac{d(b^Bh^He^E)}{2q}.
\end{equation}
The result follows from Proposition \ref{prop:bijectionZ2} and Lemma \ref{lem:degNk}. 

\end{proof}

\begin{remark} The result of Proposition \ref{prop:ck} could be indirectly obtained by \cite[Prop.~1.2]{qech}, because $\lambda_{2,q}$ is strictly contactomorphic (up to rescaling by a constant) to the standard contact form on $E(2,q)$. 
The strict contactomorphism follows from \cite{cgm,kl}.
\end{remark}

\subsection{Knot filtered ECH}\label{ss:kECH}

We collect the key properties of the knot filtration when $H_1(Y)=0$ from \cite{HuMAC}. See the introduction of the knot filtration in \S\ref{ss:overviewECH} for discussion of the consequences of these facts.

Let $b^m \alpha$ be a Reeb current where $m \in \Z_{\geq 0}$ and $\alpha$ be any Reeb current\footnote{When defining the knot filtration $\fb$, we need not assume that hyperbolic orbits have multiplicity 1, e.g. $\alpha$ does not have to be a generator of ECH.} not including $b$.   We define the \emph{knot filtration} with respect to $(b, \rot(b))$ by 
\begin{equation}\label{eq:filt}
\mathcal{F}_b(b^m\alpha) = m \rot(b) + \ell(\alpha,b),
\end{equation}
where $\ell(\alpha, b)$ is given by
\[
{\ell(\alpha,b)=\sum_im_i\ell(\alpha_i,b).}
\]

The ECH differential $\partial$ does not increase the knot filtration $\mathcal{F}_b.$
\begin{lemma}{\em \cite[Lem.~5.1]{HuMAC}}\label{lem:kdiff}
Let $(Y^3,\lambda)$ be a closed nondegenerate contact manifold. If $b^{m_+}\alpha_+$ and $b^{m_-}\alpha_-$ are Reeb currents, and there exists a $J$-holomorphic current \\ $\cur \in \M^J(b^{m_+}\alpha_+, b^{m_-}\alpha_-)$, then
\begin{equation}\label{eq:kdiff}
\mathcal{F}_b(b^{m_+}\alpha_+) \geq \mathcal{F}_b(b^{m_-}\alpha_-).
\end{equation}
In particular, the ECH differential $\partial$ does not increase the knot filtration $\mathcal{F}_b$.
\end{lemma}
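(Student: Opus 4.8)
The plan is to deduce the inequality from \emph{positivity of intersections} between the holomorphic current and the trivial cylinder $\R\times b$ over the binding, converting the rotation number $\rot(b)$ into a linking bound via the asymptotic winding estimates for the ends of holomorphic curves. First I would write $\cur=(\R\times b,\,d)\cup\cur'$, where $d\geq 0$ is the multiplicity with which the trivial cylinder $\R\times b$ appears in $\cur$ and $\cur'$ is the union of the remaining components, none of which is $\R\times b$. Since neither $\alpha_+$ nor $\alpha_-$ contains $b$, the ends of $\cur'$ at covers of $b$ have total multiplicity $m_+-d$ at $+\infty$ and $m_--d$ at $-\infty$, while its other ends lie at the orbits of $\alpha_\pm$. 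As $\cur'$ and $\R\times b$ are both $J$-holomorphic and distinct, every transverse interior intersection counts positively, so the total interior intersection number $I:=\#\big(\cur'\cap(\R\times b)\big)$ satisfies $I\geq 0$; the finiteness of this count (no intersections escaping to the ends) is supplied by the asymptotic analysis of the ends.

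Next I would evaluate $I$ topologically. Truncating $\cur'$ to $\{|s|\leq T\}$ and letting $T\to\infty$, the intersection number of $\cur'$ with the vertical cylinder $\R\times b$ equals the difference of the linking numbers with $b$ of the boundary braids at the two ends:
\[
I=\Big(\ell(\alpha_+,b)+w_+\Big)-\Big(\ell(\alpha_-,b)+w_-\Big).
\]
Here the ends at the orbits of $\alpha_\pm$ contribute exactly the topological linking numbers $\ell(\alpha_\pm,b)$, while $w_\pm$ denotes the limiting linking with $b$ of the braid formed by the ends of $\cur'$ at covers of $b$. Because we measure $\rot(b)$ with respect to the page (push-off-linking-zero) trivialization $\tau=\tau_\Sigma$, which is precisely the Seifert framing of $b$, winding relative to $\tau$ coincides with topological linking, so $w_\pm$ is simultaneously the asymptotic winding of these ends with respect to $\tau$.

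Finally I would apply the extremal winding bounds for an elliptic orbit of rotation number $\theta=\rot(b)$: a positive end asymptotic to a cover $b^{k}$ has winding at most $\lfloor k\theta\rfloor\leq k\rot(b)$, and a negative end asymptotic to $b^{k}$ has winding at least $\lceil k\theta\rceil\geq k\rot(b)$. Summing over the ends of $\cur'$ at $b$ gives $w_+\leq(m_+-d)\rot(b)$ and $w_-\geq(m_--d)\rot(b)$. Feeding these into $I\geq 0$ yields
\[
\ell(\alpha_+,b)-\ell(\alpha_-,b)\;\geq\;w_--w_+\;\geq\;(m_--m_+)\rot(b),
\]
which rearranges to $m_+\rot(b)+\ell(\alpha_+,b)\geq m_-\rot(b)+\ell(\alpha_-,b)$, that is, $\fb(b^{m_+}\alpha_+)\geq\fb(b^{m_-}\alpha_-)$. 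The degenerate situations $d=0$ (no trivial cylinder) and $d=m_\pm$ (all copies of $b$ carried by the trivial cylinder) are handled by the same computation with the corresponding windings set to zero. I expect the main obstacle to be the asymptotic input: establishing that the interior intersection count is finite and nonnegative when $\cur'$ and $\R\times b$ share asymptotic orbits, and justifying the winding bounds, both of which rest on the precise asymptotic behaviour of pseudoholomorphic ends (Siefring-type analysis and Hutchings' writhe/linking estimates). Once these are in hand, the remainder is the elementary bookkeeping above.
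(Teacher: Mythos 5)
Your proposal is correct and takes essentially the same route as the paper's own argument, which summarizes Hutchings' proof inside the proof of Proposition \ref{prop:cobknot}: positivity of intersections of the non-trivial components of $\cur$ with the $J$-holomorphic cylinder $\R\times b$, truncation at $\{\pm s_0\}\times Y$ (via Siefring's asymptotic results) to identify the intersection count with a difference of linking numbers, and the Hofer--Wysocki--Zehnder winding bounds giving $\ell(\zeta_+,b)\leq m_+\rot(b)$ and $\ell(\zeta_-,b)\geq m_-\rot(b)$, which are exactly your estimates on $w_\pm$. Your explicit bookkeeping of the trivial-cylinder multiplicity $d$ is only a cosmetic difference, since its contribution cancels on both sides just as in the paper.
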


If $K \in \R$  let $ECH_*^{\mathcal{F}_b \leq K}(Y,\lambda,J)$ denote the homology of the subcomplex generated by admissible Reeb currents $b^m\alpha$ with $\mathcal{F}_b (b^m\alpha)  \leq K$.  We have that $ECH_*^{\mathcal{F}_b\leq K}(Y,\lambda,J)$ is a topological invariant in the following sense.  

\begin{theorem}{\em \cite[Thm.~5.3]{HuMAC}}\label{thm:kech}
Let $(Y, \xi)$ be a closed contact 3-manifold with $H_1(Y)=0$, $b\subset Y$ be a transverse knot and $K\in \R$.  Let $\lambda$ be a contact form with $\ker \lambda = \xi$ such that $b$ is an elliptic Reeb orbit with rotation number $\rot(b)\in\R / \Q$.  
Let $J$ be any generic $\lambda$-compatible almost complex structure.  Then $ECH_*^{\mathcal{F}_b\leq K}(Y,\lambda,J)$ depends only on $Y, \xi, b, \rot(b),$ and $K$.  
\end{theorem}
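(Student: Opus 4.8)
The plan is to reduce the invariance of $ECH_*^{\mathcal{F}_b \leq K}(Y,\lambda,J)$ to the already-established invariance and functoriality of \emph{action} filtered embedded contact homology from \cite{cc2}, treating the knot filtration as a secondary filtration compatible with the action filtration. First I would record that, for each nondegenerate $\lambda$ with $b$ elliptic of fixed irrational rotation number $\rot(b)$ and each generic $J$, the subgroup generated by admissible Reeb currents $b^m\alpha$ with $\mathcal{F}_b(b^m\alpha) \leq K$ is a genuine subcomplex by Lemma \ref{lem:kdiff}; hence its homology is well-defined. The goal is then to produce, for any two admissible data $(\lambda_0,J_0)$ and $(\lambda_1,J_1)$ sharing the topological data $(Y,\xi,b,\rot(b))$, a chain map inducing an isomorphism on the homologies of these subcomplexes. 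Since $\rot(b)$ is irrational, $b$ and all of its iterates are nondegenerate and we remain entirely within the nondegenerate framework of \cite{cc2}, so the rational direct-limit machinery of \S\ref{s:cobordismfun} (needed for Theorem \ref{thm:introok}) is not required here. It is also worth noting at the outset that, unlike the action, the quantity $\mathcal{F}_b$ is scale-invariant, since linking numbers are topological and $\rot(b)$ is computed in the push-off-linking-zero trivialization; this is the structural reason one should expect a topological answer.

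The main mechanism is a cobordism argument. For a change of $J$ with $\lambda$ fixed I would use the product cobordism; for a change of contact form with $(\xi,b,\rot(b))$ fixed I would interpolate by an exact symplectic cobordism $X$ from $(Y,\lambda_0)$ to $(Y,\lambda_1)$ in which $b$ persists as a Reeb orbit of the same rotation number on both ends, so that $\R \times b$ completes to a properly embedded surface $Z_b \subset X$ that is a trivial cylinder over $b$ near each end. The associated action filtered cobordism maps exist and are $J$-independent by \cite[Thm.~1.3]{cc2} and the energy filtered Seiberg--Witten theory of Hutchings--Taubes (cf.\ the formulation in \cite[Thm.~2.17]{preech}). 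The crucial claim is that these maps \emph{respect} the knot filtration, i.e.\ do not increase $\mathcal{F}_b$. This is the cobordism analogue of Lemma \ref{lem:kdiff}: a (possibly broken) holomorphic current $\mathcal{C}$ in $X$ from $b^{m_+}\alpha_+$ to $b^{m_-}\alpha_-$ has a well-defined algebraic intersection number with $Z_b$, and positivity of intersections, together with control of the asymptotic winding of $\mathcal{C}$ around $b$ (governed precisely by $\rot(b)$, which is why it must be held fixed), bounds $\mathcal{F}_b$ of the input below by $\mathcal{F}_b$ of the output. Restricting the filtered cobordism maps to the knot subcomplexes then gives the desired chain maps.

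To conclude, I would invoke the standard composition and homotopy properties of ECH cobordism maps from \cite{cc2}: product cobordisms induce the canonical scaling isomorphism, and the composition of the cobordism maps for $X_{01}$ and $X_{10}$ is chain homotopic to the identity through maps that again respect $\mathcal{F}_b$. Consequently the knot filtered chain maps are quasi-isomorphisms onto the corresponding knot subcomplexes, yielding a canonical isomorphism $ECH_*^{\mathcal{F}_b \leq K}(Y,\lambda_0,J_0) \cong ECH_*^{\mathcal{F}_b \leq K}(Y,\lambda_1,J_1)$ depending only on $(Y,\xi,b,\rot(b),K)$. The action filtration enters as the technical backbone: all maps of \cite{cc2} are organized by action level $L$, and because the values of $\mathcal{F}_b$ below $K$ form a discrete set (as $\rot(b)>0$ and every orbit other than $b$ has nonnegative linking with $b$), only finitely many generators of bounded action and bounded $\mathcal{F}_b$ are involved at each stage, so compatibility can be checked level by level and then passed to the direct limit $L \to \infty$.

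The hard part will be the claim that the \cite{cc2} cobordism maps respect the knot filtration. These maps are constructed through Seiberg--Witten Floer theory via Taubes's isomorphism (Theorem \ref{thm:taubes}) rather than as naive holomorphic curve counts, so the positivity-of-intersections argument is not immediately available at the level of the maps themselves. Resolving this requires either working with the holomorphic-curve description of the cobordism maps in the regime where it is valid—so that the intersection bound with $Z_b$ applies to the curves actually counted—or approximating the maps by such counts and controlling the error, exactly as Hutchings does in \cite[\S 5]{HuMAC}. A secondary technical point is arranging the interpolating cobordism so that $b$ genuinely persists as a Reeb orbit of unchanged rotation number throughout, which is what makes $Z_b$ a well-defined surface with trivial-cylinder ends and is the geometric reason $\rot(b)$ must appear as fixed data in the statement.
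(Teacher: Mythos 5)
Your proposal is correct and takes essentially the same route as the paper: the invariance is reduced to the Hutchings--Taubes action-filtered cobordism maps, whose Holomorphic Curves axiom (Theorem \ref{thm:cobmaps}) lets one apply intersection positivity with the $J$-holomorphic cylinder $\R\times b$ together with asymptotic winding bounds governed by $\rot(b)$ to show the chain maps do not increase $\mathcal{F}_b$, after which the composition and homotopy-invariance axioms give the isomorphism. This is precisely the mechanism of Proposition \ref{prop:cobknot} and Proposition \ref{prop:che}, and your resolution of the ``hard part'' (the maps are Seiberg--Witten theoretic, so one must pass through the broken-current description of what they count) is exactly how the paper and \cite[\S 5]{HuMAC} handle it.
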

In \S\ref{s:cobordismfun}, we generalize Theorem \ref{thm:kech} to allow for rational rotation numbers and provide a Morse-Bott direct limit means of its computation via an extension of the arguments employed in \cite{preech}. The proof relies upon a doubly filtered Morse-Bott direct limit argument and requires a knot admissible sequence of contact forms (see Definition \ref{def:knotadmissible}).  

This allows us to compute the knot filtered embedded contact homology of $(S^3,\xi_{std},T(2,q),2q)$ via successive approximations using the sequence $\{ \lambda_{2,q,\ve} \}$, which is knot admissible family by Lemma \ref{lem:orbtrivCZ2} and its surrounding discussion. We briefly review the computation for the nondegenerate unknot in the irrational ellipsoid.
\begin{example}\label{ex:kechellipsoid}\cite[\S 5]{HuMAC}
Let $Y=\partial E(a,b) = \left\{ (z_1,z_2) \in \C^2 \ | \ \pi \left( \frac{|z_1|^2}{a} + \frac{|z_2|^2}{b} \right)  =1 \right\},$ with $a,b \geq 0$.   Then for $\lambda_0= \frac{i}{2}\left( \sum_{j=1}^2 z_j d\bar{z}_j - \bar{z}_jd z_j \right)$ restricted to $Y$, 
\[
R_0 = 2\pi \left( \frac{1}{a}\frac{\partial}{\partial \theta_1} + \frac{1}{b}\frac{\partial}{\partial \theta_2}  \right).
\]
If $a/b \in \R \setminus \Q$, there are exactly two embedded Reeb orbits: $\gamma_1$ in the $z_2=0$ plane with action $a$ and $\gamma_2$ in the $z_1=0$ plane with action $b$.  The ECH generators are of the form $\alpha = \gamma_1^{m_1}\gamma_2^{m_2} $ and the ECH index\footnote{When $H_1(Y)=0$, the chain complex has an absolute $\Z$-grading, which we indicate by $ECC_*$.}  
 defines a bijection from the set of generators of $ECC_*(Y,\lambda_0,J)$ to the set of nonnegative even integers, cf. \cite[Lem.~4.1]{HuMAC}, where the grading of $\emptyset$ is 0. Thus,
 \[
ECH_*(Y,\xi_{std})=\begin{cases}
\Z/2&\text{ if $*\in2\Z_{\geq0}$,}
\\0&\text{ else.}
\end{cases}
\]

We have that $\rot(\gamma_2) = b/a$ and $\ell( \gamma_1,\gamma_2)=1$.  Thus the knot filtration of an ECH generator $\gamma_1^{m_1}\gamma_2^{m_2}$ with respect to $\gamma_2$ is given by
\[
\mathcal{F}_{\gamma_2}(\gamma_1^{m_1},\gamma_2^{m_2}) = m_2b/a + m_1 = a^{-1}\A(\gamma_1^{m_1},\gamma_2^{m_2}).
\] 
Thus if $\alpha$ is an arbitrary ECH generator with $I(\alpha) = 2k$ then 
\[
\mathcal{F}_{\gamma_2}(\alpha) = N_k(1,b/a).
\]
It follows that if $k$ is a nonnegative integer, $b_0$ is the standard transverse unknot\footnote{In tight contact 3-manifolds, \cite{yasha, torus} demonstrate that the self-linking number is a complete invariant of transversal isotopy for transversal unknots and torus knots, respectively.} given by a Hopf circle, and $\rot(b_0) \in \R \setminus \Q$ then
\[
ECH_{2k}^{\mathcal{F}_{b_0} \leq K}(S^3,\xi_{std},b_0,\rot(b_0))=\begin{cases}\Z/2&K\geq N_k(1,\rot(b_0)),
\\0&\text{otherwise,}\end{cases}
\]
and in all other gradings $*$, $ECH_*^{\mathcal{F}_{\gamma_2} \leq K}(S^3,\xi_{std},b_0,\rot(b_0))=0.$
\end{example}

Before computing knot filtered ECH  for $(S^3, \xi_{std})$ with respect to the right handed $T(p,q)$ torus knot, we first compute the knot filtration.

\begin{proposition}\label{prop:Fb}
For $(S^3,\lambda_{2,q,\varepsilon})$ and $\varepsilon(L)$ as in  Proposition \ref{prop:morse2},  for any Reeb current $\alpha$ not including the right handed $T(2,q)$ torus knot  ${b}$, 
\[
\fb({b}^B\alpha) = d({b}^B \alpha) + B\delta_{b,L}.
\]
\end{proposition}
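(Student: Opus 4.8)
The plan is to expand both sides of the claimed identity directly, assembling the definition of the knot filtration in \eqref{eq:filt}, the linking numbers recorded in Corollary \ref{cor:linkingnumber}, and the rotation number of $b$ computed in Lemma \ref{lem:pagetrivCZ2}. This is a short bookkeeping computation once these ingredients are in place.

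First I would write the Reeb current $\alpha$ not containing $b$ in the form $\alpha = h^H e^E$, since by Lemma \ref{lem:orbitseh} the only Reeb orbits of $R_{2,q,\varepsilon}$ up to action $L(\varepsilon)$ are iterates of $b$, $h$, and $e$. Note that we need not impose $H \leq 1$ here, since the knot filtration is defined for arbitrary Reeb currents (not only ECH generators) and the additivity $\ell(\alpha,b) = \sum_i m_i\,\ell(\alpha_i,b)$ handles any multiplicity. By the definition \eqref{eq:filt} of the knot filtration together with this additivity,
\[
\fb(b^B h^H e^E) = B\rot(b) + H\,\ell(h,b) + E\,\ell(e,b).
\]

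Next I would substitute the relevant values. Since the linking number is symmetric, Corollary \ref{cor:linkingnumber}(ii, iii) gives $\ell(e,b) = \ell(b,e) = 2$ and $\ell(h,b) = \ell(b,h) = q$. The rotation number $\rot(b)$ appearing in \eqref{eq:filt} is taken with respect to the push-off linking zero trivialization, which by \S\ref{ss:pagetau} is exactly the page trivialization $\tau_\Sigma$; Lemma \ref{lem:pagetrivCZ2} then yields $\rot(b) = 2q + \delta_{b,L}$. Substituting,
\[
\fb(b^B h^H e^E) = B(2q + \delta_{b,L}) + Hq + 2E = (2qB + qH + 2E) + B\delta_{b,L}.
\]
Recognizing the parenthesized quantity as the degree $d(b^B h^H e^E) = 2qB + qH + 2E$ from Definition \ref{def:degreep} (adapted to the case $\beta = \emptyset$) completes the proof.

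The computation itself presents no serious obstacle; the only point requiring care is to confirm that the rotation number entering \eqref{eq:filt} is the one associated to the push-off linking zero trivialization — equivalently the page trivialization $\tau_\Sigma$ — rather than, say, the orbibundle trivialization $\tau_{orb}$ (for which the relevant monodromy angle is $2+q+\delta_{b,L}$ by Lemma \ref{lem:orbtrivCZ2}). It is precisely this choice that makes the coefficient of $B$ equal to $2q + \delta_{b,L}$, so that the $2qB$ term matches the $b$-contribution to the degree and the leftover $B\delta_{b,L}$ is the stated correction term.
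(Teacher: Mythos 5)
Your proposal is correct and follows essentially the same route as the paper's proof: expand $\fb(b^Bh^He^E)$ via the definition \eqref{eq:filt}, substitute $\rot(b)=2q+\delta_{b,L}$ from Lemma \ref{lem:pagetrivCZ2} and the linking numbers $\ell(b,e)=2$, $\ell(b,h)=q$ from Corollary \ref{cor:linkingnumber}, and recognize the result as $d(b^Bh^He^E)+B\delta_{b,L}$. Your added remark that the rotation number must be taken in the page (push-off linking zero) trivialization $\tau_\Sigma$ rather than $\tau_{orb}$ correctly identifies the one point where care is needed, and matches the paper's setup in \S\ref{ss:pagetau}.
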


\begin{proof}
By Lemma \ref{lem:pagetrivCZ2}, we know $\rot(b)=2q+\delta_{b,L}$.  We have that $\alpha = h^He^E$ for $H,E \in \Z_{\geq 0}$, thus
\[
\begin{split}
\fb(b^B \alpha) = & \ B \rot(b) + \ell(\alpha,b) \\
= & \ B(2q + {\delta_{b,L}})  + \ell(e^E,b) + \ell(h^H,b) \\
=& \ B(2q + {\delta_{b,L}}) + qH + 2E,
\end{split}
\]
where the last line follows from Corollary \ref{cor:linkingnumber}. Recall that 
\[
d(b^Bh^He^E) = 2qB + qH + 2E.
\]

\end{proof}
\begin{remark}
Note that $d(b^B \alpha) =  2q \ \A_{\lambda_{2,q}}(b^B \alpha)$, thus  knot filtered ECH is able to realize the relationship between action and linking in these examples, cf. \cite{bhs}. 
\end{remark}

Recall that $T(2,q)$ is standard as a transverse knot in the sense of Etnyre \cite{torus}, i.e. it has maximal self-linking $2q-2-q$.\footnote{We computed in \S \ref{ss:chern} that $c_\Sigma([\Sigma]) = 2+q-2q$, thus  $2q-2-q$ is the self linking which corresponds to rotation $2q+\delta_L$ with respect to the pushoff linking number zero trivialization, as explained in \S \ref{ss:pagetau}.}  We now compute $T(2,q)$ knot filtered ECH:

\begin{theorem}\label{thm:kECH} Let $\xi_{std}$ be the standard tight contact structure on $S^3$.  
Let ${b_0}$ be the standard positive ({right-handed}) transverse $T(2,q)$ torus knot for $q$ odd and positive.  Then for $k \in \N$,
\[
ECH_{2k}^{\fb \leq K}(S^3,\xi_{std},{b_0},2q)=\begin{cases}\Z/2&K\geq{ N_k(2,q)  },
\\0&\text{otherwise,}\end{cases}
\]
and in all other gradings $*$,
\[
ECH_*^{\fb \leq K}(S^3,\xi_{std},{b_0},2q)=0.
\]
If $\delta$ is a sufficiently small positive irrational number, then up to grading $k \in \N$ and knot filtration threshold $K$ inversely proportional to $\delta$,
\[
ECH_{2k}^{\fb \leq K}(S^3,\xi_{std},{b_0},2q+\delta)=\begin{cases}\Z/2&K\geq{ N_k(2,q) + \delta {(\$N_k(2,q) -1)}},
\\0&\text{otherwise,}\end{cases}
\]
where { $\$N_k(2,q)$ is the number of repeats in $\{ N_j(2,q)\}_{j\leq k}$ with value $N_k(2,q)$,} and in all other gradings $*$, up to the threshold inversely proportional to $\delta$,
\[
ECH_*^{\fb \leq K}(S^3,\xi_{std},{b_0},2q+\delta)=0.
\]

\end{theorem}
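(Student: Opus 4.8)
The plan is to combine three ingredients already assembled in the paper: the structure of the chain complex from Proposition~\ref{prop:bijectionZ2}, the knot filtration formula from Proposition~\ref{prop:Fb}, and the degree-versus-index dictionary from Lemma~\ref{lem:degNk}. I would first dispense with the rational case $\rot(b_0)=2q$, then treat the irrational case $\rot(b_0)=2q+\delta$ as a perturbation, invoking Theorem~\ref{thm:introok} (the Morse-Bott direct limit for degenerate contact forms, generalizing Theorem~\ref{thm:kech}) to guarantee that the answer depends only on $(S^3,\xi_{std},b_0,\rot(b_0),K)$ and hence may be computed from the explicit family $\{\lambda_{2,q,\varepsilon}\}$, which is knot admissible by Lemma~\ref{lem:orbtrivCZ2}.

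First I would recall that by Proposition~\ref{prop:bijectionZ2} the differential vanishes for index reasons, so $ECC^{L(\varepsilon)}_*$ has exactly one generator in each nonnegative even grading and none in odd gradings; in particular the filtered homology is just the filtered chain complex, and it suffices to read off filtration values of generators. For the \emph{rational} case, I set $\delta=0$ in Proposition~\ref{prop:Fb}, giving $\mathcal{F}_b(b^B h^H e^E)=d(b^B h^H e^E)=2qB+qH+2E$. By Lemma~\ref{lem:degNk}, the unique generator representing $ECH^{L(\varepsilon)}_{2k}$ has degree exactly $N_k(2,q)$. Hence the generator in grading $2k$ contributes to the $\mathcal{F}_b\le K$ subcomplex precisely when $K\ge N_k(2,q)$, and contributes nothing in odd gradings. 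Taking the direct limit $\varepsilon\to0$ and applying Theorem~\ref{thm:introok} to identify the limit with $ECH^{\mathcal{F}_b\le K}_*(S^3,\xi_{std},b_0,2q)$ yields the first displayed formula. The key point is that at $\delta=0$ the knot filtration \emph{equals} the degree, so the $N_k(2,q)$ already computed in Lemma~\ref{lem:degNk} and Proposition~\ref{prop:ck} transfers verbatim.

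For the \emph{irrational} case $\rot(b_0)=2q+\delta$, I keep the $B\delta_{b,L}$ correction term in Proposition~\ref{prop:Fb}, so $\mathcal{F}_b(b^B h^H e^E)=d+B\delta_{b,L}$ where $\delta_{b,L}\to\delta$ in the limit. The subtlety is that the filtration is no longer a function of the grading alone: within a fixed degree $N_k(2,q)$ there may be several generators (those with different values of $B$), and the correction $B\delta$ breaks the ties. Among all Reeb currents of degree $N_k(2,q)$, the one representing $ECH^{L(\varepsilon)}_{2k}$ is, by the lexicographic ordering of index within a degree established in Lemma~\ref{lem:dy}, the generator of \emph{smallest} $B$ among those sharing that degree-value that have index $\le 2k$; I would argue that this forces the relevant $B$-value to be exactly $\$N_k(2,q)-1$, the number of repeats of the value $N_k(2,q)$ in the sequence $\{N_j(2,q)\}_{j\le k}$ minus one (the indexing starts at the first occurrence, which has $B=0$). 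This identifies the filtration threshold as $N_k(2,q)+\delta(\$N_k(2,q)-1)$, matching the statement. The ``up to grading $k$ and threshold $K$ inversely proportional to $\delta$'' proviso enters because $\delta$ must be small enough that $N_k(2,q)+\delta(\$N_k(2,q)-1)\le N_{k+1}(2,q)$ (cf.\ the threshold remark), so the filtration values stay monotone in $k$ and no two adjacent gradings collide.

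The main obstacle I anticipate is the bookkeeping in the irrational case: one must verify that the generator carrying the homology class in grading $2k$ is indeed the correct representative among all degree-$N_k(2,q)$ generators, i.e.\ that the combinatorial correspondence $(m,n)=(E,2B+H)\leftrightarrow\Z^2_{\ge0}$ from Lemma~\ref{lem:degNk} sends increasing index to increasing $B$ within a fixed degree (via Lemma~\ref{lem:dy}), and that the multiplicity $B$ of the chosen generator equals $\$N_k(2,q)-1$. This amounts to tracking how many lattice points $(m,n)$ with $2m+qn=N_k(2,q)$ precede the one indexed by $k$, which is exactly the repeat-count $\$N_k(2,q)$. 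Once this identification is made, the direct limit argument and invocation of Theorem~\ref{thm:introok} are formal, and the irrational formula follows by substituting $\delta$ for $\delta_{b,L}$ in the limit.
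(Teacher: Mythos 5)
Your proposal is correct and follows essentially the same route as the paper's proof: both rest on Proposition~\ref{prop:bijectionZ2} (one generator per even grading, vanishing differential), on Proposition~\ref{prop:Fb} combined with Lemmas~\ref{lem:degNk} and~\ref{lem:dy} to identify the knot filtration level of the grading-$2k$ generator as $N_k(2,q)+\delta(\$N_k(2,q)-1)$, and on Theorem~\ref{thm:introok} to pass between the irrational rotation numbers $2q+\delta_{b,L(\varepsilon)}$ of the knot admissible family $\{\lambda_{2,q,\varepsilon}\}$ and the rational value $2q$ via direct limits (the paper computes the irrational case first and then takes limits to realize $\delta=0$, whereas you run the two cases in the opposite order, which is immaterial). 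One slip of wording: the generator representing grading $2k$ is the one of \emph{largest} $B$ (equivalently, index exactly $2k$) among the degree-$N_k(2,q)$ generators of index at most $2k$, not the smallest, though your subsequent lattice-point counting and the conclusion $B=\$N_k(2,q)-1$ are exactly right.
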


\begin{remark}
The relationship between the threshold of the grading $2k$ and the size of $\delta$ is as follows.  We require $\delta$ to be small enough so that $N_k(2,q)+\delta(\$N_k(p,q)-1) \leq N_{k+1}(2,q)$ for all $k$. \end{remark}

\begin{proof}

Since $\delta$ is small and the knot filtration is invariant of the contact form (so long as $b$ is an elliptic Reeb orbit with irrational rotation number $2q+\delta$ in its Seifert surface trivialization), we use one of our preferred contact forms $\lambda_{2,q,\varepsilon}$ where $\delta:=\delta_{b,L(\varepsilon)}=$ to compute the ECH chain complex up to the action and index thresholds determined by Lemma \ref{lem:orbitseh}.  We require $\delta$ to be small enough so that $N_k(2,q)+\delta(\$N_k(2,q)-1) \leq N_{k+1}(2,q)$ for all $k$.

Proposition \ref{prop:bijectionZ2} tells us that the lower bound on $K$ is precisely the knot filtration level of the generator of $ECC^{L(\varepsilon)}_{2k}(S^3,\lambda_{2,q,\varepsilon})$, which is $N_k(2,q)+\delta(\$N_k(2,q)-1)$ by Proposition \ref{prop:Fb} and Lemma \ref{lem:degNk}. By Theorem \ref{thm:introok} and the discussions in \S \ref{ss:direct}-\ref{ss:directlimit}, we can take direct limits to realize $\delta=0$.
\end{proof}

\section{Cobordism maps on embedded contact homology}\label{s:cobordismfun}

In this section we establish Theorem \ref{thm:introok}, which extends the definition and invariance of knot filtered embedded contact homology to knots with rational rotation numbers.  This is accomplished by a generalization and refinement of the direct systems established for our action filtered Morse-Bott arguments in \cite[\S 7.1]{preech}, which utilized the cobordism maps induced by filtered perturbed Seiberg-Witten Floer cohomology.

To make this section more self contained, we first briefly review a number of results regarding the existence and properties of cobordism maps for action filtered embedded contact homology and the relation to their counterpart in Seiberg-Witten Floer cohomology as established by Hutchings and Taubes \cite{cc2} in \S \ref{ss:def}-\ref{ss:action}. In \S \ref{ss:direct}, we explain how to construct action filtered direct systems via cobordism maps coming from filtered perturbed Seiberg-Witten Floer cohomology.\footnote{The contact and symplectic form perturbations of monopole Floer cohomology originated in Taubes' proof of the Weinstein conjecture in dimension three \cite{wconj, wconj2} and were also used in his proof of the isomorphism between embedded contact homology and Seiberg-Witten Floer cohomology \cite{taubesechswf}-\cite{taubesechswf5}.}  In \S \ref{ss:directlimit}, we set up the doubly filtered direct system, complete the direct limit argument, and establish invariance, which proves Theorem \ref{thm:introok}.

\subsection{Exact symplectic cobordisms and broken currents}\label{ss:def}
We now collect a number of definitions and some deep facts about the existence of certain broken holomorphic currents in exact symplectic cobordisms.

An \emph{exact symplectic cobordism} from $(Y_+,\lambda_+)$ to $(Y_-,\lambda_-)$ is a pair $(X,\lambda)$ where $X$ is a compact 4-dimensional oriented manifold with $\partial X = Y_+ - Y_-$ and $d\lambda$ is a symplectic form on $X$ with $\lambda|_{Y_\pm} = \lambda_\pm$. Given an exact symplectic cobordism $(X,\lambda)$, we form its \emph{completion}
\[
\overline{X} = ((-\infty,0] \times Y_-) \sqcup_{Y_-} X\sqcup_{Y_+} ( [0,\infty) \times Y_+)
\]
using the gluing under the following identifications.  A neighborhood of $Y_+$ in $(X,\lambda)$ can be canonically identified with $(-\epsilon, 0]_s \times Y_+$ for some $\epsilon>0$ so that $\lambda$ is identified with $e^s\lambda_+$.  Moreover, this identification is defined so that $\partial_s$ corresponds to the unique vector field $V$ such that $\iota_Vd\lambda = \lambda$.  Similarly, a neighborhood of $Y_-$ in $X$ can be canonically identified with $[0,\epsilon) \times Y_-$ so that $\lambda$ is identified with $e^s\lambda_-$.

Let $(X,\lambda)$ be an exact symplectic cobordism from $(Y_+,\lambda_+)$ to $(Y_-,\lambda_-)$.  An almost complex structure $J$ on the completion $\overline{X}$ is said to be \emph{cobordism compatible} if $J$ is $d\lambda$-compatible on $X$ (meaning that $d\lambda(\cdot, J \cdot)$ is a Riemannian metric on $X$), and there are $\lambda_\pm$-compatible almost complex structures $J_\pm$ on $\R \times Y_\pm$ such that $J$ agrees with $J_+$ on $[0,\infty) \times Y_+$ and with $J_-$ on $(-\infty,0] \times Y_-$.

{In order to define direct systems, we will need to compose cobordisms, $X_- \circ X_+$, compose cobordism compatible almost complex structures, and understand the maps they induce on embedded contact homology. 

\begin{definition}
Given exact symplectic cobordisms $(X_+,\lambda_+)$ from $(Y_1,\lambda_1)$ to $(Y_0,\lambda_0)$ and $(X_-,\lambda_-)$ from $(Y_0,\lambda_0)$ to $(Y_{-1},\lambda_{-1})$, we glue along $Y_0$ to define their \emph{composition}
\[
X_- \circ X_+ := X_- \sqcup_{Y_0} X_+,
\]
which we equip with the exact symplectic form $\lambda_- \circ \lambda_+$ obtained by gluing $\lambda_-$ and $\lambda_+$ together.  This produces an exact symplectic cobordism from $(Y_1,\lambda_1)$ to $(Y_{-1},\lambda_{-1})$.
 
For $R \geq 0$, we can construct the \emph{stretched composition}
\[
X_- \circ_{R} X_+ := X_- \sqcup_{Y_0} ([-R , R] \times Y_0) \sqcup X_+.
\]
We glue $e^{\pm R} \lambda_\pm$ on $X_\pm$ and $e^s \lambda_0$ on $[-R , R] \times Y_0$ to obtain a one-form $\lambda_- \circ_R \lambda_+$ on $X_- \circ_{R} X_+$, thereby producing an exact symplectic cobordism from $(Y_1,e^R\lambda_1)$ to $(Y_{-1},e^{-R}\lambda_{-1})$.

(Our notation here is not entirely ideal because, previously for an exact symplectic cobordism $X$ from $Y_+$ to $Y_-$, $\lambda_\pm$ were contact forms on $Y_\pm$, but in this definition they are primitives of the exact symplectic forms on the exact symplectic cobordisms being composed.)
 \end{definition}

\begin{notation}\label{n:cobord}
For positive $\varepsilon'<\varepsilon$ and $s \in [\varepsilon',\varepsilon]$, we consider exact symplectic cobordisms from $(S^3,\lambda_{2,q,\varepsilon})$ to $(S^3,\lambda_{2,q,\varepsilon'})$, which we denote by
 \[
 (X_{[\varepsilon',\varepsilon]}, \lambda_{2,q, [\varepsilon',\varepsilon]}) :=([\varepsilon',\varepsilon] \times S^3, (1+s\frak{p}^*H_{2,q})\lambda_{2,q}).
\]
For $\varepsilon''<\varepsilon'<\varepsilon$ we will consider the exact symplectic cobordism from $(S^3,\lambda_{2,q,\varepsilon})$ to $(S^3,\lambda_{2,q,\varepsilon''})$ formed from the composition of two  exact symplectic cobordisms:
\[
(X_{[\varepsilon'',\varepsilon']\circ[\varepsilon',\varepsilon]}, \lambda_{2,q, [\varepsilon'',\varepsilon']\circ[\varepsilon',\varepsilon]}):=(X_{[\varepsilon'',\varepsilon']}, \lambda_{2,q, [\varepsilon'',\varepsilon']}) \circ  (X_{[\varepsilon',\varepsilon]}, \lambda_{2,q, [\varepsilon',\varepsilon]}).
\]

\end{notation}

\begin{remark}
We can compose cobordism compatible almost complex structures as follows.  Let $J_i$ be a $\lambda_i$-compatible almost complex structure on $\R \times Y_i$ for $i=-1,0,1$.  Let $J_\pm$ be cobordism compatible almost complex structures on the completions $\overline{X}_\pm$ that restrict to $J_{\pm1}$ and $J_0$ on the ends.  For each $R \geq 0$ we glue $J_-$, $J_0$, and $J_+$ to define an almost complex structure $J_- \circ_R J_+$ on $\overline{X_- \circ_{R} X_+}$; when $R=0$ we obtain a cobordism compatible almost complex structure on $\overline{X_- \circ X_+}$, which we denote by $J_- \circ J_+$.
\end{remark}

Following \cite[\S 5.1]{cc2} we will need to consider a \emph{strong homotopy} $\left(X,\{ \lambda_t\}_{t\in[0,1]}\right)$ of exact symplectic cobordisms from $(Y_+,\lambda_+)$ to $(Y_-,\lambda_-)$, where $X$ is a compact four-manifold with boundary $\partial X = Y_+ - Y_-$ and $\{\lambda_t\}$ is a smooth family of 1-forms on $X$ that is independent of $t$ near $\partial X$, such that for each $t$, the form $d\lambda_t$ is symplectic and $\lambda_t|_{Y_\pm} = \lambda_\pm$.

Finally, we define the notion of a \emph{broken $J$-holomorphic current} on $(\overline{X},J)$.  These arise in connection with the maps induced by cobordisms on embedded contact homology. 

\begin{definition}\label{d:bcur}
Let $\alpha_+$ and $\beta_-$ be Reeb currents respectively associated to $\lambda_+$ and $\lambda_-$.   Let $\M^J(\alpha_+,\beta_-)$ be the set of $J$-holomorphic currents in $(\overline{X},J)$ from $\alpha_+$ to $\beta_-$.  A \emph{broken $J$-holomorphic current} from $\alpha_+$ to $\beta_-$ is a tuple $\cur=(\cur_{N_-},...,\cur_0,...,\cur_{N_+})$ where ${N_-}\leq0\leq{N_+}$ such that there are distinct Reeb currents $\beta_-=\beta_-(N_-),...,\beta_-(0)$ for $(Y_-,\lambda_-)$ and $\alpha_+(0),...,\alpha_+(N_+)=\alpha_+$ for $(Y_+,\lambda_+)$ such that:
\begin{itemize}
\itemsep-.35em
\item If $k>0$ then $\cur_k\in \M^{J_+}(\alpha_+(k),\alpha_+(k-1))/\R$.
\item $\cur_0 \in \M^J(\alpha_+(0),\beta_-(0))$;
\item If $k<0$ then $\cur_k\in \M^{J_-}(\beta_-(k+1),\beta_-(k))/\R$;
\end{itemize}
The currents $\cur_k$ are called the \emph{levels} of (the broken holomorphic current) $\cur$.  We denote the set of broken holomorphic currents by $\overline{\M^J(\alpha_+,\beta_-)}$.  The ECH index of the broken $J$-holomorphic current is defined to be the sum of the ECH indices of its levels,
\[
I(\cur) = \sum_{i=N_-}^{N_+}I(\cur_i)
\]
(See \cite[\S 4.2]{Hrevisit} for the definition of the ECH index in cobordisms.)
\end{definition}

Let $\lambda_\pm$ be nondegenerate and $J_\pm$ be generic so that the chain complexes $ECC(Y_\pm,\lambda_\pm,J_\pm)$ are defined.  We say that a linear map
\begin{equation}\label{eq:chain}
\phi: ECC(Y_+,\lambda_+,J_+) \to ECC(Y_-,\lambda_-,J_-) 
\end{equation}
\emph{counts (broken) $J$-holomorphic currents} if $\langle \phi \alpha_+,\beta_- \rangle \neq 0$ implies that the set $\overline{\M^J(\alpha_+,\beta_-)}$ is nonempty.

The linear map $\phi$ in \eqref{eq:chain} of interest is a noncanonical chain map defined by counting solutions to the Seiberg-Witten equations on $\overline{X}$, where the Riemannian metric is determined by $\lambda$ and $J$.    There is a perturbation of the four dimensional Seiberg-Witten equations on an exact symplectic cobordism \cite[\S 4.2]{cc2}, which is closely related to the contact form perturbation of the three dimensional Seiberg-Witten equations.  In particular, one uses a 2-form $r \hat{\omega} = r \sqrt{2}\widetilde{\omega}/{|\widetilde{\omega}|}$ on $\overline{X}$ where $r$ is a very large positive constant and $ \widetilde{\omega} = d\widetilde{\lambda}$ with $\widetilde{\lambda}$ being a slightly nonstandard choice of 1-form.\footnote{This is for the sake of consistency with Taubes' work \cite{wconj} and \cite{taubesechswf}-\cite{taubesechswf5}, as there are some factors of $2$ that appear; see \cite[Rem.~2.2, 4.2]{cc2}.  Otherwise one may have instead expected to have defined $\widetilde{\lambda}$ by extending the 1-form $\lambda$ on $\overline{X}$ to agree with $e^s\lambda_+$ on $[0,\infty) \times Y_+$ and with $e^s\lambda_-$ on $(-\infty,0] \times Y_-$. }  Up to insignificant factors, $\hat{\omega} $ agrees with $d\lambda$ on $X$ and with $d\lambda_\pm$ on its ends.  In \cite[\S 7]{cc2}, it is explained how for $r$ sufficiently large, the Seiberg-Witten solutions in a cobordism give rise to broken holomorphic currents, which are counted by the map $\phi$.  In particular, it is shown that given a sequence of solutions to the $r$-perturbed Seiberg-Witten equations with $r \to \infty$, the zero set of one component of the spinor converges to a broken holomorphic current.  
 
 However, the chain map $\phi$ is noncanonical because it is not unique; it depends on $r$, and even for fixed $r$, additional perturbations of the Seiberg-Witten equations are needed.\footnote{One uses the exact 2-forms $\mu_\pm$ on $Y_\pm$ and $\mu$ on $\overline{X}$, which agrees with $\mu_\pm$ on the ends, which appear in the contact form perturbation of the three dimensional Seiberg-Witten equations.  One must choose $\mu$ so that its derivatives up to some sufficiently large, but constant order have absolute value less than 1/100.} In particular, two different perturbations may give rise to different chain maps.  Fortunately, when this happens, Hutchings and Taubes have shown in \cite[Prop.~5.2(b)]{cc2} that they are capable of choosing a homotopy between the two perturbations, and that the two chain maps will then differ by a chain homotopy, which counts solutions to the Seiberg-Witten equations for perturbations in the homotopy, provided that $r$ is sufficiently large.  The proof of this is carried out in \cite[\S 7.6]{cc2} and is similar to the proof that the chain maps count holomorphic currents.

The noncanonical chain map $\phi$ from \eqref{eq:chain}, induced by an exact symplectic cobordism $(X,\lambda)$ from $(Y_+,\lambda_+)$ to $(Y_-,\lambda_-)$, induces canonical cobordism maps
\begin{equation}\label{eq:Lchain}
\Phi^L(X,\lambda): ECH^L(Y_+,\lambda_+) \to ECH^L(Y_-,\lambda_-),
\end{equation}
satisfying a number of wonderful properties, as established in \cite[Thm.~1.9]{cc2}, which we will soon review. 

\begin{remark}
 Our favorite property is the ``Holomorphic Curves Axiom", which guarantees that $\Phi^L(X,\lambda)$ is induced by a noncanonical chain map $\phi$, which counts (broken) $J$-holomorphic currents. The existence of a $J$-holomorphic curve allows us to draw geometric conclusions, like intersection positivity, which will be key to the proof that the knot filtration is preserved by $\phi$, and allows us to conclude that knot filtered ECH is a topological invariant.
\end{remark}

 In particular, for any cobordism compatible $J$, the cobordism map $\Phi^L(X,\lambda)$ is induced by a noncanonical chain map $\phi$ such that the coefficient $\langle \phi \gamma_+, \gamma_- \rangle$  is nonzero only if there exists a broken $J$-holomorphic current from $\gamma_+$ to $\gamma_-$.  Moreover, the coefficient $\langle \phi \gamma_+, \gamma_- \rangle$ is nonzero only if $\A(\gamma_+) \geq\A(\gamma_-)$, which is why the cobordism maps $\Phi^L(X,\lambda)$ preserve the symplectic action filtration.  

We define $\Phi^L(X,\lambda,J)$ to be the set of chain maps, which are the fruits of the labors of Hutchings and Taubes \cite{cc2} for $r \geq r_0$, where $r_0$ is chosen to be sufficiently large\footnote{The constant $r_0$ also depends on $L, \ X, \ \lambda, $ and $J$.}, so that any such chain map in fact counts (broken) $J$-holomorphic currents, and any two chain maps differ by a chain homotopy that counts $J$-holomorphic currents.  (In \cite[\S 3]{cc2}, it is explained why the energy filtered contact form perturbation of Seiberg-Witten Floer cohomology $\widehat{HM}^{*}_L(Y,\frak{s};\lambda,J,r)$ does not depend on $J$ or $r$.)

\begin{remark}
To study questions pertaining to the existence of symplectic cobordisms between transverse knots a weaker notion of symplectic cobordism will be desirable, namely that of a strong symplectic cobordism, as considered in \cite{beyond, echsft}.
\end{remark}

\subsection{Action filtered chain maps}\label{ss:action}

First, we recall from \eqref{e:scale} that if $r>0$ is a constant then there is a canonical scaling isomorphism
\begin{equation}\label{eq:scale}
ECH^L(Y,\lambda) = ECH^{rL}(Y,r\lambda)
\end{equation}
because $\lambda$ and $r \lambda$ have the same Reeb orbits up to reparametrization.  Moreover, after a unique appropriate choice of almost complex structure, the bijection on Reeb orbits gives an isomorphism at the level of chain complex \eqref{eq:scale}.  This scaling isomorphism preserves the knot filtration $\fb$ in \eqref{eq:filt}.

Second, recall from \eqref{eq:incl}, that given $L\leq L'$ there are homomorphisms induced by the inclusion of chain complexes: 
\[
\begin{split}
\iota: ECH^L(Y,\lambda, \Gamma) \longrightarrow & \ ECH(Y,\lambda, \Gamma), \\
\iota^{L,L'}: ECH^L(Y,\lambda, \Gamma) \longrightarrow & \ ECH^{L'}(Y,\lambda, \Gamma).
\end{split}
\]
(That there is independence of $J$ is shown in \cite[Thm.~1.3]{cc2}.) The homomorphisms $\iota^{L,L'}$ fit together into a direct system $(\{ECC_*^L(Y,\lambda,\Gamma)\}_{L \in \R}, \iota^{L,L'})$.  Since taking direct limits commutes with taking homology, we have
\[
ECH_*(Y,\lambda,\Gamma) = H_*\left( \lim_{L \to \infty }ECC^L_*(Y,\lambda,\Gamma;J) \right) = \lim_{L \to \infty }ECH^L_*(Y,\lambda,\Gamma) 
\]

An exact symplectic cobordism $(X,\lambda)$ from $(Y_+,\lambda_+)$ to $(Y_-,\lambda_-)$,  where $\lambda_\pm$ are nondegenerate, induces canonical maps on action filtered ECH
\[
\Phi^L(X,\lambda): ECH^L(Y_+,\lambda_+) \to ECH^L(Y_-,\lambda_-),
\]
satisfying various properties, which are proven using Seiberg-Witten theory and reviewed below.

\begin{theorem}[{\cite[Thm.~1.9, proof of Prop.~5.2(b), proof of Prop.~5.4]{cc2}, \cite[Prop.~6.2]{HuMAC}}]\label{thm:cobmaps} Let $\lambda_\pm$ be nondegenerate contact forms on closed 3-manifolds $Y_\pm$, with $(X,\lambda)$ an exact symplectic cobordism from $(Y_+,\lambda_+)$ to $(Y_-,\lambda_-)$.  Let $J$ be a cobordism compatible almost complex structure on $\overline{X}$, which restricts to generic $\lambda_\pm$-compatible almost complex structures $J_\pm$ on the ends. Then for each $L>0$, there exists a nonempty set ${\Phi}^L(X,\lambda,J)$ of maps of ungraded $\Z/2$-modules
\[
\hat{\phi}:ECH^L_*(Y_+,\lambda_+,J_+)\to ECH^L_*(Y_-,\lambda_-,J_-),
\]
induced by a nonempty set $\Theta^L(X,\lambda,J)$ of chain maps, satisfying the following properties.
\begin{itemize}

\item {\em (Holomorphic Curves)} The (noncanonical) chain map
\[
{\phi}: ECC^L_*(Y_+,\lambda_+,J_+)\to ECC^L_*(Y_-,\lambda_-,J_-),
\]
inducing $\hat{\phi} \in {\Phi}^L(X,\lambda,J)$, counts $J$-holomorphic currents.  More precisely, if $\gamma_\pm$ are the respective admissible Reeb currents for $(Y_\pm,\lambda_\pm)$ with $\A(\gamma_\pm)<L$, then:
\
\begin{enumerate}[\em (i)]
\item If there are no broken $J$-holomorphic curves in $\overline{X}$ from $\gamma_+$ to $\gamma_-$, then $\langle \hat{\phi} \gamma_+,\gamma_-\rangle = 0$.
\item If the only broken $J$-holomorphic curve  in $\overline{X}$ from $\gamma_+$ to $\gamma_-$ is a union of covers of product cylinders, then $\langle \hat{\phi} \gamma_+,\gamma_-\rangle = 1$.
\end{enumerate}

\item {\em (Inclusion)} If $L<L'$ then the following diagram commutes:
\begin{equation}\label{eqn:ECHcd}
\xymatrixcolsep{3pc}\xymatrix{
ECH^L(Y,\lambda_+,\Gamma) \ar[r]^{\Phi^L(X,\lambda)} \ar[d]_{\iota^{L,L'}} & ECH^L(Y,\lambda_-,\Gamma) \ar[d]^{\iota^{L,L'}}
\\ECH^{L'}(Y,\lambda_+,\Gamma) \ar[r]_{\Phi^{L'}(X,\lambda)} & ECH^{L'}(Y,\lambda_-,\Gamma)
}
\end{equation}

\item {\em (Homotopy Invariance)} Let $\left(X,\{ \lambda_t\}_{t\in[0,1]}\right)$ be a strong homotopy of exact symplectic cobordisms from $(Y_+,\lambda_+)$ to $(Y_-,\lambda_-)$.  Let $\{J_t\}_{t\in[0,1]}$ be a family of almost complex structures on $\overline{X}$ such that for each $t,$ $J_t$ is cobordism compatible for $\lambda_t$ and $J_t$ restricts to $J_\pm$.  Given
\[
\phi_i \in \Theta^L(X,\lambda_i,J_i)
\]
for $i=0,1$, there is a map
\[
\mathcal{K}: ECC^L(Y_+,\lambda_+,J_+) \to ECC^L(Y_-,\lambda_-,J_-)
\]
which counts $J_t$-holomorphic currents\footnote{This means that if $\langle\mathcal{K}\alpha(1),\gamma(0)\rangle \neq 0$, then for some $t$, the moduli space $\overline{\mathcal{M}^{J_t}(\alpha_+,\beta_-)}$ is nonempty.  Here $\partial_\pm$ denotes the differential on the chain complex $ECC(Y_\pm,\lambda_\pm,J_\pm)$.} such that 
\[
\partial_-\mathcal{K} + \mathcal{K}  \partial_+= \phi_0 - \phi_1
\]
\item {\em (Composition)} Let
\[
\phi_\pm \in \Theta^L(X_\pm,\lambda_\pm,J_\pm)
\]
and
\[
\phi \in \Theta^L(X_-\circ X_+,\lambda_- \circ \lambda_+, J_- \circ J_+).
\]
Then there exists a chain homotopy
\[
\mathcal{K}: ECC^L(Y_1,\lambda_1,J_1) \to ECC^L(Y_{-1},\lambda_{-1},J_{-1})
\]
such that
\[
\partial_{-1}\mathcal{K} + \mathcal{K}  \partial_{1}= \phi_- \circ \phi_+ - \phi
\]
and $\mathcal{K}$ counts $J_- \circ_R J_+$-holomorphic currents.
\item {\em (Trivial Cobordisms)} Let $\lambda_0$ be a nondegenerate contact form on $Y_0$ and suppose that 
\[
(X,\lambda)=([a,b] \times Y_0, e^s\lambda_0).
\]
Let $J_0$ be a generic $\lambda_0$-compatible almost complex structure on $\R \times Y_0$ and
\[
\phi_0: ECC^L\left(Y_0,e^b\lambda_0,J_0^{e^b}\right) \to  ECC^L\left(Y_0,e^a\lambda_0,J_0^{e^a}\right)
\]
denote the chain map induced in \eqref{eq:scale}.  Let $f: \R \to \R$ be a positive function such that $f(s)=e^a$ when $s\leq a$ and $f(s)=e^b$ when $s\geq b$.  Then
\[
\Theta^L\left([a,b] \times Y_0,e^s\lambda_0,J_0^{f}\right) = \{ \phi_0 \}
\]
\end{itemize}

\end{theorem}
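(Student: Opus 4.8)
The plan is to assemble Theorem~\ref{thm:cobmaps} directly from the Seiberg--Witten constructions of Hutchings and Taubes \cite{cc2}, supplemented by \cite[Prop.~6.2]{HuMAC}, rather than to reprove any of the underlying analysis. The sets $\Theta^L(X,\lambda,J)$ and $\Phi^L(X,\lambda,J)$ are \emph{defined} as in \S\ref{ss:def}: one fixes $r$ large, perturbs the four-dimensional Seiberg--Witten equations on $\overline{X}$ using $r\hat\omega$ as in \cite[\S 4.2]{cc2}, and takes $\Theta^L$ to be the collection of chain maps arising from counting solutions, with $\Phi^L$ the induced maps on action filtered homology. That these are well defined and independent of the auxiliary choices at the level of $ECH^L$ is exactly \cite[Thm.~1.3, Thm.~1.9]{cc2}, via the identification of $ECH^L$ with the energy filtered perturbed Seiberg--Witten Floer cohomology $\widehat{HM}^*_L$ discussed in \cite[\S 3]{cc2}.

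The technical heart, the \emph{Holomorphic Curves} axiom, is where the content lies, and I would deduce it from the compactness argument of \cite[\S 7]{cc2}: given a sequence of solutions of the $r$-perturbed equations with $r \to \infty$ whose contribution to $\langle\hat\phi\gamma_+,\gamma_-\rangle$ is nonzero, the zero set of the relevant spinor component converges to a broken $J$-holomorphic current from $\gamma_+$ to $\gamma_-$. This immediately gives part (i). Part (ii), that a lone union of covers of product cylinders contributes coefficient $1$, follows from \cite[Prop.~6.2]{HuMAC} together with the ECH index conventions of \S\ref{s:generalities}, since such a configuration is rigid and its count is forced.

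The remaining four properties I would cite essentially verbatim. \emph{Inclusion} is the naturality of the cobordism map with respect to $\iota^{L,L'}$, which is built into the construction in \cite[Thm.~1.9]{cc2}. \emph{Homotopy Invariance} follows from the proof of \cite[Prop.~5.2(b)]{cc2}: a strong homotopy $\{\lambda_t\}$ produces a chain homotopy $\mathcal{K}$ counting $J_t$-holomorphic currents with $\partial_-\mathcal{K}+\mathcal{K}\partial_+=\phi_0-\phi_1$, by interpolating the Seiberg--Witten perturbations along $t$. \emph{Composition} follows from the proof of \cite[Prop.~5.4]{cc2}, using the stretched composition $X_-\circ_R X_+$ of \S\ref{ss:def} and the neck-stretching analysis as $R \to \infty$ to produce the chain homotopy $\mathcal{K}$ counting $J_-\circ_R J_+$-holomorphic currents. \emph{Trivial Cobordisms} follows from the scaling isomorphism \eqref{eq:scale}: for $(X,\lambda)=([a,b]\times Y_0,e^s\lambda_0)$ the only relevant holomorphic currents are unions of product cylinders, so the induced map is forced to be the scaling chain map $\phi_0$, and $\Theta^L$ is the singleton $\{\phi_0\}$.

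The main obstacle is conceptual rather than computational: all the genuine analysis (the Seiberg--Witten compactness and the convergence to broken currents) is external to this paper and must be invoked as a black box from \cite{cc2}. The only honest local work is checking that the partition of the statement into the two cases of the Holomorphic Curves axiom is consistent with the ECH index and action conventions used here, and that the trivial cobordism and scaling conventions match \eqref{eq:scale}; these are routine given the setup of \S\ref{ss:def}.
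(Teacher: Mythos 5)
Your proposal follows the same route as the paper: Theorem \ref{thm:cobmaps} is not proved in this paper at all but is assembled as a citation, with existence and Inclusion quoted from \cite[Thm.~1.9]{cc2}, Homotopy Invariance from the proof of \cite[Prop.~5.2(b)]{cc2}, Composition from the proof of \cite[Prop.~5.4]{cc2}, and the refined Holomorphic Curves axiom together with Trivial Cobordisms from \cite[Prop.~6.2]{HuMAC}; the surrounding discussion in \S\ref{ss:def} supplies exactly the Seiberg--Witten context you describe (the definition of $\Theta^L(X,\lambda,J)$ via the $r$-perturbed equations and the $r\to\infty$ convergence of spinor zero sets to broken currents).

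The one place where you substitute a derivation for a citation --- Trivial Cobordisms --- is also the one place your reasoning fails. The completion of $([a,b]\times Y_0,e^s\lambda_0)$ is, up to deformation, the symplectization $\R\times Y_0$, and for $J_0^f$ its holomorphic currents correspond to $J_0$-holomorphic currents in that symplectization; these include every current counted by the ECH differential of $(Y_0,\lambda_0,J_0)$, so it is false that the only currents present are unions of covers of product cylinders. The standard Stokes argument forces only the diagonal entries: from $\gamma$ to itself the unique broken current is the product one, so part (ii) of the Holomorphic Curves axiom gives $\langle\phi\gamma,\gamma\rangle=1$, but part (i) places no constraint on $\langle\phi\gamma_+,\gamma_-\rangle$ for $\gamma_+\neq\gamma_-$, and action does not help either, since $e^b\A(\gamma_+)\geq e^a\A(\gamma_-)$ holds for many such pairs. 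The set equality $\Theta^L\left([a,b]\times Y_0,e^s\lambda_0,J_0^f\right)=\{\phi_0\}$ is a statement about which chain maps the Seiberg--Witten construction actually produces, established by the product-cobordism analysis in \cite{cc2} (cf.~\cite[Lem.~5.6]{cc2}) and recorded in \cite[Prop.~6.2]{HuMAC}; it must be cited, like the other four properties, rather than deduced from the scaling isomorphism \eqref{eq:scale} and holomorphic-curve forcing. With that substitution your proposal coincides with what the paper does.
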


Furthermore, as explained in \cite[Rem.~1.10]{cc2} the maps $\Phi^L(X,\lambda)$ respect the decomposition (\ref{e:decomp}) in the following sense: the image of $ECH_*(Y_+,\lambda_+,\Gamma_+)$ has a nonzero component in $ECH_*(Y_-,\lambda_-,\Gamma_-)$ only if $\Gamma_\pm\in H_1(Y_\pm)$ map to the same class in $H_1(X)$.

Next, we collect some additional facts about cobordism maps on the chain level in special exact cobordisms. These cobordism maps allowed us to compute the ECH of prequantization bundles in \cite[\S 7]{preech}, via successive action filtrations, and we will also employ them in our definition and computation of knot filtered ECH with respect to rational rotation numbers.

\begin{lemma}\label{lem:nicecobmap1}{\em \cite[Lem.~3.4(d), 5.6 and Def.~5.9]{cc2}} Given a real number $L$, let $\lambda_s$ and $J_s$ be smooth 1-parameter families of contact forms on $Y$ and $\lambda_s$-compatible almost complex structures such that
\begin{itemize}
\item The contact forms $\lambda_s$ are of the form $f_s\lambda_0$, where $f:[0,1]\times Y\to\R_{>0}$ satisfies $\frac{\partial f}{\partial s}<0$ everywhere.
\item All Reeb orbits of each $\lambda_s$ of length less than $L$ are nondegenerate, and there are no Reeb currents of $\lambda_s$ of action exactly $L$. (This condition is referred to in \cite{cc2} as $\lambda_s$ being ``$L$-nondegenerate.")
\item Near each Reeb orbit of length less than $L$ the pair $(\lambda_s,J_s)$ satisfies the conditions of \cite[(4.1)]{taubesechswf}. (This condition 
 is referred to in \cite{cc2} as $(\lambda_s,J_s)$ being ``$L$-flat.")
\item For Reeb currents of action less than $L$, the ECH differential $\partial$ is well-defined on admissible Reeb currents of action less than $L$ and satisfies $\partial^2=0$. (This is a condition on the genericity of $J_t$ described in \cite{obg2}, and referred to in \cite{cc2} as $J_s$ being ``$ECH^L$-generic.")
\end{itemize}
Then $([-1,0]\times Y,\lambda_{-s})$ is an exact symplectic cobordism from $(Y,\lambda_0)$ to $(Y,\lambda_1)$, and for all $\Gamma\in H_1(Y)$, the cobordism map $\Phi^L([-1,0]\times Y,\lambda_{-s})$ is induced by the isomorphism of chain complexes 
\[
ECC^L_*(Y,\lambda_0,\Gamma;J_0)\to ECC^L_*(Y,\lambda_1,\Gamma;J_1),
\]
determined by the canonical bijection on generators.

\end{lemma}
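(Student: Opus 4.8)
The statement combines three results of Hutchings--Taubes, so the plan is to verify each of its three assertions---that $([-1,0]\times Y,\lambda_{-s})$ is an exact symplectic cobordism, that the generators of the two filtered complexes are canonically identified, and that the induced map realizes this identification---invoking the cobordism-map package of Theorem \ref{thm:cobmaps} for the last of these. First I would establish the exact symplectic cobordism structure by direct computation. Regarding $\lambda:=f_{-t}\lambda_0$ as a $1$-form on $[-1,0]_t\times Y$ with no $dt$-component (here $f_{-t}:=f(-t,\cdot)$), one finds $d\lambda=-\frac{\partial f}{\partial s}\,dt\wedge\lambda_0+d_Yf_{-t}\wedge\lambda_0+f_{-t}\,d\lambda_0$, and all cross terms in $(d\lambda)^2$ except one vanish for degree reasons, leaving $(d\lambda)^2=-2f_{-t}\frac{\partial f}{\partial s}\,dt\wedge(\lambda_0\wedge d\lambda_0)$. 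Since $\partial f/\partial s<0$ and $f_{-t}>0$ this is a positive multiple of the volume form, so $d\lambda$ is symplectic; together with the boundary values $\lambda|_{\{0\}\times Y}=\lambda_0$ and $\lambda|_{\{-1\}\times Y}=\lambda_1$ this is the exact symplectic cobordism of \cite[Lem.~3.4(d)]{cc2}.

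Next I would produce the canonical bijection on generators. Because $\{\lambda_s\}$ is a continuous family of $L$-nondegenerate contact forms, none of which admits a Reeb current of action exactly $L$, the embedded Reeb orbits of action less than $L$ deform continuously in $s$ and no such orbit crosses the threshold $L$; hence the count of admissible Reeb currents of action $<L$ is locally constant, and continuation along the family yields a canonical bijection between the generators for $\lambda_0$ and for $\lambda_1$ that preserves the homology class $\Gamma$. This identifies the chain groups $ECC^L_*(Y,\lambda_0,\Gamma;J_0)$ and $ECC^L_*(Y,\lambda_1,\Gamma;J_1)$.

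It then remains to show that $\Phi^L([-1,0]\times Y,\lambda_{-s})$ is induced by this bijection, which is the crux. By the Holomorphic Curves property of Theorem \ref{thm:cobmaps}, any chain map $\phi$ inducing $\Phi^L$ has $\langle\phi\gamma_+,\gamma_-\rangle\neq0$ only if there is a broken $J$-holomorphic current from $\gamma_+$ to $\gamma_-$, which in turn forces $\A(\gamma_+)\geq\A(\gamma_-)$. I would then invoke the $L$-flatness hypothesis, which puts $(\lambda_s,J_s)$ into the normal form of \cite[(4.1)]{taubesechswf} near every orbit of length $<L$, to conclude---following \cite[Lem.~5.6, Def.~5.9]{cc2}---that in this product cobordism the only broken currents of ECH index zero and total action below $L$ are unions of covers of the product cylinders $\R\times\gamma$, and that these occur exactly between a current and its continuation image. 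Part (ii) of the Holomorphic Curves property then assigns coefficient $1$ to each such diagonal entry and part (i) assigns $0$ off the diagonal, so $\Phi^L$ is precisely the isomorphism induced by the bijection.

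The hard part will be this last step: it is not formal, since ruling out nontrivial index-zero currents and certifying the product cylinders as the unique contributions relies essentially on the $L$-flat normal form of \cite{cc2,taubesechswf}, which renders the moduli spaces near the (merely rescaled, dynamically unchanged) Reeb orbits explicit. Accordingly, most of the genuine content lies in checking that the hypotheses of \cite[Lem.~5.6, Def.~5.9]{cc2} are met along the whole family---in particular that the $ECH^L$-genericity of $J_s$ guarantees $\partial^2=0$ and the well-definedness of every filtered complex $ECC^L_*(Y,\lambda_s,\Gamma;J_s)$ used in the continuation argument.
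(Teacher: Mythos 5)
The first thing to say is that the paper does not prove this lemma at all: it is imported verbatim from Hutchings--Taubes, as the citation \cite[Lem.~3.4(d), 5.6, Def.~5.9]{cc2} in the lemma's header indicates, so the argument you are implicitly competing with is the Seiberg--Witten-theoretic one given in \cite{cc2}. Your first two steps are correct but are the easy parts: the computation $(d\lambda)^2=-2f_{-t}\frac{\partial f}{\partial s}\,dt\wedge\lambda_0\wedge d\lambda_0>0$ is fine, and the persistence of generators below action $L$ (no birth, death, or action-crossing) is exactly what $L$-nondegeneracy of the whole family gives.

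The crux step, however, has a genuine gap. You propose to pin down the chain map via the Holomorphic Curves property of Theorem \ref{thm:cobmaps}: classify all broken $J$-holomorphic currents in the deformation cobordism, conclude that the only ones are covers of product cylinders joining a generator to its continuation image, and read off the matrix. This fails on several counts. First, since $\lambda_s=f_s\lambda_0$ with $f_s$ a nonconstant function on $Y$, the Reeb vector fields and hence the Reeb orbits of $\lambda_s$ move as $s$ varies; in general there are no product cylinders $\R\times\gamma$ in $([-1,0]\times Y,\lambda_{-s})$ at all, so part (ii) of the Holomorphic Curves property is vacuous and cannot supply the diagonal entries. Second, even when the orbits happen to be fixed (as in this paper's application, where they sit over critical points of $H_{2,q}$), the claimed classification of index-zero broken currents is not available: in an exact symplectic cobordism there is no $\R$-action, multiply covered curves can have negative ECH index, and controlling them is precisely the open problem that forces Hutchings--Taubes to define ECH cobordism maps through Seiberg--Witten theory in the first place. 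Third, the off-diagonal entries cannot be killed this way even in principle, because the Holomorphic Curves property only converts \emph{absence} of broken currents into vanishing of coefficients, and nothing in the hypotheses excludes broken currents between non-corresponding generators with $\A(\gamma_+)\geq\A(\gamma_-)$. The actual proof in \cite{cc2} runs entirely on the Seiberg--Witten side: the cobordism map of this deformation cobordism is identified with the continuation isomorphism on filtered perturbed Seiberg--Witten Floer cohomology attached to the admissible deformation $\{(\lambda_s,J_s,r)\}$ (this is \cite[Lem.~5.6, Def.~5.9]{cc2}), and \cite[Lem.~3.4(d)]{cc2} shows that, because the pair is $L$-flat for \emph{every} $s$, the Seiberg--Witten generators remain canonically identified with Reeb currents along the whole family, so that continuation isomorphism is induced by the canonical bijection. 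In other words, $L$-flatness is used to identify Seiberg--Witten solutions with Reeb currents, not to classify holomorphic curves; your sketch attributes to \cite{cc2} a curve-classification statement it does not contain, and without it your argument does not close.
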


\begin{remark}\label{rem:nicecobmap1}
In \cite{cc2} and \cite{HuMAC} the notation conventions for the ``directionality" of the cobordisms in Lemma \ref{lem:nicecobmap1} disagree; we made use of the former in \cite{preech}.   To align with \cite{HuMAC}, we subsequently switch to using the exact symplectic cobordism $([0,1]\times Y,\lambda_{s})$, which gives rise to a cobordism map $\Phi^L([0,1]\times Y,\lambda_{s})$ induced by the isomorphism of chain complexes
\[
 ECC^L_*(Y,\lambda_1,\Gamma;J_1)\to ECC^L_*(Y,\lambda_0,\Gamma;J_0).
\]
determined by the canonical bijection on generators in Lemma \ref{lem:nicecobmap1}.   This latter convention is also taken in Theorem \ref{thm:cobmaps} (Trivial Cobordisms).  

Additionally, in \cite[\S 3.1, Lem.~3.6]{cc2}, it is explained that an arbitrary pair $(\lambda,J)$, where $\lambda$ is an $L$-nondegenerate contact form and $J$ is an $ECH^L$ generic $\lambda$-compatible almost complex structure on $\R \times Y$, can always be approximated by an $L$-flat pair $(\lambda_1,J_1)$, which is the endpoint of a well-behaved smooth homotopy as in \cite[Def.~3.2]{cc2}.  As a result there is a canonical isomorphism of chain complexes induced by the canonical identification of generators
\[
ECC_*^L(Y,\lambda,\Gamma; J) \overset{\simeq}{\to} ECC_*^L(Y,\lambda_1,\Gamma; J_1).  \]
Combining this with the isomorphism established in \cite[Prop.~3.1]{cc2}, we can conclude that if  $(\lambda_1,J_1)$ is an $L$-flat approximation and if $r$ is sufficiently large, then there is a canonical isomorphism of chain complexes
\[
ECC_*(Y,\lambda,\Gamma;J) \overset{\simeq}{\to} \widehat{CM}^{-*}_L(Y,\frak{s}_{\xi,\Gamma}; \lambda_1,J_1,r).
\]
(The right hand side denotes the filtered perturbed Seiberg-Witten cochain complex, which we are about to dive into.)
\end{remark}

\subsection{Direct systems via filtered perturbed Seiberg-Witten}\label{ss:direct}
Our approach to defining and computing knot filtered embedded contact homology using a a {knot admissible pair} requires the use of Seiberg-Witten Floer cohomology to define the direct system, similarly to \cite[\S 7.1]{preech}.  These  cobordism maps belong to the realm of energy filtered contact form perturbed Seiberg-Witten Floer cohomology $\widehat{HM}^*_L(Y,\frak{s};\lambda,J,r)$, which we now review from \cite{cc2}.  We do not take the time to define Seiberg-Witten Floer cohomology, which is fully explained in the book by Kronheimer and Mrowka \cite{KMbook}, the contact and symplectic form perturbations of the Seiberg-Witten equations, or the energy filtration that is analogous to the action filtration in ECH; a summary can be found in \cite[\S 7.1.1-7.1.3]{preech}, and many more details in \cite[\S 2, 4]{cc2}.  

\begin{remark}
We define $\widehat{CM}^*_L(Y,\frak{s};\lambda,J,r)$ to be the submodule of  $\widehat{CM}^*_\text{irr}$ generated by irreducible solutions $(A, \psi)$ to the contact form perturbation of the Seiberg-Witten equations \cite[(28)]{cc2} 
with energy $E(A) < 2\pi L$ \cite[Lem.~2.3]{cc2} and abstract perturbation (if necessary to obtain suitable transversality).  The energy of a reducible solution $(A,0)$ to the contact form perturbation of the Seiberg-Witten equations is a linear increasing function in $r$, so if $r$ is sufficiently large then the condition that elements of $\widehat{CM}^*_L$ be elements of $\widehat{CM}^*_\text{irr}$ is redundant: if the energy $E(A)<2\pi L$ then if $r$ is large enough, the pair $(A,0)$ cannot be a solution to the perturbed Seiberg-Witten equations.
\end{remark}

First we recall the necessary conditions for defining the homology of the submodule $\widehat{CM}^*_L$:
\begin{lemma}[{\cite[Lem.~2.3]{cc2}}]\label{lem:CMLsubcx}
Fix $Y,\lambda, J$ as above and $L\in\R$. Suppose that $\lambda$ has no Reeb current of action exactly $L$. Fix $r$ sufficiently large, and a 2-form $\mu$ so that all irreducible solutions to the perturbed Seiberg-Witten equations are cut out transversely. Then for every $\mathfrak{s}$ and for every sufficiently small generic abstract perturbation, $\widehat{CM}^*_L(Y,\mathfrak{s};\lambda,J,r)$ is a subcomplex of $\widehat{CM}^*(Y,\mathfrak{s};\lambda,J,r)$.
\end{lemma}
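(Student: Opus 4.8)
The plan is to show that the Seiberg--Witten differential $\widehat{\partial}$ does not raise the energy filtration, so that the generators of energy below $2\pi L$ span a subcomplex. Recall that $\widehat{CM}^*(Y,\mathfrak{s};\lambda,J,r)$ is the Morse-type complex whose differential counts, after the abstract perturbation and modulo $\R$-translation, the index-one solutions of the four-dimensional perturbed Seiberg--Witten equations on $\R\times Y$ asymptotic to critical points as $s\to\pm\infty$. The essential input is an energy inequality: if $\langle\widehat{\partial}\,\mathfrak{c}_+,\mathfrak{c}_-\rangle\neq0$, so that there is such a flow line from $\mathfrak{c}_+$ to $\mathfrak{c}_-$, then $E(\mathfrak{c}_+)\ge E(\mathfrak{c}_-)$, where $E$ is the energy of \cite[\S 2]{cc2} (up to normalization, $E(A)=i\int_Y\lambda\wedge F_A$) used to define the filtration. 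First I would recall from \cite[\S 2]{cc2} (cf.\ \cite{taubesechswf}) the expression of the four-dimensional analytic energy of a flow line as the difference of the values of the perturbed Chern--Simons--Dirac functional $\mathfrak{a}_r$ at its two ends, together with the fact that this analytic energy is nonnegative. Writing $\mathfrak{a}_r$ as its $r$-independent topological part plus the $r$-weighted term whose leading behavior is governed by $E$, one extracts that $E$ is nonincreasing along flow lines up to a contribution controlled by the perturbing $2$-form $\mu$; requiring the derivatives of $\mu$ to be pointwise smaller than $1/100$ (as in the standing hypotheses on $\mu$ recalled before Lemma \ref{lem:nicecobmap1}) renders this error harmless.

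Next I would address the reducibles, which is where the ``from'' version $\widehat{HM}^*$ differs from a naive Morse complex. As recorded in the remark preceding the statement, the energy $E(A,0)$ of a reducible solution grows linearly in $r$; hence for $r$ sufficiently large (depending on $L$) every reducible lies above the level $2\pi L$, so no reducible generator, and no flow line incident to a reducible, contributes below $2\pi L$. Thus below the threshold the differential is the honest count of irreducible-to-irreducible flow lines, and the energy inequality of the previous paragraph applies verbatim: a generator $\mathfrak{c}_+$ with $E(\mathfrak{c}_+)<2\pi L$ is sent by $\widehat{\partial}$ to a combination of generators $\mathfrak{c}_-$ with $E(\mathfrak{c}_-)\le E(\mathfrak{c}_+)<2\pi L$. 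The hypothesis that $\lambda$ has no Reeb current of action exactly $L$ (equivalently, after the $L$-flat normalization of \cite[\S 3]{cc2}, no solution of energy exactly $2\pi L$) guarantees that the threshold is not achieved, so the dichotomy $E<2\pi L$ versus $E\ge 2\pi L$ is clean and the filtered piece is exactly the irreducible span below $2\pi L$. Combining these, $\widehat{\partial}\big(\widehat{CM}^*_L\big)\subseteq\widehat{CM}^*_L$.

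Finally I would assemble the ingredients: transversality for the irreducibles is supplied by the choice of $\mu$ and a sufficiently small generic abstract perturbation, which also secures $\widehat{\partial}^2=0$ on the ambient complex; the energy inequality shows the filtration is respected; and the reducible estimate identifies $\widehat{CM}^*_L$ with the irreducible span below the threshold. Hence $\widehat{CM}^*_L$ is a subcomplex of $\widehat{CM}^*$. The main obstacle is the energy inequality itself: one must track the precise $r$-dependence of $\mathfrak{a}_r$ and of $E$ and verify that the abstract and $\mu$-perturbations, needed for transversality and $\widehat{\partial}^2=0$, do not overwhelm the monotonicity. This is exactly the analytic bookkeeping of \cite[\S 2, \S 3]{cc2}, which I would cite and import rather than reprove, since this lemma is precisely \cite[Lem.~2.3]{cc2}.
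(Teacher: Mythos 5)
The paper does not prove this lemma at all: it is imported verbatim from \cite[Lem.~2.3]{cc2} as part of the background review in \S\ref{ss:direct}, and is immediately used as a black box to define $\widehat{HM}^*_L$. So your bottom line---cite \cite{cc2} and import the result rather than reprove it---coincides with the paper's own treatment, and your handling of the reducibles (their energy grows linearly in $r$, so for $r$ large none lie below the threshold $2\pi L$) reproduces the remark the paper places just before the lemma.

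That said, the mechanism you sketch for the key step is not correct as stated, and this is worth flagging even though you ultimately defer the analysis to \cite{cc2}. Your ``essential input''---the exact inequality $E(\mathfrak{c}_+)\ge E(\mathfrak{c}_-)$ whenever $\langle\widehat{\partial}\,\mathfrak{c}_+,\mathfrak{c}_-\rangle\neq 0$---is not available: the energy $E$ is not monotone along Seiberg--Witten instantons (there is no pointwise positivity giving $\frac{d}{ds}E$ a sign, and the abstract perturbations required for transversality would destroy any such exact identity in any case). The monotone quantity is the perturbed Chern--Simons--Dirac functional $\mathfrak{a}_r$, and at generators $\mathfrak{a}_r$ differs from $-\tfrac{r}{2}E$ by the Chern--Simons and perturbation terms; these are emphatically not ``a contribution controlled by $\mu$,'' but are only known to be $o(r)$ for solutions of energy below $2\pi L$, by Taubes' estimates \cite{taubesechswf}. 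The subcomplex property is then forced by a gap argument: because $\lambda$ has no Reeb current of action exactly $L$, generator energies avoid a window of definite width around $2\pi L$ once $r$ is large, and a differential crossing this window would contradict monotonicity of $\mathfrak{a}_r$, since the $r$-weighted gap dominates the $o(r)$ errors. This is precisely why the statement carries both hypotheses ``no Reeb current of action exactly $L$'' and ``$r$ sufficiently large'': if $E$ were genuinely monotone along flow lines, neither would be needed and the lemma would hold for every $L$ and every $r$. In your sketch the gap hypothesis is invoked only to say that the threshold value is not attained, which misplaces its actual quantitative role; a proof expanded along the lines you describe would not close up.
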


When the hypotheses of Lemma \ref{lem:CMLsubcx} apply, we denote the homology of $\widehat{CM}^*_L(Y,\mathfrak{s};\lambda,J,r)$ by $\widehat{HM}^*_L(Y,\lambda,\mathfrak{s})$. (If $r$ is sufficiently large then this homology is independent of $\mu$ and $r$, and it is also independent of $J$, as shown in \cite[Cor.~3.5]{cc2}.) We use the notation $\widehat{HM}^*_L(Y,\lambda,\mathfrak{s};\lambda,J,r)$ when we wish to emphasize the roles of $J$ and $r$.

We have that filtered Seiberg-Witten Floer cohomology is isomorphic to ECH:
\begin{lemma}[{\cite[Lem.~3.7]{cc2}}]\label{lem:fswech} Suppose that $\lambda$ is $L$-nondegenerate and $J$ is $ECH^L$-generic (see Lemma \ref{lem:nicecobmap1}). Then for all $\Gamma\in H_1(Y)$, there is a canonical isomorphism of relatively graded $\Z/2$-modules
\begin{equation}\label{eqn:filterediso}
\Psi^L:ECH^L_*(Y,\lambda,\Gamma;J)\overset{\simeq}{\longrightarrow}\widehat{HM}^{-*}_L(Y,\lambda,\mathfrak{s}_{\xi,\Gamma}),
\end{equation}
where $\mathfrak{s}_{\xi,\Gamma}$ is the spin-c structure $\mathfrak{s}_{\xi} + \op{PD}(\Gamma)$.
\end{lemma}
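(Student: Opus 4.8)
The plan is to view this as the $L$-filtered refinement of Taubes' chain-level isomorphism underlying Theorem \ref{thm:taubes}, and to obtain $\Psi^L$ simply by passing to homology from a canonical isomorphism of filtered chain complexes. Because the hypotheses only require $\lambda$ to be $L$-nondegenerate and $J$ to be $ECH^L$-generic, rather than $L$-flat, the first step is to pass to an $L$-flat model. By the discussion in Remark \ref{rem:nicecobmap1}, the pair $(\lambda,J)$ can be approximated by an $L$-flat pair $(\lambda_1,J_1)$ through a well-behaved homotopy, and the canonical identification of generators induces a canonical isomorphism of chain complexes $ECC^L_*(Y,\lambda,\Gamma;J)\overset{\simeq}{\to}ECC^L_*(Y,\lambda_1,\Gamma;J_1)$. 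Thus it suffices to produce the isomorphism for the $L$-flat pair.

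For the $L$-flat pair and $r$ sufficiently large, I would invoke Taubes' correspondence between the generators of the two theories: the admissible Reeb currents of action less than $L$ are in canonical bijection with the irreducible solutions of the $r$-perturbed Seiberg-Witten equations of energy $E(A)<2\pi L$, and the ECH differential is identified with the Seiberg-Witten differential under this bijection. The filtration bookkeeping is the essential point: the energy estimate \cite[Lem.~2.3]{cc2} makes the energy cutoff $E(A)<2\pi L$ correspond precisely to the action cutoff $\mathcal{A}<L$, so the filtered subcomplex $\widehat{CM}^{-*}_L$ (which is an honest subcomplex by Lemma \ref{lem:CMLsubcx}) matches $ECC^L_*$. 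Concretely, combining the $L$-flat approximation above with \cite[Prop.~3.1]{cc2} yields, as recorded in Remark \ref{rem:nicecobmap1}, a canonical isomorphism of filtered chain complexes $ECC^L_*(Y,\lambda,\Gamma;J)\overset{\simeq}{\to}\widehat{CM}^{-*}_L(Y,\mathfrak{s}_{\xi,\Gamma};\lambda_1,J_1,r)$. Taking homology of this chain isomorphism produces $\Psi^L$, with the grading reversal $*\mapsto-*$ and spin-c shift $\mathfrak{s}_{\xi,\Gamma}=\mathfrak{s}_\xi+\op{PD}(\Gamma)$ built into the conventions on the Seiberg-Witten side.

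It then remains to check that $\Psi^L$ is canonical and relatively grading-preserving. Independence of the auxiliary data---the abstract perturbation $\mu$, the parameter $r$, and the almost complex structure $J$---is exactly \cite[Cor.~3.5]{cc2}, valid once $r$ is large; this is what allows us to suppress $J$ and $r$ in the notation $\widehat{HM}^{-*}_L(Y,\lambda,\mathfrak{s}_{\xi,\Gamma})$.

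The genuinely hard input is external to this paper, namely Taubes' identification of the two differentials in \cite{taubesechswf}--\cite{taubesechswf5}: the compactness and gluing analysis showing that, for large $r$, sequences of Seiberg-Witten solutions converge to broken holomorphic currents and that the counts match (in the spirit of \cite{swgr}). Once this is granted, the work reduces to the filtration and genericity bookkeeping above, which is why in practice the statement is simply \cite[Lem.~3.7]{cc2}.
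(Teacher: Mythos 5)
Your proposal is correct and matches the paper's treatment: the paper states this lemma as a direct citation of \cite[Lem.~3.7]{cc2}, and the ingredients you assemble---the $L$-flat approximation and chain-level identification recorded in Remark \ref{rem:nicecobmap1} (via \cite[Lem.~3.6, Prop.~3.1]{cc2}), the energy/action cutoff matching from Lemma \ref{lem:CMLsubcx}, and independence of $J$, $r$, and the perturbation via \cite[Cor.~3.5]{cc2}---are exactly the ones the paper itself invokes around this statement. Deferring the hard analytic input to Taubes' work \cite{taubesechswf}--\cite{taubesechswf5} is likewise how both the paper and \cite{cc2} handle it, so there is nothing to add.
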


We now illustrate the ideas behind Lemma \ref{lem:nicecobmap1}, as encapsulated in \cite[\S 5.3]{cc2}, which will be used  to construct a direct system.  

\begin{proposition}\label{prop:admissible}
Fix $L$ such that $\lambda_{\varepsilon(L)}$ has no Reeb currents of action exactly $L$.  Then for
\[
\varepsilon' < \varepsilon \leq \varepsilon(L)
\]
and pairs $(\lambda_\varepsilon,J_1)$ and $(\lambda_{\varepsilon'},J_1)$  
satisfying the conditions in Lemma \ref{lem:nicecobmap1}, there is a cobordism map
\[
\varphi_{\varepsilon,\varepsilon'}^L : ECH_*^{L}(Y,\lambda_{\varepsilon},J_1)  \to ECH_*^{L}(Y,\lambda_{\varepsilon'}, J_0) 
\]
which is an isomorphism.  
\end{proposition}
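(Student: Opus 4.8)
The plan is to realize $\varphi^L_{\varepsilon,\varepsilon'}$ as the cobordism map of the exact symplectic cobordism $(X_{[\varepsilon',\varepsilon]},\lambda_{2,q,[\varepsilon',\varepsilon]})$ of Notation \ref{n:cobord} and to read off that it is an isomorphism directly from Lemma \ref{lem:nicecobmap1}. First I would take the interpolating family $\lambda_s := (1+s\,\fp^*H_{2,q})\lambda_{2,q}$, $s \in [\varepsilon',\varepsilon]$, whose endpoints are $\lambda_{\varepsilon'}$ and $\lambda_\varepsilon$. Because $H_{2,q}$ is $C^2$-close to $1$ and hence strictly positive, $\partial_s f_s = \fp^*H_{2,q}>0$ everywhere; this is exactly the monotonicity needed so that $X_{[\varepsilon',\varepsilon]}$ is a genuine exact symplectic cobordism running from the top $(S^3,\lambda_\varepsilon)$ to the bottom $(S^3,\lambda_{\varepsilon'})$, matching the direction asserted for $\varphi^L_{\varepsilon,\varepsilon'}$.

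Next I would check that $\{\lambda_s\}_{s\in[\varepsilon',\varepsilon]}$, together with an appropriate family $\{J_s\}$, satisfies the four hypotheses of Lemma \ref{lem:nicecobmap1}. Monotonicity is the previous step. The $L$-nondegeneracy of each $\lambda_s$ follows from $s\leq\varepsilon\leq\varepsilon(L)$ via Lemma \ref{lem:efromL}(i), while the standing assumption that no $\lambda_s$ carries a Reeb current of action exactly $L$ guarantees that no generator crosses the filtration threshold as $s$ varies, so that the \emph{canonical bijection on generators} between $ECC^L_*(S^3,\lambda_\varepsilon)$ and $ECC^L_*(S^3,\lambda_{\varepsilon'})$ is well defined. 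For the $L$-flatness and $ECH^L$-genericity conditions I would invoke Remark \ref{rem:nicecobmap1}: any $L$-nondegenerate, $ECH^L$-generic pair is joined to an $L$-flat pair by a canonical chain-level isomorphism, so it suffices to run the argument on the $L$-flat representatives and then transport back.

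With the hypotheses verified, Lemma \ref{lem:nicecobmap1} in the orientation of Remark \ref{rem:nicecobmap1} states precisely that $\Phi^L(X_{[\varepsilon',\varepsilon]},\lambda_{2,q,[\varepsilon',\varepsilon]})$ is induced by the chain isomorphism $ECC^L_*(S^3,\lambda_\varepsilon,J_1)\to ECC^L_*(S^3,\lambda_{\varepsilon'},J_0)$ determined by the canonical bijection on generators; the $J_1,J_0$ subscripts here are exactly those of Remark \ref{rem:nicecobmap1}, and are in any case immaterial at the level of homology since action-filtered ECH is independent of the compatible $J$ by \cite[Thm.~1.3]{cc2}. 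Setting $\varphi^L_{\varepsilon,\varepsilon'}$ equal to this map gives the desired isomorphism. I expect the only genuine subtlety to be arranging the ``no Reeb current of action exactly $L$'' condition simultaneously for all $s\in[\varepsilon',\varepsilon]$ rather than merely at the endpoints, so that no orbit enters or leaves the action window and the canonical bijection on generators is genuinely well defined. Using the explicit action formula $\A(\gamma_x^{k|\Gamma_x|})=k(1+\varepsilon\,\fp^*H_{2,q}(x))$ of Lemma \ref{lem:efromL}(ii), the set of parameters $s$ at which some fiber iterate has action exactly $L$ is discrete, so a generic choice of $L$ (which does not change the filtered homology) avoids all such crossings, after which the statement is a verbatim application of Lemma \ref{lem:nicecobmap1}.
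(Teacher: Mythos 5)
Your proposal takes a genuinely different route from the paper, and as written it has a real gap. The hypotheses of Lemma \ref{lem:nicecobmap1} must hold for \emph{every} $s$ in the interpolating family, not just at its endpoints: each $\lambda_s$ must be $L$-nondegenerate, each pair $(\lambda_s,J_s)$ must be $L$-flat, and each $J_s$ must be $ECH^L$-generic. Your appeal to Remark \ref{rem:nicecobmap1} only handles a single pair: the $L$-flat approximation of one $(\lambda,J)$ does not produce a monotone family $f_s\lambda_0$ of $L$-flat pairs, and you give no argument that one can choose a path $\{J_s\}$ consisting entirely of $ECH^L$-generic almost complex structures. Connecting two generic $J$'s through generic ones is precisely the bifurcation problem that is not understood in ECH, and it is the reason that cobordism maps and invariance statements are routed through Seiberg--Witten theory in the first place. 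The paper's proof sidesteps all of this: it writes $\lambda_\varepsilon=e^{g_\varepsilon}\lambda_{\varepsilon'}$ (using only $\ker\lambda_\varepsilon=\ker\lambda_{\varepsilon'}$), builds the exact cobordism $([0,1]\times Y,\,e^{g(s,\cdot)}\lambda_{\varepsilon'})$, regards the path $\eta_s=e^{g(s,\cdot)}\lambda_{\varepsilon'}$ as an \emph{admissible deformation} in the sense of \cite[Def.~3.3]{cc2}, applies \cite[Lem.~3.4]{cc2} to get an isomorphism on $\widehat{HM}^{-*}_L$ along the path, and uses Lemma \ref{lem:fswech} to convert to $ECH^L$ at the two endpoints only. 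An admissible deformation requires merely that no orbit set of $\eta_s$ has action exactly $L$ --- no nondegeneracy, flatness, or genericity at intermediate times --- which is also why the paper's argument works for an arbitrary knot admissible family on an arbitrary $Y$, whereas your argument is tied to the explicit $T(2,q)$ family $\lambda_s=(1+s\fp^*H_{2,q})\lambda_{2,q}$.

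Separately, your fix for the ``no Reeb current of action exactly $L$ along the whole path'' condition does not work. Since $\partial_s f_s>0$, the action of every orbit set is strictly increasing in $s$, so an orbit set whose action sweeps past $L$ on $[\varepsilon',\varepsilon]$ also sweeps past every nearby value of $L$; no ``generic choice of $L$'' avoids such crossings (and replacing $L$ by a nearby $L'$ can change $ECH^L$ if orbit sets have action between them). What actually rules out crossings is the joint choice of $L$ and $\varepsilon(L)$: unperturbed actions of orbit sets lie in the discrete set $\frac{1}{2q}\Z$, the actions at parameter $s\in[\varepsilon',\varepsilon]$ lie (up to small error) in intervals of the form $\left[A_0(1+\varepsilon'),A_0(1+\varepsilon)\right]$ with $A_0\in\frac{1}{2q}\Z$, and because $\varepsilon(L)\sim 1/L$ (Lemma \ref{lem:efromL} and the discussion following it, cf.~\cite[Lem.~3.1]{preech}) one can fix $L$ in a gap between consecutive such intervals. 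This is a standing feature of how $\varepsilon(L)$ is constructed, not a genericity statement about $L$; the paper's proof relies on exactly this fact to make $\rho$ admissible, but it never needs the far stronger path-wise hypotheses your route requires.
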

\begin{proof}
Since $\ker \lambda_\varepsilon = \ker \lambda_{\varepsilon'}$, we know $\lambda_\varepsilon = e^{g_\varepsilon} \lambda_{\varepsilon'}$ for some $g_\varepsilon \in \C^\infty(Y, \R)$.  We may assume $g_\varepsilon > 0$ everywhere by the scaling isomorphism \eqref{e:scale}, which preserves the knot filtration $\fb$.  
Let $g\in \C^\infty(\R \times Y,\R)$ such that
\begin{itemize}
\itemsep-.35em
\item $g(s,y) = s$ for $s \in (-\infty, \varsigma)$ for some $\varsigma > 0$;
\item $g(s,y) = g_\varepsilon(y) + s -1$ for $s \in (1- \varsigma, \infty)$ for some $\varsigma > 0$;
\item $\partial_s g >0$.
\end{itemize}
Since $d(e^{g(s,\cdot)}\lambda_{\varepsilon'})$ is symplectic, $([0,1] \times Y, e^{g(s,\cdot)}\lambda_{\varepsilon'})$ is an exact symplectic cobordism from $(Y,\lambda_{\varepsilon})$ to $(Y,\lambda_{\varepsilon'})$.  Consider the {admissible deformation} (in the sense of \cite[Def.~3.3]{cc2})
\begin{equation}\label{eq:eta}
\rho:=\{ e^{g(s,\cdot)} \lambda_{\varepsilon'}, L, J_s, r_s) \ | \ s\in[0,1] \},
\end{equation}
where $r_s$ is sufficiently large, and $J_s$ is $ECH^L$-generic. Define $\eta_s:=e^{g(s,\cdot)}\lambda_{\ve'}.$ 

By Lemma \ref{lem:fswech} and \cite[\S 3.5]{cc2}, since $\eta_s$ has no orbit sets of action $L$, 
\begin{equation}\label{eq:echhml}
ECH^L_*(Y,\eta_s,\Gamma;J_s) \ {\simeq} \ \widehat{HM}^{-*}_L(Y,\eta_s,\mathfrak{s}_{\xi,\Gamma}).
\end{equation}
By \cite[Lem.~3.4]{cc2}, the admissible deformation $\rho$ gives an isomorphism
\begin{equation}\label{eq:hmhml}
\widehat{HM}^{-*}_L(Y,\mathfrak{s}_{\xi,\Gamma};\lambda_{\varepsilon},J_1,r_1)\overset{\simeq}{\longrightarrow}\widehat{HM}^{-*}_L(Y,\mathfrak{s}_{\xi,\Gamma};\lambda_{\varepsilon'},J_0,r_0).
\end{equation}
Composing the isomorphisms \eqref{eq:echhml} and \eqref{eq:hmhml} gives the desired map $\varphi_{\varepsilon,\varepsilon'}^L.$

\end{proof}

For a fixed $L$, when $\varepsilon > \varepsilon(L)$, we cannot typically directly compute $ECH^L_*(Y,\lambda_{\varepsilon},J)$.  For example, the chain complex $ECC^L_*(S^3,\lambda_{2,q,\varepsilon},J)$ will contain orbits which do not project to critical points of $H_{2,q}$.  Thus it is desirable to instead compute the direct limit over $L$ with respect to a sequence of contact forms $\{ \lambda_{\varepsilon(L)} \}$. This requires some additional cobordism maps from Seiberg-Witten Floer cohomology, as in \cite[\S 7.1.3]{preech}.  However, we will need to take slightly more care, so that we can define the direct system associated to a {knot admissible} pair and take its direct limit; this will allow us to define and establish invariance of knot filtered ECH with respect to a rational rotation number in \S \ref{ss:directlimit}.  

Analogous to the cobordism maps on $ECH^L_*$, there are cobordism maps on $\widehat{HM}^*_L$. The following is a modified version of \cite[Cor.~5.3(a)]{cc2}, which keeps track of the spin-c structures in our setting. Note that therefore our notation for the cobordism maps on $\widehat{HM}^*_L$ differs slightly from that of \cite{cc2}.

\begin{lemma}\label{lem:HMLcobmapdef} Let $(X,\lambda)$ be an exact symplectic cobordism from $(Y_+,\lambda_+)$ to $(Y_-,\lambda_-)$ where $\lambda_\pm$ is $L$-nondegenerate. Let $\mathfrak{s}$ be a spin-c structure on $X$ and let $\mathfrak{s}_\pm$ denote its restrictions to $Y_\pm$, respectively. Let $J_\pm$ be $\lambda_\pm$-compatible almost complex structures. Suppose $r$ is sufficiently large. Fix 2-forms $\mu_\pm$ and small abstract perturbations sufficient to define the chain complexes $\widehat{CM}^*(Y_\pm,\mathfrak{s}_\pm;\lambda_\pm,J_\pm,r)$. Then there is a well-defined map
\begin{equation}\label{eqn:HMLcobmap}
\widehat{HM}^*_L(X,\lambda,\mathfrak{s}):\widehat{HM}^*_L(Y_+,\mathfrak{s}_+;\lambda_+,J_+,r)\to\widehat{HM}^*_L(Y_-,\mathfrak{s}_-;\lambda_-,J_-,r),
\end{equation}
depending only on $X,\mathfrak{s},\lambda,L,r,J_\pm,\mu_\pm$, and the perturbations, such that if $L<L'$ and if $\lambda_\pm$ are also $L'$-nondegenerate, then the diagram
\begin{equation}\label{eqn:HMLcd}
\xymatrixcolsep{5pc}\xymatrix{
\widehat{HM}^*_{L}(Y_+,\mathfrak{s}_+;\lambda_+,J_+,r) \ar[r]^{\widehat{HM}^*_{L}(X,\lambda,\mathfrak{s})} \ar[d] & \widehat{HM}^*_{L}(Y_-,\mathfrak{s}_-;\lambda_-,J_-,r) \ar[d]
\\\widehat{HM}^*_{L'}(Y_+,\mathfrak{s}_+;\lambda_+,J_+,r) \ar[r]_{\widehat{HM}^*_{L'}(X,\lambda,\mathfrak{s})} & \widehat{HM}^*_{L'}(Y_-,\mathfrak{s}_-;\lambda_-,J_-,r)
}
\end{equation}
commutes, where the vertical arrows are induced by inclusions $i^{L,L'}$ of chain complexes.
\end{lemma}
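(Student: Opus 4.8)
The plan is to derive the lemma directly from the construction of cobordism maps on energy-filtered Seiberg-Witten Floer cohomology in \cite[Cor.~5.3(a)]{cc2}, the only new ingredient being the bookkeeping of spin-c structures. First I would recall that Kronheimer and Mrowka \cite{KMbook} associate to the exact symplectic cobordism $(X,\lambda)$, equipped with a fixed spin-c structure $\mathfrak{s}$ restricting to $\mathfrak{s}_\pm$ on $Y_\pm$, a map on the full Seiberg-Witten Floer cohomologies, defined by counting solutions to the perturbed Seiberg-Witten equations on the completion $\overline{X}$. The Riemannian metric on $\overline{X}$ is determined by $\lambda$ and the cobordism compatible data, and the relevant perturbation uses the 2-form $r\hat{\omega}$ together with $\mu_\pm$ and the small abstract perturbations, exactly as reviewed in \S\ref{ss:def}.

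Next I would invoke the energy bound of Hutchings and Taubes: for $r$ sufficiently large relative to $L$, the cobordism map on the full complex carries the submodule generated by solutions of energy less than $2\pi L$ into the corresponding submodule for $Y_-$, so that it descends to a map $\widehat{HM}^*_L(X,\lambda,\mathfrak{s})$ on filtered cohomology. This is the content of \cite[Cor.~5.3(a)]{cc2}; the only adjustment in our setting is that I hold $\mathfrak{s}$ fixed rather than summing over spin-c structures. The assertion that the resulting filtered map depends only on $X,\mathfrak{s},\lambda,L,r,J_\pm,\mu_\pm$, and the chosen perturbations is part of that statement, once the energy filtration is shown to be respected.

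For the spin-c bookkeeping I would appeal to the decomposition of the Seiberg-Witten Floer groups and cobordism maps by spin-c structure. As recalled after Theorem \ref{thm:cobmaps} (see \cite[Rem.~1.10]{cc2}), the cobordism map sends the $\mathfrak{s}_+$-summand into the $\mathfrak{s}_-$-summand precisely when $\mathfrak{s}_\pm$ are the restrictions of a common spin-c structure on $X$; extracting this component gives the map \eqref{eqn:HMLcobmap}. Finally, the commutativity of \eqref{eqn:HMLcd} is immediate from the inclusion-compatibility already built into \cite[Cor.~5.3(a)]{cc2}: the filtered cobordism maps for $L$ and $L'$ are induced by the \emph{same} chain-level map, restricted to the two energy submodules, while the vertical inclusions $i^{L,L'}$ are the inclusions of those submodules, so the square commutes on the nose.

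The main obstacle is not any new analysis but rather the careful verification that the energy filtration and the spin-c decomposition interact correctly: one must check that, for $r$ large, every Seiberg-Witten solution contributing to the $\mathfrak{s}$-component of the cobordism map has its energy controlled in the manner required by \cite[Cor.~5.3(a)]{cc2}, so that restricting to a fixed $\mathfrak{s}$ does not disturb the energy estimates. Since the estimates in \cite{cc2} are established solution-by-solution and are insensitive to which spin-c summand a solution lies in, fixing $\mathfrak{s}$ changes nothing, and the lemma follows.
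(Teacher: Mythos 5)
Your proposal matches the paper's treatment: the paper proves this lemma simply by presenting it as a modified version of \cite[Cor.~5.3(a)]{cc2}, with the only new ingredient being the bookkeeping of spin-c structures (cf.\ \cite[Rem.~1.10]{cc2}), which is exactly your route. Your added justification---that the energy estimates in \cite{cc2} are established solution-by-solution and hence insensitive to the spin-c decomposition, and that commutativity of \eqref{eqn:HMLcd} follows because both filtered maps are restrictions of one chain-level map while the vertical arrows are inclusions of energy submodules---is the correct, if largely implicit, content behind the paper's citation.
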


After passing through the isomorphism with filtered perturbed Seiberg-Witten Floer cohomology, Lemma \ref{lem:fswech}, we obtain the following cobordism map from Lemma \ref{lem:HMLcobmapdef} associated to the product exact symplectic cobordism $(X:= [0,1] \times Y, \lambda:=e^{g(s,\cdot)},\lambda_{\varepsilon'})$
\begin{equation}\label{eq:maps}
\Phi^L(X,\lambda,J): ECH_*^{L}(Y,\lambda_{\varepsilon},J_1)  \to ECH_*^{L}(Y,\lambda_{\varepsilon'},J_0). 
\end{equation}

Assuming that $L<L'$ and $\ve(L) > \ve(L')$, after combining the cobordism map induced by admissible deformation in Proposition \ref{prop:admissible} with the cobordism map induced by inclusion, we obtain the following commutative diagram, as in the proof of \cite[Lem.~3.7]{cc2}:
\begin{equation}\label{eq:square}
\xymatrixcolsep{3pc}\xymatrix{
ECH^{L}(Y,\lambda_{\ve(L)}) \ar[r]^{\varphi^L_{\ve(L),\ve(L')}} \ar[d]_{\iota^{L,L'}} & ECH^L(Y,\lambda_{\ve(L')}) \ar[d]^{\iota^{L,L'}}
\\ECH^{L'}(Y,\lambda_{\ve(L)}) \ar[r]_{\varphi^{L'}_{\ve(L),\ve(L')}} & ECH^{L'}(Y,\lambda_{\ve(L')})
}
\end{equation}

Here the $\iota^{L,L'}$ are the inclusion induced cobordism maps as in Theorem \ref{thm:cobmaps} (Inclusion).  We have also (abusively) suppressed the almost complex structures, though this is acceptable by \cite[\S 5.3]{cc2}.  Moreover, if $\ve=\ve(L)$ and $\ve'=\ve'(L)$ then by Lemma \cite[Lem.~5.6]{cc2}, $ \varphi^L_{\ve(L),\ve(L')}$ and $ \varphi^{L'}_{\ve(L),\ve(L')}$ can be identified with the respective exact cobordism maps $\Phi^L(X,\lambda,J)$ and $\Phi^{L'}(X,\lambda,J)$ from \eqref{eq:maps}.

Next we verify that \eqref{eq:square} produces a direct system.  We need to check that we have a well-defined composition for the cobordism maps, defined via either path in \eqref{eq:square},
\begin{equation}\label{eq:comp}
\Phi^{L,L'}(\ve(L),\ve(L')): ECH^{L}(Y,\lambda_{\ve(L)}) \to ECH^{L'}(Y,\lambda_{\ve(L')}) .
\end{equation}
To do so, for $L<L'<L''$, we define $\ve:=\ve(L) > \ve':=\ve(L') > \ve'':=\ve(L'')$.  Then the composition \eqref{eq:comp} is given by the commutative diagram:

\begin{equation}\label{eq:giant}
\xymatrixcolsep{3pc}\xymatrix{
ECH^{L}(Y,\lambda_{\ve}) \ar[r]_{\varphi^L_{\ve,\ve'}} \ar[d]_{\textcolor{magenta}{\iota^{L,L'}}} & ECH^L(Y,\lambda_{\ve'}) \ar[r]_{\varphi^L_{\ve',\ve''}} \ar[d]_{\iota^{L,L'}} & ECH^L(Y,\lambda_{\ve''}) \ar[d]_{\iota^{L,L'}} 
\\ECH^{L'}(Y,\lambda_{\ve}) \ar[r]_{\varphi^{L'}_{\ve,\ve'}} \ar[d]_{\textcolor{magenta}{{\iota^{L',L''}}}}  & ECH^{L'}(Y,\lambda_{\ve'}) \ar[r]_{\varphi^{L'}_{\ve',\ve''}} \ar[d]_{\textcolor{blue}{\iota^{L',L''}}}  & ECH^{L'}(Y,\lambda_{\ve''})  \ar[d]_{\iota^{L',L''}}
\\ECH^{L''}(Y,\lambda_{\ve}) \ar[r]_{\textcolor{magenta}{\varphi^{L''}_{\ve,\ve'}}} & ECH^{L''}(Y,\lambda_{\ve'}) \ar[r]_{\textcolor{violet}{\varphi^{L''}_{\ve',\ve''}}} & ECH^{L''}(Y,\lambda_{\ve''}) 
}
\end{equation}
To complete the direct limit of the filtered ECH complexes with respect to the above maps, some additional algebraic manipulations are required, akin to those found at the very end of \cite[\S 7]{luya}, which we complete in the next subsection.   We also review there why the chain maps do not increase the knot filtration.  (The colors provide navigation of the diagram as needed in the proof of Lemma \ref{lem:trick}.)

\begin{remark}
We do not want to directly pass to Seiberg-Witten theory to complete the direct limit, as we did in \cite[\S 7.1.4]{preech}.  This is because we need to verify that the action filtered cobordism maps count broken $J$-holomorphic currents, and hence preserve the knot filtration, which is defined within the realm of embedded contact homology.  
\end{remark}

Before completing this argument, for the purposes of the Morse-Bott computations that we made use of in the proof of Theorem \ref{thm:kECH}, we review why the maps induced by the composition of exact symplectic cobordisms $([\ve',\ve] \times S^3, (1+s\fp^*H_{2,q})\lambda_{2,q})$ compose properly.  This relies on a version of \cite[Prop. 5.4]{cc2} explaining the composition law for $\widehat{HM}^*_L$, which we previously explained in \cite[\S 7.1]{preech}

 Assume $\varepsilon''<\varepsilon'<\varepsilon$ and recall Notation \ref{n:cobord}. We consider the exact symplectic cobordism $(X_{[\varepsilon'',\varepsilon]},\lambda_{2,q,[\ve',\ve]})$, which is the composition of 
 \[
(X_{[\varepsilon'',\varepsilon']\circ[\varepsilon',\varepsilon]}, \lambda_{2,q, [\varepsilon'',\varepsilon']\circ[\varepsilon',\varepsilon]}):=(X_{[\varepsilon'',\varepsilon']}, \lambda_{2,q, [\varepsilon'',\varepsilon']}) \circ  (X_{[\varepsilon',\varepsilon]}, \lambda_{2,q, [\varepsilon',\varepsilon]}),
\]
 in the sense of \cite[\S1.5]{cc2}, where $\lambda_{2,p,\varepsilon}, \lambda_{2,p,\varepsilon'}$, and $\lambda_{2,p,\varepsilon''}$ are $L(\varepsilon)$-nondegenerate. We also assume $J, J'$, and $J''$ are $\lambda_\varepsilon$-, $\lambda_{\varepsilon'}$-, and $\lambda_{\varepsilon''}$-compatible almost complex structures, respectively. Further, we choose a spin-c structure $\mathfrak{s}''$ on $[\varepsilon'',\varepsilon]\times Y$ which restricts to spin-c structures $\mathfrak{s}'$ and $\mathfrak{s}$ on $[\varepsilon'',\varepsilon']\times Y$ and $[\varepsilon',\varepsilon]\times Y$, respectively, where $\mathfrak{s}'$ restricts to $\mathfrak{s}_2$ on $\{\varepsilon''\}\times Y$, $\mathfrak{s}$ restricts to $\mathfrak{s}_0$ on $\{\varepsilon\}\times Y$, and both $\mathfrak{s}'$ and $\mathfrak{s}$ restrict to $\mathfrak{s}_1$ on $\{\varepsilon'\}\times Y$. Finally we choose abstract perturbations and $r$ large enough to define the chain complexes $\widehat{CM}^*_L$.
\begin{lemma}\label{lem:HMLcomp} The maps of Lemma \ref{lem:HMLcobmapdef} for the above data satisfy
\[
\widehat{HM}^*_L(X_{[\varepsilon'',\varepsilon]}, \lambda_{2,q, [\varepsilon'',\varepsilon]},\mathfrak{s}'')=\widehat{HM}^*_L(X_{[\varepsilon'',\varepsilon']}, \lambda_{2,q, [\varepsilon'',\varepsilon']},\mathfrak{s}')\circ\widehat{HM}^*_L(X_{[\varepsilon',\varepsilon]}, \lambda_{2,q, [\varepsilon',\varepsilon]},\mathfrak{s}).
\]
\end{lemma}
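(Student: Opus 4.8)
The plan is to deduce the claimed identity directly from the composition law for cobordism maps on filtered perturbed Seiberg--Witten Floer cohomology established by Hutchings and Taubes, namely \cite[Prop.~5.4]{cc2}, after verifying that the data $(X_{[\ve'',\ve]}, \lambda_{2,q,[\ve'',\ve]}, \mathfrak{s}'')$ meets its hypotheses and after tracking the spin-c structures, exactly as \cite[Cor.~5.3(a)]{cc2} and our Lemma \ref{lem:HMLcobmapdef} are arranged to do. This is the analogue for our setting of the composition argument carried out for prequantization bundles in \cite[\S 7.1]{preech}, with the one new ingredient being the bookkeeping of spin-c structures.

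First I would record that all three cobordisms are genuine exact symplectic cobordisms with $L$-nondegenerate ends: the forms $(1+s\fp^*H_{2,q})\lambda_{2,q}$ of Notation \ref{n:cobord} have symplectic differential on each piece, and $\lambda_{2,q,\ve}, \lambda_{2,q,\ve'}, \lambda_{2,q,\ve''}$ are $L(\ve)$-nondegenerate by hypothesis, hence $L$-nondegenerate for the fixed $L$ under consideration. Next I would check the spin-c compatibility: the hypothesis supplies $\mathfrak{s}''$ on $X_{[\ve'',\ve]}=[\ve'',\ve]\times S^3$ restricting to $\mathfrak{s}'$ on $X_{[\ve'',\ve']}$ and to $\mathfrak{s}$ on $X_{[\ve',\ve]}$, with $\mathfrak{s},\mathfrak{s}'$ restricting to $\mathfrak{s}_0,\mathfrak{s}_1,\mathfrak{s}_2$ on the three copies of $S^3$ and agreeing on the shared slice $\{\ve'\}\times S^3$. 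Since $H^2(S^3;\Z)=0$, each $\mathfrak{s}_i$ is the unique structure $\mathfrak{s}_{\xi_{std}}$ and these restriction conditions hold automatically; I retain the bookkeeping only so the statement transfers to the sequel and to Seifert fiber spaces with $H^2\neq0$.

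With the hypotheses in place, I would invoke \cite[Prop.~5.4]{cc2}: for $r$ sufficiently large and suitable abstract perturbations, the map attached to the composed cobordism equals the composition of the maps attached to its two pieces. Concretely, one forms the stretched composition $X_{[\ve'',\ve']}\circ_R X_{[\ve',\ve]}$, observes via the admissible-deformation and neck-stretching arguments of \cite[\S 5]{cc2} that the induced map on $\widehat{HM}^*_L$ is independent of the neck length $R$ and agrees with the map induced by the un-stretched composition $X_{[\ve'',\ve]}$, and then glues Seiberg--Witten solutions along the long neck $\{\ve'\}\times S^3$ to identify this with $\widehat{HM}^*_L(X_{[\ve'',\ve']}, \lambda_{2,q,[\ve'',\ve']},\mathfrak{s}')\circ \widehat{HM}^*_L(X_{[\ve',\ve]}, \lambda_{2,q,[\ve',\ve]},\mathfrak{s})$. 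Because the $\widehat{HM}^*_L$ cobordism maps of Lemma \ref{lem:HMLcobmapdef} are defined at the level of homology, this produces a genuine equality of maps, not merely an equality up to chain homotopy.

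The main obstacle is analytic rather than algebraic: one must arrange the hypotheses of \cite[Prop.~5.4]{cc2} \emph{simultaneously} for all three cobordisms and their composition. This requires a single perturbation parameter $r$ chosen large enough, together with compatible choices of the $2$-forms $\mu_\pm$, the abstract perturbations, and the almost complex structures $J, J', J''$, so that the filtered cochain complexes $\widehat{CM}^*_L$ are defined on every end (Lemma \ref{lem:CMLsubcx}) and the stretched and glued moduli spaces are cut out transversely. The independence of $\widehat{HM}^*_L$ from all of these auxiliary choices for $r$ large, recorded in \cite[Cor.~3.5]{cc2} and in Lemma \ref{lem:HMLcobmapdef}, is precisely what makes the neck-stretching and gluing argument yield the stated equality.
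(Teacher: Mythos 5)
Your proposal is correct and follows the same core route as the paper: both reduce the statement to the composition law for filtered perturbed Seiberg--Witten cobordism maps, \cite[Prop.~5.4]{cc2}, applied to the product cobordisms of Notation \ref{n:cobord}. The one place you diverge is the treatment of spin-c structures, which happens to be the only substantive content of the paper's proof: the paper observes that, although \cite[Prop.~5.4]{cc2} does not discuss spin-c structures, its proof is a neck-stretching argument producing holomorphic curves whose ends must be homologous, so the resulting maps and chain homotopies automatically respect the spin-c decomposition (cf.\ \cite[Rem.~1.10]{cc2}). You instead note that $H^2(S^3;\Z)=0$ makes every restriction condition on $\mathfrak{s}, \mathfrak{s}', \mathfrak{s}''$ vacuous; this is correct and suffices for the lemma as stated, and is arguably the simpler justification here. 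What the paper's homologous-ends argument buys is generality: it justifies the identical spin-c bookkeeping on cobordisms with nontrivial second cohomology (e.g.\ general Seifert fiber spaces), which is exactly the situation your own parenthetical remark about transferring the statement to the sequel would require, and which your $H^2=0$ observation does not cover.
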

While \cite[Prop. 5.4]{cc2} does not discuss the spin-c structures, it is proved with a neck-stretching argument for holomorphic curves whose ends must be homologous, thus it will preserve spin-c structures in the case considered in Lemma \ref{lem:HMLcomp}, see \cite[Rmk. 1.10]{cc2}.

In case it is instructive to better understand the proof of Theorem \ref{thm:kECH}, we summarize our ``cruder" Morse-Bott limit argument from \cite[\S 7]{preech}, wherein we passed to the isomorphism with Seiberg-Witten at an earlier stage.  We have
\[
ECH^{L(\varepsilon)}_*(Y,\lambda_\varepsilon,\Gamma)=ECH^L_*(Y,\lambda_{\varepsilon(L)},\Gamma)
\]
and therefore
\[
\lim_{L\to\infty}ECH^L_*(Y,\lambda_{\varepsilon(L)},\Gamma)=\lim_{\varepsilon\to0}ECH^{L(\varepsilon)}_*(Y,\lambda_\varepsilon,\Gamma).
\]
We then invoked the following sequence of isomorphisms by way of the contact form perturbation of the Seiberg-Witten equations:
\begin{align}
\lim_{\varepsilon\to0}ECH_*^{L(\varepsilon)}(Y,\lambda_\varepsilon,\Gamma)&\simeq\lim_{\varepsilon\to0}\widehat{HM}^{-*}_{L(\varepsilon)}(Y,\lambda_\varepsilon,\mathfrak{s}_{\xi,\Gamma})\label{eqn:ECHeHMe}
\\&\simeq\lim_{\varepsilon\to0}\lim_{L\to\infty}\widehat{HM}^{-*}_L(Y,\lambda_\varepsilon,\mathfrak{s}_{\xi,\Gamma})\label{eqn:factoringdleL}
\\&\simeq\lim_{\varepsilon\to0}\widehat{HM}^{-*}(Y,\mathfrak{s}_{\xi,\Gamma})\label{eqn:sendLtoinfty}
\\&\simeq ECH_*(Y,\xi,\Gamma).\label{eqn:HMtoECH}
\end{align}
A few remarks are in order regarding the above chain of isomorphisms. 

 The direct limit on the right hand side (\ref{eqn:ECHeHMe}) is defined using composition of the ``trivial" symplectic cobordisms and the commutative diagram of inclusion maps modified from \cite[Cor.~5.3(a)]{cc2} to keep track of spin-c structures; cf.~the recap in Lemmas \ref{lem:HMLcomp} and \ref{lem:HMLcobmapdef} .  Thus the isomorphism between action filtered Seiberg-Witten and action filtered ECH \cite[Lem.~3.7]{cc2} establishes (\ref{eqn:ECHeHMe}).

The groups $\widehat{HM}^{-*}_L(Y,\lambda_\varepsilon,\mathfrak{s}_{\xi,\Gamma})$ in (\ref{eqn:factoringdleL}) are only defined for $L$ and $\varepsilon$ such that $\lambda_\varepsilon$ has no Reeb currents of action exactly $L$.  (For any given $\varepsilon$, we still obtain a full measure set of $L$.)  A calculation carried out in \cite[\S 7.1.4]{preech} yields  (\ref{eqn:factoringdleL}).

That \eqref{eqn:sendLtoinfty} holds follows from the last equation of \cite[\S 3.5]{cc2}, which follows from \cite[Thm.~4.5]{taubesechswf}.  It bears mention that although the equation in former citation is only required to hold for nondegenerate $\lambda$, it is in fact true for all $\lambda$.

Finally, we obtain \eqref{eqn:HMtoECH} because the groups $\widehat{HM}^{-*}(Y,\mathfrak{s}_{\xi,\Gamma})$ are all equal and independent of $\varepsilon$, together with Taubes' isomorphism \cite{taubesechswf}-\cite{taubesechswf5}: $ECH_*(Y,\lambda,\Gamma,J) \simeq \widehat{HM}^{-*}(Y, \mathfrak{s}_{\xi,\Gamma})$.

\subsection{The knot filtration and doubly filtered direct limits}\label{ss:directlimit}
We now have all the ingredients to prove Theorem \ref{thm:introok}.
Before completing the algebraic arguments which allow us to take direct limits, we review why the knot filtration is respected by the chain maps, as previously explained in \cite[\S 7]{HuMAC}.  Let us collect the usual suspects.

Fix a closed contact 3-manifold $(Y,\xi)$ with $H_1(Y;\Z)=0$\footnote{This condition can be relaxed to allow $H_1(Y;\Z)$ to be torsion, cf. \cite[Thms.~5.2 \& 5.3]{weiler} to see how to obtain a well-defined rotation number in this case. } and let $\lambda_0$ and $\lambda_1$ be two contact forms for $\xi$, which are nondegenerate and both admit the same transverse knot $b$ as an elliptic embedded Reeb orbit so that $\rot_1(b) \geq \rot_0(b)$. 

\begin{remark}
 It is to be understood from context that the knot filtration with respect to $b$ is computed with respect to $\lambda_i$, $\mathcal{F}_b(b^m\alpha) := m\rot_i(b) + \ell(\alpha,b),$ where $\alpha$ is a Reeb current not containing $b$ associated to $\lambda_i$.  
\end{remark}

 Let $([0,1]_s\times Y, e^{g(s,\cdot)}\lambda_0)$ be an exact symplectic cobordism from $(Y,\lambda_1)$ to $(Y,\lambda_0)$, as defined in proof of Proposition \ref{prop:admissible}, and identify its completion with $\R \times Y$ in the obvious way.  Let $J$ be a cobordism compatible almost complex structure on $\R \times Y$, which agrees with $J_1$ on $[1,\infty)\times Y$ and with $J_0$ on $(-\infty,0]\times Y$.  We can choose $J$ so that $\R \times b$ is a $J$-holomorphic curve because $[1,\infty) \times b$ is a $J_1$-holomorphic submanifold, $(-\infty,0] \times b$ is a $J_0$-holomorphic submanifold, and $[0,1]\times b$ is a symplectic submanifold of $\R\times Y$.  (The space of $J$ satisfying these conditions is contractible.)

\begin{proposition}\label{prop:cobknot}
Given $L>0$, let
\[
{\phi}:ECC_*^L(Y,\lambda_1,J_1) \to ECC_*^L(Y,\lambda_0,J_0) 
\]
be a chain map in the set $\Theta^L([0,1] \times Y, e^{g(s,\cdot)}\lambda)$, as provided by Theorem \ref{thm:cobmaps}.  If $\lambda_1$ and $\lambda_2$ both admit $b$ as an embedded elliptic Reeb orbit and $\rot_1(b) \geq \rot_0(b)$ then $\phi$ preserves the knot filtration $\fb$, meaning that if $\langle \phi \alpha_+, \beta_-\rangle \neq 0$, then $\fb( \alpha_+) \geq \fb(\beta_-)$.
\end{proposition}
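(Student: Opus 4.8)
The plan is to adapt the proof of Lemma \ref{lem:kdiff} (Hutchings' \cite[Lem.~5.1]{HuMAC}) from the symplectization to the cobordism setting, with the new inequality $\rot_1(b)\geq\rot_0(b)$ entering precisely where the two ends carry different rotation numbers. First I would invoke the Holomorphic Curves property of Theorem \ref{thm:cobmaps}: since $\phi$ counts $J$-holomorphic currents, $\langle\phi\alpha_+,\beta_-\rangle\neq0$ forces the existence of a broken $J$-holomorphic current $\cur=(\cur_{N_-},\dots,\cur_0,\dots,\cur_{N_+})$ from $\alpha_+$ to $\beta_-$, as in Definition \ref{d:bcur}. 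The levels $\cur_k$ with $k>0$ are $J_1$-holomorphic currents in $\R\times Y$ for $\lambda_1$ and those with $k<0$ are $J_0$-holomorphic currents for $\lambda_0$; by Lemma \ref{lem:kdiff} each of these does not increase $\fb$ (computed with $\rot_1(b)$ and $\rot_0(b)$ respectively). Because consecutive levels share the intermediate Reeb currents $\alpha_+(k),\beta_-(k)$, these inequalities telescope, and it remains only to treat the cobordism level $\cur_0\in\M^J(\alpha_+(0),\beta_-(0))$ and show $\fb(\alpha_+(0))\geq\fb(\beta_-(0))$, where the left side is computed with $\rot_1(b)$ and the right with $\rot_0(b)$.

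For this cobordism level I would use the fact, arranged in the paragraph preceding the proposition, that $J$ can be chosen so that $\R\times b$ is a $J$-holomorphic curve in $\overline{X}=\R\times Y$. Writing $\alpha_+(0)=b^{m_+}\alpha'$ and $\beta_-(0)=b^{m_-}\beta'$ with $b$ appearing in neither $\alpha'$ nor $\beta'$, I would split $\cur_0$ into its components which are covers of $\R\times b$, of total multiplicity $c$, and the remaining current $\cur_0'$, which shares no component with $\R\times b$. The trivial-cylinder components contribute $c\,\rot_1(b)$ to $\fb(\alpha_+(0))$ and $c\,\rot_0(b)$ to $\fb(\beta_-(0))$, so their net contribution to $\fb(\alpha_+(0))-\fb(\beta_-(0))$ is $c(\rot_1(b)-\rot_0(b))\geq0$ by hypothesis; hence they may be discarded, and it suffices to bound the contribution of $\cur_0'$. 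Since $\cur_0'$ has no component equal to $\R\times b$, positivity of intersections for punctured pseudoholomorphic curves gives $\cur_0'\cdot(\R\times b)\geq0$.

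The heart of the argument, which I expect to be the main obstacle, is converting $\cur_0'\cdot(\R\times b)\geq0$ into the desired filtration inequality. Working in the trivialization for which the push-off of $b$ has linking number zero (so that $\rot_i(b)$ is the monodromy angle, cf.\ Remark \ref{rem:linkingtriv}), I would expand $\cur_0'\cdot(\R\times b)$ into a nonnegative count of interior intersections plus asymptotic contributions at the ends of $\cur_0'$, following Hutchings' relative intersection and writhe analysis \cite[\S 3]{Hrevisit}. The ends at orbits other than $b$ contribute exactly $\ell(\alpha',b)$ at the top and $-\ell(\beta',b)$ at the bottom, while the ends at covers of $b$ contribute terms controlled by the asymptotic winding bounds for elliptic orbits: a positive end at $b^d$ winds at most $\floor{d\,\rot_1(b)}$ and a negative end at $b^d$ winds at least $\ceil{d\,\rot_0(b)}$. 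Summing these over all $b$-ends, and using $\rot_1(b)\geq\rot_0(b)$ to guarantee that the positive- and negative-end contributions combine with the correct signs, the inequality $\cur_0'\cdot(\R\times b)\geq0$ yields
\[
[m_+-c]\,\rot_1(b)+\ell(\alpha',b)\ \geq\ [m_--c]\,\rot_0(b)+\ell(\beta',b),
\]
which, combined with the trivial-cylinder contribution above, gives $\fb(\alpha_+(0))\geq\fb(\beta_-(0))$. The delicate point is the precise accounting of the winding contributions of the $b$-ends at the two levels, where monotonicity of the rotation number is exactly what closes the estimate; this parallels, but is slightly more involved than, the corresponding computation in \cite[\S 7]{HuMAC}, and I would verify it by the same writhe argument adapted to permit distinct rotation numbers at the positive and negative ends.
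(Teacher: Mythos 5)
Your proposal is correct and follows essentially the same route as the paper's proof: extract a broken $J$-holomorphic current via the Holomorphic Curves axiom, dispose of the symplectization levels by Hutchings' argument (Lemma \ref{lem:kdiff}), and handle the cobordism level by intersection positivity with the $J$-holomorphic cylinder $\R\times b$ together with the asymptotic winding bounds at ends on covers of $b$, with $\rot_1(b)\geq\rot_0(b)$ closing the estimate. If anything, your accounting is more explicit than the paper's --- which summarizes the proof of \cite[Lem.~5.1]{HuMAC} and then asserts the cobordism level works by ``the same intersection positivity argument'' --- since you isolate the components covering $\R\times b$, whose contribution $c(\rot_1(b)-\rot_0(b))\geq0$ is precisely where the rotation-number hypothesis is needed.
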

\begin{proof}
By the Theorem \ref{thm:cobmaps} (Holomorphic Curves), if $\langle \phi \alpha_+, \beta_-\rangle \neq 0$, there exists a broken  holomorphic current
\[
\cur=(\cur_{N_-},...,\cur_0,...,\cur_{N_+}) \in \overline{\M^J(\alpha_+,\beta_-)}.
\]
The proof of \cite[Lem.~5.1]{HuMAC} shows that if there exists a $J_i$-holomorphic current $\cur^i \in \M^{J_i}(b^{m_+} \gamma_+, b^{m_-} \gamma_-)$ then $\fb(b^{m_+} \gamma_+) \geq \fb( b^{m_-} \gamma_-)$; we now summarize how it goes.  The argument relies on intersection positivity of the $J_i$-holomorphic trivial cylinder $\R \times b$ with  each irreducible somewhere injective component $C$ of $\cur$ which does not agree with $\R \times b$.  By \cite[Cor.~2.5, 2.6]{s1}, if $s_0>0$ is sufficiently large, then $C$ is transverse to $\{ \pm s_0\} \times Y$ and $C \cap ((-\infty,s_0] \times Y)$ and  $C \cap ([s_0,\infty) \times Y)$ do not intersect $\R \times b$.  Let $s_0$ be sufficiently large and denote $\eta_\pm = C \cap( \{ \pm s_0\} \times Y)$ to be these intersections.  Then 
\[
\ell(\eta_+,b)  - \ell(\eta_-,b) = \#( C \cap (\R \times b)) \geq 0.
\]
Moreover, the link $\eta_\pm$ consists of a link approximating the Reeb currents $\gamma_\pm$ and a link $\zeta_\pm$ in a neighborhood of $b$.  We have
\[
\ell(\eta_\pm,b) = \ell(\gamma_\pm, b) + \ell(\zeta_\pm,b).
\]
By the bounds on the winding number of the associated asymptotic eigenfunction of $L_{b^{m^k_\pm}}$ in terms of the Conley-Zehnder index of $b^{m^k_\pm}$, going back to \cite[\S 3]{hwz2} and as reviewed in \cite[Lem.~3.2, 3.4]{dc}, it follows that $\ell(\zeta_+,b) \leq m_+ \rot(b)$ and $\ell(\zeta_-,b) \geq m_- \rot(b)$, with respective equality holding if and only if $m_\pm=0$.  Thus it follows that the current $\cur^i$ respects the knot filtration.

Thus if $k \neq 0$, it immediately follows from this argument that $\cur_k$ preserves the filtration.  If $k=0,$ then since $\R \times b$ is $J$-holomorphic, the same intersection positivity argument and the fact that $\rot_1(b) \geq \rot_0(b)$ implies that $\cur_0$ also preserves the filtration $\fb$, and therefore $\cur$ does as well.
\end{proof}

In light of Proposition \ref{prop:cobknot}, we can restrict the action filtered subcomplex to the subcomplex where $\fb \leq K$, and after passing to homology, we obtain a map
\begin{equation}\label{eq:kcob}
\phi_*:ECH_*^{\af}(Y_1,\lambda_1,\rot_1(b),J_1) \to ECH_*^{\af}(Y_0,\lambda_0,\rot_0(b),J_0). 
\end{equation}
By Theorem \ref{thm:cobmaps} (Homotopy Invariance) and intersection positivity as in Proposition \ref{prop:cobknot}, we know that the map \eqref{eq:kcob} does not depend on the choice of $g$, $J$, or $\phi \in \Theta([0,1] \times Y, e^{g(s, \cdot)}\lambda_{\ve'},J)$.

Thus the noncanonical chain maps $\hat{\varphi}$ which induced the cobordism maps in \eqref{eq:maps}, \eqref{eq:square}, \eqref{eq:comp},  and \eqref{eq:giant} all preserve the knot filtration {assuming the knot admissibility condition, e.g. $\rot_1(b) \geq \rot_0(b)$.  Thus we could in fact have restricted to the subcomplexes where $\fb \leq K$ prior to passing to homology.  Assuming that $\rot_\ve(b) \geq \rot_{\ve'}(b),$ this produces the the commutative diagram 
\begin{equation}\label{eq:fsquare}
\xymatrixcolsep{3pc}\xymatrix{
ECH^{\af}(Y,\lambda_{\ve},\rot_\ve(b)) \ar[r]^{\varphi^L_{\ve,\ve'}} \ar[d]_{\iota^{L,L'}} & ECH^{\af}(Y,\lambda_{\ve'}, \rot_{\ve'}(b)) \ar[d]^{\iota^{L,L'}}
\\ECH^{\afp}(Y,\lambda_{\ve},\rot_\ve(b)) \ar[r]_{\varphi^{L'}_{\ve,\ve'}} & ECH^{\afp}(Y,\lambda_{\ve'}, \rot_{\ve'}(b))
}
\end{equation}
 which analogously produces a direct system. 

Next, we need to understand the direct limit as $L\to \infty$, so that we can establish the definition of knot filtered ECH with respect to a knot admissible pair, as it appeared in Theorem \ref{thm:introok}.  We need the following lemma to allow us to take a doubly filtered direct limit.  

\begin{lemma}\label{lem:trick} Given a knot admissible pair $\{(\lambda_\ve,J_\ve)\}$, as defined in Definition \ref{def:knotadmissible}, the map
\begin{equation}\label{eq:trick}
\Psi:  
\lim_{\varepsilon \to 0}ECH^{\afe}(Y, \lambda_{\varepsilon}, b, \rot_{\varepsilon}(b)) \to 
\lim_{\varepsilon \to 0} \lim_{L \to \infty}ECH^{\af}(Y, \lambda_{\varepsilon}, b, \rot_{\varepsilon}(b)),
\end{equation}
which sends the the equivalence class of an element $\sigma_\ve \in ECH^{\afe}(Y, \lambda_{\varepsilon}, b, \rot_{\varepsilon}(b))$ under $\lim_{\ve \to 0}$ to the equivalence class of $\sigma_\ve$ under   $\lim_{\varepsilon \to 0} \lim_{L \to \infty}$, is well-defined and a bijection. 
\end{lemma}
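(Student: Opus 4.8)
The plan is to recognize the two sides of \eqref{eq:trick} as two ways of extracting a colimit from a single doubly-indexed direct system, and to reduce the statement to the cofinality of the diagonal $\varepsilon\mapsto(\varepsilon,L(\varepsilon))$. First I would set up the bifunctor. For $\varepsilon\in\mathcal{S}$ and $L$ avoiding the discrete set of action values of $\lambda_\varepsilon$, write $G_{\varepsilon,L}:=ECH^{\af}(Y,\lambda_\varepsilon,b,\rot_\varepsilon(b))$. The inclusion-induced maps $\iota^{L,L'}$ (for $L\le L'$) and the cobordism maps $\varphi^L_{\varepsilon,\varepsilon'}$ (for $\varepsilon'<\varepsilon$) make $\{G_{\varepsilon,L}\}$ a direct system over the product of the directed set of $\varepsilon$'s (ordered so that smaller is later) and the directed set of $L$'s (ordered by magnitude). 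That the maps commute is exactly \eqref{eq:fsquare}, and that they compose associatively is \eqref{eq:giant}. Crucially, every map here is defined on the knot-filtered subquotients and preserves $\fb$ by Proposition \ref{prop:cobknot}, using the knot-admissibility hypothesis $\rot_\varepsilon(b)\ge\rot_{\varepsilon'}(b)$ for $\varepsilon'<\varepsilon$ (which holds since $\{\rot_\varepsilon(b)\}$ decreases to $\rot(b)$). Hence $\{G_{\varepsilon,L}\}$ is an honest direct system of $\Z/2$-modules over the product poset.

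With this in place, the right-hand side of \eqref{eq:trick} is the iterated colimit $\varinjlim_\varepsilon\varinjlim_L G_{\varepsilon,L}$, where the inner limit $\varinjlim_L G_{\varepsilon,L}$ is $ECH^{\fb\le K}(Y,\lambda_\varepsilon,b,\rot_\varepsilon(b))$. Because direct limits commute with direct limits, this iterated colimit equals the colimit $\varinjlim_{(\varepsilon,L)}G_{\varepsilon,L}$ over the whole product poset. The left-hand side is the colimit along the diagonal $D=\{(\varepsilon,L(\varepsilon))\}$, whose transition maps are precisely the composites $\iota^{L(\varepsilon),L(\varepsilon')}\circ\varphi^{L(\varepsilon)}_{\varepsilon,\varepsilon'}=\Phi^{L(\varepsilon),L(\varepsilon')}$ of \eqref{eq:comp}, these being well defined since $L(\varepsilon')\ge L(\varepsilon)$ for $\varepsilon'\le\varepsilon$. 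The map $\Psi$ is then the canonical comparison map induced by the inclusion of $D$ into the product poset, so its well-definedness is immediate from the universal property of the colimit.

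Bijectivity thus reduces to showing that $D$ is cofinal. Given $(\varepsilon_0,L_0)$ I must produce $\varepsilon\le\varepsilon_0$ in $\mathcal{S}$ with $L(\varepsilon)\ge L_0$; since $L(\cdot)$ is monotone and $L(\varepsilon)\to+\infty$ as $\varepsilon\to0$ (Definition \ref{def:knotadmissible}) and $\mathcal{S}$ has full measure, such an $\varepsilon$ exists, giving $(\varepsilon,L(\varepsilon))\succeq(\varepsilon_0,L_0)$. Cofinality then yields that the comparison map from the diagonal colimit to the full colimit is an isomorphism. Unwound concretely: surjectivity holds because any class in $\varinjlim_{(\varepsilon,L)}$ is represented in some $G_{\varepsilon_0,L_0}$, which pushes into $G_{\varepsilon,L(\varepsilon)}$ for $\varepsilon$ as above; injectivity holds because a diagonal class dying in the full colimit already dies after pushing to some $G_{\varepsilon_1,L_1}$, and choosing $\varepsilon\le\varepsilon_1$ with $L(\varepsilon)\ge L_1$ shows it dies along the diagonal.

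I expect the only delicate point to be the bookkeeping that all transition maps are simultaneously defined and knot-filtration-preserving on the relevant subcomplexes: one must remain in the full-measure set $\mathcal{S}$ of admissible $\varepsilon$ and, for each $\lambda_\varepsilon$, in the full-measure set of $L$ avoiding its action values, and invoke Proposition \ref{prop:admissible} to identify the $\varphi^L_{\varepsilon,\varepsilon'}$ with the cobordism maps of \eqref{eq:maps} (isomorphisms in the range $L\le L(\varepsilon)$). Once the system is confirmed to be a genuine direct system of $\Z/2$-modules over the product poset, the remainder is the formal cofinality argument above.
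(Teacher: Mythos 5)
Your proposal is correct and is essentially the paper's argument: recognizing the left-hand side as the colimit along the diagonal $\varepsilon\mapsto(\varepsilon,L(\varepsilon))$ and proving that diagonal cofinal in the doubly-indexed system is exactly the paper's device of ``moving into the good region $\varepsilon\le\varepsilon(L)$'' through the knot-filtered analogue of \eqref{eq:giant}, and your unwinding of cofinality into surjectivity and injectivity reproduces the paper's two diagram chases almost verbatim. The only cosmetic difference is that the paper argues concretely (well-definedness, then injectivity and surjectivity via Proposition \ref{prop:admissible} and the maps \eqref{eq:maps}) rather than invoking Fubini for colimits and cofinality of a subposet abstractly.
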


\begin{remark}
In light of Lemma \ref{lem:trick} and the associated considerations that came beforehand, we have that
\begin{equation}\label{eq:kech}
\begin{split}
ECH_*^{\fb \leq K} (Y, \lambda, b, \rot(b)) & = \lim_{\varepsilon \to 0} \lim_{L \to \infty}ECH^{\af}(Y, \lambda_{\varepsilon}, b, \rot_{\varepsilon}(b)) ,
\\
&= \lim_{\varepsilon \to 0}ECH^{\afe}(Y, \lambda_{\varepsilon}, b, \rot_{\varepsilon}(b)) ,\\
& = \lim_{L \to \infty}ECH^{\af}(Y, \lambda_{\varepsilon(L)}, b, \rot_{\varepsilon(L)}(b)).
\end{split}
\end{equation}
A highly desirable consequence of Lemma \ref{lem:trick} is that it allows us to move from a region where $L$ is too large compared to $\varepsilon$ into a `good region' where $\varepsilon < \varepsilon(L)$.  We will establish invariance after the proof of Lemma \ref{lem:trick} so as to complete the proof of Theorem \ref{thm:introok}.  A similar argument appears in the context of the connected sum formula for ECH in \cite[\S 7]{luya}.

\end{remark}
 
\begin{proof}
The proof relies on choosing appropriate paths through the knot filtered analogue of the commutative diagram \eqref{eq:giant}.  
A schematic illustration of the proof is given in Figure \ref{fig:trick}, where we have plotted $(\ve, L)$ coordinates on a plane and the shaded region corresponds to the `good region' where  $\varepsilon < \varepsilon(L)$ .  We can always move into this `good region' by moving vertically or horizontally via chain maps induced by cobordisms in the knot filtered analogue of \eqref{eq:giant}.  We now provide the details.

\begin{figure}[h]
\centering
\begin{minipage}{0.49\textwidth}
 \begin{overpic}[width=.95\textwidth, unit=1.75mm]{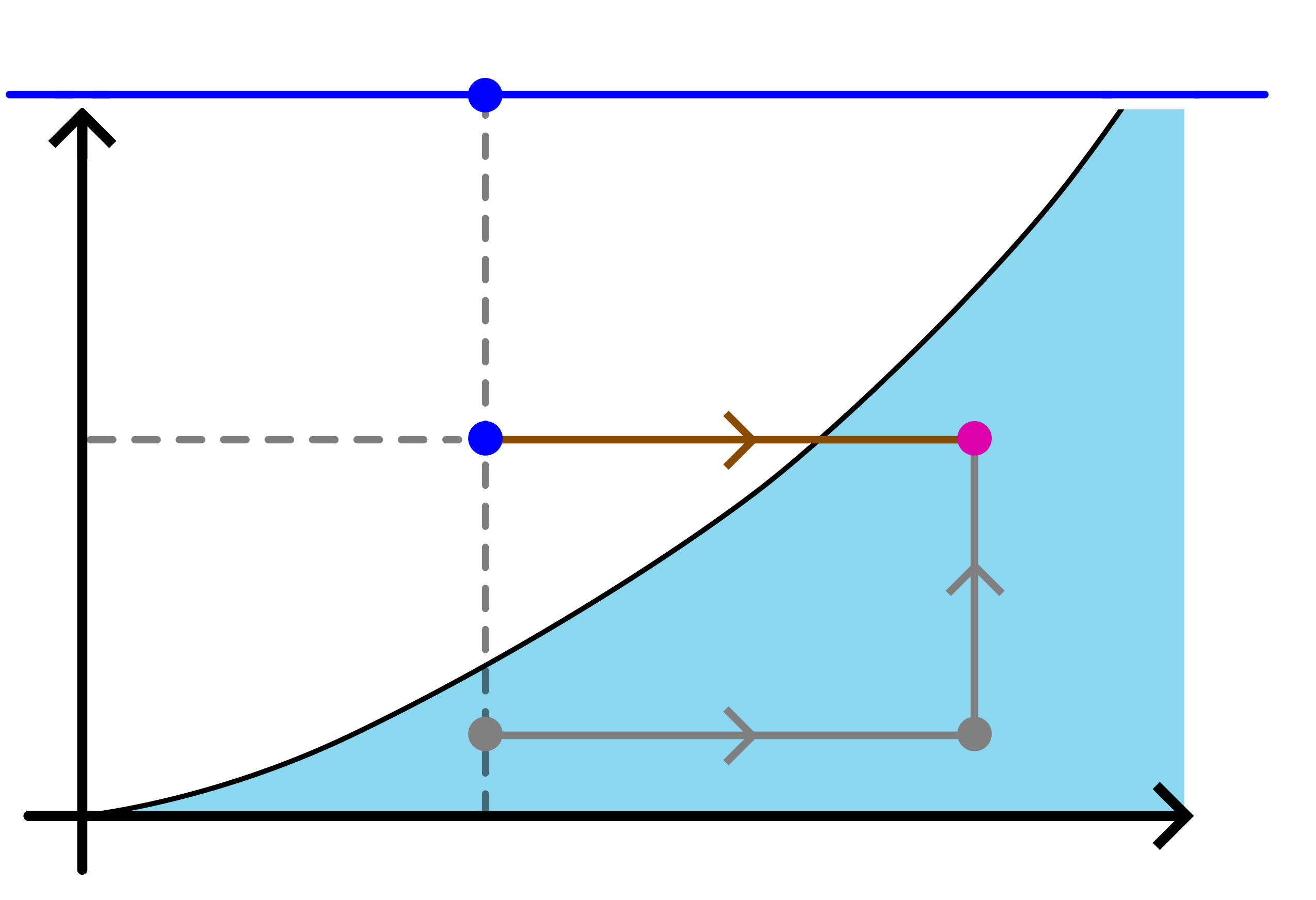}
\put(41,28){\tb{``$L=\infty$''}} 

\put(-.5,14.5){\textcolor{gray}{$L_0$}} 
\put(15,.5){\textcolor{gray}{$\ve_0$}}

\put(5,6){$\varepsilon(L)$}

\put(11,28.5){\tb{$\Psi(\sigma)=0$}}
\put(11,17){\tb{$\widetilde{\Psi(\sigma)} =0$}}
\put(30.75,17){\textcolor{magenta}{$\Phi^{L_0}\widetilde{\Psi(\sigma)}=0$}}

\end{overpic}
\caption*{(Injectivity) Let $\Psi(\sigma)=0$. Then there exists $L_0$ such that $\widetilde{\Psi(\sigma)} =0$ for some $\varepsilon_0$.  We can map $\widetilde{\Psi(\sigma)}$ to zero under $\Phi^{L_0}$ from \eqref{eq:maps}, thus $\sigma \sim 0$. }
 \end{minipage}\hfill
 \begin{minipage}{0.49\textwidth}
 \begin{overpic}[width=.95\textwidth, unit=1.75mm]{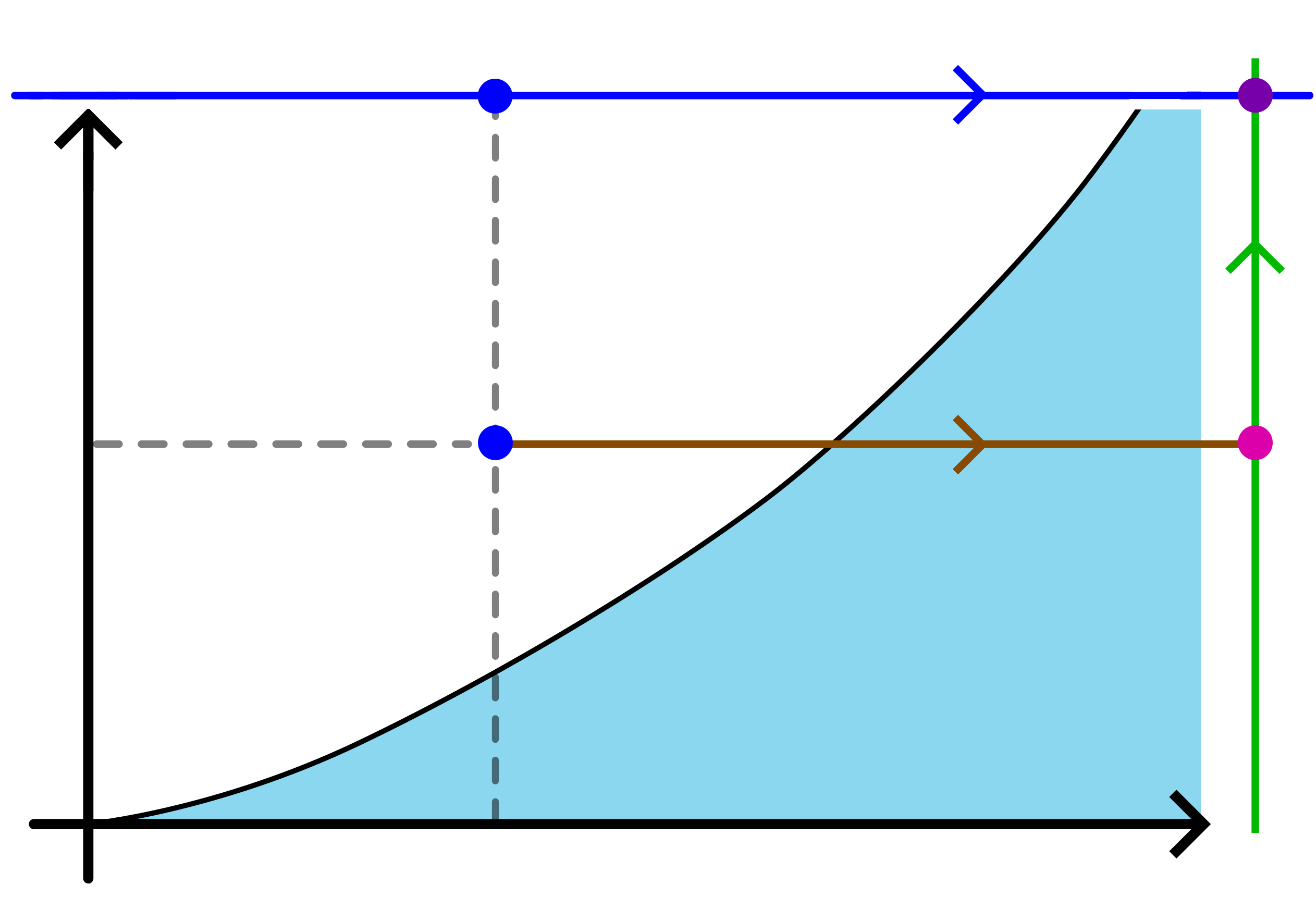}
\put(-.5,14.5){\textcolor{gray}{$L_0$}} 
\put(15,.5){\textcolor{gray}{$\ve_0$}}

\put(5,6){$\varepsilon(L)$}

\put(37,0){\textcolor{Green}{``$\ve = 0$''}} 

\put(14,28.5){\tb{$\tau=[\sigma_0]$}}
\put(17,17){\tb{$\sigma_0$}}

\put(37,17){\textcolor{magenta}{$\sigma'=\op{Im}(\sigma_0)$}}

\put(33,28.5){\textcolor{violet}{$[\tau]=\Psi([\sigma'])$}}

\end{overpic}
\caption*{(Surjectivity) Let $\sigma_0$ be some representative of an element $\tau$ for some $\ve_0$.  By taking the limit as $\ve \to 0$, the image of $\sigma_0$ is $\sigma'$.  Then $[\tau] = \Psi([\sigma'])$.   }
\end{minipage}
\caption{The `good region' corresponds to $\ve \leq \ve(L)$, which is shaded. When $\ve_0 > \ve(L_0)$ we illustrate how appropriate choices of vertical and horizontal paths in the knot filtered analogue of \eqref{eq:giant} allow us to move into this `good region.'  We reach the ``$L=\infty$'' and ``$\ve =0$" lines by taking direct limits.}
\label{fig:trick}
\end{figure}

First, we establish that $\Psi$ is well-defined.  Consider 
\[
\sigma_\ve,\ \sigma_{\ve'} \in \lim_{\ve \to 0}ECH^{\afe}(Y, \lambda_{\varepsilon}, b, \rot_{\varepsilon}(b)),
\]
 where $\ve>\ve'$ and $\sigma_\ve \sim \sigma_{\ve'}$.  This means there is a common element 
 \[
 \sigma_{\ve''} \in  ECH^{\afepp}(Y, \lambda_{\varepsilon}, b, \rot_{\varepsilon}(b)),
\]
that both $\sigma_\ve$ and $\sigma_{\ve'}$ are mapped to under the direct limit.  By composing the following maps we can show that $\Psi(\sigma_\ve) \sim \Psi(\sigma_{\ve'}),$ by showing that they both map to $\Psi(\sigma_{\varepsilon''})$.  To obtain $\Psi(\sigma_\ve)$, we compose the maps arising from the four edges (\textcolor{magenta}{three in magenta} and  \textcolor{violet}{one in purple}) obtained by first following the left column, then going along the bottom row of the knot filtered analogue of the commutative diagram \eqref{eq:giant},
\[
ECH^{\afe}(Y, \lambda_{\varepsilon}, b, \rot_{\varepsilon}(b)) {\to} ECH^{\afepp}(Y, \lambda_{\varepsilon}, b, \rot(b)_{\varepsilon}) \to ECH^{\afepp}(Y, \lambda_{\varepsilon''}, b, \rot_{\varepsilon''}(b)).
\]
To obtain $\Psi(\sigma_{\ve'})$, we compose the maps arising from the 
two edges following the bottom half of the middle column (\textcolor{blue}{in blue}) and bottom row  (\textcolor{violet}{in purple}),
\[
ECH^{\afep}(Y, \lambda_{\varepsilon'}, b, \rot_{\varepsilon'}(b)) \to ECH^{\afepp}(Y, \lambda_{\varepsilon'}, b, \rot_{\varepsilon'}(b)) \to ECH^{\afepp}(Y, \lambda_{\varepsilon''}, b, \rot_{\varepsilon''}(b)).
\]

Next, we prove injectivity of $\Psi$. The schematic illustration in Figure \ref{fig:trick} (Injectivity) may be helpful to understand $\ve$ and $L$ ranges as we consider the doubly filtered complexes. Suppose that $\Psi(\sigma)=0$.  Then there exists $L_0$ such that a representative 
\[
\widetilde{\Psi(\sigma)} \in ECH^{\afo}(Y,\lambda_{\varepsilon_0}, b, \rot_{\varepsilon_0}(b))
\]
 is zero for some $\varepsilon_0$, where $[\widetilde{\Psi(\sigma)}] = {\Psi(\sigma)}$.  If $\ve_0 \leq \ve(L_0)$ then we are done, because by Proposition \ref{prop:admissible} for the `good region' $\ve \leq \ve(L),$
\[
ECH^{\af}(Y, \lambda_{\varepsilon}, b, \rot_{\varepsilon}(b)) = ECH^{\af}(Y, \lambda_{\varepsilon(L)}, b, \rot_{\varepsilon(L)}(b)),
\]
thus $\Psi$ is a bijection. Suppose that $\ve_0 > \ve(L_0)$.  Then $\widetilde{\Psi(\sigma)} $ is mapped to zero in \\ $ECH^{\afo}(Y,\lambda_{\varepsilon_0}, b, \rot_{\varepsilon_0}(b))$ under the map in \eqref{eq:maps}.  Therefore $\sigma \sim 0$.  

Finally, we show that $\Psi$ is surjective. The schematic illustration in Figure \ref{fig:trick} (Surjectivity) may be helpful to understand $\ve$ and $L$ ranges as we consider the doubly filtered complexes.  Let
\[
\tau \in \lim_{L\to \infty} ECH^{\af}(Y,\lambda_{\ve_0},b,\rot_{\ve_0}(b)) 
\]
for some $\ve_0$.  Then there exists $L_0$ so that the element
\[
\sigma_0 \in ECH^{\afo}(Y,\lambda_{\varepsilon_0}, b, \rot_{\varepsilon_0}(b))
\]
 is a representative such that $[\sigma_0] = \tau$.  Similarly to the proof of injectivity, if $\ve_0 \leq \ve(L_0)$, then we are done.  Suppose that $\ve_0 > \ve(L_0)$.  Let
\[
\sigma' \in ECH^{\afo}(Y,\lambda_{\varepsilon(L)}, b, \rot_{\varepsilon(L)}(b))
\]
be the image of $\sigma_0$ when taking the limit as $\ve \to 0$ defined by the exact cobordism map as in  \eqref{eq:maps}.  Then $[\tau] = \Psi([\sigma'])$.
\end{proof}
 
It remains to establish invariance to complete the proof of Theorem \ref{thm:introok}.  

\begin{proposition}\label{prop:che}
Let $\{(\lambda^+_{\ve^+},J^+_{\ve^+}) \}$ be a knot admissible pair for $(Y,\lambda^+, b, \rot(b))$ and let $\{(\lambda^-_{\ve^-},J^-_{\ve^-}) \}$ be a knot admissible pair for $(Y,\lambda^-, b, \rot(b))$ where $\ker \lambda^+ = \ker \lambda^-. $ Then there exists a chain map
\begin{equation}\label{eq:propcm}
\Phi^+_-:ECC^{\fb \leq K}_*(Y,\lambda^+,b, \rot(b), J^+) \to ECC^{\fb \leq K}_*(Y,\lambda^-,b, \rot(b),J^-),
\end{equation}
which is a chain homotopy equivalence.
\end{proposition}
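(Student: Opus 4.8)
The plan is to prove invariance by \emph{interleaving} the two knot admissible families and assembling a zigzag of filtration-preserving cobordism maps whose compositions agree, up to chain homotopy, with the structure maps of the respective direct systems. First I would record that every member of either family shares the kernel $\xi$, since each $\lambda^\pm_{\ve^\pm}$ is a conformal rescaling of a contact form for $\xi$; hence any two of them satisfy $\lambda^+_{\ve^+}=e^{g}\lambda^-_{\ve^-}$ and bound exact symplectic cobordisms exactly as in the proof of Proposition \ref{prop:admissible}. On each such cobordism I would choose a cobordism compatible $J$ for which $\R\times b$ is holomorphic, as in the discussion preceding Proposition \ref{prop:cobknot}, so that intersection positivity is available.

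The key geometric input is that both families have rotation numbers $\rot^\pm_{\ve^\pm}(b)$ monotonically decreasing to the \emph{common} value $\rot(b)$, so $\rot^\pm_{\ve^\pm}(b)>\rot(b)$ for every $\ve^\pm>0$. This lets me interleave: given any $\ve^+$ I can pick $\ve^-$ small enough that $\rot^+_{\ve^+}(b)\geq \rot^-_{\ve^-}(b)$, and conversely. Iterating produces decreasing sequences $\ve^+_n\to 0$ and $\ve^-_n\to 0$ with $\rot^+_{\ve^+_1}(b)\geq \rot^-_{\ve^-_1}(b)\geq \rot^+_{\ve^+_2}(b)\geq\cdots$, together with action thresholds $L_n=L(\ve^\pm_n)\to\infty$. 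Because $\rot$ decreases along every step of the zigzag, Proposition \ref{prop:cobknot} guarantees that each cobordism map preserves $\fb$, and Theorem \ref{thm:cobmaps} guarantees it preserves the action filtration; restricting to the subcomplexes with $\fb\leq K$ and $\mathcal{A}<L_n$ then yields honest chain maps between the doubly filtered complexes at each stage.

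Next I would invoke the Composition property of Theorem \ref{thm:cobmaps}, together with the identification of the within-family cobordism maps with the canonical filtered isomorphisms of Lemma \ref{lem:nicecobmap1} and Proposition \ref{prop:admissible}, to show that going from $\lambda^+_{\ve^+_n}$ across to the $-$ family and back to $\lambda^+_{\ve^+_{n+1}}$ agrees up to chain homotopy with the structure map of the $+$ direct system (and symmetrically). Since both interleaved subsequences are cofinal in their direct systems, the induced maps on the two doubly filtered direct limits are then mutually inverse; applying Lemma \ref{lem:trick} to reorganize $\lim_{\ve\to 0}\lim_{L\to\infty}$ identifies these limits with the objects in \eqref{eq:propcm} and delivers the chain homotopy equivalence $\Phi^+_-$, completing Theorem \ref{thm:introok}.

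The hard part will be controlling the two filtrations simultaneously through the interleaving. The cobordism maps commute only up to chain homotopy, so before concluding that the zigzag induces well-defined inverse maps on direct limits I must check that the homotopy operators $\mathcal{K}$ supplied by the Composition and Homotopy Invariance axioms \emph{also} respect the knot filtration; this should again follow from intersection positivity of $\R\times b$ with the $J_t$-holomorphic currents that $\mathcal{K}$ counts, run through the winding-number bounds used in Proposition \ref{prop:cobknot}. A secondary but genuine subtlety is the cofinality bookkeeping: I must arrange the interleaved sequences to be cofinal in both systems at once while maintaining the strict inequality $\rot^+_{\ve^+_n}(b)\geq \rot^-_{\ve^-_n}(b)\geq\rot^+_{\ve^+_{n+1}}(b)$ at every step, so that Proposition \ref{prop:cobknot} applies uniformly along the entire zigzag.
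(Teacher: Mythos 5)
Your proposal is correct and follows essentially the same route as the paper: interleave the two families using monotone convergence of the rotation numbers so that Proposition \ref{prop:cobknot} (intersection positivity with $\R\times b$) applies to every cobordism, build chain maps in both directions, use the Composition and Trivial Cobordisms properties of Theorem \ref{thm:cobmaps} to produce chain homotopies that also respect $\fb$ by the same positivity argument, and pass to the doubly filtered direct limit via Lemma \ref{lem:trick}. The point you flag as the hard part---that the homotopy operators $\mathcal{K}$ themselves preserve the knot filtration because they count holomorphic currents---is exactly the ingredient the paper invokes (``the same intersection positivity argument as before''), so your treatment matches the paper's proof in both strategy and key details.
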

\begin{proof}
Since the rotation angles of the knot admissible families $\{\lambda^+_{\ve^+} \}$ and $\{\lambda^-_{\ve^-} \}$ both monotonically converge from above to $\rot(b)$, we can pass to subsequences of $\{(\lambda^+_{\ve^+},J^+_{\ve^+}) \}$ and $\{(\lambda^-_{\ve^-},J^-_{\ve^-}) \}$ and re-index whenever necessary to guarantee that our intersection positivity argument as in Proposition \ref{prop:cobknot} applies.  Thus there is a chain map
\begin{equation}\label{eq:cm}
\phi^+_-:ECC^{\afplus}_*(Y,\lambda^+,b, \rot(b), J^+) \to ECC^{\afminus}_*(Y,\lambda^-,b, \rot(b),J^-),
\end{equation}
 which we now want to show is a chain homotopy equivalence.  This means that we need to show that there is a chain map
 \begin{equation}\label{eq:cm-+}
\phi^-_+:ECC^{\afplus}_*(Y,\lambda^-,b, \rot(b),J^-) \to ECC^{\afminus}_*(Y,\lambda^+,b, \rot(b), J^+),
\end{equation}
such that 
\begin{align}
\phi^+_- \circ \phi^-_+ & \ \ \ \mbox{ is chain homotopic to } \id_-, \label{eq:id-}\\
 \phi^-_+ \circ \phi^+_- & \ \ \ \mbox{ is chain homotopic to } \id_+. \label{eq:id+} 
 \end{align}
To show this, we need two collections $\{ \kappa^+ \}$ and $\{ \kappa^- \}$ of chain contractors
\[
\kappa^\pm:  ECC^{\afpm}_*(Y,\lambda^\pm,b, \rot(b),J^\pm) \to ECC^{\afpm}_{*+1}(Y,\lambda^\pm,b, \rot(b), J^\pm)
\] 
such that 
\begin{align}
\phi^+_- \circ \phi^-_+ - \id_-&= \partial \kappa^- + \kappa^- \partial,  \label{eq:id--}\\
 \phi^-_+ \circ \phi^+_- - \id_+&= \partial \kappa^+ + \kappa^+\partial. \label{eq:id++} 
\end{align}

 The desired properties \eqref{eq:id--}
and   \eqref{eq:id++} follow from the Theorem \ref{thm:cobmaps} (Trivial Cobordisms) and (Composition) together with the same intersection positivity argument as before.  We obtain the map \eqref{eq:propcm} by taking direct limits as $L \to \infty$ and $\varepsilon \to 0$ as in Lemma \ref{lem:trick}.

\end{proof}

\addcontentsline{toc}{section}{References}

\noindent \textsc{Jo Nelson \\   Rice University \\}
{\em email: }\texttt{jo.nelson@rice.edu}\\

\noindent \textsc{Morgan Weiler \\  Cornell University}\\
{\em email: }\texttt{morgan.weiler@rice.edu}\\

\end{document}